\documentclass[12pt]{amsart}
\usepackage{latexsym}
\usepackage[utf8]{inputenc}
\usepackage{amsmath}
\usepackage{amsthm}
\usepackage{amsfonts}
\usepackage{amssymb}
\usepackage{epsfig}
\usepackage{nicefrac}
\usepackage[all]{xy}
\usepackage{graphicx}
\usepackage{graphics} 
\graphicspath{ {./images/} }
\usepackage{subcaption}
\usepackage{enumerate}
\usepackage[T1]{fontenc}
\usepackage{subcaption}
\usepackage[font=small,skip=0pt]{caption}
\usepackage{tikz}
\usepackage{subfig}
 
\newtheorem{theorem}{Theorem}[section]
\newtheorem{corollary}[theorem]{Corollary}
\newtheorem{proposition}[theorem]{Proposition}
\newtheorem{lemma}[theorem]{Lemma}
\newtheorem{claim}[theorem]{Claim}
\newtheorem{question}[theorem]{Question}

\theoremstyle{definition}
\newtheorem{assumption}[theorem]{Assumption}
\newtheorem{remark}[theorem]{Remark}

\newtheorem{definition}[theorem]{Definition}

\newcommand{\CC}{{\mathbb C}}
\newcommand{\RR}{{\mathbb R}}
\newcommand{\ZZ}{{\mathbb Z}}
\newcommand{\NN}{{\mathbb N}}
\newcommand{\TT}{{\mathbb T}}

\begin{document}

\title[Constraints on Lagrangians, and the shape invariant]{New constraints on Lagrangian embeddings, and the shape invariant}
\date{\today}
\author{Richard Hind}
\address{RH: Department of Mathematics\\
University of Notre Dame\\
255 Hurley\\
Notre Dame, IN 46556, USA.}
\author{Ely Kerman}
\address{EK: Department of Mathematics\\
University of Illinois at Urbana-Champaign\\
1409 West Green Street\\
Urbana, IL 61801, USA.}
\thanks{Both authors are supported by grants from the Simons Foundation}

\begin{abstract}
For a large class of toric domains in $\RR^4$,  we determine which product Lagrangian tori can be mapped into the domain by a Hamiltonian diffeomorphism. In other words, we compute the Hamiltonian shape invariant of these toric domains, as defined in \cite{hizh}. The argument relies on new intersection results for product Lagrangian tori in symplectic polydisks. For Hamiltonian diffeomorphisms which map certain Lagrangian product tori back into the polydisk, we establish intersections between the images and a one-parameter family of product Lagrangian tori that includes (is based at) the original torus. For symplectic polydisks with area ratios less than two, we strengthen this to establish intersections between the Hamiltonian images and the original Lagrangian torus. As a soft complement to these intersection results we also present an embedding construction which demonstrates that this intersection rigidity vanishes when the one-parameter family of product Lagrangian tori is replaced by a natural packing by Lagrangian tori.

\end{abstract}

\maketitle

\section{Introduction}

Let $L$ be a Lagrangian submanifold of $\RR^{2n}$ equipped with its standard symplectic structure. Given an open subset $U \subset \RR^{2n}$ it is natural to ask whether there exists a Hamiltonian diffeomorphism of $\RR^{2n}$ that maps $L$ into $U$. One can then ask whether such maps must also satisfy special constraints. For example, if $L$ is contained in $U$, then  the 
set  $$\{ \phi(L) \mid \phi \in \mathrm{Ham}(\RR^{2n}), \phi(L) \subset U\}$$ is nonempty and one can ask if any pair of elements must intersect. Even in simple settings, these pairwise intersections may not be guaranteed. They may also only be a small part of a richer intersection story.

Consider, for example, the standard open disk $D(b) \subset \RR^2$ of area $b$ and the standard (Lagrangian) circle $L(s) $ enclosing area $s<b$. Suppose that $b > 2s$. Then  $L(s)$ and its Hamiltonian images in $D(b)$ need not intersect. On the other hand, the two intersection statements below always hold and follow immediately from simple area considerations.

\begin{theorem}\label{thm:2a} If $\phi$ is a Hamiltonian diffeomorphism of $\RR^2$ such that $\phi(L(s)) \subset D(b)$, then $\phi(L(s))$ must intersect the family $$\bigcup_{t\in [s,b-s]}L(t).$$ 
\end{theorem}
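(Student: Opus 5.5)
Write $\gamma = \phi(L(s))$. The plan is an area argument in the plane. Since $\phi$ is a diffeomorphism of $\RR^2$, $\gamma$ is an embedded circle, so by the Jordan curve theorem it bounds a closed topological disk $\Omega \subset \RR^2$. Since $\phi$ is area preserving and isotopic to the identity through compactly supported diffeomorphisms, the region bounded by $\gamma$ is the image under $\phi$ of the disk bounded by $L(s)$; hence $\Omega$ has area exactly $s$. Also $\gamma \subset D(b)$ forces $\Omega \subset D(b)$, because $D(b)$ is simply connected: the bounded complementary component of $\gamma$ cannot contain points of $\RR^2 \setminus D(b)$, since that set is connected and unbounded.

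Assume for contradiction that $\gamma$ misses every $L(t)$ with $t\in[s,b-s]$. Then $\gamma$ lies in the open set $\{ \pi |z|^2 < s\} \cup \{ b-s < \pi|z|^2 < b\}$. Because $\gamma$ is connected, it lies entirely in one of these two pieces, and I treat the two cases separately.

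\emph{Case 1:} $\gamma \subset D(s)$, the open disk of area $s$. Then, as above, $\Omega \subset D(s)$. But $\Omega$ is a closed set of area $s$, while $D(s)\setminus\Omega$ is a nonempty open set (it contains a neighborhood of any point of $L(s')$ for $s'$ close to $s$ lying outside the compact set $\Omega$). So $\Omega$ has area strictly less than $s$, a contradiction.

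\emph{Case 2:} $\gamma$ lies in the open annulus $A=\{b-s<\pi|z|^2<b\}$. There are two subcases. If $\gamma$ is contractible in $A$, then $\Omega \subset A$, because the bounded component of $\RR^2 \setminus \gamma$ then avoids the inner disk $\{\pi|z|^2\le b-s\}$. If instead $\gamma$ is not contractible in $A$, then it winds around the origin and $\Omega \supset \{\pi|z|^2 \le b-s\}$. In the first subcase, $\Omega$ is a compact subset of $A$, which has area $s$, and the same strictness argument as in Case 1 gives area of $\Omega$ less than $s$, a contradiction. In the second subcase, $\Omega$ contains a closed disk of area $b-s$ together with a neighborhood of the compact set $\gamma$, which lies outside that disk. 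Hence the area of $\Omega$ is greater than $b-s$, which is at least $s$ whenever $b \ge 2s$, again a contradiction. When $b<2s$ the interval $[s,b-s]$ is empty and there is nothing to prove; the statement is only meaningful for $b\ge 2s$.

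The main obstacle is the dichotomy in Case 2: showing rigorously that an embedded curve in an annulus either bounds a disk inside the annulus or encloses the hole. I would deduce this from the Jordan–Schoenflies theorem applied to the simple closed curve $\gamma$. The bounded component of $\RR^2\setminus\gamma$ either contains the origin or does not. If it does not, it avoids the whole connected inner disk, since that disk is disjoint from $\gamma$ and the origin is excluded.
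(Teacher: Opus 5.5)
Your proof is correct and is exactly the ``simple area consideration'' the paper invokes without further detail; note that the statement tacitly assumes $b\ge 2s$, since for $s<b<2s$ the family is empty and the conclusion is false rather than vacuous. One phrase needs fixing: in the subcase where $\gamma$ encloses the origin, $\Omega$ does not contain a neighborhood of $\gamma$. What you need, and what holds, is that the interior of $\Omega$ meets the open set $\{\pi|z|^2>b-s\}$ in a nonempty open set, because every point of $\gamma$ is a limit of interior points; this gives the strict inequality you use.
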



\begin{theorem}\label{thm:2b}
    Let $k \geq 2$ be the unique integer such that $ks<b \leq (k+1)s$ and suppose that  $$\bigsqcup_{j=1}^k\Lambda_j$$ is a disjoint union of embedded closed curves $\Lambda_j$ in $D(b)$ that each bound a domain of area  $s$.  If $\phi$ is a Hamiltonian diffeomorphism of $\RR^2$ that maps $L(s)$ into $D(b)$, then $\phi(L(s))$ must intersect at least one of the $\Lambda_j$.
\end{theorem}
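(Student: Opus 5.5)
We argue by contradiction, using only area considerations in the plane. Suppose $\phi(L(s))$ is disjoint from every $\Lambda_j$. Set $\Gamma=\phi(L(s))$, an embedded closed curve in $D(b)$. By the Jordan curve theorem $\Gamma$ bounds a disk $\Omega_0\subset D(b)$. Since $\phi$ is Hamiltonian, hence area preserving, and the region enclosed by $L(s)$ has area $s$, the disk $\Omega_0$ also has area $s$. For the same reason each $\Lambda_j$ bounds a disk $\Omega_j\subset D(b)$ of area $s$; these lie in $D(b)$ because $D(b)$ is simply connected.

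The first step is to record the nesting structure of the disjoint curves $\Gamma,\Lambda_1,\dots,\Lambda_k$. For two disjoint embedded closed curves in the plane, the bounded complementary disks are either nested or disjoint. Two disks of the same area $s$ cannot be properly nested. If $\Omega_i\subsetneq\Omega_j$, then $\Omega_j\setminus\overline{\Omega_i}$ is a nonempty open set, so it has positive area. That would give $\mathrm{area}(\Omega_j)>\mathrm{area}(\Omega_i)$, which is a contradiction. Equality of the disks would force equality of their boundary curves, and this is excluded by disjointness. Hence the $k+1$ disks $\Omega_0,\Omega_1,\dots,\Omega_k$ are pairwise disjoint open subsets of $D(b)$.

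The second step is the area count. Summing the areas gives
$$(k+1)s=\sum_{j=0}^k \mathrm{area}(\Omega_j)\le \mathrm{area}(D(b))=b.$$
Combined with the hypothesis $b\le (k+1)s$, this forces equality. So the disks $\Omega_j$, together with the curves between them, fill $D(b)$ up to a set of measure zero. The main obstacle is to rule out this equality case. The complement $D(b)\setminus\bigcup_j\overline{\Omega_j}$ is open and has measure zero, so it is empty. Therefore $D(b)$ is the union of the $k+1\ge 3$ pairwise disjoint closed disks $\overline{\Omega_j}$. These closed disks are disjoint because the curves are disjoint and the disks are not nested. But $D(b)$ is connected, and a connected space cannot be written as a finite disjoint union of two or more nonempty relatively closed sets. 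This gives the contradiction. Note also that each $\overline{\Omega_j}$ is compact inside the open disk $D(b)$, so it is closed in $D(b)$.

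We conclude that $\Gamma=\phi(L(s))$ must meet some $\Lambda_j$. Note that the hypothesis $ks<b$ is not needed for this argument; it only ensures that $k$ disjoint curves of this kind can exist in $D(b)$ at all.
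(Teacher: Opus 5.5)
Your proof is correct, and it is exactly the ``simple area consideration'' the paper has in mind (the paper states this result without writing out a proof): disjointness would force $k+1$ pairwise disjoint disks of area $s$, with disjoint closures, inside $D(b)$, which is impossible when $b\le (k+1)s$. The only delicate point is the equality case $b=(k+1)s$, and you handle it correctly via connectedness. Equivalently, $\bigcup_j\overline{\Omega_j}$ is compact while $D(b)$ is not, so the complement contains a nonempty open set near $\partial D(b)$.
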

\begin{figure}[!h]
    \centering
    \includegraphics[width=0.85\linewidth]{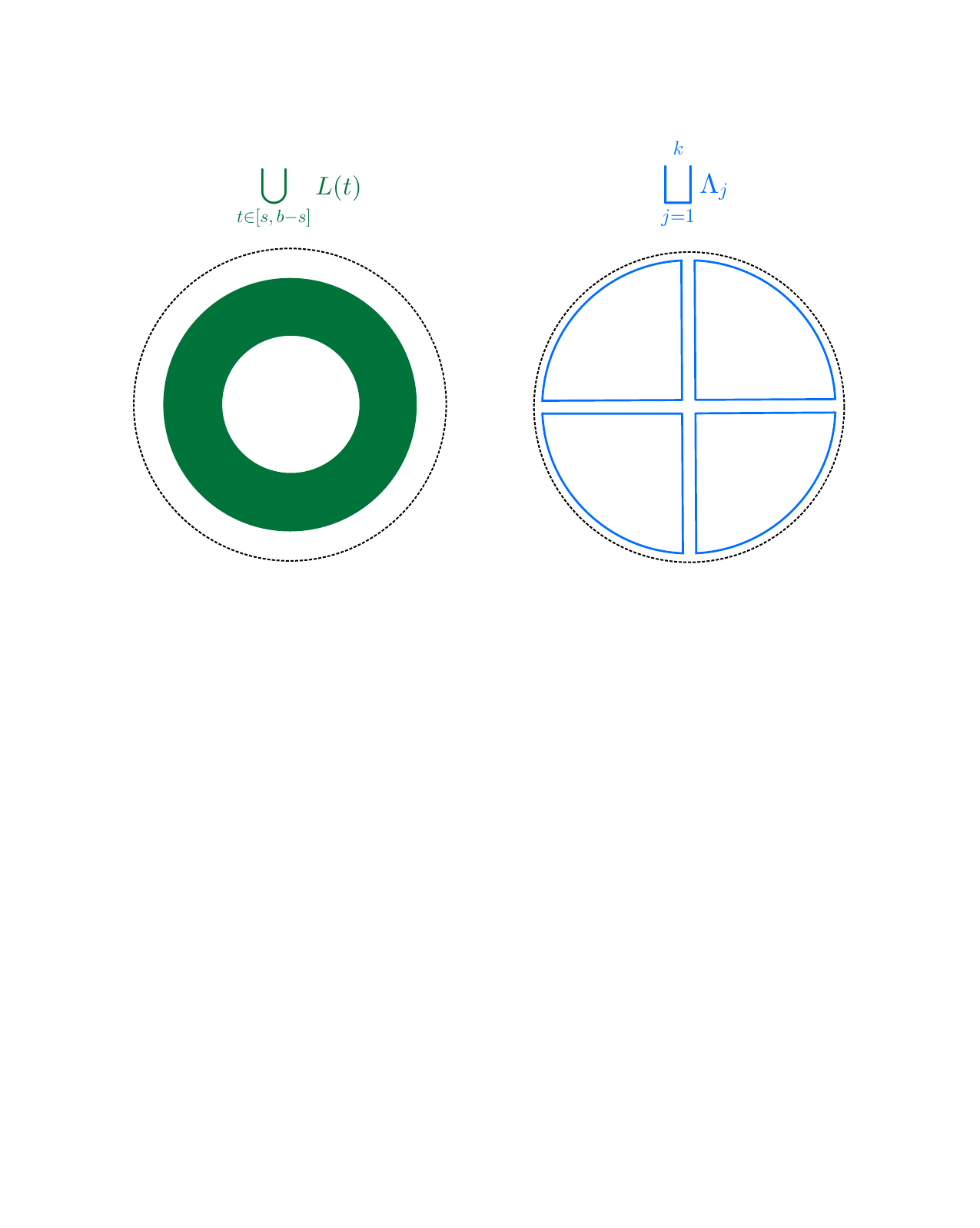}
    \vspace{-6.5cm}
    \caption{Any Hamiltonian image of $L(s)$ in $D(b)$ must intersect both $\bigcup_{t\in [s,b-s]}L(t)$ and $\bigsqcup_{j=1}^k\Lambda_j$.}
    \label{fig:2in2}
\end{figure}

\bigskip

In this paper we consider if and how the rigidity statements in Theorems \ref{thm:2a} and \ref{thm:2b} might extend to  four dimensions. Identifying $\RR^4$ with $\CC^2$, we consider product Lagrangian tori,  
\begin{equation*} 
L(r,s) =\{ (z_1,z_2) \in \CC^2 \mid \pi |z_1|^2 = r, \, \pi |z_2|^2 = s \},
\end{equation*}
and symplectic polydisks,
\begin{equation*} \label{poly}
P(a,b) = \{(z_1,z_2) \in \CC^2 \mid \pi |z_1|^2 <a, \, \pi |z_2|^2 <b \}.
\end{equation*}

\subsubsection{Generalizing Theorem \ref{thm:2a}}\label{secgen}
Our main result is the following extension of Theorem \ref{thm:2a} to dimension four.

\begin{theorem}\label{lagint}
Suppose that  $1/2\leq r<1$ and $1 \leq s <b/2$. If $\phi$ is a Hamiltonian diffeomorphism of $\RR^4$ that maps $L(r,s)$ into $P(1,b)$, then $\phi(L(r,s))$ intersects the codimension one submanifold
\begin{equation} 
\bigcup_{t \in [s,b-s]} L(r, t).
\end{equation}
\end{theorem}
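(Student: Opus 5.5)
We argue by contradiction. Suppose $\phi(L(r,s))\subset P(1,b)$ is disjoint from $\Sigma=\bigcup_{t\in[s,b-s]}L(r,t)$. The complement of $\Sigma$ in $P(1,b)$ splits into regions by the value of $\pi|z_1|^2$ relative to $r$ and of $\pi|z_2|^2$ relative to the interval $[s,b-s]$. Since $\phi(L(r,s))$ is connected, it lies in one component of $P(1,b)\setminus\Sigma$. The complement $\{\pi|z_1|^2\neq r\}\cup\{\pi|z_2|^2<s\}\cup\{\pi|z_2|^2>b-s\}$ is connected. So we cannot just separate components; we need holomorphic curves.

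The plan is to neck-stretch along $\phi(L(r,s))$, in the spirit of Hind's work on Lagrangian tori in polydisks. Compactify $P(1,b)$, viewed as slightly enlarged, to $S^2(1)\times S^2(b)$, and choose almost complex structures that are standard near the divisors at infinity and stretched along $\phi(L(r,s))$. Because $L(r,s)$ is Hamiltonian isotopic to its image in $\RR^4$, the image is monotone-type in the relevant homotopy classes, with Maslov index $2$ disks of areas $r$ and $s$. We then take curves in the class $[S^2\times pt]$ through generic points, and curves of degree $(1,k)$. In the limit these break into planes asymptotic to $\phi(L)$. The area constraints $r\ge 1/2$ and $s\ge1$, together with index considerations, force the existence of a plane in the class of the $z_1$-disk with area $r$ and a complementary plane of area $1-r$. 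The latter lies in the $S^2(1)$ direction, and its boundary class has $z_2$-component zero.

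Next, we use the family $\Sigma$ as an obstruction. Run the stretching also along $L(r,t)$ for $t\in[s,b-s]$, or equivalently use the foliation of $P$ by the tori $L(r,t)$. Any finite-energy plane $u$ asymptotic to $\phi(L)$ with boundary class $(1,0)$ and area $r$ or $1-r$ has a $z_1$-projection winding once. If $\phi(L)$ misses $\Sigma$, then intersection positivity with the holomorphic hypersurfaces $\{z_2=c\}$ and $\{z_1=c\}$ shows the following: the $z_2$-coordinate of $\phi(L)$ must stay either below $s$ or above $b-s$ wherever $\pi|z_1|^2=r$. We handle these two cases symmetrically; the symmetry $t\mapsto b-t$ is realized by an anti-holomorphic-like swap of the $S^2(b)$ factor's poles. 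In the case "below $s$", we glue the area-$r$ plane with a disk of $L(r,t)$ to produce a curve in $S^2(1)\times S^2(b)$ of too small area. Alternatively, we produce a disk with boundary on $\phi(L)$ in class $(0,1)$ lying in the region $\pi|z_2|^2<s$. Its area would then be $<s$, contradicting the fact that all $(0,1)$-class disks with boundary on $\phi(L)$ have area exactly $s$. In the case "above $b-s$", the complementary $(0,1)$-disk through the point at infinity has area $b-s<b-s$ from the opposite side, which gives the same contradiction. The hypothesis $s<b/2$ ensures $[s,b-s]$ is nonempty, so these two cases are disjoint.

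The main obstacle is the second step. We must show that the limiting planes with $z_2$-component nontrivial actually exist and have controlled position relative to $\Sigma$. This is where $r\ge1/2$ is needed, to rule out degenerations in which the $(1,0)$ class splits into two planes of the "wrong" type. I would first try to obtain the key plane as a limit of the degree-$(1,0)$ spheres through a point on a chosen $L(r,t)$. I would then use a relative intersection/linking count between its projection and the torus $L(r,t)$, which must equal zero if $\phi(L)$ misses $\Sigma$. Summing over $t$ via the $\{|z_1|^2=r\}$ hypersurface should then give the area contradiction.
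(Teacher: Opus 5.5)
There is a genuine gap, and it is in the step you yourself flag as the main obstacle. Your first step, stretching the $(1,0)$-spheres so that broken leaves are pairs of planes, is in the spirit of the relative foliation of Section~\ref{foliation}. Note, though, that it needs the lower bound $r$ on areas of index-one planes (Corollary~\ref{>r}), not just area and Maslov data. The mechanisms you then propose for exploiting disjointness from $\Sigma=\bigcup_{t\in[s,b-s]}L(r,t)$ do not work.

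(i) The dichotomy ``$\pi|z_2|^2<s$ or $\pi|z_2|^2>b-s$ wherever $\pi|z_1|^2=r$'' is just a restatement of disjointness; no intersection positivity is involved. It is also not a global alternative. $\phi(L)\cap\{\pi|z_1|^2=r\}$ can have components of both kinds, or be empty: nothing you say excludes $\phi(L)\subset\{r<\pi|z_1|^2<1\}\times D(b)$, where both of your cases are vacuous. Nor are the cases symmetric. Swapping the poles of $S^2(b)$ does not preserve $P(1,b)$, and it sends $L(r,s)$ to $L(r,b-s)$, which is not Hamiltonian isotopic to it in $\RR^4$. This is why the paper brings in a third torus $L_3=L(r,b-s)$ and three cases for $p(L_2)$.

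(ii) The area contradiction is invalid. A disk on $\phi(L)$ in a Maslov-two class of area $s$ has area $s$ wherever it lies. Confinement to $\{\pi|z_2|^2<s\}$ bounds nothing, since $\omega_1$ contributes and the $z_2$-projection of the boundary need not be embedded. In the other case, ``$b-s<b-s$'' is not an inequality at all. In fact no argument using only the area and Maslov classes of $\phi(L)$ and its position relative to $\Sigma$ can succeed. Suppose $2r>s$ and let $e_1,e_2$ be the factor-circle classes. The torus $L(r,2r-s)\subset P(1,b)$ misses $\Sigma$ and sits in your ``below $s$'' position. The isomorphism $e_1\mapsto e_1$, $e_2\mapsto 2e_1-e_2$ matches its area and Maslov classes with those of $L(r,s)$. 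Its Maslov-two class $2e_1-e_2$ of area $s$ is represented by a chain in $\{\pi|z_2|^2\le 2r-s\}$ (compare the remark after Corollary~\ref{newlagv2}). So the Hamiltonian isotopy class must enter essentially.

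In the paper the contradiction is an honest Lagrangian intersection, not an area inequality. The argument runs as follows.

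First, inflate $S_\infty$ so the fibre class has area $2r$, and build a foliation by $(1,0)$-spheres relative to $L(r,s)$, $\phi(L(r,s))$ and $L(r,b-s)$. Projecting to $S_\infty$, disjointness from the family becomes a nesting property of the projected circles (Proposition~\ref{prop:proj}(4), Proposition~\ref{adjustprop}(4)). This comes from a continuity argument in $t$: as $L(r,t)$ is pushed toward $T_\infty$, planes asymptotic to $\phi(L)$ cannot cross planes asymptotic to $L(r,t)$ (Lemmas~\ref{lem:fam}--\ref{lem:fam2}).

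The nesting then controls high-degree $(d,n_1)$-buildings with point constraints on the tori. These yield spheres $F,G$ in class $(d,1)$ and disks with the pattern of Proposition~\ref{prop:submanifolds}. Blowing up $F\cap G$ and blowing down the fibres through those points makes $\hat F,\hat G,\hat T_0,\hat T_\infty$ axes of a new $S^2\times S^2$ avoiding both tori. The complement is a domain in $T^*T^2$ in which both tori are relatively exact and homologous to the zero section (Lemma~\ref{lem:pre}). Theorem~B of \cite{rgi} and Gromov's theorem \cite{gr} then force $\phi(L(r,s))$ to meet $L(r,s)$ or $L(r,b-s)$.

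The Hamiltonian isotopy class enters through Corollary~\ref{>r}, both to exclude Type~B buildings and, in Lemma~\ref{lem:s}, to rule out the competing area $2r-s$. Your sketch contains none of this machinery, and no substitute for it.
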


The first three constraints on $r$ and $s$ in Theorem \ref{lagint} are each necessary. If $r$ is at least $1$, then Theorem 3 of \cite{ho} implies that there is no Hamiltonian diffeomorphism of $\RR^4$ that maps $L(r,s)$ into any polydisk of the form $P(1,b)$. If $r$ is less than $1/2$, then there are Hamiltonian diffeomorphisms of $D(1)$ that displace $D(r)$ from itself, and any such map can be used to construct a Hamiltonian diffeomorphism of $\RR^4$ that maps $L(r,s)$ into $P(1,b) \smallsetminus \bigcup_{t \in [s,b-s]} L(r, t).$ If $s$ is less than $1$, and greater than $r$, then $L(s,r)$, which is Hamiltonian isotopic to $L(r,s)$, is contained in $P(1,b)$ and is disjoint from the family $\bigcup_{t \in [s,b-s]} L(r, t)$. 


\medskip



In the limit $b\to 2s$, Theorem \ref{lagint} yields the following Lagrangian intersection result which appears to be new. 

\begin{corollary}\label{newlag}
Suppose that $1/2\leq r<1$ and $1 \leq s <b \le 2s$. If $\phi$ is a Hamiltonian diffeomorphism of $\RR^4$ that maps $L(r,s)$ into $P(1,b)$, then $$L(r,s) \cap \phi(L(r,s)) \neq \emptyset.$$
\end{corollary}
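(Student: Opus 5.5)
We deduce Corollary \ref{newlag} from Theorem \ref{lagint} by a limiting argument in the parameter $b$. The hypotheses differ in one respect: Theorem \ref{lagint} requires $s<b/2$, i.e. $b>2s$, while the corollary allows $s<b\le 2s$. The idea is that for $b\le 2s$ the interval $[s,b-s]$ is empty or degenerate, so the family $\bigcup_{t\in[s,b-s]}L(r,t)$ should collapse to $L(r,s)$ itself. We make this precise by enlarging the polydisk slightly and letting the enlargement shrink to zero.

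Suppose $\phi$ is a Hamiltonian diffeomorphism with $\phi(L(r,s))\subset P(1,b)$ and $b\le 2s$.

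1. For every $\epsilon>0$ we have $P(1,b)\subset P(1,2s+\epsilon)$. Set $b_\epsilon=2s+\epsilon$. Then $1\le s<b_\epsilon/2$ and $1/2\le r<1$, so $\phi(L(r,s))\subset P(1,b_\epsilon)$ and Theorem \ref{lagint} applies.

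2. Theorem \ref{lagint} gives a point
$$p_\epsilon\in \phi(L(r,s))\cap \bigcup_{t\in[s,s+\epsilon]}L(r,t).$$

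3. Take $\epsilon=1/n$. The points $p_n$ lie in the compact set $\phi(L(r,s))$, so a subsequence converges to some $p\in\phi(L(r,s))$.

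4. Write $p_n=(z_1^{(n)},z_2^{(n)})$. By construction $\pi|z_1^{(n)}|^2=r$ and $\pi|z_2^{(n)}|^2\in[s,s+1/n]$. Passing to the limit, $\pi|z_1|^2=r$ and $\pi|z_2|^2=s$, so $p\in L(r,s)$.

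Hence $p\in L(r,s)\cap\phi(L(r,s))$, which proves the corollary.

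There is no real obstacle here; all the substance is in Theorem \ref{lagint}. The only point needing care is Step 1: we enlarge the target domain, which is legitimate because the hypothesis $\phi(L(r,s))\subset P(1,b)$ is monotone in the target. This avoids any need to perturb $\phi$ or the torus, and compactness then finishes the argument.
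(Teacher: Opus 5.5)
Your proposal is correct and follows the paper's own route: the paper obtains Corollary \ref{newlag} from Theorem \ref{lagint} ``in the limit $b\to 2s$,'' and your enlargement to $P(1,2s+\epsilon)$ followed by the compactness argument is a precise rendering of that limit. (The paper also mentions an alternative route, rerunning the argument of Section \ref{adjust} with $L_3=L_1$, which yields the stronger Corollary \ref{newlagv2}, but the limit argument is what it gives for this statement.)
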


As described below in Section \ref{sec:2lag}, one can use the methods developed here to prove a more general result. We show, in Corollary \ref{newlagv2}, that any two Hamiltonian images of $L(r,s)$ in $P(1,b)$ must intersect.

\begin{remark}
It would be interesting to know if these intersections can be detected by some refinement of Lagrangian Floer theory. In the same spirit, it would also be interesting to know whether there is a topological lower bound for the number of intersection points here.
\end{remark}

\begin{remark}
Versions of Corollary \ref{newlag} fail in dimension greater than $4$. More precisely, it follows from \cite[Theorem A]{chek} that when $\alpha>1$ is irrational, the product torus $L=L(1, \alpha, 2)$ is Hamiltonian isotopic, in $\RR^6$, to infinitely many distinct product tori lying arbitrarily close to $L$. This is discussed in section 2 of the article \cite{atallah}, as well as in \cite{brsh}. 
\end{remark}

\begin{remark} If one compactifies $P(1,b)$ to $S^2 \times S^2$ with a nonmonotone symplectic form, then the analogue of Corollary \ref{newlag} fails almost completely. Corollary 1.10 of \cite{brki} implies that there is a set of split Lagrangian tori in $S^2 \times S^2$, of full measure, such that for every $L$ in the set every Weinstein neighborhood of $L$ contains a disjoint Hamiltonian image of $L$. For more discussion on the $S^2 \times S^2$ case see Remark \ref{brendelschmitz}.
    
\end{remark}

\medskip

In the limit $b\to \infty$, Theorem \ref{lagint} yields  the following Lagrangian refinement of Gromov's nonsqueezing theorem from \cite{gr}.
\begin{corollary}\label{binf}
   Suppose that $r<1 \le s$. If there is a Hamiltonian diffeomorphism of $\RR^4$ that maps $L(r,s)$ into 
   $$(D(1) \times \RR^2) \smallsetminus \bigcup_{t \in [s,\infty)} L(r, t),$$ then $r <1/2$.
\end{corollary}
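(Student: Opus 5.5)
We deduce Corollary \ref{binf} from Theorem \ref{lagint} by compactness. Suppose $\phi$ is a Hamiltonian diffeomorphism of $\RR^4$ with
$$\phi(L(r,s)) \subset (D(1) \times \RR^2) \smallsetminus \bigcup_{t \in [s,\infty)} L(r, t),$$
and suppose, for contradiction, that $r \geq 1/2$. Together with the hypothesis $r<1\le s$, this places $r$ and $s$ exactly in the range $1/2 \le r < 1$, $1 \le s$ required by Theorem \ref{lagint}.

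First we confine the image to a polydisk. Since $L(r,s)$ is compact, $\phi(L(r,s))$ is compact. Its projection to the first factor lies in the open disk $D(1)$, and its projection to the second factor is bounded. Hence there is a finite $b$ with $b > 2s$ and $\phi(L(r,s)) \subset P(1,b)$; enlarging $b$ only makes this easier.

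Now apply Theorem \ref{lagint} with this $b$. Its hypotheses hold: $1/2 \le r < 1$, $1 \le s < b/2$, and $\phi(L(r,s)) \subset P(1,b)$. Therefore $\phi(L(r,s))$ meets $L(r,t)$ for some $t \in [s, b-s]$. Since $[s,b-s] \subset [s,\infty)$, this contradicts the assumption that $\phi(L(r,s))$ avoids $\bigcup_{t\in[s,\infty)} L(r,t)$. Hence $r<1/2$.

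The argument has no serious obstacle. The only point to check is that compactness gives a finite $b$, which is what makes Theorem \ref{lagint} applicable.
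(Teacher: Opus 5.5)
Your proposal is correct and takes the same route as the paper, which derives Corollary~\ref{binf} from Theorem~\ref{lagint} by letting $b\to\infty$. Your compactness argument, choosing a finite $b>2s$ with $\phi(L(r,s))\subset P(1,b)$ and noting $[s,b-s]\subset[s,\infty)$, is exactly what makes that limit precise.
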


\bigskip

\subsubsection{Generalizing Theorem \ref{thm:2b}} The setting of Theorem \ref{thm:2b} can be extended to dimension four in the following natural way. Given a collection of disjoint embedded closed curves  
$$\{\Lambda_1, \dots, \Lambda_{k}\} \subset D(b)$$
each bounding area $s$, as in Theorem \ref{thm:2b}, consider the $k$ disjoint Lagrangian tori
$$L_j = L(r) \times \Lambda_j$$ in $P(2r,b).$ Given a Hamiltonian diffeomorphism $\phi$ of $\RR^4$ with $\phi(L_1) \subset P(2r,b)$, one can ask whether $\phi(L_1)$ must intersect at least one of the $L_j$. 
By continuity, when asking this question, one can assume that  $$b < (k+1)s.$$

In this setting, the work of Polterovich and Shelukhin from \cite{ps} yields a local rigidity theorem in the spirit of Theorem \ref{thm:2b}. The relationship between $r$ and the positive number
 \begin{align}\label{eq:g}
    \mathfrak{o} = (k+1)s-b,
\end{align}
plays a crucial role.
\begin{theorem}(Theorem C, \cite{ps})\label{thm:ps}
If $r < \mathfrak{o}/(k-1),$ then there is no Hamiltonian diffeomorphism supported in $P(2r,b)$ that displaces $L(r,s)$ from all of the $L_j.$
\end{theorem}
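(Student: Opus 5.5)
This is Theorem C of \cite{ps}, so within the paper it is quoted rather than proved. If I were to reprove it, I would follow the Polterovich--Shelukhin approach through Lagrangian Floer theory with bulk deformations, or equivalently quantitative Floer theory for the link $\mathcal{L} = L_1\sqcup\dots\sqcup L_k$ (after arranging $L(r,s)$ to be one of the components). The strategy is to show that a suitable invariant of $\mathcal{L}$ is nonzero and has a spectral gap, and then let a displacement-energy argument bound the support.

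\textbf{Step 1: reduce to a compact setting.} I would first compactify $P(2r,b)$ into $S^2(2r)\times S^2(b')$, with $b'$ slightly larger than $b$, or simply work with the product of a disk Floer setup. I would also isotope the curves $\Lambda_j$ into a symmetric configuration: $k$ disjoint circles, each bounding area $s$, separated by annular regions of area about $\lambda=(b-ks)/(k+1)$. A Hamiltonian supported in $P(2r,b)$ is supported away from the boundary, so displacement there gives displacement in the compactified manifold.

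\textbf{Step 2: the link invariant in dimension two.} Following Mak--Smith and Cristofaro-Gardiner--Humilière--Mak--Seyfaddini--Smith, the link $\sqcup\Lambda_j$ in the sphere with these area constraints is monotone in the orbifold (symmetric product) sense. Its Floer cohomology in $\mathrm{Sym}^k(S^2)$ is nonzero, and the associated link spectral invariants are Lipschitz and satisfy a quasimorphism-type bound.

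\textbf{Step 3: take the product with the circle $L(r)\subset S^2(2r)$.} The equator is monotone with nonzero Floer cohomology. The product link $\mathcal{L}$ in $S^2(2r)\times S^2(b')$ then has nonzero Floer homology. Its boundary depth, or torsion bar length, is controlled by the smallest disk areas. One family comes from the $S^2(2r)$ factor and has areas on the order of $r$. The other comes from the Maslov-two disks of the link, whose areas differ by the defect $\mathfrak{o}$.

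\textbf{Step 4: derive the conclusion from the spectral inequality.} Suppose $\phi$ displaces $L(r,s)$ from all the $L_j$. Then the Floer complex $CF(\phi(L_1),\mathcal{L})$, suitably symmetrized, vanishes. Comparing spectral invariants under the Lipschitz property yields an inequality of the form $(k-1)r \ge \mathfrak{o}$, because the relevant action windows must be crossed $k-1$ times. This contradicts the hypothesis $r<\mathfrak{o}/(k-1)$.

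The main obstacle is Step 4: making the nonvanishing robust when only one component is moved. This requires a filtered comparison in which the action gap attributable to $\mathfrak{o}$ survives, and this is precisely where the $(k-1)$ factor must be extracted.
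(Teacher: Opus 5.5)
The paper does not prove this statement; it quotes it from \cite{ps}, so your first sentence agrees with the paper. The reproof you sketch, however, has a genuine gap at exactly the point where the theorem has content.

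\textbf{Step 3 is unjustified.} You treat $\mathcal L=\{L(r)\times\Lambda_j\}$ as the product of the equator with the planar link and infer nonvanishing Floer homology. No such product structure exists. A link-type Floer theory for $\mathcal L$ lives in the symmetric product $\mathrm{Sym}^k(S^2(2r)\times S^2(b))$, and this is not $S^2(2r)\times\mathrm{Sym}^k(S^2(b))$: the configuration projects onto $\mathrm{Sym}^k$ of the equator, not onto a single copy of it. Moreover, a Hamiltonian diffeomorphism of the four-manifold induces nothing on the latter space. A componentwise K\"unneth argument is also useless. Since $s<b/2$, each $L(r)\times\Lambda_j$ is displaceable on its own (by $\mathrm{id}\times\psi$, cut off away from $L(r)$), so any rigidity must be a configuration effect. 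Finally, your sketch never uses the hypothesis where it should enter. For the planar link, $\eta$-monotonicity forces $\eta=\mathfrak{o}/(2(k-1))$, so the hypothesis reads $r<2\eta$. A proof must explain how the disks of area $r$ in the $S^2(2r)$-direction interact with this $\eta$-correction in the four-dimensional configuration, and your outline leaves that blank.

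\textbf{Step 4 fails as written.} Displacing the single component $L_1$ from $\bigcup_j L_j$ does not make any symmetrized complex vanish. The link complex pairs $\mathrm{Sym}(\phi(\mathcal L))$ with $\mathrm{Sym}(\mathcal L)$, and $\phi(L_j)$ for $j\ge 2$ may still meet $\mathcal L$. The unsymmetrized pair $(\phi(L_1),\bigcup_j L_j)$ carries no information at all, since each $L_j$ is displaceable. The inequality $(k-1)r\ge\mathfrak{o}$ obtained by ``crossing action windows $k-1$ times'' is asserted, not derived.

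In fact no window counting is needed once the right invariant exists. Suppose you had a homogenized link spectral invariant $\mu$ on $\mathrm{Ham}(S^2(2r)\times S^2(b))$ for $\mathcal L$ that is conjugation invariant, monotone, and satisfies Lagrangian control, $\mu(H)=\frac1k\sum_j H|_{L_j}$ whenever $H$ is constant on each $L_j$. The conclusion then follows directly:
\begin{enumerate}
\item Assume $\phi(L_1)$ misses $\bigcup_j L_j$. Take $F\ge 0$ supported near $\phi(L_1)$, away from $\bigcup_j L_j$, with $F=1$ on a neighborhood of $\phi(L_1)$.
\item Since $F$ vanishes on every $L_j$, Lagrangian control gives $\mu(F)=0$.
\item Choose $G$ with $0\le G\le 1$, equal to $1$ near $L_1$ and $0$ near $L_j$ for $j\ge 2$, and supported where $F\circ\phi=1$. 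Then $F\circ\phi\ge G$, so $\mu(F\circ\phi)\ge\mu(G)=1/k$.
\item This contradicts conjugation invariance, which gives $\mu(F\circ\phi)=\mu(F)=0$.
\end{enumerate}
So the whole burden lies in constructing such an invariant for the four-dimensional product configuration under $r<\mathfrak{o}/(k-1)$. That is the missing idea your proposal does not supply.
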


In the present work, we show, by construction, that even this local rigidity disappears when $r$ is sufficiently large.

\begin{theorem}\label{thm:hm}
If $k$ is even, then for every $r> 6\mathfrak{o}/k $ there is a Hamiltonian diffeomorphism of $P(2r,b)$ that displaces $L(r,s)$ from all of the  $L_j.$
\end{theorem}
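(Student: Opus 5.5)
The plan is to construct the displacing Hamiltonian diffeomorphism explicitly, as a Lagrangian isotopy inside $P(2r,b)$ that ends at a torus disjoint from $L_1\cup\dots\cup L_k$. Since the tori are product tori in $\RR^4$, a Lagrangian isotopy with vanishing flux is generated by a compactly supported Hamiltonian. So it suffices to build a Lagrangian torus $L'\subset P(2r,b)\smallsetminus\bigcup_j L_j$ with the same symplectic area class as $L(r,s)$, together with an exact Lagrangian isotopy from $L(r,s)$ to $L'$ inside $P(2r,b)$.

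First I normalize the picture in $D(b)$. By a Hamiltonian isotopy of the second factor, I can assume the $\Lambda_j$ are round circles placed in a row, grouped in $k/2$ pairs. The complement $D(b)\smallsetminus\bigcup_j D_j$, where $D_j$ is the disk bounded by $\Lambda_j$, then has area $b-ks=s-\mathfrak{o}$. This is smaller than $s$ by exactly $\mathfrak{o}$, which is why no product torus $L(r)\times\gamma$ can avoid the $L_j$. The deficit $\mathfrak{o}$ has to be absorbed by the first factor.

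Next I build $L'$ as a torus fibred over the $\theta$–circle. Write it as $\{(\sqrt{\rho(\theta,\varphi)/\pi}\,e^{i\theta},\ \gamma_\theta(\varphi))\}$, where the fibre curves $\gamma_\theta$ live in the complement of the disks $D_j$. The Lagrangian condition ties the $\theta$-derivative of the area swept by $\gamma_\theta$ to the variation of $\rho$. In other words, a second-factor curve can trade area with the first factor as long as $\rho$ stays in $(0,2r)$, and this gives a budget of roughly $r$ in each direction.

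I then let $\gamma_\theta$ be immersed rather than embedded. Its image stays in the gap region, it runs through the channels between consecutive pairs of the $\Lambda_j$, and its signed enclosed area is $s$ minus the amount carried by $\rho$. The double points of $\gamma_\theta$ are resolved in $\RR^4$ by arranging that, at the two preimages of each double point, the first-factor coordinates differ (different $\theta$ or different $\rho$). The parity of $k$ enters here: pairing the $\Lambda_j$ lets the required winding be distributed over the $k/2$ pairs. Each pair contributes a detour whose area defect is about $2\mathfrak{o}/k$. Accommodating it costs a first-factor oscillation of roughly three times that amount, once for going out, once for coming back, and once for the separation needed at crossings. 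This is where $r>6\mathfrak{o}/k$ should come from.

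Finally, the isotopy from $L(r,s)$ to $L'$ is obtained by deforming the family $\gamma_\theta$ and $\rho$ continuously from the constant product data. Throughout, I check that the flux, namely the periods $r$ and $s$, is preserved, so that the isotopy is Hamiltonian. The isotopy is allowed to pass through the $L_j$; only its endpoint must avoid them.

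The main obstacle is the middle step: simultaneously making $L'$ embedded, keeping $\rho$ strictly between $0$ and $2r$, and matching the periods exactly. I would first try a piecewise construction. In it, $\theta\in S^1$ is cut into $k/2$ arcs, and on each arc only one pair of the $\Lambda_j$ is negotiated. I would then verify the area bookkeeping arc by arc, so that the constant $6$ emerges from an explicit local model.
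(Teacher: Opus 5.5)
\textbf{There is a genuine gap.} All of the theorem's content sits in the step you flag as the main obstacle, and the mechanism you propose for that step is not what the Lagrangian condition allows.

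\textbf{What the Lagrangian condition forces in your ansatz.}
\begin{itemize}
\item Normalize $\theta$ so that $\omega_1=d\rho\wedge d\theta$. For your ansatz the Lagrangian condition is $\partial_\varphi\rho=\omega_2(\partial_\theta\gamma_\theta,\partial_\varphi\gamma_\theta)$.
\item Integrating in $\varphi$ gives $\frac{d}{d\theta}\mathcal{A}(\gamma_\theta)=0$, where $\mathcal{A}$ is the signed area (action) in $D(b)$.
\item Because $\theta$ is constant along a $\varphi$-loop, the period of $L'$ on that class is exactly $\mathcal{A}(\gamma_\theta)$. Along your exact isotopy starting from $L(r)\times\gamma_0$, this period must remain $s$.
\item So the first factor absorbs none of the deficit in this class, and ``signed enclosed area $s$ minus the amount carried by $\rho$'' cannot happen. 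Area can be traded between the factors only in the $\theta$-class, whose period is $\int_0^1\rho(\theta,\varphi_0)\,d\theta+\mathcal{A}(\theta\mapsto\gamma_\theta(\varphi_0))$.
\item Since $W=D(b)\smallsetminus\bigcup_j\bar D_j$ has area $s-\mathfrak{o}$, each $\gamma_\theta\subset W$ must produce the full area $s$ by itself, by winding around holes and compensating with negatively oriented lobes.
\end{itemize}

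\textbf{Double points.}
\begin{itemize}
\item The two preimages of a double point of $\gamma_\theta$ have the same $\theta$, so ``different $\theta$'' is not available. Only $\rho$ can separate them.
\item The same identity gives $\rho(\theta,\varphi_2)-\rho(\theta,\varphi_1)=\frac{d}{d\theta}\mathcal{A}(\gamma_\theta|_{[\varphi_1,\varphi_2]})$. Hence every lobe cut off by a double point must be strictly growing or shrinking for as long as that double point exists.
\item As $\theta$ runs around a circle, double points must therefore be created and destroyed.
\item Moreover $\gamma_\theta$ must stay immersed, because the $\varphi$-derivative of your parametrization vanishes wherever $\partial_\varphi\gamma_\theta$ does.
\end{itemize}
Designing such a loop of immersed curves in $W$ while keeping $0<\rho<2r$ is the actual problem. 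The proposal does not address it. Your ``three times $2\mathfrak{o}/k$'' count does not come from any of these constraints, so it does not explain the hypothesis $r>6\mathfrak{o}/k$.

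\textbf{Your target is stronger than the theorem.} Your $L'$ would avoid the entire hypersurface $D(2r)\times\bigcup_j\Lambda_j$. That is strictly stronger than what is claimed, the paper does not prove it, and it is not clear that it holds.

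\textbf{The idea you are missing.} The paper uses the specific first factor of the $L_j$ and tolerates intersections of the second-factor projections.
\begin{itemize}
\item It repositions the tori as $L_j=\gamma_1\times\Lambda_j$ for $j$ odd and $\gamma_2\times\Lambda_j$ for $j$ even, where $\gamma_1,\gamma_2$ are curves of area $r$ in $D(2r)$.
\item It takes a product target $\mathbf{L}=\Gamma\times\Sigma$, with $\Sigma$ crossing every $\Lambda_j$.
\item It moves consecutive pairs by the flows of $G(z_1)H_j(z_2)$. Here $H_j$ displaces $\Lambda_j$ from $\Sigma$ with $\max H_j$ just above $\mathfrak{o}/k$.
\item $G\equiv 0$ near $\gamma_1\cap\gamma_2$, so each pair stays disjoint. $G\equiv 1$ near $\gamma_i\cap\Gamma$, so over those fibres the second factor has been displaced from $\Sigma$.
\item Disjointness is then checked fibrewise over the finitely many points where the first-factor projections meet.
\end{itemize}
Each move costs a first-factor rectangle of area about $\mathfrak{o}/k$. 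Fitting these rectangles into $D(2r)$ without creating new intersections is exactly where $r>6\mathfrak{o}/k$ is used. The parity of $k$ enters because the tori are handled in pairs with alternating $\gamma_1,\gamma_2$.
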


This construction also extends to the limit $b \to \infty$.

\begin{corollary}\label{cor:disc}
    Let $\Lambda \colon S^1 \times \NN \to \RR^2$ be a proper embedding such that each $\Lambda_j = \Lambda(S^1 \times \{j\})$ bounds a disk of area $s$. Set $L_j = L(1/2) \times \Lambda_j$. There exists a Hamiltonian diffeomorphism $\phi$ of the cylinder  $D(1) \times \RR^2 $ such that $\phi(L(1/2,s)) \cap L_j=\emptyset$ for all $j \in \NN$.
\end{corollary}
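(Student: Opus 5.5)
The plan is to deduce the corollary from Theorem \ref{thm:hm}, applied with $r=1/2$ (so $2r=1$), after a Hamiltonian diffeomorphism of the second factor $\RR^2$ that moves finitely many of the curves $\Lambda_j$ into a standard polydisk and leaves all the others outside it. We choose the parameters first. Fix an even integer $k$ with $k>12s$. Then we pick $b$ with $ks<b<(k+1)s$ and $b-ks=\delta$ very small. This gives $\mathfrak{o}=(k+1)s-b=s-\delta<s<k/12$, so the hypothesis $r=1/2>6\mathfrak{o}/k$ of Theorem \ref{thm:hm} holds.

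\emph{Choosing an area-$b$ disk in the plane.} Each $\Lambda_j$ bounds a closed disk $\Delta_j$ of area $s$. No two of these disks are nested, since one would then have strictly larger area than the other. Hence the $\Delta_j$ are pairwise disjoint. By properness of $\Lambda$, only finitely many $\Delta_j$ meet any compact set. After relabeling, we construct a smoothly bounded open topological disk $U\subset\RR^2$ with the following properties:
\begin{itemize}
\item $U$ contains $\Delta_1,\dots,\Delta_k$;
\item $U$ is disjoint from $\Delta_j$ for $j>k$;
\item $U$ has area exactly $b=ks+\delta$.
\end{itemize}
To build it, join $\Delta_1,\dots,\Delta_k$ by thin bands that avoid the other (locally finitely many) disks and thicken slightly. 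Since the complement of $\bigcup_j\Delta_j$ is open and nonempty, the extra area can be made any small $\delta>0$. This is exactly why we took $k>12s$: the free area $\delta$ can be arbitrarily small rather than nearly $s$, which we could not guarantee if the disks were packed densely.

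By the Dacorogna--Moser/Greene--Shiohama theorem, there is an area-preserving diffeomorphism $\chi$ of $\RR^2$ with $\chi(D(b))=U$. We take it isotopic to the identity; every such map of the plane is Hamiltonian, since $H^1(\RR^2)=0$. Set $\Lambda'_j=\chi^{-1}(\Lambda_j)\subset D(b)$ for $j\le k$. These are disjoint embedded curves, each bounding area $s$.

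\emph{Applying Theorem \ref{thm:hm} and transporting back.} Theorem \ref{thm:hm} with curves $\Lambda'_1,\dots,\Lambda'_k$ gives a Hamiltonian diffeomorphism $\psi$ of $P(1,b)$, which we may take compactly supported and hence extendable by the identity. It satisfies $\psi(L(1/2,s))\subset P(1,b)$ and $\psi(L(1/2,s))\cap L(1/2)\times\Lambda'_j=\emptyset$ for $j\le k$. Now define
$$\phi=(\mathrm{id}\times\chi)\circ\psi,$$
a Hamiltonian diffeomorphism of $D(1)\times\RR^2$. Then $\phi(L(1/2,s))\subset D(1)\times U$, which gives the two required disjointness statements:
\begin{itemize}
\item for $j\le k$, $\phi(L(1/2,s))$ is disjoint from $(\mathrm{id}\times\chi)(L(1/2)\times\Lambda'_j)=L_j$;
\item for $j>k$, $\phi(L(1/2,s))$ is disjoint from $L_j$, because $\Lambda_j\cap U=\emptyset$.
\end{itemize}

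The main point needing care is the construction of $U$. It must contain exactly the chosen $k$ disks, avoid all the others, and have area pinned within the tiny window $(ks,ks+\delta]$. Properness of $\Lambda$ and the choice $k>12s$ are what make this possible. Everything else is a formal conjugation argument.
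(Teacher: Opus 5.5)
Your argument is correct, but it follows a different route from the one the paper indicates. The paper gives no separate proof of Corollary \ref{cor:disc}. It only remarks that the construction proving Theorem \ref{thm:hm} ``extends to the limit $b\to\infty$'': one reruns the explicit pairwise displacement by product Hamiltonians $G(z_1)H_i(z_2)$ for the whole infinite family, using that $6\mathfrak{o}/k\le 6s/k\to 0$.

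You instead use Theorem \ref{thm:hm} as a black box and carry out the limit by truncation and conjugation. You confine the image to $D(1)\times U$, where $U$ is a disk of area $b\in(ks,(k+1)s)$ containing exactly $k$ of the $\Delta_j$ and missing the rest. This leaves only finitely many $L_j$ to avoid, and $\mathrm{id}\times\chi$ transports the polydisk statement to the cylinder.

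What this buys is robustness. You never have to reposition infinitely many curves, which would be delicate when the complement of $\bigcup_j\Delta_j$ has little, possibly finite, area. Your choice of an even $k>12s$ gives $r=1/2>6\mathfrak{o}/k$ for every free area $\delta=b-ks\in(0,s)$, so it does not matter how tightly the disks are packed.

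The details still to add are routine:
\begin{itemize}
\item The bands exist because a locally finite union of pairwise disjoint closed disks does not separate the plane.
\item $\chi$ can be taken to be a compactly supported Hamiltonian diffeomorphism of $\RR^2$, since two embedded circles enclosing equal area are Hamiltonian isotopic. The relevant flux group is $H^1_c(\RR^2)=0$.
\end{itemize}
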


The following related questions remain unresolved.

\begin{question}
    For which values of $r$ between $\mathfrak{o}/(k-1)$  and $6\mathfrak{o}/k$, if any,  does the rigidity statement of Theorem \ref{thm:ps} hold?
\end{question}

\begin{question}
    If one allows all Hamiltonian diffeomorphisms of $\RR^4$, not just those supported in $P(2r,b)$, does the rigidity statement  of Theorem \ref{thm:ps} hold for any positive value of $r$?
\end{question}

\begin{remark}\label{brendelschmitz}
    If one compactifies $P(2r, b)$ to $S^2 \times S^2$, then the gap between rigidity and nonrigidity shrinks to a point. Theorem C of \cite{ps} asserts that for  $r < \mathfrak{o}/(k-1)$ there is no Hamiltonian diffeomorphism of $S^2 \times S^2$ that displaces $L(r,s)$ from all of the $L_j.$ On the other hand, by Corollary 1.16 of \cite{brki}, for almost every $s<b$ satisfying $r > \mathfrak{o}/(k-1)$,
    there is a infinite sequence of Hamiltonian diffeomorphisms $\phi_j$ of $S^2 \times S^2$ such that the images $\phi_j(L(r,s))$ are pairwise disjoint.

    There is more flexibility if we also consider Lagrangian tori $L(x,y) \subset P(2r,b)$ with $x \neq r$. Assuming our compactification is not monotone, that is, $2r \neq b$, recent work of Schmitz, \cite[Theorem B]{schmitz} says that, given $\varepsilon>0$, there exists a (single) Hamiltonian diffeomorphism $\psi$ of $S^2 \times S^2$ such that, for almost every $(x,y) \in \{0<x<2r, \, 0< y<b, \, |x-r| > \varepsilon \}$, the images $\psi^j(L(x,y))$ are pairwise disjoint.
\end{remark}

\subsection{The Hamiltonian shape invariant}
The original motivation for Theorem \ref{lagint} concerns the Hamiltonian shape invariant  defined  by Hind and Zhang in \cite{hizh}. The Hamiltonian shape of a subset $X$ of $\RR^4$ is the set 
\begin{align*} 
{\rm Sh}(X) & : = \left\{(r, s) \, \big| \, r \le s, \,  L(r, s) \hookrightarrow X \right\} \subset \RR_{>0}^2,
\end{align*}
where the notation $L(r, s) \hookrightarrow X$ implies that there exists a Hamiltonian diffeomorphism $\Phi$ of $\RR^4$ with $\Phi(L(r,s)) \subset X$. This set-valued symplectic invariant of $X$ has the following properties:\\
\begin{enumerate}
    \item ${\rm Sh}(cX) = c^2 {\rm Sh}(X), \text{ for all } c>0$,\\
    \item $X \hookrightarrow Y \implies {\rm Sh}(X)\subset {\rm Sh}(Y)$,\\
    \item ${\rm Sh}(\cup_i X_i) \supset \cup_i {\rm Sh}(X_i)$,\\
    \item ${\rm Sh}(\cap_i X_i) \subset \cap_i {\rm Sh}(X_i).$\\
\end{enumerate}
It can thus be viewed as a set-valued symplectic capacity. Like all capacities, the Hamiltonian shape is highly nontrivial to compute. A natural starting class of examples to consider are toric domains. 

Recall that a 
{\em toric domain} in $\RR^4$ is a subset of the form $X_{\Omega} = \mu^{-1} (\Omega)$ where $\Omega$ is a subset of $\RR^2_{\ge 0}$
and $\mu: \RR^4=\CC^2 \to \RR^2_{\ge 0}$ is the standard moment map defined by $$\mu(z_1, z_2) = (\pi|z_1|^2, \pi |z_2|^2).$$ 
For each point $(r,s)$ in the interior of $\Omega$, $\rm{int}(\Omega)$, the inverse image $\mu^{-1}(r,s)$ is the Lagrangian torus $L(r,s) \subset X_{\Omega}$. Hence, the set $\rm{int}(\Omega) \cap \{ r \leq s\}$ is contained in ${\rm Sh} (X_{\Omega})$. This direct connection between $\Omega$ and ${\rm Sh}(X_\Omega)$ inspires the hope that there might be a simple 
formula, or algorithm, for computing the Hamiltonian shape of $X_{\Omega}$ directly from $\Omega$.

This has been achieved for some important standard toric domains. In \cite{ho}, Hind and Opshstein compute the Hamiltonian shapes of balls and symplectic polydisks. In \cite{hizh}, Hind and Zhang compute the Hamiltonian shape of symplectic ellipsoids 
\begin{equation*} 
E(a,b) = \mu^{-1}(\{(r,s) \in \RR^2_{\ge 0} \mid x/a+y/b <1 \})
\end{equation*}
with $\frac{b}{a} \in \NN_{\geq 2}.$
Theorem \ref{lagint} and Corollary \ref{newlag} allow us to compute the Hamiltonian shape of two general families of toric domains.

\subsubsection{Information loss} Before stating our results, we recall two essential observations from  \cite{ho} concerning the ways in which information about $\Omega$ may become hidden in ${\rm Sh}(X_{\Omega})$. The first source of information loss in the passage from $\Omega$ to ${\rm Sh}(X_{\Omega})$, already accounted for in the definition of the shape, is due to a basic symmetry. Since the tori $L(r,s)$ and $L(s,r)$ are Hamiltonian isotopic, if one of them symplectically embeds in a domain, then so does the other.
Let $\sigma:(r,s) \mapsto (s,r)$ be the reflection in the diagonal in $\RR^2_{>0}$.
It follows that 
${\rm Sh}(X_{\Omega})$ contains the set $\Omega^+ \cap \{r \leq s\}$ where $$ \Omega^+= \Omega \cup \sigma \Omega$$
This transition from $\Omega$ to $\Omega^+$ can obscure features of $\Omega$. In particular, for distinct subsets $\Omega_1$ and $\Omega_2$ we can have $\Omega_1^+ = \Omega_2^+$, see Figure \ref{fig:blind1}.

\begin{figure}
    \centering
    \includegraphics[width=0.8\linewidth]{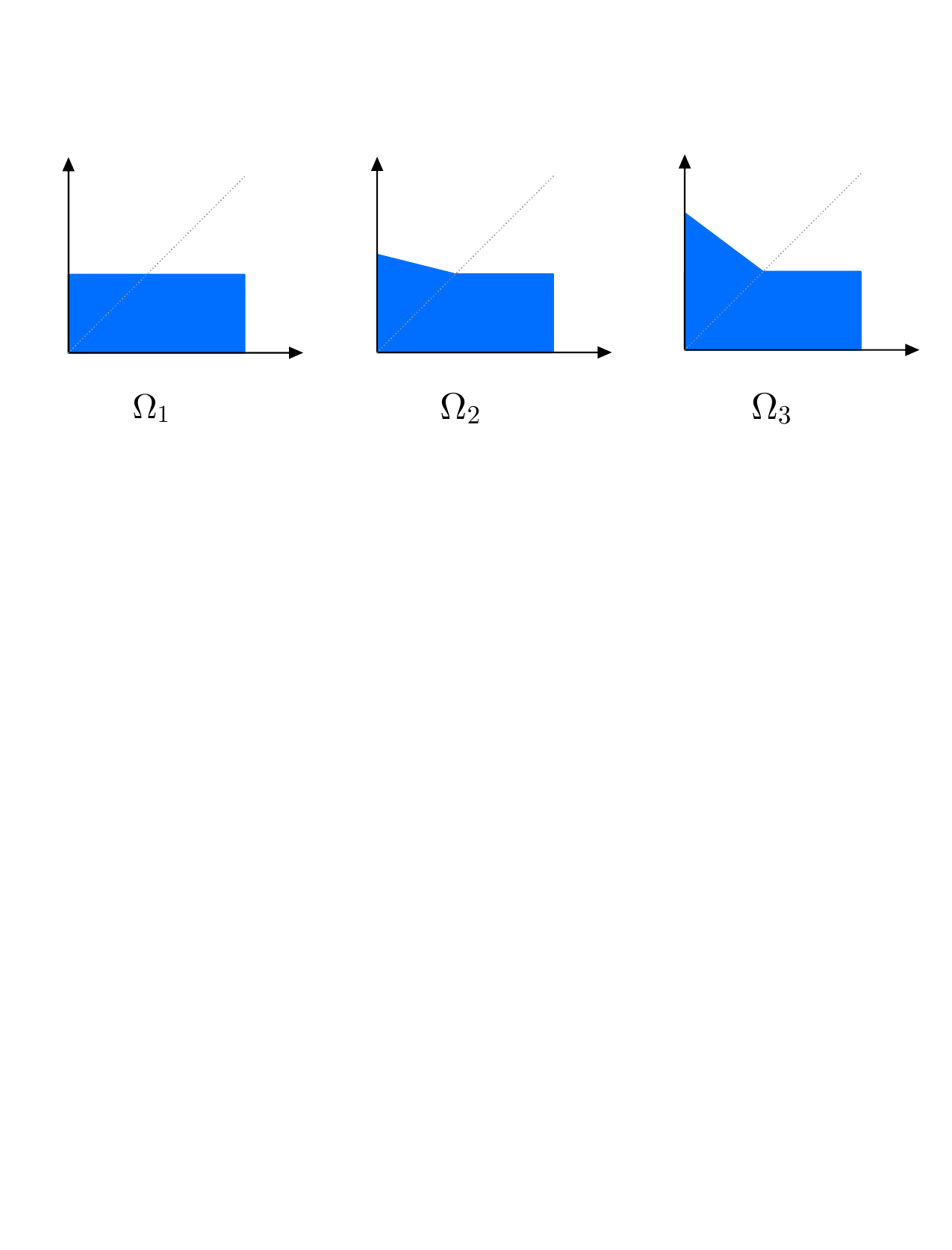}
    \vspace{-7.5cm}
    \caption{For these distinct regions $\Omega_1^+ = \Omega_2^+ =\Omega_3^+ $.}
    \label{fig:blind1}
\end{figure}

\bigskip

A second, more subtle, reason why some features  of $\Omega$ are not visible in ${\rm Sh}(X_{\Omega})$ is a consequence of the following Lagrangian embedding theorem of Hind and Opshtein.

\begin{theorem}(\cite{ho} \cite[Theorem 5.2]{hizh2})
For every $x>2$ and every $\epsilon >0$ there is a Hamiltonian diffeomorphism $\Phi$ of $\RR^4$ such that 
$\Phi(L(1,x))$ is contained in the toric domain $X_{(1+\epsilon)Q}$, where $Q$ is the convex hull of the set of points $\{(0,0), (2,0), (0,2), (1,2)\}$.
\end{theorem}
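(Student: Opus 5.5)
The plan is to construct the Hamiltonian diffeomorphism $\Phi$ explicitly, realizing $\Phi(L(1,x))$ as a \emph{Lagrangian suspension}. The point is that the second period $x$ need not be stored as area enclosed in the $z_2$-plane. It can instead be produced largely by the Hamiltonian flux of a loop of motions of the $z_1$-circle. This explains why the constraint in the $z_2$ direction is only $\pi|z_2|^2\lesssim 2$, independent of $x$.

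\emph{Step 1 (suspension model).} Let $\{\psi_t\}_{t\in S^1}$ be a loop of Hamiltonian diffeomorphisms of $\CC$ generated by $H_t$, and let $\gamma\colon S^1\to\CC$ be an embedded loop. Consider
$$N=\{(\psi_t(w),\gamma(t)) \mid w\in L(1),\ t\in S^1\},$$
after the standard correction in the $z_2$ direction that makes it Lagrangian. One way to do this is to thicken $\gamma$ to an annulus with coordinates $(t,\tau)$ and $\omega=d\tau\wedge dt$, and set $\tau=-H_t(\psi_t(w))$. Then $N$ is an embedded Lagrangian torus with the following periods:
\begin{itemize}
\item on the fibre class, the period is $1$;
\item on the section class, the period is the area enclosed by the corrected $\gamma$ plus the flux term $\int_{S^1}H_t\,dt$ (suitably normalized).
\end{itemize}
If $\psi_t$ is the loop that rigidly carries the area-$1$ disk once around inside a disk of area slightly more than $2$, then the flux is controlled by $H$. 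By winding the $z_2$-loop and the $z_1$-motion together, I would arrange the section period to be any prescribed $x>2$. The $z_2$-loop itself should then enclose area at most about $2$.

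\emph{Step 2 (identification with $L(1,x)$).} Next, I would show that $N$ is Hamiltonian isotopic to $L(1,x)$. To do this, deform the data $(\psi_t,\gamma)$ through suspensions while keeping both periods fixed, until $\psi_t\equiv\mathrm{id}$ and $\gamma=L(x)$. Each intermediate torus is Lagrangian with constant periods, so the isotopy has zero flux and is therefore Hamiltonian. The concern is that, in the final configuration, the required $z_2$-area reappears as $x$. That is fine here, since we only need an isotopy in $\RR^4$, not one inside the target domain.

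\emph{Step 3 (moment image).} Compute $\mu(N)$ and check that it lies in $(1+\epsilon)Q$. The $z_1$-circle of area $1$ moving inside a disk of area $2$ gives $\pi|z_1|^2<2$, and $\gamma$ gives $\pi|z_2|^2<2$. The delicate constraint is the slanted edge $2\pi|z_1|^2+\pi|z_2|^2<4$. To satisfy it, the motion must be coordinated: when the $z_1$-circle is far from the origin (where $\pi|z_1|^2$ approaches $2$), the point $\gamma(t)$ must be near the origin of the $z_2$-plane, and conversely. When $\pi|z_2|^2$ is near $2$, the $z_1$-circle must sit centered so that $\pi|z_1|^2\le 1+\epsilon$.

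I expect Step 3, together with the period bookkeeping in Step 1, to be the main obstacle. The time parametrizations of $\psi_t$ and $\gamma$ have to be chosen carefully, probably with many windings and with $\epsilon$ absorbing the corrections. The goal is that the flux plus the enclosed area equals exactly $x$ while the pointwise inequality defining $Q$ holds along the entire torus. My first attempt would be piecewise: alternate phases in which only $z_1$ moves (with $\gamma$ parked near $0$) and phases in which only $z_2$ moves (with the circle centered). Then I would smooth the result and verify the three inequalities phase by phase.
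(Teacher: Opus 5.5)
The paper does not prove this statement; it quotes it from \cite{ho} and \cite[Theorem 5.2]{hizh2}. There the embeddings come from explicit Hamiltonian isotopies of the standard torus; the product flows $G(z_1)H(z_2)$ that this paper borrows from \cite{ho} for Theorem \ref{thm:hm} are of this kind. Because the isotopy is explicit, the Hamiltonian isotopy class of the image is never in question.

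Your suspension route has a genuine gap at exactly this point: Steps 1 and 2 do not produce $L(1,x)$. For a suspension, the period of the section loop through $w$ is $A_1(w)+A_2(w)$. Here $A_1(w)$ is the signed area enclosed by the orbit $t\mapsto\psi_t(w)$; it is your flux term $\int_0^1H_t(\psi_t(w))\,dt$, up to the normalization of the loop. $A_2(w)$ is the area enclosed by the actual $z_2$-curve through $w$.

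Suppose the circle is carried around by rotating the $z_1$-plane $k$ times. Then the linearized loop has Maslov index $2k$, so your section class has Maslov index $2k+2$, not $2$. Concretely, slide the circle to the centre of rotation. The periods stay $(1,x)$, so this is a Hamiltonian isotopy. It ends at the literal product $L(1)\times\gamma'$ with $\mathrm{area}(\gamma')=x-k\le 2(1+\epsilon)$. Your ``section period $x$'' is just the period of the class $k[\mathrm{fibre}]+[\gamma']$. So you have only embedded $L(1,x-k)$, which already lay in the polydisk. Step 2 also fails here: the Maslov class is invariant under Lagrangian isotopy, so no deformation takes this torus to $L(1)\times L(x)$ with your fibre and section classes matched.

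Step 2 is valid only if the loop $\psi_t$ is contractible, for example a loop in $\mathrm{Ham}_c$ of the $z_1$-disk. With the compactly supported normalization, the reference curve $\gamma$ has area $x$, and the $z_2$-curve through $w$ is the graph $\tau=-H_t(\psi_t(w))$. So containment in $\pi|z_2|^2<2(1+\epsilon)$ requires
$\int_0^1\min_{w\in L(1)}H_t(\psi_t(w))\,dt\ge x-2(1+\epsilon)$.

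The obstacle is that the oscillation of $H_t$ along $\psi_t(L(1))$, which is what moves the circle at all, spreads the $z_2$-projection into a band. For instance, translate a round disk of area $1$ around a closed path of length $\ell$ enclosing signed area $\alpha$. This gives $\int_0^1\min_w H_t(\psi_t(w))\,dt=\alpha-\ell/\sqrt{\pi}$. That quantity is negative for every such path inside $D(2+2\epsilon)$, however often it winds. Indeed $\alpha\le\ell\rho'/2$, where $\rho'=(\sqrt{2+2\epsilon}-1)/\sqrt{\pi}<1/\sqrt{\pi}$ is the radius available to the centre.

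You do not exhibit any loop with the required property. That existence statement, together with the slanted constraint $2\pi|z_1|^2+\pi|z_2|^2<4(1+\epsilon)$ that Step 3 also leaves open, is the whole content of the theorem. What you have is a heuristic, not a proof.
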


\begin{remark}
    The set $X_Q$ is the closure of $P(2,2) \cap E(2,4).$
\end{remark}








An immediate implication of this result is that 
${\rm Sh}(X_{\Omega})$ contains the unbounded set $$\{0\leq r < m(\Omega)\} \cap \{r\leq s\} $$ where the width of the infinite strip is given by $$m(\Omega) = \sup \{ c \mid cQ \subset \Omega \, \, \mathrm{or} \, \, cQ \subset \sigma \Omega \}.$$ Hence, no part of $\Omega$ that lies in the infinite strip $\{0\leq r < m(\Omega)\}$ is visible in ${\rm Sh}(X_{\Omega})$.

\bigskip

In all examples where the shape of $X_{\Omega}$ is known, there is nothing else to the story, and one has
\begin{equation}\label{eq:shape}
    {\rm Sh}(X_{\Omega}) = \Bigl(\Omega^+  \cup \{0\leq r < m(\Omega)\} \Bigr)\cap \{r\leq s\}.
\end{equation} This equality is  established for balls and symplectic polydisks in \cite{ho}, and for symplectic ellipsoids $E(a,b)$ with $\frac{b}{a} \in \NN_{\geq 2}$ in \cite{hizh}.
Here we show that \eqref{eq:shape} holds much more generally.

\subsubsection{New shape computations}

We will focus on subsets $\Omega \subset \RR^2_{\ge 0}$ that contain the unit square $[0,1)\times [0,1)$ and are contained in the unit strip $[0,1) \times \RR_{\ge 0}$. In this case, we have $\Omega^+ =\Omega$, $m(\Omega) =1/2$ and hence 
\begin{equation}\label{starter}
    {\rm Sh}(X_{\Omega}) \supset \Bigl(\Omega \cup \{0\leq r < 1/2\} \Bigr)\cap \{r\leq s\}.
\end{equation}
We will show that these sets are equal for two broad classes of subsets.

Theorem \ref{lagint} implies the following.

\begin{theorem}\label{main} Let $f: [0,1] \to [1, \infty]$ be any function. For the corresponding graphical subset $$\Omega(f)=\{ (r,s) \mid 0\leq r < 1, 0\leq s < f(r)\}$$ we have 
\begin{align*}
\label{fshape}
   {\rm Sh}(X_{\Omega(f)}) = \Bigl(\Omega(f)  \cup \{0\leq r < 1/2\} \Bigr)\cap \{r\leq s\}. 
\end{align*}

\end{theorem}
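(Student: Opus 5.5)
The inclusion ``$\supset$'' is already recorded in \eqref{starter}: since $\Omega(f)$ contains $[0,1)\times[0,1)$ (because $f\geq 1$) and lies in the unit strip, we have $\Omega(f)^+=\Omega(f)$ and $m(\Omega(f))=1/2$, so the Hind--Opshtein embedding theorem together with the trivial inclusion $\mathrm{int}(\Omega)\cap\{r\le s\}\subset {\rm Sh}$ gives the right-hand side. (One should check that points on the boundary pieces of $\Omega(f)$ not in its interior are handled; if $f$ is arbitrary, a point $(r,s)$ with $s<f(r)$ lies on the fiber $L(r,s)\subset X_{\Omega(f)}$ directly, since $X_{\Omega(f)}=\mu^{-1}(\Omega(f))$, so no interior assumption is needed.) The work is therefore the reverse inclusion.

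For ``$\subset$'', take $(r,s)$ with $r\le s$ and suppose some Hamiltonian $\Phi$ maps $L(r,s)$ into $X_{\Omega(f)}$. We must show $r<1/2$ or $(r,s)\in\Omega(f)$. Since $X_{\Omega(f)}\subset D(1)\times\RR^2$, Theorem 3 of \cite{ho} (as recalled after Theorem \ref{lagint}) forces $r<1$. If $s<1$ then $(r,s)\in[0,1)^2\subset\Omega(f)$ and we are done. So assume $1/2\le r<1\le s$, and argue by contradiction assuming $s\ge f(r)$. Then the whole ray $\{(r,t)\mid t\ge s\}$ lies outside $\Omega(f)$, so $X_{\Omega(f)}$ is disjoint from $\bigcup_{t\in[s,\infty)}L(r,t)$. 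Hence $\Phi(L(r,s))\subset (D(1)\times\RR^2)\smallsetminus\bigcup_{t\ge s}L(r,t)$, and Corollary \ref{binf} yields $r<1/2$, a contradiction.

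To make this rigorous from Theorem \ref{lagint} rather than its limiting form, note that $\Phi(L(r,s))$ is compact, so it lies in $P(1,b)$ for some finite $b$, which we may enlarge so that $b>2s$. Then Theorem \ref{lagint} applies with $1/2\le r<1$, $1\le s<b/2$, and gives an intersection of $\Phi(L(r,s))$ with $\bigcup_{t\in[s,b-s]}L(r,t)$, which is disjoint from $X_{\Omega(f)}$ because $f(r)\le s\le t$. This is the contradiction.

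The main obstacle is entirely absorbed by Theorem \ref{lagint}. The only delicate points in this deduction are the compactness argument, which lets us choose a finite $b$, and the case split at $s<1$ versus $s\ge1$ needed to meet the hypothesis $s\ge1$. Both are routine.
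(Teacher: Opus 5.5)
Your proposal is correct and follows the paper's proof: reduce to $1/2\le r<1\le s$ with $s\ge f(r)$, place the compact image inside some $P(1,b)$ with $b>2s$, and let Theorem \ref{lagint} force an intersection with tori $L(r,t)$, $t\ge s\ge f(r)$, which are disjoint from $X_{\Omega(f)}$. The differences are only in the bookkeeping, and they favour your version. The paper bounds $r$ by displacement energy, which as written gives only $r\le 1$, and it chooses $b>2f(r)$. Citing \cite{ho} to get $r<1$, and using compactness to choose $b>2s$, explicitly secures the hypotheses $r<1$ and $s<b/2$ that Theorem \ref{lagint} needs.
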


\begin{proof}
 For $0< r\leq s$, it follows from Proposition 2.3 of \cite{chsc} that the displacement energy of $L(r,s)$ is equal to $r$. The displacement energy of the symplectic cylinder $D(1) \times \RR^2 = \mu^{-1}([0,1) \times \RR_{\ge 0})$ is equal to $1$. Since the displacement energy and shape are monotone and $X_{\Omega(f)} \subset D(1) \times \RR^2$,  if $(r,s)$ is in ${\rm Sh}(X_{\Omega(f)})$ we must have $r\leq 1$. It suffices to show that if $(r,s)$ is in ${\rm Sh}(X_{\Omega(f)})$ and $1/2 \leq r \leq 1$ then $s < f(r).$ Equivalently, it suffices to show that for any $(r,s)$ with $r \in [1/2,1]$ and $s \ge f(r)$ there is no Hamiltonian diffeomorphism of $\RR^4$ that maps $L(r,s)$ into $X_{\Omega(f)}$. Choosing any $b > 2f(r)$, this is an immediate implication of Theorem \ref{lagint}.
\end{proof}

\begin{figure}[!h]
    \centering
    \includegraphics[width=0.8\linewidth]{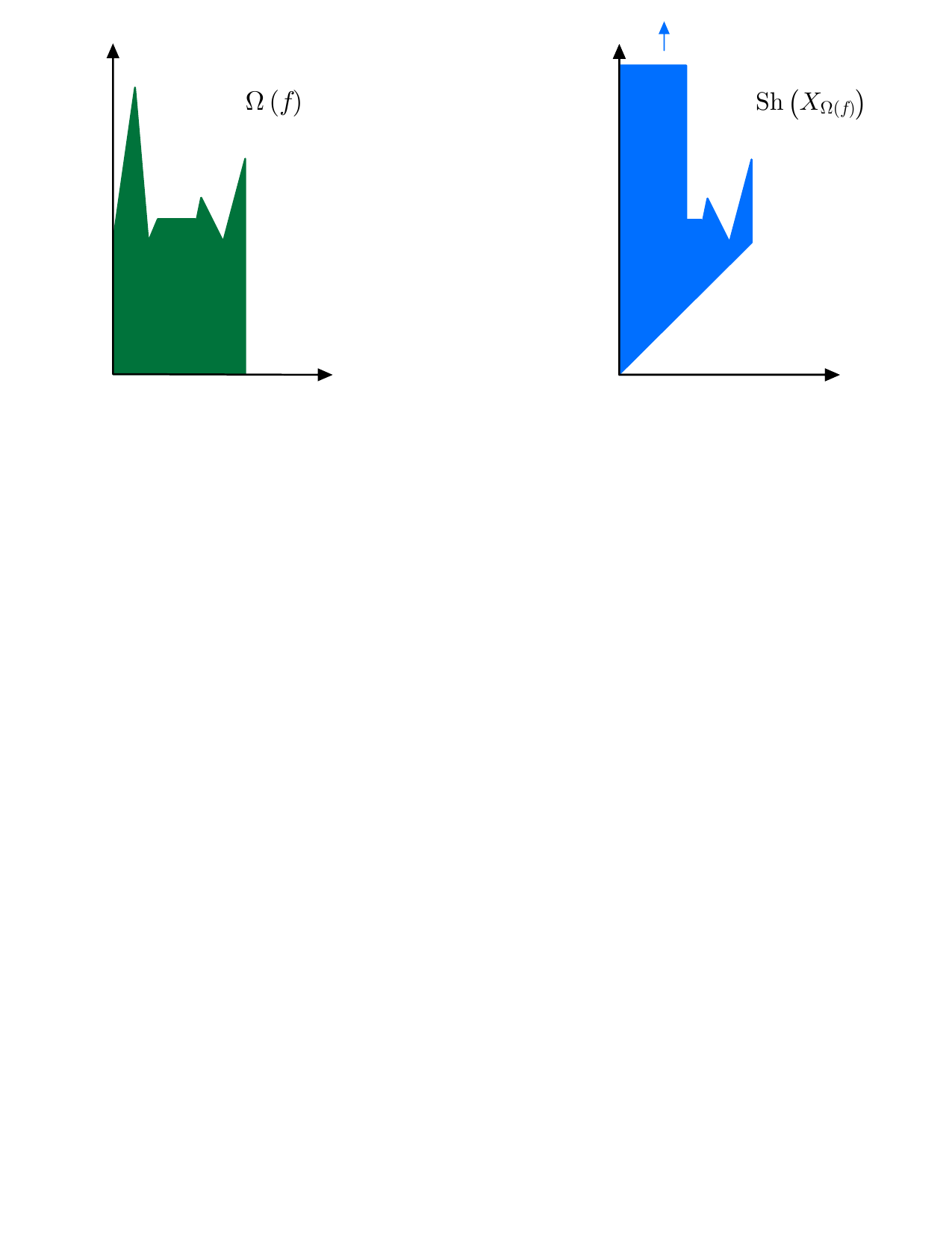}
    \vspace{-7.5cm}
    \caption{Equality \eqref{eq:shape} holds for $X_{\Omega(f)}$.}
    \label{fig:shape1}
\end{figure}

\begin{remark} Consider two functions $f,g: [0,1] \to [1, \infty]$. Theorem \ref{main} implies that if the toric domain $X_{\Omega(f)}$ and $X_{\Omega(g)}$ are symplectomorphic, then the restrictions of the functions $f$ and $g$ to $[1/2,1]$ must be identical. A {\em rigidity of profile} result of a similar spirit has recently been asserted by Hutchings, \cite{hutch-talk}, for generic strictly convex toric domains in $\RR^4$. 
\end{remark}

Arguing similarly, Corollary \ref{newlag} yields the following. 

\begin{theorem}\label{main2} Let $\Omega \subset \RR^2_{\geq 0}$ be any set that contains the square 
$[0,1)\times[0,1)$ and is contained in the rectangle $[0,1) \times [0,2).$
Then 
\begin{equation*}
\label{}
   {\rm Sh}(X_{\Omega}) = \Bigl(\Omega  \cup \{0\leq r < 1/2\} \Bigr)\cap \{r\leq s\}. 
\end{equation*}
\end{theorem}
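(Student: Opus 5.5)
The proof follows that of Theorem \ref{main}, with Corollary \ref{newlag} in place of Theorem \ref{lagint}. The inclusion ``$\supset$'' is \eqref{starter}: $\Omega$ contains the unit square and lies in the unit strip, so $\Omega^+=\Omega$ and $m(\Omega)=1/2$. For ``$\subset$'', take $(r,s)\in{\rm Sh}(X_\Omega)$ with $r\le s$. The displacement energy of $L(r,s)$ is $r$ (Proposition 2.3 of \cite{chsc}), and $X_\Omega\subset D(1)\times\RR^2$ has displacement energy $1$, so $r\le 1$. If $r<1/2$ there is nothing to prove. It remains to show that when $1/2\le r\le 1$, the point $(r,s)$ lies in $\Omega$.

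Suppose instead that $1/2\le r\le 1$ and $(r,s)\notin\Omega$, and let $\Phi$ be a Hamiltonian diffeomorphism with $\Phi(L(r,s))\subset X_\Omega$. The case $r=1$ is excluded by Theorem 3 of \cite{ho}, since $X_\Omega\subset P(1,2)$. So $r<1$. Since $\Omega\supset[0,1)^2$ and $(r,s)\notin\Omega$, we get $s\ge 1$. We also get $s<2$: otherwise $\Phi(L(r,s))\subset P(1,2)\subset P(1,b)$ for some $b$ with $s<b\le 2s$, and Corollary \ref{newlag} forces $\Phi(L(r,s))\cap L(r,s)\neq\emptyset$. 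But $L(r,s)=\mu^{-1}(r,s)$ is disjoint from $X_\Omega=\mu^{-1}(\Omega)$ because $(r,s)\notin\Omega$, which is a contradiction. (In fact this argument covers every $s\ge 1$, so the case split is only cosmetic.)

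Now apply Corollary \ref{newlag} with $b=2$ and the torus $L(r,s)$. We have $1/2\le r<1$, $1\le s<2\le 2s$, and $\Phi(L(r,s))\subset X_\Omega\subset P(1,2)$. Hence $\Phi(L(r,s))$ meets $L(r,s)$. Since $L(r,s)\cap X_\Omega=\emptyset$, this contradicts $\Phi(L(r,s))\subset X_\Omega$. Therefore $(r,s)\in\Omega$.

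The only delicate points are the boundary cases, namely $r=1$ and the strict inequalities in Corollary \ref{newlag}, where one needs $s<b\le 2s$. These are handled by the displacement-energy bound and by Theorem 3 of \cite{ho}. Everything else reduces directly to Corollary \ref{newlag}.
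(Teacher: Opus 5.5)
Your proof is correct and follows the paper's route: the paper obtains Theorem \ref{main2} by repeating the proof of Theorem \ref{main} with Corollary \ref{newlag} (here with $b=2$, or $b=2s$) in place of Theorem \ref{lagint}. The contradiction comes from the forced intersection with $L(r,s)$, which lies in $\RR^4\smallsetminus X_\Omega$. The paper's terse argument leaves the endpoint $r=1$ implicit, since Corollary \ref{newlag} needs $r<1$, and you handle that case explicitly via Theorem 3 of \cite{ho}.
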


The sets whose Hamiltonian shape is given by  Theorem \ref{main2} are not assumed to have any additional structure beyond the two inclusions asserted, see Figure \ref{fig:shape2}.

\begin{figure}[!h]
    \centering
    \includegraphics[width=0.8\linewidth]{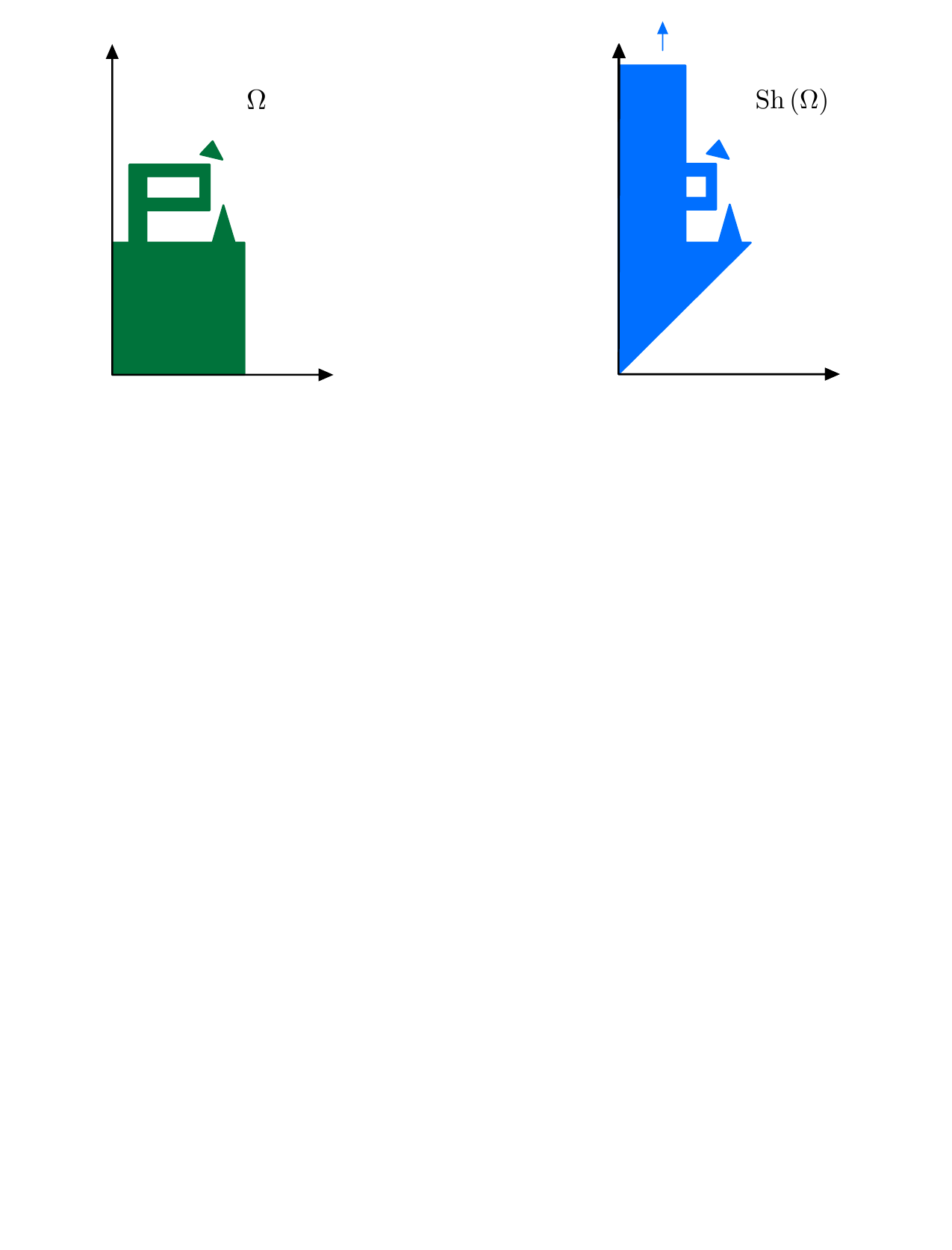}
    \vspace{-7.5cm}
    \caption{Equality \eqref{eq:shape} holds when $[0,1)\times[0,1)\subset \Omega \subset [0,1) \times [0,2).$}
    \label{fig:shape2}
\end{figure}

\subsubsection{On the shape of ellipsoids} For symplectic ellipsoids $E(a,b)$ with $b>2a$ and $\frac{b}{a} \notin 2\NN$, it is still not known whether the inclusion
$${\rm Sh}(E(a,b)) \subset \Bigl(\mu(E(a,b))  \subset \{0\leq r < a/2\} \Bigr)\cap \{r\leq s\}$$
is an equality. Theorem \ref{main} can be used to refine this problem. 

\begin{corollary}
For $b\geq 2a >0$ let $\mathcal{T}(a,b)$ be the triangular domain defined by the inequalities $r\le s$, $s \geq b -br/a$ and $s <a$. Then 
$${\rm Sh}(E(a,b))=\Bigl(\mu(E(a,b))  \cup \{0\leq r < a/2\} \Bigr)\cap \{r\leq s\}$$
iff no torus $L(r,s)$ with $(r,s) \in \mathcal{T}(a,b)$ can be mapped into $E(a,b)$ by a Hamiltonian diffeomorphism of $\RR^4$ .
\end{corollary}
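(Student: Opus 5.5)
The plan is to squeeze ${\rm Sh}(E(a,b))$ between the known lower bound \eqref{eq:shape}-type inclusion and an upper bound coming from Theorem \ref{main} applied to a graphical domain containing $E(a,b)$. I will then check that the gap between the two bounds is exactly $\mathcal{T}(a,b)$.

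By the scaling property ${\rm Sh}(cX)=c^2{\rm Sh}(X)$, and since $\mathcal{T}(a,b)$ scales the same way, I reduce to $a=1$ and $b\ge 2$. Set $\Omega=\mu(E(1,b))$.

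\emph{Lower bound.} $\Omega^+\cap\{r\le s\}=\Omega\cap\{r\le s\}$. Indeed, if $r\le s$ and $r/b+s<1$, then
$$r+s/b\le s+r/b<1,$$
because the difference is $(s-r)(1-1/b)\ge 0$. Also $m(\Omega)=1/2$, as already used in the paper's displayed inclusion. Hence the right-hand side $R:=\bigl(\Omega\cup\{0\le r<1/2\}\bigr)\cap\{r\le s\}$ is always contained in ${\rm Sh}(E(1,b))$.

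\emph{Upper bound.} Define $f\colon[0,1]\to[1,\infty]$ by $f(r)=\max\{b(1-r),1\}$. Then
$$\Omega(f)=\Omega\cup\bigl([0,1)\times[0,1)\bigr),$$
so $E(1,b)\subset X_{\Omega(f)}$. Monotonicity of the shape together with Theorem \ref{main} gives
$${\rm Sh}(E(1,b))\subset\Bigl(\Omega\cup([0,1)\times[0,1))\cup\{0\le r<1/2\}\Bigr)\cap\{r\le s\}.$$

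\emph{Identifying the gap.} Subtracting $R$, the extra points are those with $r\le s<1$, $r\ge 1/2$ and $s\ge b(1-r)$. I claim this set is exactly $\mathcal{T}(1,b)$, i.e.\ the condition $r\ge 1/2$ is automatic in $\mathcal{T}(1,b)$. If $r<1/2$, then $s\ge b(1-r)>b/2\ge 1$, which contradicts $s<1$; this is where $b\ge 2a$ is used. Consequently
$${\rm Sh}(E(1,b))=R\,\sqcup\,\bigl({\rm Sh}(E(1,b))\cap\mathcal{T}(1,b)\bigr),$$
with the union disjoint since $\mathcal{T}\cap R=\emptyset$. So equality with $R$ holds iff no $(r,s)\in\mathcal{T}(1,b)$ lies in the shape. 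By definition of ${\rm Sh}$, this means no $L(r,s)$ with $(r,s)\in\mathcal{T}(1,b)$ maps into $E(1,b)$ by a Hamiltonian diffeomorphism; points of $\mathcal{T}$ already satisfy $r\le s$, so the ordering convention causes no issue.

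The only mildly delicate step is the bookkeeping of the gap. One must check that $\Omega^+$ contributes nothing new above the diagonal, and that $b\ge 2a$ forces $r\ge a/2$ on $\mathcal{T}$. Beyond that, everything is a formal consequence of Theorem \ref{main} and the monotonicity and scaling properties of ${\rm Sh}$.
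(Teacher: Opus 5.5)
Your proof is correct and is essentially the paper's argument: the paper likewise bounds ${\rm Sh}(E(a,b))$ above by ${\rm Sh}(E(a,b)\cup P(a,a))$ and evaluates the latter with Theorem \ref{main}, and your $f(r)=\max\{b(1-r),1\}$ is exactly the graph function of $\mu(E(1,b))\cup[0,1)^2$. Your extra bookkeeping fills in steps the paper leaves implicit: the lower bound, the identity $\Omega^+\cap\{r\le s\}=\Omega\cap\{r\le s\}$, and the use of $b\ge 2a$ to show that $\mathcal{T}$ is disjoint from the right-hand side. One small fix: check $m(\mu(E(1,b)))=1/2$ directly from the vertices of $cQ$ rather than citing \eqref{starter}, since $\mu(E(1,b))$ does not contain the unit square.
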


\begin{proof}
    By monotonoicity, the set ${\rm Sh}(E(a,b))$ is contained in ${\rm Sh}(E(a,b) \cup P(a,a))$ which, by Theorem \ref{main}, is equal to $$\left(\Bigl(\mu(E(a,b))  \cup \{0\leq r < a/2\} \Bigr)\cap \{r\leq s\}\right) \cup \mathcal{T}(a,b). $$
\end{proof}


\begin{figure}[!h]
    \centering
    \includegraphics[width=1\linewidth]{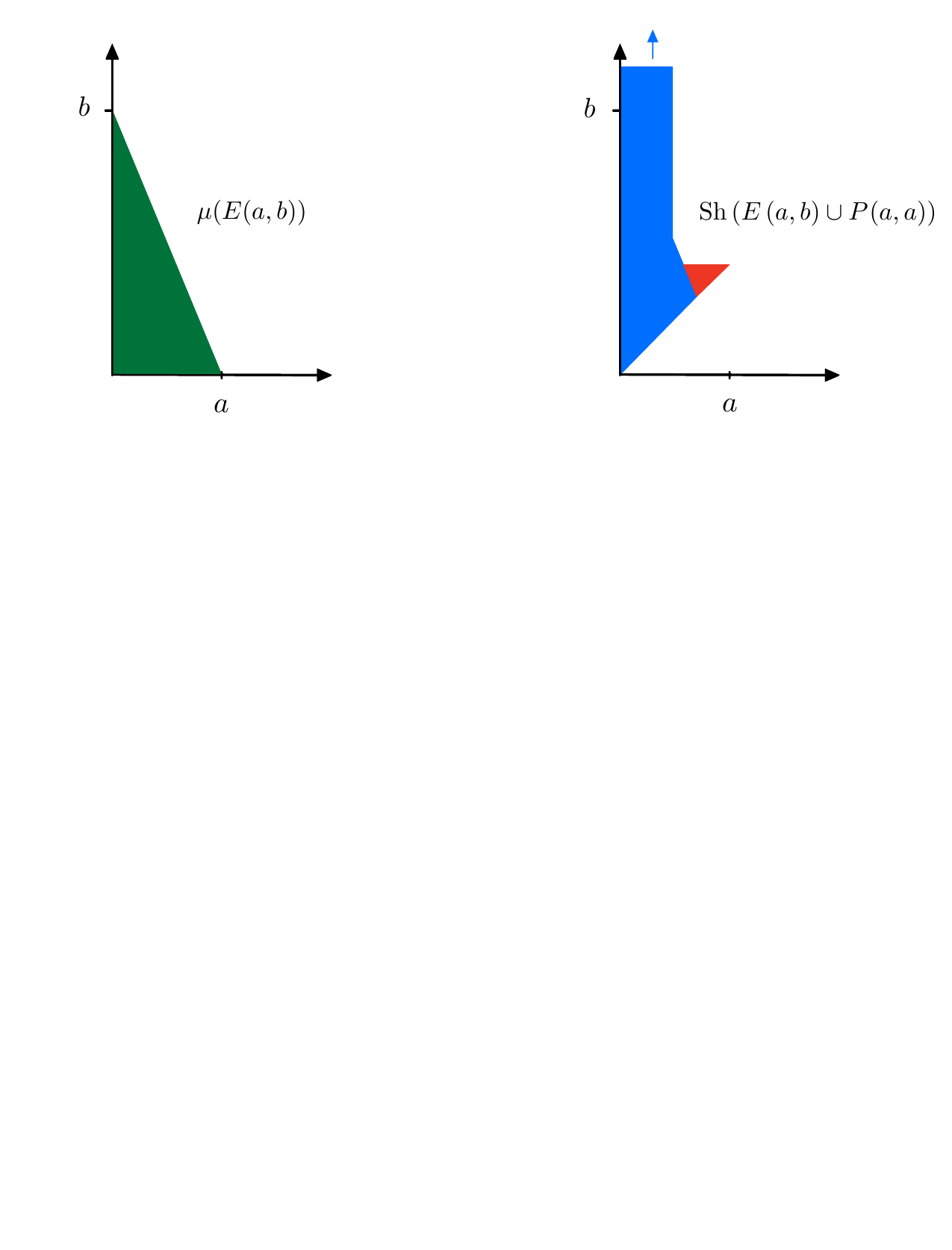}
    \vspace{-10.5cm}
    \caption{It is not known when, if ever, a Lagrangian torus $L(r,s)$ with $(r,s)$ in the red region, $\mathcal{T}(a,b)$, can be symplectically embedded into $E(a,b)$.}
    \label{fig:ellipse}
\end{figure}

{\bf Acknowledgements.} The authors would like to thank Felix Schlenk for detailed comments on a draft, and also Jun Zhang for his interest and encouragement.

\section{The proof of Theorem \ref{lagint} modulo two propositions}

In this section we present the proof of Theorem \ref{lagint} assuming two propositions whose proofs are presented in Section \ref{foliation} and Section \ref{dbuildings}. More precisely, we first present a proof of a slightly weaker result, Theorem \ref{center}, below. In Section \ref{adjust}, we then show that the proof of Theorem \ref{center} can be extended to cover Theorem \ref{lagint}.
\begin{theorem}\label{center}
Suppose that  $1/2\leq r<1$ and $1 \leq s <b$. If $\phi$ is a Hamiltonian diffeomorphism of $\RR^4$ that maps $L(r,s)$ into $P(1,b)$, then $\phi(L(r,s))$ intersects the codimension one submanifold
\begin{equation*} 
\bigcup_{t \in [s,b)} L(r, t).
\end{equation*}
\end{theorem}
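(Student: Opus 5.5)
We argue by contradiction, using holomorphic curves in a compactification. Suppose $\phi(L(r,s))\subset P(1,b)$ is disjoint from $\bigcup_{t\in[s,b)}L(r,t)$. Write $L=\phi(L(r,s))$. The complement of the family in $P(1,b)$ contains the region $\{\pi|z_2|^2<s\}$, and $L$ is connected. So either $L$ lies in $\{\pi|z_2|^2<s\}\cup\{\pi|z_1|^2\neq r\}$ on the side of small $|z_2|$, or it avoids the family by passing around it in the $z_1$ direction. Topologically, the family $\bigcup_{t\in[s,b)}L(r,t)$ separates the region $\{\pi|z_2|^2\ge s\}\cap P(1,b)$ into the pieces $\{\pi|z_1|^2<r\}$ and $\{\pi|z_1|^2>r\}$.

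The plan is to compactify $P(1,b)$ to $S^2(1)\times S^2(b')$, with $b'$ slightly larger than $b$, and to use two ingredients.

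First, we take a foliation by holomorphic spheres in the class $[S^2\times pt]$, of area $1$, adapted to $L$ and to the family. This is presumably the content of the proposition in Section \ref{foliation}. Stretching the neck along $L$, we show that some limiting building has a top-level component: a plane or a cylinder in $S^2\times S^2\smallsetminus L$, asymptotic to $L$, of small area. Its boundary class on $L$ has the form $(1,0)$, or $(-1,0)$ up to symplectic area considerations, relative to the standard basis transported by $\phi$. Since the Maslov class and the area class are Hamiltonian invariants, the areas of these curves are $1-r$ or $r$. Because $r\ge 1/2$, the piece of area $1-r\le 1/2$ is the relevant one.

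Second, we need a curve in the $z_1$-direction of class $[S^2\times pt]$ whose $z_1$-projection covers the disk $D(r)$. We apply Section \ref{dbuildings}'s proposition to the degenerations. The aim is to show that a limiting curve in the complement, of area $1-r$ with boundary on $L$, must meet the Lagrangian $L(r,t)$ for some $t\ge s$. Positivity of intersection with the symplectic hypersurfaces $\{z_2=\text{const}\}$, for $\pi|z_2|^2\ge s$, combined with the $z_1$-projection, gives a disk of area less than $1-r$ bounded by a curve in $\{\pi|z_1|^2\ge r\}$. This is impossible unless the curve crosses $\{\pi|z_1|^2=r\}$ at a point with $\pi|z_2|^2\ge s$, and such a point lies in the family.

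The main obstacle is the second step. We must rule out the possibility that the finite-energy curves have their $z_2$-coordinate confined to $\pi|z_2|^2<s$. To handle it, we would use $s\ge1$: a sphere in class $[pt\times S^2]$ has area $b'$, and a piece of it contained in $\{\pi|z_2|^2<s\}$ with boundary on $L$ would force the $(0,1)$-period of $L$, which equals $s$, to be less than what is available there. This area bookkeeping is where $s\ge1$ and $r<1$ should be used decisively.
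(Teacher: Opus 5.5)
Your proposal has a genuine gap: nothing in it actually places a point of $L=\phi(L(r,s))$ on the family.

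The step you call the aim is that some limiting curve $u$ with boundary on $L$, of area $1-r$, must meet $L(r,t)$ for some $t\ge s$. Even if proved, this would not contradict the hypothesis. The hypothesis only says that $L$ avoids $\{\pi|z_1|^2=r,\ \pi|z_2|^2\ge s\}$, and the interior of $u$ is free to cross that set. You would need the crossing to lie on $\partial u\subset L$. Neither positivity of intersection nor area bookkeeping controls that.

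Your use of $r\ge 1/2$ is also not essential as written. For $r<1/2$ the statement is false: displace $D(r)$ from itself inside $D(1)$ and take the product with $L(s)$. That torus bounds disks of exactly the same areas and Maslov indices. Your sketch uses $r\ge 1/2$ only to decide which piece is ``relevant'', not at a step that visibly breaks down for $r<1/2$.

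Several intermediate steps are also not valid as stated:
\begin{enumerate}
\item The complement of the family in $P(1,b)$ is connected, so your opening dichotomy yields no usable constraint.
\item One cannot stretch the neck along an entire codimension-one family at once.
\item Positivity of intersection with $\{z_2=\mathrm{const}\}$ requires $J$ to be split there. This conflicts with $J$ (and its stretched limit) being adapted to $L$ wherever $L$ enters $\{\pi|z_2|^2\ge s\}$, which is allowed as long as $\pi|z_1|^2\neq r$ there.
\item In $S^2(1)\times S^2(b')$ with a non-monotone torus, you have not excluded the degenerate (Type B) buildings that would destroy a relative foliation.
\end{enumerate}

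The missing idea is to bring in the reference torus $L_1=L(r,s)=L[1]$, itself a member of the family, as a second Lagrangian. The contradiction then comes from a genuine Lagrangian intersection $L_1\cap L_2\neq\emptyset$, where $L_2=\phi(L(r,s))$. The paper proceeds as follows:
\begin{enumerate}
\item It inflates $S_\infty$ by $2r-1$ to reach $S^2(2r)\times S^2(b)$. This is where $r\ge 1/2$ enters, and it makes both halves of a broken leaf have area $r$.
\item It uses Corollary \ref{>r} together with $s\ge 1$ to rule out Type B buildings, which gives a foliation relative to $L_1\cup L_2$.
\item It converts the disjointness hypothesis into the nesting property (4) of Proposition \ref{prop:proj}. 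This is done by stretching along $L[t]\cup L_2$ one value of $t$ at a time and running a continuation argument in $t$.
\item High-degree $(d,1)$-buildings produce the spheres $F,G$ and the disks of Proposition \ref{prop:submanifolds}.
\item Blowing up at $F\cap G$ and blowing down the fibres through those points gives axes $\hat F,\hat G,\hat T_0,\hat T_\infty$ disjoint from $L_1\cup L_2$. Their complement sits inside $T^*T^2$.
\item There Lemma \ref{lem:pre} shows both tori are relatively exact and homologous to the zero section. Hence they are Hamiltonian isotopic to the same constant section, and Gromov's theorem forces them to intersect.
\end{enumerate}
Neither the nesting statement nor this exact-Lagrangian intersection step has a counterpart in your sketch. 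Without them there is no source for the contradiction.
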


\begin{remark}
    The choice to present Theorem \ref{lagint} as an extension of Theorem \ref{center} is made for the sake of clarity. The proof of Theorem \ref{center} contains all the key technical ideas and arguments, and the extension to Theorem \ref{lagint} is easily described in these terms. 
\end{remark}

\subsection{Proof of Theorem \ref{center}}

To prove Theorem \ref{center} we argue by contradiction. The assumption to be contradicted is the following.

\begin{assumption}\label{assump}
There are real numbers $r$, $s$ satisfying 
\[ \frac{1}{2} \leq r <1 \leq s,\]
a number $b>s$, and a Hamiltonian diffeomorphism $\phi$ of $\RR^4$ that maps $L(r,s)$ into $P(1,b)$, such that  $\phi(L(r,s))$ is disjoint from the family of Lagrangian tori $\bigcup_{t \in [s,b)} L(r, t)$.
\end{assumption}

\bigskip

To simplify our notation we set $L_1=L(r,s)$, $L_2= \phi(L(r,s))$ and $$L[t] = L(r, s-1+t)$$ for $t \in [1,b-s+1)$. With this, Assumption \ref{assump} asserts that $L_2$ is  disjoint from each of the $L[t]$. As in \cite{hiker}, the general philosophy behind the proof is that the assertion: {\em $L_1$ and $L_2$ are disjoint Lagrangian submanifolds in $(M,\omega)$} implies an identical assertion for any symplectic manifold that can be obtained from $(M,\omega)$ by applying standard symplectic procedures in the complement of $L_1 \cup L_2$; like compactification, symplectic blow-up, symplectic blow-down and inflation. Such changes of the ambient symplectic manifold will be referred to as {\em scene changes}. To contradict Assumption \ref{assump}, we will show that if $L_2$ is disjoint from the $L[t]$, then we can construct a scene in which a classical Lagrangian intersection result of Gromov implies that $L_1$ and $L_2$ must intersect.  

\bigskip
\noindent{\bf Scene 1.} Let $\omega$ be the standard symplectic form on the sphere $S^2$ with total area equal to $1$. For $\rho>0$, denote the symplectic manifold $(S^2, \rho \omega)$ by $S^2(\rho)$. For our first scene change we compactify the polydisk $P(1,b)$ to the symplectic manifold $$(X_1,\Omega_1) := (S^2 \times S^2, \omega \oplus b \, \omega) =S^2(1) \times S^2(b).$$ 

In this scene, $L_2$ is still disjoint from each of the $L[t]$ and all these tori lie in the complement of the union of symplectic spheres $S_{\infty} \cup T_{\infty}$, where $S_{\infty} = \{\infty\} \times S^2(b)$ and $T_{\infty} = S^2(1) \times \{\infty\}$.
 


\bigskip
\noindent{\bf Scene 2.}
Next, we inflate along the symplectic sphere $S_{\infty}$ by adding a tubular neighborhood of capacity $2r-1$. Denoting the resulting symplectic manifold by $(X_2, \Omega_2)$ we note that $(X_2, \Omega_2)$ is symplectomorphic to $S^2(2r) \times S^2(b)$. 
Here again, $L_2$ is still disjoint from each of the $L[t]$ and all these tori lie in the complement of $S_{\infty} \cup T_{\infty}$ (this time viewed as symplectic spheres in $(X_2,\Omega_2)$ so that $S_{\infty} = \{\infty\} \times S^2(b)$ and $T_{\infty} = S^2(2r) \times \{\infty\}$).

In Section \ref{foliation} we extend the theory of relative finite energy foliations from \cite{rgi} and the techniques developed in \cite{hiker,boust} to prove the following result.

\begin{proposition}\label{prop:proj} There is an $\Omega_2$-tame (or compatible) almost complex structure $J_2$ on $X_2$  and a continuous (projection) map $p \colon X_2 \to S_{\infty}$ with the following properties.
\begin{enumerate}
    \item The image $p(T_{\infty})$ is a point.\\
    \item For every $z$ in the complement of $p(L_1) \cup p(L_2)$ the set $p^{-1}(z)$ is a $J_2$-holomorphic sphere in the class $(1,0) \in H_2(X_2;\ZZ) = \ZZ \times \ZZ$.\\
    \item $p(L_1)$ and $p(L_2)$ are disjoint embedded closed curves in $S_{\infty}$.\\
    \item The component of $S_\infty \smallsetminus p(L_1)$ that contains $p(T_\infty)$ is contained in the component of $S_\infty \smallsetminus p(L_2)$ that contains $p(T_\infty)$.
\end{enumerate}
\end{proposition}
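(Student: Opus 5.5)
The plan is to build a foliation of $X_2 = S^2(2r)\times S^2(b)$ by $J_2$-holomorphic spheres in the class $(1,0)$, with leaves allowed to degenerate only along $L_1 \cup L_2$. This follows the relative finite energy foliation approach of \cite{rgi} as adapted in \cite{hiker,boust}. Define $p$ by sending a point to the intersection of its leaf with $S_\infty$.

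Concretely, I would stretch the neck along the unit cotangent-type boundaries of small Weinstein neighborhoods $U_1, U_2$ of $L_1$ and $L_2$, using flat metrics on the tori. I choose $J$ so that $S_\infty$ and $T_\infty$ are $J$-holomorphic. For each finite neck length, the class $(1,0)$ is represented by a foliation of $X_2$ by embedded $J$-spheres, since the class has minimal area $2r$ among the relevant classes. By positivity of intersection each sphere meets $S_\infty$ once, because $(1,0)\cdot(0,1)=1$, and it meets $T_\infty$ trivially, because $(1,0)\cdot(1,0)=0$.

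In the limit, SFT compactness produces the following picture.
\begin{enumerate}
\item A generic leaf avoiding $U_1 \cup U_2$ survives as a genuine sphere.
\item The remaining leaves break into buildings. Their top levels are punctured planes and cylinders in $X_2 \smallsetminus (L_1\cup L_2)$, asymptotic to geodesics, that is, to Reeb orbits on the unit cotangent bundles.
\end{enumerate}
Following \cite{hiker}, the leaves that hit $L_i$ form, through their intersection with $S_\infty$, a one-parameter family. The projection of $L_i$ is therefore an embedded closed curve, which gives (3). Taking $p^{-1}(z)$ to be the unbroken limit leaf for $z \notin p(L_1)\cup p(L_2)$ gives (2). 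Continuity of $p$ follows from uniqueness of leaves through each point, which comes from positivity of intersections and automatic transversality in dimension four. Item (1) holds because $T_\infty$ is itself a $J_2$-holomorphic sphere disjoint from the necks, and it lies entirely in the single leaf $S^2\times\{\infty\}$; hence $p(T_\infty)$ is a point.

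The main obstacle is the index and area analysis of the broken leaves. I need every leaf through $L_i$ to break into exactly two planes, one on each side of the torus, asymptotic to a fixed primitive class in $H_1(L_i)$. I also need the family of these planes to sweep out $L_i$ with embedded image curve in $S_\infty$. Here I would use $r \ge 1/2$ and the areas involved. The total area of a leaf is $2r$, and the disk classes on $L_1$ have areas $r$ and $s$, together with their complements. For $L_1$ the relevant planes are the obvious ones, $D(r)\times\{pt\}$ and its complement of area $2r - r = r$. For $L_2$, the Hamiltonian invariance of the area class forces the same decomposition into two planes of area $r$ each. Any other decomposition would need a disk of area below $r$ or a non-primitive class, and energy, index and Fredholm arguments exclude this. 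Intersection positivity between planes through $L_1$ and $L_2$ shows the two curves are disjoint. This uses Assumption~\ref{assump}, since $L_1$ and $L_2$ are disjoint.

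Item (4) is the nesting statement. The disk in $S_\infty$ bounded by $p(L_1)$ on the $T_\infty$ side is swept by the leaves whose $S^2(b)$-side planes through $L_1$ have area $b-s$. For the curve $p(L_2)$ I would compare via the family $L[t]$. The tori $L[t]$ are disjoint from $L_2$, so the $J_2$-foliation can be arranged to restrict on $\bigcup_t L[t]$ to the standard foliation. Hence $p\bigl(\bigcup_t L[t]\bigr)$ is an annulus in $S_\infty$ disjoint from $p(L_2)$, stretching from $p(L_1)$ to near $p(T_\infty)$. This forces $p(L_2)$ to lie on the far side of $p(L_1)$ from $p(T_\infty)$, so the $T_\infty$-component for $L_1$ is contained in that for $L_2$. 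The key technical point in making this rigorous is to choose the neck-stretching compatibly with the standard structure near $\bigcup_t L[t]$; I expect this to require stretching around $L_2$ alone while keeping $J$ standard near the family.
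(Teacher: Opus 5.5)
Your outline of items (1)--(3) follows the paper: stretch the Gromov foliation in the class $(1,0)$ along $L_1\cup L_2$ and read off $p$. The problem is item (4). This is exactly where the hypothesis that $L_2$ misses every $L[t]$ has to enter, and your argument for it has a genuine gap.

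You assert that, because the tori $L[t]$ miss $L_2$, the foliation ``can be arranged to restrict on $\bigcup_t L[t]$ to the standard foliation,'' so that $p\bigl(\bigcup_t L[t]\bigr)$ is an annulus disjoint from $p(L_2)$. Disjointness of $L_2$ from the tori says nothing about whether $L_2$ meets the \emph{leaves} through them.
\begin{itemize}
\item Those leaves are global spheres whose position depends on $J$ everywhere, including near $L_2$.
\item Keeping $J$ standard near the family does not make them horizontal slices: a $J$-sphere can cross a region where $J$ is standard as a non-horizontal complex curve.
\item You also cannot stretch along a codimension-one family of tori at once.
\end{itemize}
What you actually need is that no leaf meets both $\bigcup_t L[t]$ and $L_2$. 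That is essentially statement (4) itself, and it must come from holomorphic rigidity, not from the disjointness hypothesis.

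The paper proves it as follows.
\begin{itemize}
\item It takes a family $J_t$, $t\in[1,c]$, adapted to $L[t]\cup L_2$, and compares the foliations obtained by stretching along $L[t]\cup L_2$ and along $L[t]$ alone.
\item Fukaya's trick gives smooth dependence at regular $t$ (Lemmas~\ref{lem:fam}, \ref{lem:fam'}).
\item Area-$r$ rigidity gives compactness of $\tilde{\mathfrak{r}}'_0$, $\tilde{\mathfrak{r}}'_\infty$, $\tilde{\mathfrak{s}}_0$ across non-regular $t$ (Lemma~\ref{lem:fam2}).
\item Suppose $p(L_2)\subset C$. A path $x_t$ on planes $w_t\in\mathfrak{s}_{0,t}$ starts in $(p'_1)^{-1}(C)$ and ends in $(p'_c)^{-1}(S_\infty\smallsetminus D_c)$. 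Meanwhile the planes of $\mathfrak{r}'_{0,t}\cup\mathfrak{r}'_{\infty,t}$ sweep from $(p'_1)^{-1}(\partial C)$ to $(p'_c)^{-1}(\partial D_c)$, so one of them passes through some $x_t$.
\item Stretching along $L_2$ then yields a building of total area $r$ through such a point. Either it covers the plane $w$, which requires too much area, or it meets $w$ positively, contradicting disjointness of leaves for foliation-type $J$.
\end{itemize}

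A second, smaller gap is your exclusion of other breakings, which is too vague to check. When $s<2r$ the tori carry a Maslov-$2$ class of area $2r-s\in(0,r)$. Energy and index alone do not rule out breakings that use it:
\begin{itemize}
\item a leaf split into planes of areas $2r-s$ (avoiding $S_\infty$) and $s$ (through $S_\infty$);
\item a Type~B building with two such planes and a cylinder of area $2s-2r$.
\end{itemize}
Nor does invariance of the area class force the $r+r$ split for $L_2$. The paper needs the $J$-independence of the minimal area $a(J)$ of curves with $\mathrm{index}\ge\mathrm{end}$ (Lemma~\ref{>r2}, after \cite[Lemma 3.7]{ho}). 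This applies to $L_2$ because it is Hamiltonian isotopic to $L(r,s)$ in $\RR^4$, and it yields Corollary~\ref{>r}. The paper combines this with the lower bound $2r-1$ on the area of curves meeting $S_\infty$, which comes from keeping $J$ standard on the inflated neighborhood, and with $s\ge 1$.
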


\begin{remark} The proof of the final assertion is precisely where we use the assumption that $L_2$ is disjoint from $L[t]$ for all $t \in [1,b-s+1)$.
\end{remark}

Let $A \subset S_\infty$ be the component of $S_\infty \smallsetminus p(L_2)$ that does not contain $p(T_\infty)$, and let $C$ be the component of $S_\infty \smallsetminus p(L_1)$ that does. Then $A$ and $C$ are disjoint, and their complement is a cylinder, $B$, as pictured in Figure \ref{fig:abC}.

\begin{figure}[!h]
    \centering
    \includegraphics[width=0.6\linewidth]{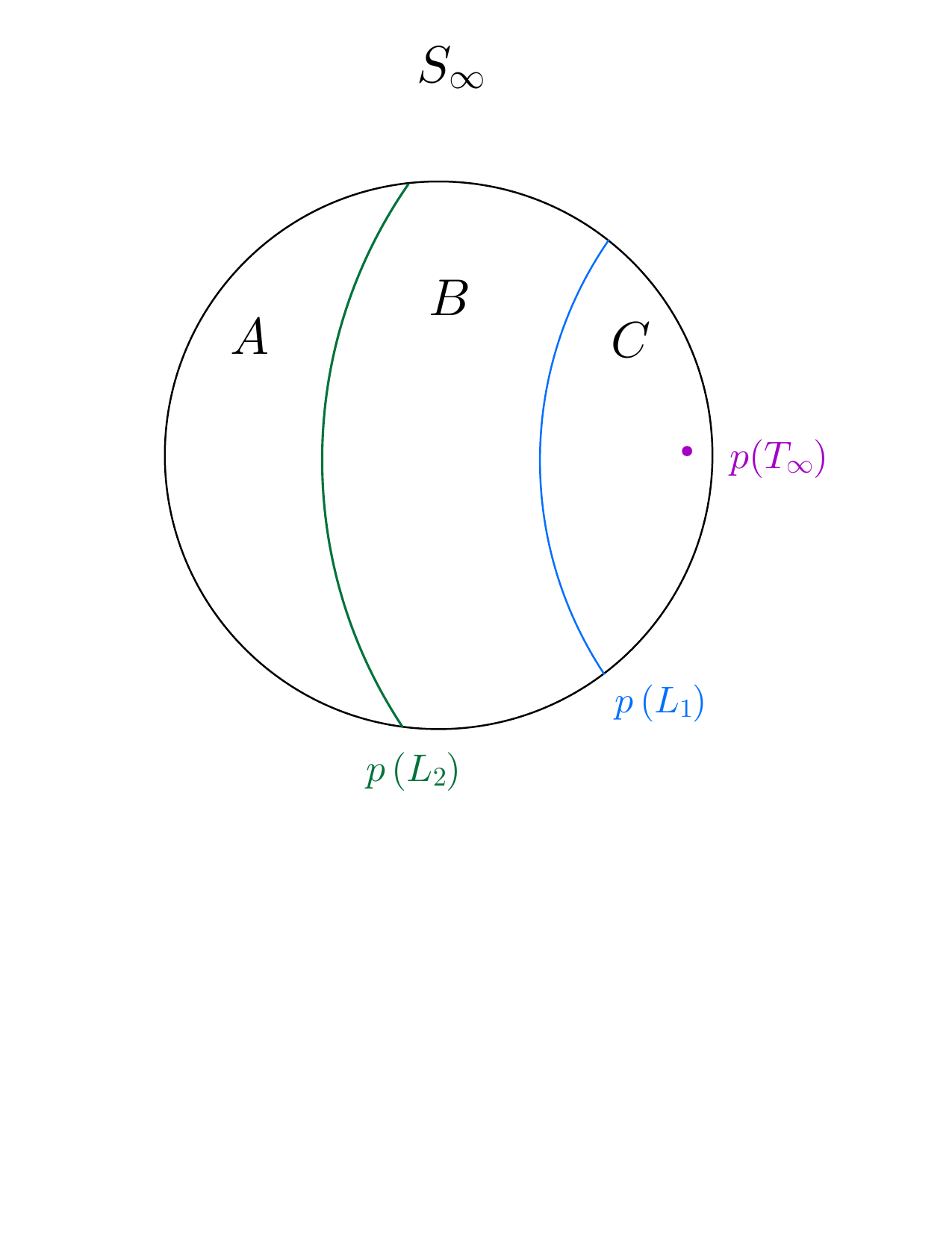}
    \vspace{-2.5cm}
    \caption{Projections of $L_1$ and $L_2$ onto $S_{\infty}$ under Assumption \ref{assump}.}
    \label{fig:abC}
\end{figure}

\medskip

Choose a point $z_0$ in $A$ and set $T_0 = p^{-1}(z_0).$ By Proposition \ref{prop:proj}, $T_0$ is a regular $J_2$-holomorphic sphere that is disjoint from $L_1 \cup L_2$. In Section \ref{dbuildings}, we study buildings obtained by taking the limit of high degree pseudo-holomorphic curves in $(X_2, \Omega_2)$ with point constraints along $L_1 \cup L_2$ as we stretch-the-neck along these Lagrangians. These buildings are used, together with Proposition \ref{prop:proj}, to prove the following.

\begin{proposition}\label{prop:submanifolds}
For every sufficiently large $d \in \NN$, there exist symplectic embeddings
\begin{align*}
    &F,G \colon S^2 \to X_2 \smallsetminus (L_1 \cup L_2) \\
    &D_A, D^0_2, D^\infty_2 \colon  (D^2,S^1) \to (X_2,L_2) \\
    &D_C, D^0_1, D^\infty_1 \colon  (D^2,S^1) \to (X_2,L_1) \\
    &O_B \colon (S^1 \times [0,1],S^1 \times \{0,1\}) \to (X_2,L_1 \cup L_2)
\end{align*}
such that:\\

\begin{enumerate}

\item  The maps $F$ and $G$ both represent the class $(d,1) \in H_2(X_2; \ZZ)$\\

\item The six disks all have Maslov index $2$.\\

\item The set $\{[\partial D_A], [\partial D^\infty_2]\}$ is an integral basis of $H_1(L_2;\ZZ)$, and $[\partial  D^0_2]=-[\partial  D^\infty_2]$.\\

\item The set $\{[\partial D_C], [ \partial D^\infty_1]\}$ is an integral basis of $H_1(L_1;\ZZ)$, and $[\partial  D^0_1]=-[\partial D^\infty_1]$.\\

\item Denoting the component of the boundary of $O_B$ on $L_i$ by $\partial^i O_B$, the set $\{[\partial^i O_B], [\partial D^\infty_i]\}$ is an integral basis of $H_1(L_i;\ZZ)$\\

\item The images of  $p \circ D_A$, $p \circ O_B$ and  $p \circ D_C$ are the closures of $A$, $B$ and $C$, respectively.\\

\item The symplectic areas of  $D_A$, $O_B$ and  $D_C$ are $s$, $r$ and $b-s$, respectively. The disks $D^\infty_1$ and $D^\infty_2$ both have symplectic area equal to $r$.\\

\item  All nine maps are $J_2$-holomorphic away from arbitrarily small neighborhoods of a collection of Lagrangian tori whose elements are close to, and Lagrangian isotopic to, either $L_1$ or $L_2$.\\

\item We have the following intersection pattern:\\
\begin{enumerate}
    \item $F \bullet G=2d$
    \item $F \bullet D^0_i + F \bullet D^\infty_i = G \bullet D^0_i + G \bullet D^\infty_i =1$ for $i=1,2$
    \item $\{F \bullet D^\infty_1, G \bullet D^\infty_1\} = \{F \bullet D^\infty_2, G \bullet D^\infty_2\} = \{0,1\}$    
    \item $F \bullet D_A + G \bullet D_A = d$
    \item $F \bullet D_C + G \bullet D_C = d$
    \item $F \bullet O_B =1$ and $G \bullet O_B=0$
    \item $D^\infty_1 \bullet S_\infty = D^\infty_1 \bullet S_\infty =1$
    \item $F \bullet T_0 = F \bullet T_\infty=G \bullet T_0 = G \bullet T_\infty=1$.\\
\end{enumerate}




\item The spheres $F$ and $G$ have exactly $2d$ intersection points. Of these, $d$ are located in $p^{-1}(A)$ and $d$ are located in $p^{-1}(C)$.\\

\end{enumerate}

\end{proposition}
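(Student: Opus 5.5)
The plan is to produce all nine objects from a single limiting building. Take degree-$(d,1)$ $J$-holomorphic spheres through suitably many point constraints and stretch the neck along $L_1\cup L_2$.

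First, fix Weinstein neighborhoods of $L_1$ and $L_2$, identified with unit codisk bundles of flat tori, and choose a family $J_\tau$ of almost complex structures. Each $J_\tau$ agrees with $J_2$ of Proposition \ref{prop:proj} away from these neighborhoods and is stretched along their boundaries. The class $(d,1)$ has $c_1=2d+2$, so its moduli space of spheres has dimension $4d+2$. We constrain the spheres to pass through $2d$ points: $d$ points on $L_1$ and $d$ points on $L_2$, chosen generically, together with one extra point near $T_0$ if needed. Gromov--Witten invariants of $S^2\times S^2$ in class $(d,1)$ are nonzero, which gives curves for every $\tau$.

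Next, take the SFT limit as $\tau\to\infty$ and analyze the resulting building. Its top level lives in $X_2\smallsetminus(L_1\cup L_2)$, and its bottom levels live in $T^*T^2$. Two tools constrain it. The index and area constraints force each limiting top-level component to be either a plane of Maslov index $2$ with a simple orbit, or a cylinder. Intersections with the foliation $p^{-1}(z)$ of Proposition \ref{prop:proj}, together with positivity against $S_\infty$, $T_\infty$ and $T_0$, then determine how each component projects under $p$.

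By the area bookkeeping, the planes asymptotic to $L_2$ whose projection covers $A$ have area $s$. Those asymptotic to $L_1$ covering $C$ have area $b-s$. The cylinder covering $B$ has area $r$. The fiber-class planes $D^0_i,D^\infty_i$ are halves of the fibers $p^{-1}(z)$ through $L_i$, which are split into two disks by $L_i$; the disk meeting $S_\infty$ has area $r$. Compactifying the planes and cylinders by attaching cylinders in $T^*T^2$ onto nearby Lagrangian copies of $L_i$ yields disks and an annulus with boundary on $L_1,L_2$. This accounts for item (8).

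The spheres $F$ and $G$ come from a second, deformed choice of constraints. Two generic curves in class $(d,1)$ are glued or perturbed so that they avoid $L_1\cup L_2$. Homological intersection then gives $F\bullet G=(d,1)\cdot(d,1)=2d$, and positivity of intersections with the $J$-curves $T_0$, $T_\infty$ and the disks gives the pattern in (9). Item (10) requires locating intersection points: as $\tau\to\infty$ the $2d$ intersections concentrate in the components of the building. Using item (6) and the fact that $F$ and $G$ meet $O_B$ in total once, we count $d$ points over $A$ and $d$ over $C$. The homology statements (3)–(5) follow because the boundary orbits of planes over $A$ (respectively $C$) and of fiber disks are the meridian and longitude of the torus in the neck coordinates determined by $p$.

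The main obstacle is the classification of the limiting building. One must rule out multiply covered or higher-genus configurations, and show that the relevant planes are embedded, have Maslov index $2$, and cover exactly $A$, $B$, $C$. The approach is to run the intersection-theoretic argument from \cite{hiker,boust}: intersection numbers of each component with the leaves $p^{-1}(z)$ are constant off $p(L_1)\cup p(L_2)$, so each component is a degree-$k$ branched cover over the regions $A$, $B$, $C$. The total degree $1$ in the $S^2(b)$ factor then forces exactly one component of positive degree over each region. This is also the step where item (4) of Proposition \ref{prop:proj}, and hence Assumption \ref{assump}, is used.
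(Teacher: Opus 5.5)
Your overall strategy (stretch $(d,1)$-spheres with constraints on $L_1\cup L_2$, read $D_A,O_B,D_C$ off the essential curves and $D^{0}_i,D^\infty_i$ off the broken leaves) is the paper's. The gap is the classification of the limiting building. Local constancy of intersections with the leaves $p^{-1}(z)$, together with $(d,1)\cdot(1,0)=1$, only shows that each of $A$, $B$, $C$ is covered exactly once by the essential ($p$-injective) curves. It does not show that the three regions are covered by three \emph{different} curves. An essential curve can cross $p(L_1)$ or $p(L_2)$ through unbroken leaves, because foliation-type punctures project to points. So a single curve can cover $A\cup B\cup C$, $B\cup C$ or $A\cup B$ (Types 1, 2.1, 2.2 of Proposition \ref{prop:type}).

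Excluding these is exactly where ``$d$ sufficiently large'' enters, and your argument never uses it. The paper first builds, for a fixed $e$, spheres $A_0,A_\infty,B_0,B_\infty$ in class $(e,1)$, each meeting only one of $\mathfrak{r}_0,\mathfrak{r}_\infty$ (resp.\ $\mathfrak{s}_0,\mathfrak{s}_\infty$). These come from shifting the $(e,2e+1)$-buildings of Proposition \ref{prop:Fd} (Proposition \ref{subaxis}). Case by case, it then compares $\mathbf{F}\bullet A_\ast=d+e$, $\mathbf{F}\bullet B_\ast=d+e$ or $\mathbf{F}\bullet S_\infty=d$ with the constraint counts $k_i+N_i\ge n_i$ and the area identity. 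This gives a contradiction directly or via Lemma \ref{lem:monotonicity}. Only then does the index count give indices $(1,2,1)$ for the essential curves.

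This needs all $2d+1$ constraints on $L_1\cup L_2$, split as $(d,d+1)$ or $(d+1,d)$. With your $2d$ points the moduli space is still $2$-dimensional. An extra interior point near $T_0$ lies over $A$, so it must be absorbed by an unbroken fiber (costing area $2r$) or by the essential plane over $A$ (raising its index). Either way the Maslov-$2$ picture with areas $s,r,b-s$ is lost. Your ``area bookkeeping'' also hides Lemma \ref{lem:s}. Since $L_2$ is only known to be Hamiltonian isotopic to $L(r,s)$, the Maslov-$2$ class over $A$ has area $s$ or $2r-s$. Excluding $2r-s$ uses Corollary \ref{>r} and the inflation of $S_\infty$ by $2r-1$.

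The construction of $F$ and $G$ is also missing. Spheres through constraint points on $L_i$ cannot just be ``perturbed to avoid $L_1\cup L_2$''. Homology, positivity and the foliation give only (9)(a), (b), (g), (h). The numbers in (9)(c)--(f) and (10) are intersections with disks and an annulus with boundary on $L_i$. These are not homological; they depend on how each sphere is pushed off $L_i$.

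In the paper, $F$ and $G$ come from two different rigid buildings: the $(d,d)$-building $\mathbf{F}$ and the $(d,d+1)$-building $\mathbf{G}$ of Propositions \ref{prop:dd} and \ref{prop:dd+1}. These have respectively $d-1,d$ and $d,d-1$ broken planes along $L_1,L_2$. Each is deformed by translating its ends to nearby tori $L_i(\mathbf{v})$ and compactifying, as in Lemmas \ref{lem:shift1}--\ref{lem:shift2} and \cite[Proposition~3.24]{hiker}. The sign of the shift decides whether a sphere meets the $\mathfrak{r}_0$ or the $\mathfrak{r}_\infty$ planes, which gives (9)(c). Together with $(d,1)\cdot(d,1)=2d$, the plane counts force $G\bullet O_B=0$ and $F\bullet O_B=1$. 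They also put the $2d$ points of $F\cap G$ over $A$ and $C$.
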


\bigskip

\noindent{\bf Scene 3.} Let $x_1, \dots, x_{2d}$ be the intersection points of $F$ and $G$. We may assume that $x_1, \dots, x_{d}$ lie in $p^{-1}(A)$ and that $x_{d+1}, \dots, x_{2d}$ lie in $p^{-1}(C)$. Let $H_i$ be the fiber $p^{-1}(p(x_i))$. By the second assertion of Proposition \ref{prop:proj}, the $H_i$ are disjoint symplectic spheres in the class $(1,0)$.

The next scene is obtained by performing a sequence of small symplectic blow-ups followed by a sequence of symplectic blow-downs. First we choose $J_2$ to be $\Omega_2$-compatible, from the start, and perturb it, as in \cite{boust}, to an almost complex structure $J_3$ for which $F$ and $G$ are complex, and which is the standard integrable structure in Darboux balls of capacity $\varepsilon>0$ centered at each of the $x_i \in F \cap G$. 
We may also assume that $F$, $G$ and the $H_i$ coincide with complex planes in the balls.
We then blow up these Darboux balls. The proper transforms of $F$, $G$ and the $H_i$ are well defined smooth symplectic surfaces because of our standard coordinates in the balls. Hence we can blow down the corresponding proper transforms of the $H_i$, symplectically again. The resulting manifold, $(X_3, \Omega_3)$, is diffeomorphic to $S^2 \times S^2$. Given a symplectic submanifold $S$ of $(X_2, \Omega_2)$, we will denote its proper transform in $(X_3, \Omega_3)$ by $\hat{S}$. The symplectic submanifolds $\hat{F}$ and $\hat{G}$ are disjoint spheres in $X_3$ and their self-intersection numbers are both $0$. The same is true of $\hat{T}_0$ and $\hat{T}_{\infty}$.  Moreover, both $\hat{T}_0$ and $\hat{T}_{\infty}$ intersect each of $\hat{F}$ and $\hat{G}$ positively in a single point. In other words, the collection $\hat{F} \cup  \hat{G} \cup \hat{T}_0 \cup \hat{T}_{\infty}$ serves as a set of (topological) axes in the new copy of $S^2\times S^2$ and these axes are disjoint from $L_1 \cup L_2$.
Note, however, that $(X_3, \Omega_3)$ is not monotone. The spheres $\hat{T}_0$ and $\hat{T}_{\infty}$ have area $2r$ while the spheres $\hat{F}$ and $\hat{G}$ have area $2dr + b - 2d \varepsilon$.

\bigskip

\noindent{\bf Scene 4.} Finally, we work away from $\hat{F} \cup  \hat{G} \cup \hat{T}_0 \cup \hat{T}_{\infty}$ and view $L_1$ and $L_2$ as disjoint Lagrangian tori in the open symplectic manifold 
$$X_4 := X_3 \smallsetminus ( \hat{F} \cup  \hat{G} \cup \hat{T}_0 \cup \hat{T}_{\infty} )$$
equipped the restriction of $\Omega_3$ which we denote by $\Omega_4$. By classification results for symplectic spheres, see \cite{mcdrat}, this final scene can be identified with the subset of $T^* T^2 = T^2 \times\RR^2$ defined by restricting the momentum coordinates to the rectangle with side lengths $2r$ and $2dr + b - 2d \varepsilon$.

\begin{lemma}\label{lem:pre}
    The Lagrangian tori $L_1$ and $L_2$ are relatively exact in $(X_4, \Omega_4)$ and both are homologous to the zero section.
\end{lemma}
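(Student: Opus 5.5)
The plan is to compute everything homologically in $X_3 \cong S^2 \times S^2$, using the disks and cylinder of Proposition \ref{prop:submanifolds} as explicit generators. Two facts are used throughout. First, $X_4$ is identified with $T^2 \times R$, where $R$ is a rectangle with side lengths $2r$ and $2dr+b-2d\varepsilon$. Hence $H_1(X_4;\ZZ)\cong \ZZ^2$, $H_2(X_4;\ZZ)\cong\ZZ$ is generated by the zero section, and $\Omega_4$ is exact on $X_4$. Second, a $1$-cycle $\gamma \subset X_4$ bounds a $2$-chain $\Sigma$ in $X_3$, and the class of $\gamma$ in $H_1(X_4)$ is read off from the intersection numbers of $\Sigma$ with the axes, modulo the ambiguity from adding closed classes of $X_3$. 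Concretely, I would use the pair
\[
\bigl(\Sigma\bullet \hat F - \Sigma\bullet \hat G,\ \Sigma\bullet \hat T_0 - \Sigma\bullet \hat T_\infty\bigr),
\]
which is independent of $\Sigma$ because $\hat F,\hat G$ are homologous, as are $\hat T_0,\hat T_\infty$.

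\textbf{Step 1 (transfer the intersection data to $X_3$).} Perturb each disk so that it avoids the balls around $x_1,\dots,x_{2d}$. Its proper transform then has the same intersections with $\hat F,\hat G$ as the original has with $F,G$, except for the intersections that lay on the blown-down fibers $H_i$. For the $T$-intersections I would use property (6) of Proposition \ref{prop:submanifolds}. Since $z_0\in A$ and $p(T_\infty)\in C$, we get $D_A\bullet T_0=1$ and $D_A\bullet T_\infty=0$; similarly $D_C\bullet T_0=0$, $D_C\bullet T_\infty=1$, and $O_B$ misses both. For the $D_i^\infty$, item (9) gives $\{F\bullet D^\infty_i,G\bullet D^\infty_i\}=\{0,1\}$, so the $F$–$G$ coordinate of $\partial D^\infty_i$ is $\pm1$, while its $T$-coordinate is controlled by $D^\infty_i\bullet S_\infty=1$ and the fibration structure. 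The correction from the blow-downs is governed by how many of the $H_i$ each disk meets. Since the $H_i$ are fibers of $p$ over points of $A$ (for $i\le d$) and of $C$ (for $i>d$), these counts come directly from (6) and (9d), (9e).

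\textbf{Step 2 (homologous to the zero section).} Using the bases from (3) and (4), the computation in Step 1 gives the matrix of $H_1(L_i)\to H_1(X_4)$ in the form $\begin{pmatrix}\ast & \pm1\\ \pm 1 & 0\end{pmatrix}$. For example, $\partial D_A$ has $T$-coordinate $1$ and $\partial D^\infty_i$ has $F$–$G$ coordinate $\pm 1$. The determinant is therefore $\pm1$, so the map is an isomorphism. Composing with the projection $X_4\to T^2$, the torus $L_i$ maps with degree $\pm1$, which means $[L_i]=\pm[T^2]$ in $H_2(X_4)$. After orienting $L_i$ appropriately, this is the zero section class.

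\textbf{Step 3 (relative exactness).} Since $\Omega_4$ is exact and $H_1(L_i)\to H_1(X_4)$ is an isomorphism, $H_2(X_4,L_i)$ is generated by $H_2(X_4)$, and there the area vanishes because the zero section is Lagrangian. So relative exactness of each $L_i$ is equivalent to the vanishing of the Liouville class, that is, the areas of the relative classes in $X_4$ lying over a basis of $H_1(L_i)$. The same reasoning applies to the pair $L_1\cup L_2$, once one adds the class given by the cylinder $O_B$ connecting them. To compute these areas, I would build each relative class in $X_4$ explicitly: take a disk or cylinder from the Proposition and add rational multiples of the closed classes $[\hat F]$ and $[\hat T_0]$ and exceptional-type corrections so that all intersections with the four axes cancel. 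The area is then a linear combination of $s$, $b-s$, $r$, the sphere areas $2r$ and $2dr+b-2d\varepsilon$, and $\varepsilon$ from the blow-ups. For instance, $D^0_i\cup D^\infty_i$ has boundary classes that cancel, total intersection with the $T$-axes matching one fiber, and area $2r=\Omega_3(\hat T_0)$, consistent with zero flux. The areas $s$, $r$ and $b-s$ of $D_A$, $O_B$ and $D_C$, checked against $\hat F$ and the $d$ blow-ups in $A$ and in $C$, should give vanishing fluxes and equal fluxes for $L_1$ and $L_2$ in the same way.

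The main obstacle is the bookkeeping in Step 1: tracking exactly how each disk's intersections change under the blow-ups at the $x_j$ and the blow-downs of the $H_j$, including the $\varepsilon$-area contributions. Everything else is linear algebra with item (9) of Proposition \ref{prop:submanifolds}.
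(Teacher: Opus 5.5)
\textbf{Your Step 2 is correct.} It is a slightly different and sharper route than the paper's. The paper argues by contradiction that $\partial\colon H_2(X_4,L_1)\to H_1(L_1)$ vanishes: it completes a hypothetical relative cycle with multiples of $\hat D_C$ and $\hat D^\infty_1$ to a closed cycle whose intersections with the axes are impossible. Your linking homomorphism $\Phi$, together with the determinant computation using $\partial D_A$ (resp.\ $\partial D_C$) and $\partial D^\infty_i$, shows that $H_1(L_i)\to H_1(X_4)$ is an isomorphism. Hence $[L_i]=\pm[T^2]$. Since the $F$--$G$ entry of $\partial D_A$ never enters, the blow-up bookkeeping is harmless there.

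\textbf{The gap is in Step 3, which is the real content of the lemma.}

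First, the single-torus version you state is vacuous. By your own Step 2, $H_2(X_4,L_i)=0$, because $H_2(L_i)\to H_2(X_4)$ is onto and $H_1(L_i)\to H_1(X_4)$ is injective. So there are no ``relative classes in $X_4$ lying over a basis of $H_1(L_i)$''. Also, ``vanishing of the Liouville class'' of one $L_i$ is not intrinsic: since $H^1(X_4)\to H^1(L_i)$ is onto, you can always choose the primitive of $\Omega_4$ so that a given $L_i$ is exact. What must be shown is that $\Omega_4$ vanishes on $H_2(X_4,L_1\cup L_2)$. Concretely, you need two zero-area cycles running between $L_1$ and $L_2$ whose boundary pieces form bases of both $H_1(L_i)$; you identify only one candidate, $O_B$.

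Second, your mechanism for building such cycles cannot work. Every closed class $Z$ of $X_3$, including any ``exceptional-type'' one, since $H_2(X_3)$ is spanned by $[\hat F]$ and $[\hat T_0]$, satisfies $Z\bullet\hat F=Z\bullet\hat G$ and $Z\bullet\hat T_0=Z\bullet\hat T_\infty$. But by (6) and (9f), $\hat O_B$ has intersection vector $(1,0,0,0)$ with $(\hat F,\hat G,\hat T_0,\hat T_\infty)$. So no closed correction, rational or not, pushes $\hat O_B$ into $X_4$, and the same holds for any single disk. The areas you expect to juggle ($2r$, $2dr+b-2d\varepsilon$, $\varepsilon$, $s$, $b-s$) are a symptom of this. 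The concluding ``should give vanishing fluxes'' is left unverified.

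\textbf{The missing idea} is to cancel $\hat F$-intersections with the fiber disks, which lie over the other Lagrangian or over the same one, rather than with closed classes. By (9b) and (9c), you can choose $D_1\in\{\hat D^0_1,\hat D^\infty_1\}$ and $D_2\in\{\hat D^0_2,\hat D^\infty_2\}$ each meeting $\hat F$ exactly once and missing $\hat G$. All four of these disks have area $r$, since a broken leaf has area $2r$ and $D^\infty_i$ has area $r$. They lie over $p(L_i)$, so they miss $\hat T_0$, $\hat T_\infty$ and every blown-up or blown-down fiber $H_j$. The same holds for $O_B$, which lies over $\bar B$.

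Tubing along paths in $\hat F$ gives relative cycles $C_1\simeq D_2-D_1$ and $C_2\simeq\hat O_B-D_1$ in $X_4$, each of area $r-r=0$. Using $[\partial D^0_i]=-[\partial D^\infty_i]$ and items (3)--(5), their boundary pieces give bases of $H_1(L_1)$ and $H_1(L_2)$. Neither $D_A$ nor $D_C$ is needed here, so the $\varepsilon$ and blow-down bookkeeping you flag as the main obstacle never arises.
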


\begin{proof}

To prove that $L_1$ and $L_2$ are relatively exact, it suffices to find two relative cycles,  $C_1$ and $C_2$, representing classes in $H_2(X_4, L_1 \cup L_2;\ZZ)$, such that 
\begin{equation}
\Omega_4(C_1) = \Omega_4(C_2) =0
\end{equation}
\begin{equation}
\{[\partial C_1 \cap L_1], [\partial C_2 \cap L_1]\}\quad \text{is a basis of} \quad   H_1(L_1;\ZZ),
\end{equation}
and 
\begin{equation}
\{[\partial C_1 \cap L_2], [\partial C_2 \cap L_2]\} \quad \text{is a basis of} \quad H_1(L_2;\ZZ).
\end{equation}

To produce these cycles we will use the transforms of the submanifolds of $X_2$ described in Proposition \ref{prop:submanifolds}.

By Proposition \ref{prop:submanifolds} we can choose disks $D_1$ equal to one of the transforms $\hat{D}^0_1$ or $\hat{D}^\infty_1$, and $D_2$ equal to one of the transforms $\hat{D}^0_2$ or $\hat{D}^\infty_2$, such that both disks intersect $\hat{F}$ exactly once but are disjoint from $\hat{G}$. We denote the intersection points with $\hat{F}$ by $q_1$ and $q_2$ respectively.
Let $\gamma_1$ be a smooth curve in $\hat{F}$ connecting $q_1$ to $q_2$, and $\Gamma_1$ a smooth cylinder in the normal bundle to $\hat{F}$ such that $\Gamma_1$ is disjoint from $\hat{F}$ and projects to $\gamma_1$. We may assume $\Gamma_1$ intersects $D_1$ and $D_2$ in circles around their intersections with $\hat{F}$. The circles bound disks $E_1$ and $E_2$ such that $D_1 \smallsetminus E_1$ and $D_2 \smallsetminus E_2$ are cylinders disjoint from $\hat{F}$, and we can define $C_1$ to be the glued cycle $C_1 = (D_1 \smallsetminus E_1) \cup \Gamma_1 \cup (D_2 \smallsetminus E_2)$. 
It clearly has a zero $\Omega_4$-area. Moreover,  $[\partial C_1 \cap L_1]=-[\partial D_1]$ and $[\partial C_1 \cap L_2]=[\partial D_2]$.

To construct the cycle $C_2$, we first consider the transform $\hat{O}_B$ of $O_B$. It follows from assertion (5) of Proposition \ref{prop:submanifolds} that 
$\{[\partial C_1 \cap L_i], [\partial \hat{O}_B \cap L_i]\}$ is a basis of  $H_1(L_i;\ZZ)$ for $i=1,2$.
However the $\Omega_4$-area of $\hat{O}_B$ is $r$. Moreover, $\hat{O}_B$ intersects the axis $\hat{F}$ positively in precisely one point and so does not represent a class in $H_2(X_4, L_1 \cup L_2;\ZZ)$. Both problems can be fixed simultaneously by choosing a smooth curve $\gamma_2$ connecting $q_1$ to $\hat{O}_B \cap \hat{F}$. We can then form a cylinder $\Gamma_2$ over $\gamma_2$ and use it to glue $D_1$ and $\hat{O}_B$ to produce the required cycle $C_2$ in the complement of $\hat{F}$ as above.

It remains to show that $L_1$ (or $L_2$) is homologous to the zero section. It is enough to show that the inclusion $H_1(L_1;\ZZ) \to H_1(X_4;\ZZ)$ is injective or, equivalently, that the boundary map $$\partial \colon H_1(X_4,L_1;\ZZ) \to H_1(L_1;\ZZ)$$ is trivial. If the boundary map is not trivial then there is a relative 2-cycle $C$ such that $[\partial C] = (m,n) \neq (0,0) \in H_1(L_1, \ZZ).$ Note that $C$ also represents a class in $H_2(X_3, L_1;\ZZ)$. Hence, adding suitable multiples of $\hat{D}_C$ and $\hat{D}_1^{\infty}$ to $C$ we get a closed cycle $C_{m,n}$ in $X_3$ whose intersections with the topological axes are determined by the disks added. If $m\neq 0$, then $C_{m,n}$, like $\hat{D}_C$,  will intersect $\hat{T}_0$ but not $\hat{T}_{\infty}$. Otherwise, if $m=0$ and $n\neq0$, then $C_{m,n}$, like $\hat{D}_1^{\infty}$,  will intersect $\hat{F}$ but not $\hat{G}$. In both cases, the resulting intersection pattern is impossible since $[\hat{T}_0]=[\hat{T}_{\infty}] \in H_2(X_3;\ZZ)$ and $[\hat{F}] =[\hat{G}]\in H_2(X_3;\ZZ)$.
\end{proof}

Lemma \ref{lem:pre} immediately yields the desired contradiction. Within $(X_4, \Omega_4) \subset (T^*T^2, d \lambda)$, it follows from Lemma \ref{lem:pre} and Theorem B of \cite{rgi}, that $L_1$ and $L_2$ are both Hamiltonian isotopic to the same constant section of $T^*T^2$. With this, Gromov's Theorem 2.3.$B_4''$ from \cite{gr} implies that $L_1$ and $L_2$ must intersect.  This contradicts Assumption \ref{assump}. To prove Theorem \ref{center} then, it remains for us to prove Proposition \ref{prop:proj} and Proposition \ref{prop:submanifolds}.

\subsection{Proof of Theorem \ref{lagint}}\label{adjust}
 
To extend the proof of Theorem \ref{center} to a proof of Theorem \ref{lagint}, we need to consider a third Lagrangian torus, $L_3=L(r, b-s)$. The (weaker) standing assumption is now as follows.

\begin{assumption}\label{assump2}
There are real numbers $r$, $s$ satisfying 
\[ \frac{1}{2} \leq r <1 \leq s,\]
a number $b>2s$, and a Hamiltonian diffeomorphism $\phi$ of $\RR^4$ that maps $L(r,s)$ into $P(1,b)$, such that  $\phi(L(r,s))$ is disjoint from the family of Lagrangian tori $\bigcup_{t \in [s,b-s]} L(r, t)$ from $L_1$ to $L_3$.
\end{assumption}

Under this assumption, relative foliation theory implies the following analogue of Proposition \ref{prop:proj}.

\begin{proposition}\label{adjustprop} There is an $\Omega_2$-tame (or compatible) almost complex structure $J_2$ on $X_2$  and a continuous (projection) map $p \colon X_2 \to S_{\infty}$ with the following properties.
\begin{enumerate}
    \item The image $p(T_{\infty})$ is a point.\\
    \item For every $z$ in the complement of $p(L_1) \cup p(L_2)  \cup p(L_3)$ the set $p^{-1}(z)$ is a $J_2$-holomorphic sphere in the class $(1,0) \in H_2(X_2;\ZZ) = \ZZ \times \ZZ$.\\
    \item The images $p(L_i)$, for $i= 1,2,3$ are disjoint embedded closed curves in $S_{\infty}$.\\
    \item The component of $S_\infty \smallsetminus p(L_3)$ that contains $p(T_\infty)$ is contained in the component of $S_\infty \smallsetminus p(L_1)$ that contains $p(T_\infty)$. As well, the projection $p(L_2)$ is contained in the component of $S_\infty \smallsetminus p(L_3)$ that contains $p(T_\infty)$ or it is contained in the component of $S_\infty \smallsetminus p(L_1)$ that does not contain $p(T_\infty)$.
\end{enumerate}
\end{proposition}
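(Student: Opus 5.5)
The plan is to run the same relative foliation argument that will prove Proposition \ref{prop:proj} in Section \ref{foliation}, now with three Lagrangian tori $L_1, L_2, L_3$ instead of two. We work in $(X_2,\Omega_2)\cong S^2(2r)\times S^2(b)$, choose an $\Omega_2$-tame almost complex structure that is standard near $S_\infty\cup T_\infty$, and stretch the neck simultaneously along $L_1$, $L_2$ and $L_3$. These three tori are pairwise disjoint: $L_1$ and $L_3$ are distinct product tori, and Assumption \ref{assump2} gives $L_2\cap L_1=L_2\cap L_3=\emptyset$. The relative finite energy foliation theory of \cite{rgi}, extended as in \cite{hiker,boust}, then produces a limiting foliation of $X_2$ by $J_2$-holomorphic spheres in class $(1,0)$ away from the tori. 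Near each $L_i$ there are broken leaves whose levels are planes or cylinders in the symplectizations of $T^*L_i$, and each torus is swept out by a one-parameter family of such broken leaves. Defining $p(x)$ as the intersection point of the leaf through $x$ with $S_\infty$ gives a continuous map. Items (1) and (2) then follow exactly as in Proposition \ref{prop:proj}: $T_\infty$ meets every $(1,0)$-class leaf once, and positivity of intersections forces it to lie in a single leaf. Item (3) follows because each $L_i$ meets the broken leaves in a circle family, so $p(L_i)$ is an embedded circle, and the curves are disjoint because distinct leaves are disjoint.

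For item (4), first consider the case without $L_2$. For $L_1$ and $L_3$, together with the intermediate family $L[t]$, we can keep the standard product almost complex structure on the region $\bigcup_{t}L(r,t)$ before stretching. The leaves there are $S^2(2r)\times\{w\}$, so $p(L(r,t))$ is the circle $\{\pi|w|^2=t\}$. These circles are nested, and larger $t$ means closer to $p(T_\infty)$. This gives the first nesting claim, that the $T_\infty$-side of $p(L_3)$ lies inside the $T_\infty$-side of $p(L_1)$.

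To make this compatible with $L_2$, we stretch along all of $\bigcup_{t\in[s,b-s]}L(r,t)$ at once, as one does along a codimension-zero family of Lagrangians, and use the hypothesis that $L_2$ avoids every $L(r,t)$. This keeps the standard product structure on a neighborhood $N$ of the region between $L_1$ and $L_3$. In Proposition \ref{prop:proj} the same hypothesis placed $p(L_2)$ on one side; here the analogous step is to show that $p(L_2)$ cannot meet the annulus $p(N)$ bounded by $p(L_1)$ and $p(L_3)$. Indeed, every leaf through a point of $N$ is a standard sphere lying in the union of the $L(r,t)$ and its $z_2$-neighborhood. That union is disjoint from $L_2$, so those leaves miss $L_2$. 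Since $p(L_2)$ is a connected circle disjoint from the closed annulus, it lies either in the disk on the $T_\infty$-side of $p(L_3)$ or in the disk on the far side of $p(L_1)$. This is exactly the dichotomy in (4).

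The main obstacle is this last step. We must arrange that the neck stretching along $L_2$ does not disturb the product structure on $N$. We must also rule out broken leaves through $N$ that have components escaping into the neck of $L_2$. I would first try to handle this by an energy and area argument: leaves through $N$ have area $2r$, and any component entering the $L_2$ neck would force a Maslov-$2$ disk on $L_2$ of area less than $r$, which is excluded by the index and area constraints on $L_2=\phi(L(r,s))$. If that fails, I would try a positivity of intersection argument with the standard cylinders $L(r,t)\times$ radial lines, which remain holomorphic throughout.
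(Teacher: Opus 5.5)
Your outline of (1)--(3) is fine. For these nonmonotone tori you also need Lemma \ref{lem:noB}, via Corollary \ref{>r} and Remark \ref{rem:flex}, to exclude Type B buildings along each of $L_1,L_2,L_3$. The argument for (4), however, has a genuine gap.

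\textbf{Where (4) fails.} First, there is no neck-stretching ``along all of $\bigcup_t L(r,t)$ at once''. That set is a hypersurface swept out by Lagrangian tori, not a single Lagrangian with a Weinstein neighborhood whose unit cotangent bundle you can stretch along. More seriously, your claim that every leaf through a point of $N$ is a split sphere lying near $\bigcup_t L(r,t)$, and hence misses $L_2$, is false:
\begin{itemize}
\item For the split structure, the $(1,0)$-leaf through a point of $L(r,t)$ is the whole sphere $S^2(2r)\times\{w\}$ with $\pi|w|^2=t$. It meets $\{\pi|z_1|^2=r\}$ only in a circle and otherwise runs over all values of $|z_1|$.
\item Assumption \ref{assump2} only says that $L_2$ misses $\{\pi|z_1|^2=r,\ s\le \pi|z_2|^2\le b-s\}$. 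So $L_2$ can perfectly well cross the spheres $S^2(2r)\times\{w\}$ with $s\le\pi|w|^2\le b-s$.
\item Since $J$ must be adapted to, and stretched along, $L_2$, you cannot keep the product structure along these leaves. Nothing forces the deformed leaves to avoid $L_2$.
\end{itemize}
The same objection applies to your derivation of the first nesting claim: the leaves through $L_1$ and $L_3$ in the foliation relative to $L_1\cup L_2\cup L_3$ need not be split.

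Your fallback does not close the gap either. The broken leaves along $L_2$ consist of two planes of area exactly $r$; they are what define $p(L_2)$. So no disk of area less than $r$ ever appears, and area considerations cannot tell you where these leaves project. Likewise, comparison surfaces taken from the split structure stop being $J$-holomorphic once $J$ is modified near $L_2$.

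\textbf{The missing idea.} The hypothesis $L_2\cap L(r,t)=\emptyset$ is really used to choose, for each $t\in[s,b-s]$ separately, an almost complex structure $J_t$ compatible with both $L(r,t)$ and $L_2$. One then runs the family argument from the proof of Proposition \ref{prop:proj} in Section \ref{foliation}:
\begin{enumerate}
\item Stretch $J_t$ along $L(r,t)\cup L_2$, and also along $L(r,t)$ alone.
\item Get continuity of the relative foliations at regular $t$ from Fukaya's trick and automatic regularity (Lemmas \ref{lem:fam} and \ref{lem:fam'}).
\item Get compactness of the families of area-$r$ planes across nonregular $t$ from Lemma \ref{>r2}, as in Lemma \ref{lem:fam2}.
\item Suppose the circle $p(L_2)$ lay in the annulus $\mathcal{B}$ between $p(L_1)$ and $p(L_3)$. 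As $t$ runs from $s$ to $b-s$, the planes asymptotic to $L(r,t)$ in the foliation relative to $L(r,t)$ alone sweep across $p^{-1}(\mathcal{B})$. They must therefore pass through a continuous path of points $x_t$ on planes $w_t\in\mathfrak{s}_{0,t}$ asymptotic to $L_2$.
\item Stretching additionally along $L_2$ produces a limiting building through a point of $w_{s^\infty}$. This contradicts either the total area $r$ or the disjointness of leaves of the foliation relative to $L(r,s^\infty)\cup L_2$.
\end{enumerate}
It is this sweeping argument over the family, not a fixed product structure near the hypersurface, that keeps $p(L_2)$ out of $\mathcal{B}$.
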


Let $\mathcal{A}$ to be the component of $S_\infty \smallsetminus p(L_1)$ that does not contain $p(T_\infty)$, and $\mathcal{C}$ be the component of $S_\infty \smallsetminus p(L_3)$ that contains $p(T_\infty)$. Then $\mathcal{A}$ and $\mathcal{C}$ are disjoint disks and their complement is a cylinder, 
$\mathcal{B}$. This is pictured in Figure \ref{fig:abc2}. The three possible positions of $p(L_2)$ are pictured in Figure \ref{fig:abc-l2}.

\begin{figure}[!h]
    \centering
    \includegraphics[width=0.6\linewidth]{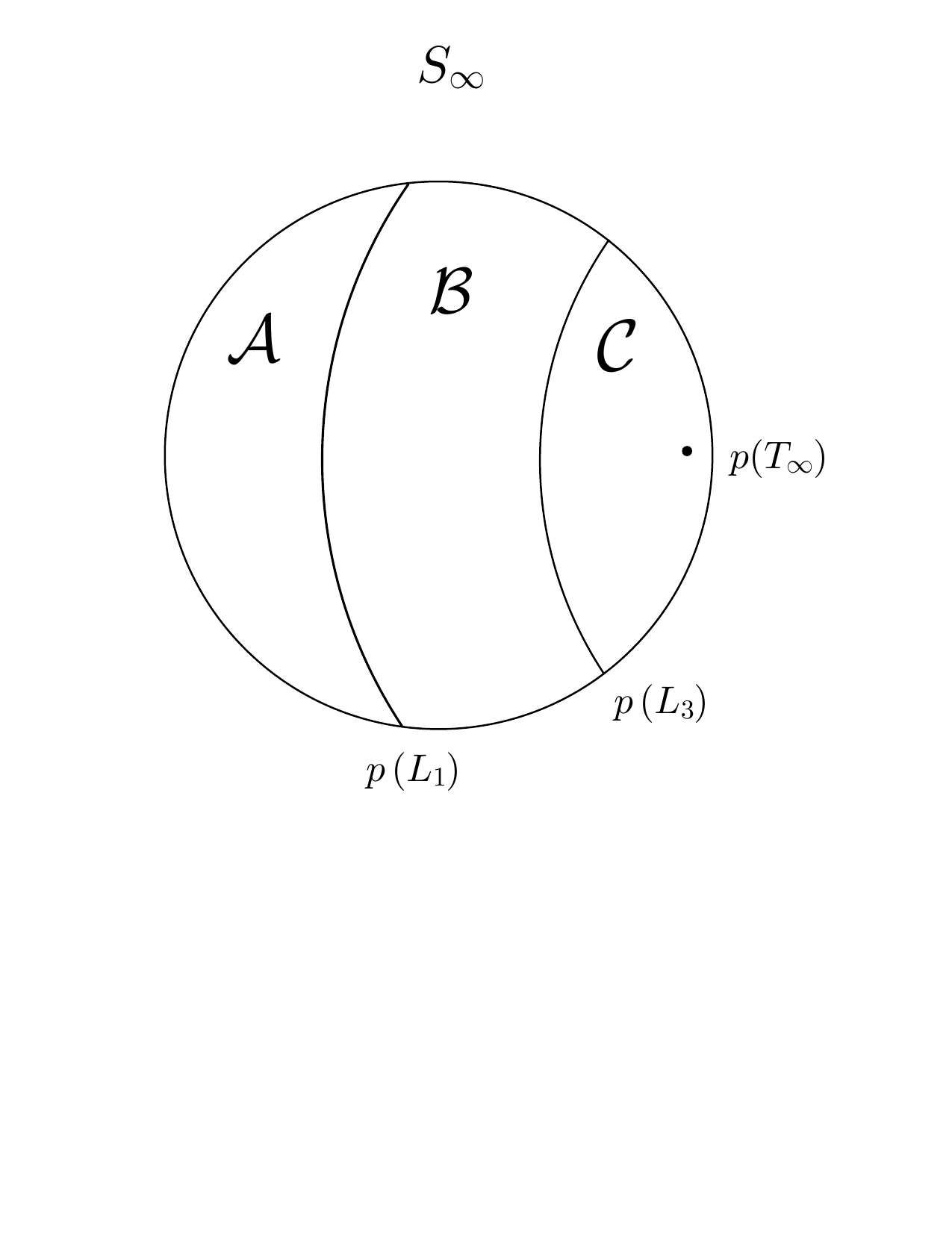}
    \vspace{-2.5cm}
    \caption{Projections of $L_1$ and $L_3$ onto $S_{\infty}$ under Assumption \ref{assump2}.}
    \label{fig:abc2}
\end{figure}

\medskip

\begin{figure}[!h]
    \centering
    \includegraphics[width=0.9\linewidth]{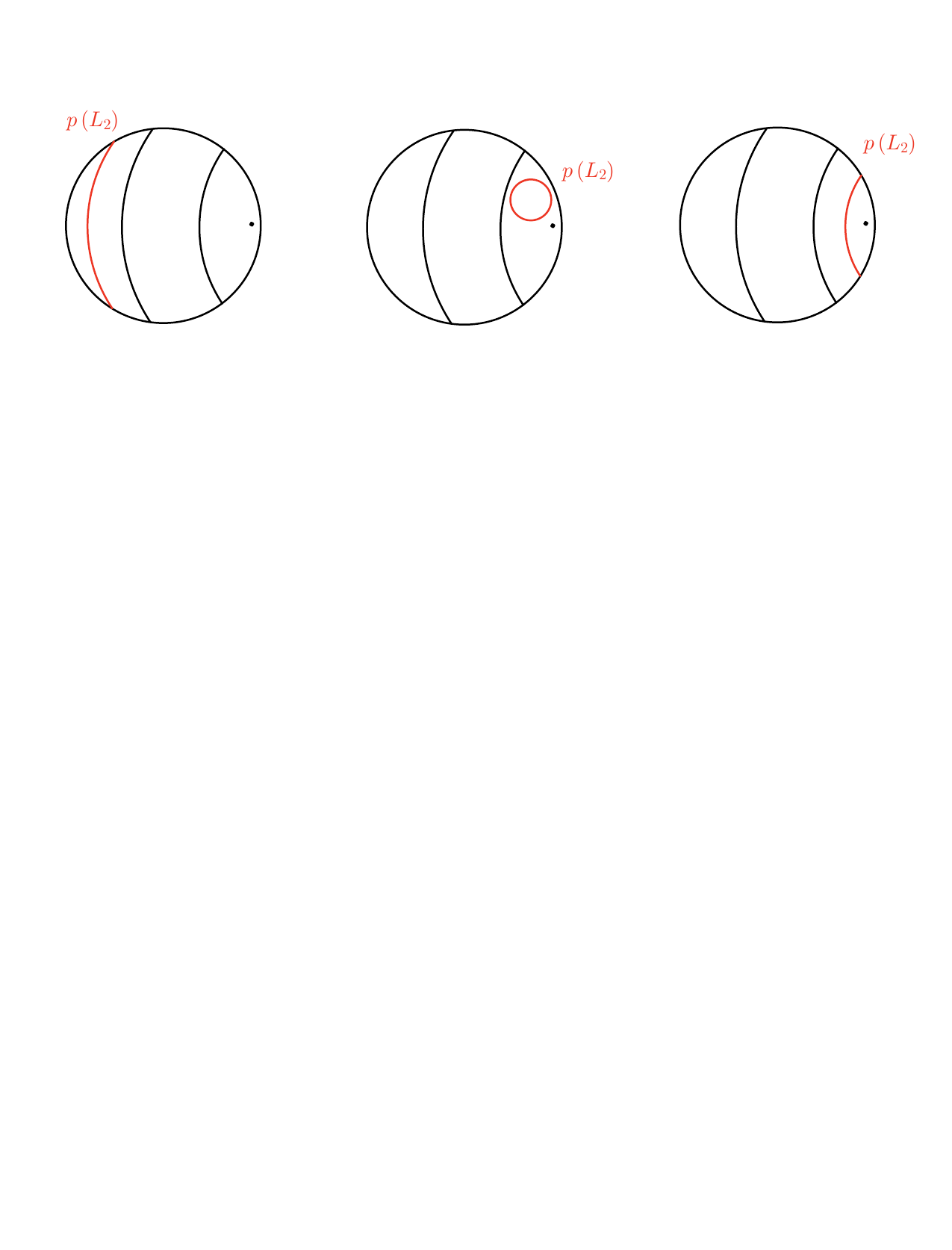}
    \vspace{-10.5cm}
    \caption{The three possibilities for the location of $p(L_2)$.} 
    \label{fig:abc-l2}
\end{figure}


If we denote the component of $S_\infty \smallsetminus p(L_2)$ that does not contain $p(T_\infty)$ by $A$, then we have the following three cases to consider: 

\begin{enumerate}
    \item $A$ is contained in $\mathcal{A}$,
    \item $A$ is contained in $\mathcal{C}$,
    \item $A$ contains both $\mathcal{A}$ and $\mathcal{B}$.
\end{enumerate}
As summarized in Figure  \ref{fig:abc3}, for each of these three cases the proof of Theorem \ref{lagint} can be reduced to that of Theorem \ref{center}.
In Case 1 we are in the exact scenario described in statement (4) of Proposition \ref{prop:proj} and the proof of Theorem \ref{center} applies directly. For Case 2, we fix a point $z_0 \in \mathcal{A}$ and set $T_0 = p^{-1}(z_0)$. Then, substituting $T_0$ for $T_{\infty}$ and $L_3$ for $L_1$, we can again follow the proof of Theorem \ref{center}. 
In the end, the argument implies that $L_2$ must intersect $L_3$, which contradicts  Assumption \ref{assump2}.
Finally, in Case 3, the proof of Theorem \ref{center} can be applied directly, but with the roles of $L_1$ and $L_2$ reversed. 

\begin{figure}[!h]
    \centering
    \includegraphics[width=0.9\linewidth]{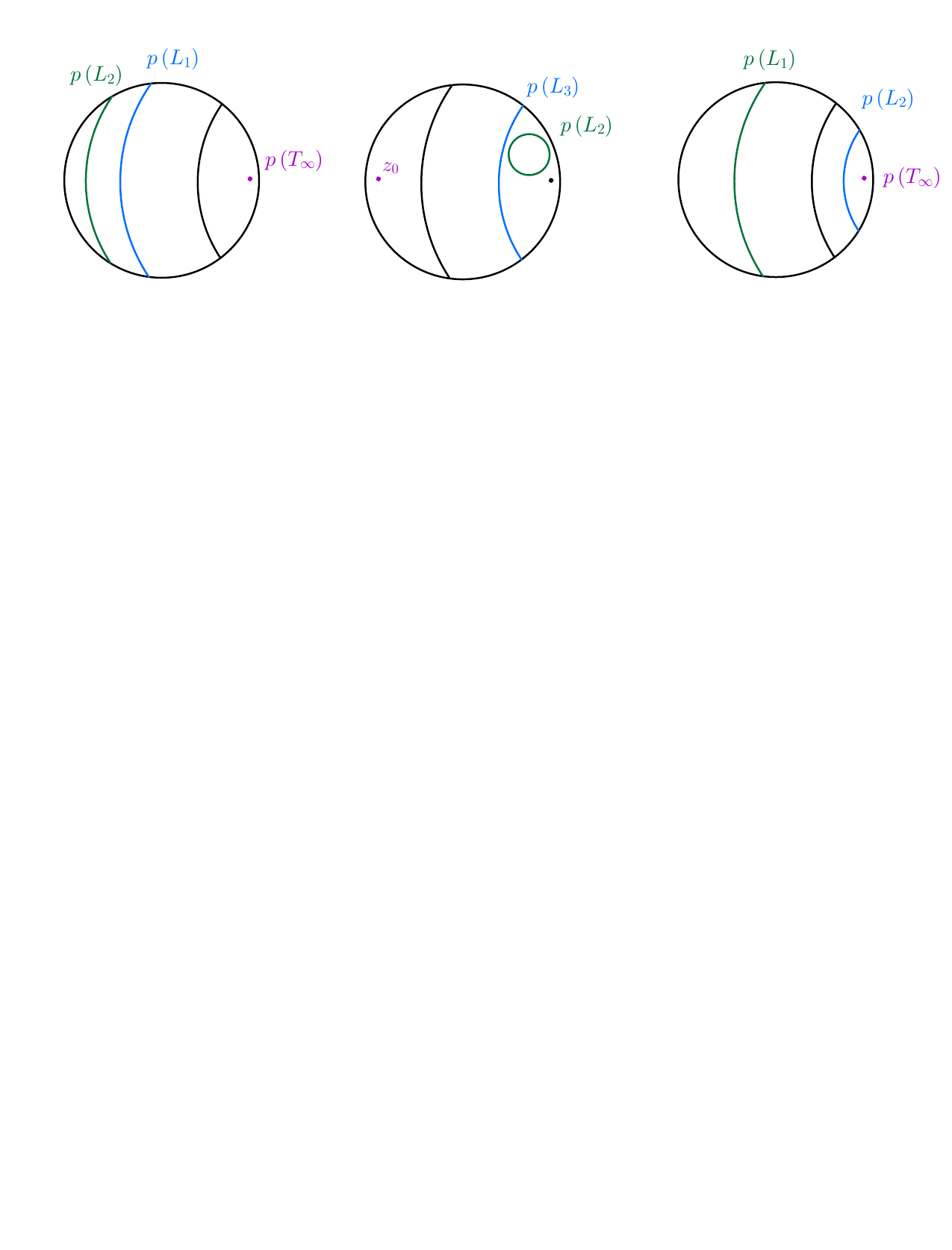}
    \vspace{-10.5cm}
    \caption{The colorings, when compared to that Figure \ref{fig:abC}, encode the manner in which the proof of Theorem \ref{lagint} can be reduced to that of Theorem \ref{center} in each case.}
    \label{fig:abc3}
\end{figure}

\medskip

\subsection{More Lagrangian Intersections} \label{sec:2lag}

As an alternative to taking the limit $b\to 2s$, Corollary \ref{newlag} could also be obtained by following the argument in Section \ref{adjust} but setting $L_1 = L_3$. In this case, $\mathcal{B} = \emptyset$ and Proposition \ref{adjustprop} (4) is automatic, that is, we do not need to study the $1$-parameter family of tori. Hence, rather than assuming $L_1 = L(r,s)$, we may instead assume that $L_1 = \phi_1(L(r,s))$ for some Hamiltonian diffeomorphism $\phi_1$ of $\RR^4$ that maps $L(r,s)$ into $P(1,b)$. With this we get the following more general intersection result.

\begin{corollary}\label{newlagv2}
Suppose that $1/2\leq r<1$ and $1 \leq s <b \le 2s$. If $\phi_1$ and $\phi_2$ are Hamiltonian diffeomorphisms of $\RR^4$ which each map $L(r,s)$ into $P(1,b)$, then $$\phi_1(L(r,s)) \cap \phi_2(L(r,s)) \neq \emptyset.$$
\end{corollary}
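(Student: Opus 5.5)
We argue by contradiction, following the scheme of Section \ref{adjust} with $L_3$ replaced by $L_1$. Suppose $\phi_1(L(r,s))$ and $\phi_2(L(r,s))$ are disjoint. Set $L_1=\phi_1(L(r,s))$ and $L_2=\phi_2(L(r,s))$; both lie in $P(1,b)$. Perform Scene 1 and Scene 2 exactly as before. The disjointness of $L_1$, $L_2$ is preserved, and both tori avoid $S_\infty\cup T_\infty$ in $X_2 \cong S^2(2r)\times S^2(b)$. Since both tori are Hamiltonian images of $L(r,s)$ in $\RR^4$, the relevant invariants are identical to the case $L_1=L(r,s)$: the Maslov classes and the areas of disks with boundary on each torus, and their intersections with $S_\infty$ and $T_\infty$ computed after extending the Hamiltonians compactly. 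This is what the curve-theoretic arguments of Sections \ref{foliation} and \ref{dbuildings} actually use.

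First, we would run the relative foliation argument from the proof of Proposition \ref{adjustprop} with only the two tori $L_1,L_2$. This should produce $J_2$ and a projection $p\colon X_2\to S_\infty$ with properties (1)--(3). It remains to arrange (4), namely that the disk components of $S_\infty\smallsetminus p(L_i)$ not containing $p(T_\infty)$ are nested in the right order. Since $p(L_1)$ and $p(L_2)$ are disjoint embedded circles on the sphere, there are only two configurations. In the first, the component of $S_\infty\smallsetminus p(L_2)$ not containing $p(T_\infty)$ lies inside the corresponding component for $L_1$; this is exactly property (4) of Proposition \ref{prop:proj}. In the second, that component contains $p(L_1)$ and the disk it bounds, so swapping the roles of $L_1$ and $L_2$ puts us back in the first configuration. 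This is the step where the one-parameter family $L[t]$ was used before. Here it is unnecessary, because nothing needs to separate the two curves: we only need that they are disjoint and nested, and this is automatic. The areas of the pieces in Proposition \ref{prop:submanifolds}(7) are then $s$, $r$ and $b-s$, as before.

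With the nesting in hand, we would invoke Proposition \ref{prop:submanifolds} for $L_1,L_2$ and then carry out Scenes 3 and 4 and Lemma \ref{lem:pre} verbatim. Lemma \ref{lem:pre} shows that $L_1$ and $L_2$ are relatively exact and homologous to the zero section in $X_4\subset T^*T^2$. Theorem B of \cite{rgi} then makes both tori Hamiltonian isotopic to the same constant section, and Gromov's theorem forces $L_1\cap L_2\neq\emptyset$, which is a contradiction.

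The main obstacle is checking that Proposition \ref{prop:submanifolds} genuinely holds when $L_1$ is itself only a Hamiltonian image rather than the standard torus. Its proof must never use the explicit product structure of $L_1$, for example explicit holomorphic disks on $L(r,s)$. We expect those disks to come instead from neck-stretching limits, whose existence and areas depend only on the classes above. The hypothesis $b\le 2s$ enters in the area bookkeeping. The middle cylinder $O_B$ (area $r$) and the outer disk $D_C$ (area $b-s$) must be compatible with $D_A$ (area $s$) for either ordering of the tori. This symmetry is available precisely when $b-s\le s$, which is the analogue of collapsing $\mathcal{B}$ when $L_3=L_1$. We would verify this step first, tracing the area constraints in the building analysis.
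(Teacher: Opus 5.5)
\paragraph{The gap: a third configuration.} Your claim that property (4) is automatic because ``there are only two configurations'' is where the argument fails. The disjoint circles $p(L_1)$ and $p(L_2)$ cut $S_\infty$ into two disks and an annulus, and $p(T_\infty)$ can lie in any of the three. Suppose it lies in the annulus. Then the component $A$ of $S_\infty\smallsetminus p(L_2)$ not containing $p(T_\infty)$ is disjoint from the component $\mathcal{A}$ of $S_\infty\smallsetminus p(L_1)$ not containing $p(T_\infty)$. This is the case $A\subset\mathcal{C}$ of Section \ref{adjust}. Here property (4) fails for both orderings, and swapping $L_1$ and $L_2$ does not help.

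This case is not an artifact. For $b>2s$, take disjoint circles $\Lambda,\Lambda'\subset D(b)$, each bounding area $s$. Then $L(r)\times\Lambda$ and $L(r)\times\Lambda'$ are disjoint Hamiltonian images of $L(r,s)$ in $P(1,b)$. For the split structure, $p=\mathrm{pr}_2$ puts $p(T_\infty)=\infty$ exactly in the annulus. Relatedly, your scheme never genuinely uses $b\le 2s$. In both configurations you do treat, the proof of Theorem \ref{center} runs with reference fiber $T_\infty$ for every $b>s$. The bookkeeping $s$, $r$, $b-s$ for $D_A$, $O_B$, $D_C$ is the same whichever torus is inner. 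So as written, your argument would prove a false statement for $b>2s$.

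\paragraph{The missing idea: change the reference fiber.} The paper handles the annulus case as follows. Pick $z_0\in\mathcal{A}$ and use the leaf $T_0=p^{-1}(z_0)$ instead of $T_\infty$ as the reference fiber. Relative to $T_0$ the nesting holds: $\mathcal{A}$ plays the role of $C$, and the annulus (which now contains $p(T_\infty)$) plays the role of $B$.

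However, the essential plane on $L_1$ over $\mathcal{A}$ misses $T_\infty$, so the argument of Lemma \ref{lem:s} gives it area $s$. The proof of Theorem \ref{center} needs the disk over $C$ to have area $b-s$. That requirement is what forces $\Omega(\underline{u})=r$, and hence the area-$r$ cylinder $O_B$ that makes the cycle $C_2$ in Lemma \ref{lem:pre} have zero area. The two areas agree only when $b=2s$. So one first reduces to $b=2s$ using $P(1,b)\subset P(1,2s)$. This is precisely the content of ``setting $L_1=L_3$'' with $L_3=L(r,b-s)$ in the paper, and it is where the hypothesis $b\le 2s$ actually enters.

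Your worry about Proposition \ref{prop:submanifolds} for a non-standard $L_1$ is reasonable, since Lemma \ref{lem:b-s} is stated using that $L_1$ is standard. It is secondary, however, to the missing case analysis above.
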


\begin{remark} Corollary \ref{newlagv2} fails if we simply assume that $L_3$ has the Maslov and area class of $L(r,s)$. Indeed, there is an isomorphism from $H_1(L(3/5, 4/5))$ to $H_1(L(4/5,1))$ which preserves both Maslov and area classes (while the product tori are clearly disjoint in $P(1,2)$). The assumption on the Hamiltonian isotopy class is used in Corollary \ref{>r}.
\end{remark}





\section{The proof of Proposition \ref{prop:proj}}\label{foliation}

We first show that the relative foliation theory from \cite{ivriit} and \cite{rgi} can be applied to the pair $L_1 \cup L_2 \subset X_2$, despite the fact that neither Lagrangian torus is monotone. To simplify our notation, we set $(X_2,\Omega_2) = (X,\Omega)$.

\subsection{Foliations of $(X, \Omega)$ relative to $L_1 \cup L_2$.}\label{subsec:morefol} 

Let $J$ be an $\Omega$-tame almost complex structure on $X$ such that $J$ is equal to the standard split complex structure near $S_{\infty} \cup T_{\infty}$. 
In fact, by the construction of $X=X_2$, in Scene $2$, we may assume that $J$ is standard in a tubular neighborhood of $S_{\infty}$ of capacity $2r-1$. This implies that $J$-holomorphic curves intersecting $S_{\infty}$ have area at least $2r-1$.

Since $J$ is standard near $T_{\infty}$, it follows from Theorem 0.2.A of \cite{gr} that the $J$-holomorphic sphere $T_{\infty}$ extends to a foliation $\mathcal{F}(J)$ of $X$ comprised of $J$-holomorphic spheres in the class $[S^2 \times \{pt\}]$. 
Moreover, there is a smooth projection map from $X$ to $S_{\infty}$ defined by taking a point $p$ in $X_2$ to the unique intersection point of $S_{\infty}$ and the leaf of $\mathcal{F}(J)$ that contains $p$.

To produce a relative version of this picture, we restrict $J$ further. For each Lagrangian $L_i$ we fix a parameterization  $\psi_i \colon \mathbb{T}^2 \to L_i$. This extends to a symplectic embedding  $\Psi_i \colon \mathcal{U}(\epsilon) \to X$ where $$\mathcal{U}(\epsilon) = \{|p_1| <\epsilon,\, |p_2|< \epsilon\} \subset T^* \mathbb{T}^2$$ for some sufficiently small $\epsilon>0.$  Set $$\mathcal{U}_i =\Psi_i(\mathcal{U}(\epsilon)).$$ We assume that $J$ is {\it compatible} with the parameterizations $\psi_i$ in the sense that in each neighborhood $\mathcal{U}_i$ we have 
$$J \frac{\partial}{\partial q_i} = - \frac{\partial}{\partial p_i}.$$
Given this, $J$ can be stretched along $L_1 \cup L_2$, as in \cite{behwz}, to yield  an almost complex structure $J_{\infty}$ on $X \smallsetminus (L_1 \cup L_2)$. 
This new almost complex structure is {\it adapted} to the parameterizations $\psi_i$, i.e. in $\mathcal{U}_i \smallsetminus L_i$ we have $$J_\infty \frac{\partial}{\partial q_i} = -  \sqrt{p^2_1 + p_2^2}\,\frac{\partial}{\partial p_i}.$$ The almost complex structure $J_\infty$ is also standard near $S_{\infty} \cup T_{\infty}$. 
As described in \cite{behwz}, in stretching along $L_1 \cup L_2$, the curves of the Gromov foliation $\mathcal{F}(J)$ converge to a collection of holomorphic buildings, $\bar{\mathcal{F}}_{L_1,L_2}(J)$. Each building consists of genus zero pseudo-holomorphic curves of three possible level types. The {\em top level} curves  map to $X \smallsetminus (L_1 \cup L_2)$ and are $J_{\infty}$-holomorphic. The {\em middle level} curves map to one of two copies of the symplectization of the unit cotangent bundle of the flat torus that correspond to 
$L_1$ and $L_2$. These middle level curves are all $J_{\mathrm{cyl}}$-holomorphic  where $J_{\mathrm{cyl}}$ is the cylindrical almost-complex structure defined by $$J_{\mathrm{cyl}} \frac{\partial}{\partial q_i} = -  \sqrt{p^2_1 + p_2^2}\,\frac{\partial}{\partial p_i}.$$ The {\em bottom level} curves map to one of two copies of $T^*\mathbb{T}^2$  that again correspond to 
$L_1$ and $L_2$.\footnote{In what follows we will identify the copy of $T^*\mathbb{T}^2$ corresponding to $L_i$ with $T^*L_i$.} These bottom level curves are $J_{\mathrm{std}}$-holomorphic where  $J_{\mathrm{std}}$ is of the form 
$$J_{\mathrm{std}} \frac{\partial}{\partial q_i} = -  \rho \left(\sqrt{p^2_1 + p_2^2}\right)\,\frac{\partial}{\partial p_i}$$ for a fixed positive increasing function $\rho(t)$ such that $\rho(0)=1$ and $\rho(t) =t$ for $t \ge 2.$

A generic building of the collection $\bar{\mathcal{F}}_{L_1,L_2}(J)$ is (still) a simple $J$-holomorphic sphere in $X \smallsetminus (L_1 \cup L_2)$. For example, (the standard parameterization of) $T_{\infty}$ is such a building. The other buildings in $\bar{\mathcal{F}}_{L_1,L_2}(J)$ are split along either $L_1$ or $L_2$ and have multiple components. 


\begin{definition}\label{one}
    A split building in $\bar{\mathcal{F}}_{L_1,L_2}(J)$ is of {\it Type A} if it has three component curves; two $J_{\infty}$-holomorphic planes  in $X \smallsetminus (L_1 \cup L_2)$ of index $1$ and asymptotic to the same Lagrangian torus, say  $L_i$, and a $J_{\rm std}$-holomorphic cylinder of index $2$ with image in $T^*L_i$. Exactly one of the planes intersects $S_{\infty}$.  
\end{definition}

\begin{definition}\label{two}
    A split building in $\bar{\mathcal{F}}_{L_1,L_2}(J)$ is of {\it Type B} if it has five component curves; two  $J_{\infty}$-holomorphic planes of index $1$ in $X \smallsetminus (L_1 \cup L_2)$, 
    a nontrivial $J_{\infty}$-holomorphic cylinder of index 0 in  $X \smallsetminus (L_1 \cup L_2)$, and  
pseudo-holomorphic cylinders in $T^*L_1 \cup T^*L_2$ of index $2$. 
\end{definition}

\begin{definition}
    An almost complex structure $J$ on $(X, \Omega)$ that is compatible with the parameterized Lagrangian tori $L_1$ and $L_2$ is of {\it foliation type} relative to the parameterized Lagrangian tori $L_1$ and $L_2$ if $\bar{\mathcal{F}}_{L_1,L_2}(J)$ consists only of $J_{\infty}$-holomorphic spheres and split buildings of Type A.
\end{definition}

\begin{lemma}\label{lem:noB}
Suppose  $J$ is standard near  $S_{\infty} \cup T_{\infty}$, compatible with $L_1$ and $L_2$,  and $J_{\infty}$ is regular for somewhere injective curves. Then $J$ is of foliation type relative to $L_1$ and $L_2$.  
\end{lemma}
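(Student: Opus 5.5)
We argue by analyzing the possible limit buildings of the Gromov foliation $\mathcal{F}(J)$ under neck-stretching along $L_1 \cup L_2$, using index and area constraints together with genericity of $J_\infty$. Let $\mathbf{F}$ be a limit building of leaves in the class $(1,0)$. It has genus zero, and its total index (the index of a sphere in class $(1,0)$ in dimension four, modulo the point constraint that cuts out the foliation) is $2$. Since $J_\infty$ is regular for somewhere injective curves, every top level curve that is somewhere injective has nonnegative index. The index of a top level curve with $k$ negative ends on the flat tori is computed as $\mu - 2 + k$ plus the usual count, where each end is asymptotic to a Morse–Bott family of closed geodesics. The bottom level curves in $T^*L_i$ are planes or cylinders, and middle level curves have index at least $1$ unless they are trivial cylinders. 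We then sum the indices over components using the additivity formula for buildings. The key topological input is that the building has genus zero and its curves in $T^*L_i$ glue top level components: each node between levels costs index in the Morse–Bott setting.

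\textbf{Step one: multiply covered top level components.} First we exclude multiply covered top level curves. Any top level component lies in a class whose intersection with $T_\infty$, computed after compactifying, is bounded by $(1,0)\cdot(0,1)=1$. Positivity of intersection with $S_\infty$ (where $J$ is standard) implies that exactly one top level component meets $S_\infty$, and it does so once. The other top level components have zero intersection with $S_\infty$ and with $T_\infty$. For these we use area: they lie in $P$-type regions and their areas are positive multiples of the periods of $L_1, L_2$. A multiple cover of a plane $u$ of index $1$ has index at least that of $u$ when all the data is Morse–Bott with $\dim 1$ families, so covers do not drop index below $0$. Hence somewhere injective underlying curves still satisfy the regularity bound.

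\textbf{Step two: index counting.} The total index $2$ must be distributed over the pieces. Each top level plane has index at least $1$, since a plane of index $0$ would be rigid in a $1$-dimensional Morse–Bott family and is excluded by regularity together with the $S^1$-freedom of its asymptotic orbit. Cylinders in $X \smallsetminus (L_1\cup L_2)$ have index at least $0$. The genus zero condition then forces the building to have exactly two planes, with any remaining top level curves being cylinders connecting the two bottom regions. Bottom level curves in $T^*L_i$ that connect the two planes must be index $2$ cylinders, since the connection is a cylinder between orbits in distinct directions; these are the only $J_{\rm std}$ curves of that type.

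\textbf{Step three: excluding Type B.} If both planes are asymptotic to the same $L_i$, we obtain Type A, with the intersection with $S_\infty$ carried by exactly one plane. Otherwise an index $0$ cylinder must connect $T^*L_1$ to $T^*L_2$, which gives a Type B building. This is the step we expect to be the main obstacle. To rule it out, we would use that $J_\infty$ is generic: an index $0$ cylinder with both ends varying in $1$-dimensional Morse–Bott families, subject to the matching conditions imposed by the two bottom cylinders and the planes, forms an overdetermined configuration. Counting the matching conditions, the glued family has the expected dimension $-1$ at the building level when the leaf is required to pass through a point. Since the foliation's buildings are parametrized by $S_\infty$, a $2$-dimensional family, Type B buildings could occur at most on a codimension-$\geq 1$ set, and regularity makes that set empty. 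Hence $\bar{\mathcal{F}}_{L_1,L_2}(J)$ consists only of spheres and Type A buildings.
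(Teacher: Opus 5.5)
The gap is in Step 3, and Step 3 is where the content of the lemma lies. The reduction to Types A and B in your Steps 1--2 is what the paper simply imports from Propositions 3.5 and 5.3 of \cite{rgi}.

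A Type B building is \emph{not} overdetermined. Its total index is $1+0+1+2+2-4=2$. This is exactly the index of a sphere in class $(1,0)$, and the same as for a Type A building ($1+1+2-2=2$). So with a point constraint its expected dimension is $0$, not $-1$.

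Regularity of $J_\infty$ only forbids negative index. A somewhere injective index-$0$ cylinder is perfectly allowed. A building whose components are transversally cut out and transversally matched persists under small perturbations of $J$. In the two-dimensional leaf space such buildings appear at isolated leaves. That is a codimension-two phenomenon in a two-parameter family, which is stable rather than excluded, so ``codimension $\geq 1$, hence empty'' is a non sequitur.

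This is why Type B has to be excluded by area considerations. In the monotone case the connecting cylinder would have zero area; here one needs quantitative area input. A warning sign is that your argument never uses $r\geq 1/2$, $s\geq 1$, the inflation of $S_\infty$ by $2r-1$, or the fact that the $L_i$ are Hamiltonian images of $L(r,s)$.

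The paper's argument runs as follows.
\begin{itemize}
\item All curves of a Type B building avoid $T_\infty$. It is an unbroken leaf in the same class $(1,0)$, so the relevant intersection number is $0$; your Step 1 conflates $T_\infty$ with $S_\infty$.
\item Exactly one curve meets $S_\infty$. Since $J$ is standard on the inflated neighborhood of capacity $2r-1$, monotonicity gives that curve area $>2r-1$.
\item Curves missing $S_\infty\cup T_\infty$ live in $\RR^4$. There Corollary \ref{>r}, which rests on the deformation invariance of $a(J)$ in Lemma \ref{>r2}, gives area at least $r$ to every index-$1$ plane.
\item If neither plane meets $S_\infty$, the total area exceeds $2r$, the area of the class $(1,0)$.
\item Otherwise, the plane missing $S_\infty$ together with the cylinder compactifies to a Maslov $2$ disk in $\RR^4$ on a Hamiltonian image of $L(r,s)$, with area $>r$. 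Maslov $2$ classes have areas $s+m(r-s)$, so its area is at least $s\geq 1$, and the total again exceeds $2r$.
\end{itemize}
This also covers a case your Step 3 overlooks: both planes and the index-$0$ cylinder may all end on the same $L_i$. That configuration is also index-consistent, so two planes on one torus do not by themselves give Type A.
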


\subsubsection{Proof of Lemma \ref{lem:noB}}
It follows as in Proposition 3.5 and Proposition 5.3 of \cite{rgi}, that for $J$ as in the statement, every split building in 
$\bar{\mathcal{F}}_{L_1,L_2}(J)$ is either of Type A or Type B. If $L_1$ and $L_2$ are monotone then, as described in Corollary 5.4 of \cite{rgi}, there are no split buildings of Type B. However, in the current setting, $L_1$ and $L_2$ are nonmonotone. 
To preclude buildings of Type B in our setting, we invoke the following result implied by Lemma 3.7 of \cite{ho}.

\begin{lemma}\label{>r2} Let $L$ be a parameterized Lagrangian torus in a symplectic manifold $(M,\omega)$ of dimension four. For an $\omega$-compatible almost complex structure $J$ on $M$ which is compatible with the parameterization of $L$ set $$a(J) = \min\{  \omega(u) \, | \, \mathrm{index}(u) \ge \mathrm{end}(u) \}$$
where the minimum is taken over $J_{\infty}$-holomorphic curves $u$ of genus zero with ends asymptotic to $L$, and $\mathrm{end}(u)$ denotes the number of ends of $u$.

\begin{enumerate}
\item If $M$ is closed, then $a(J)$ is independent of $J$.\\

\item Suppose that $(M,\omega)$ is an open and  geometrically bounded symplectic $4$-manifold. Let $J_{\mathrm{fix}}$ be an $\omega$-compatible almost complex structure such that the Riemannian metric $\omega(\cdot, J_{\mathrm{fix}} \cdot )$ is complete, has bounded sectional curvature and a positive lower bound for its injectivity radius. For a bounded open neighborhood $U$ of $L$, let $\mathcal{J}_{\mathrm{fix}} (L,U)$ be the set of $\omega$-compatible almost complex structures which are compatible with the parameterization of $L$ and equal to  $J_{\mathrm{fix}}$ in the complement of $U$. Then $a(J)$ is constant on $\mathcal{J}_{\mathrm{fix}} (L,U)$.
\end{enumerate}

\end{lemma}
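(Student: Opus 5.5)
The statement to prove is Lemma \ref{>r2}: the invariance of $a(J)$. The plan is a deformation argument. Join two admissible structures $J_0,J_1$ by a path $J_\tau$ of structures compatible with the parameterization of $L$. In the open case the path stays in $\mathcal{J}_{\mathrm{fix}}(L,U)$, which is convex, so this is possible. Stretch each $J_\tau$ along $L$ to get a path $(J_\tau)_\infty$. Then show that $\tau\mapsto a(J_\tau)$ is both lower and upper semicontinuous, hence constant.

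First I would record that $a(J)$ is well defined and attained. The areas $\omega(u)$ of finite energy curves with ends on $L$ take values in the discrete set $\omega(H_2(M,L))\cap[0,E]$ once an energy bound is fixed. The first step is to show that the set of curves with $\mathrm{index}(u)\ge \mathrm{end}(u)$ is nonempty and that areas are bounded below by a positive constant.

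\textbf{Lower semicontinuity.} Take $\tau_n\to\tau$ and curves $u_n$ realizing $a(J_{\tau_n})$, with bounded energy. SFT compactness (\cite{behwz}) gives a limit building for $(J_\tau)_\infty$. In dimension four and genus zero, index is additive over the levels and the middle and bottom level contributions are nonnegative. One of the top level components, or a suitable glued sub-building, should then still satisfy $\mathrm{index}\ge\mathrm{end}$ and have area at most $\lim a(J_{\tau_n})$. In the open case, the monotonicity lemma for $J_{\mathrm{fix}}$ (bounded geometry) keeps the curves in a compact set, so compactness applies.

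\textbf{Upper semicontinuity.} A minimizer $u$ for $J_\tau$ should persist under small perturbation. I would argue that it is somewhere injective, or replace it by its underlying simple curve, which has no larger area and still satisfies the inequality. Then Wendl-type automatic transversality in dimension four (with index $\ge$ ends giving positive normal Chern number bounds for punctured curves) makes $u$ regular, so it survives to nearby $\tau$ with the same area.

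The main obstacle is the index bookkeeping in the compactness step. One must show that some component of the limit building, after discarding trivial cylinders and accounting for multiple covers, retains $\mathrm{index}\ge\mathrm{end}$. I would first try summing the inequality over the top level components and using that the bottom level planes in $T^*L$ have index at least $2$ per component. This should force at least one top level curve to satisfy the inequality. If that fails, I would appeal directly to the argument of Lemma 3.7 in \cite{ho}, of which this statement is a version.
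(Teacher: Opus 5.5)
The gap is in your upper semicontinuity step: it rests on every minimizer being regular, and automatic transversality does not give that.

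\textbf{Lower semicontinuity is fine.} This step is the degeneration step of the paper's proof. The bookkeeping you flag as the main obstacle is in fact easy. For a genus zero curve in $M\smallsetminus L$ with $e$ unconstrained ends, $\mathrm{index}(u)=e-2+\mu(u)$, where $\mu(u)$ is the even Maslov index of the compactified surface. So $\mathrm{index}(u)\ge\mathrm{end}(u)$ simply means $\mu(u)\ge 2$.

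Maslov indices of the compactified components of a limit building add up. The symplectization-level components live in $T^*L$ and contribute $0$, and all terms are even. Hence some top level component $w$ has $\mu(w)\ge 2$ and no larger area. If $w$ is a $k$-fold cover of a simple curve $v$, then $\mu(v)=\mu(w)/k$ is a positive even number, so $v$ also qualifies. There are no $T^*L$ levels in this compactness problem, and $T^*\TT^2$ carries no nonconstant planes anyway.

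Two minor corrections. First, $\omega(H_2(M,L))$ is countable but not discrete in general; it is $r\ZZ+s\ZZ$ for $L(r,s)$. Countability still suffices for a continuous function on $[0,1]$ to be constant. Second, you need not prove nonemptiness, since $a(J)=+\infty$ is allowed.

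\textbf{The gap: upper semicontinuity.} For these Morse--Bott ends, Wendl's normal Chern number is $c_N(u)=e-2+\mu(u)/2$. His criterion $\mathrm{ind}(u)>c_N(u)+Z(du)$ therefore reads $Z(du)<\mu(u)/2$. When $\mu=2$ it requires $u$ to be immersed.

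Nothing forces a minimizer for $J_\tau$ to be immersed or regular. The endpoints $J_0,J_1$ are arbitrary, and any path has isolated nongeneric times. A nonregular curve need not persist to nearby $J_{\tau'}$. Ruling out exactly this disappearance of low-area curves is the content of the lemma, so the claim that the minimizer survives with the same area is unsupported.

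\textbf{What the paper does instead.} It never uses regularity of an individual curve. It assumes a $J_1$-curve $u_1$ with $\mathrm{index}\ge\mathrm{end}$ and $\omega(u_1)<a(J_0)$, and proceeds as follows. It takes a generic path $\{J_t\}$, so that the family moduli space in the class of $u_1$ is a manifold. It cuts this down to a one-dimensional space $\mathcal{M}$ by fixing one point on each asymptotic orbit and $(\mathrm{index}(u_1)-\mathrm{end}(u_1))/2$ points in the image. $\mathcal{M}$ has nothing over $t=0$, so it must degenerate at some $t_c$. Your compactness step then yields a $J_{t_c}$-curve with $\mathrm{index}\ge\mathrm{end}$ and area $<a(J_0)$, and the argument repeats on $[0,t_c]$.

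You need this kind of continuation argument over a cut-down one-parameter moduli space, driven by your degeneration analysis, in place of pointwise persistence.
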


\begin{proof}

In both cases, the proof follows directly from that of  \cite[Lemma 3.7]{ho} which we briefly recall. Arguing by contradiction, we assume that there are $\omega$-compatible almost complex structures $J_0$ and $J_1$ that are compatible with the parameterization of $L$, and lie 
in the same set $\mathcal{J}_{\mathrm{fix}} (L,U)$, such that there is a $J_1$-holomorphic curve $u_1$ with $\mathrm{index}(u_1) \ge \mathrm{end}(u_1)$ and $\omega(u_1) < a(J_0).$  Let $\{J_t\}_{t \in[0,1]}$ be a smooth family of almost complex structures from $J_0$ to $J_1$ which all have the properties of $J_0$ and  $J_1$ described above. As noted in \cite[Lemma 3.7]{ho}, the results from \cite{we}, \cite{ze} and \cite{ohzh} imply that for a generic family $\{J_t\}$  the corresponding family moduli space, $\widetilde{\mathcal{M}}$, of curves in the class of $u_1$, is a smooth finite dimension manifold with boundary corresponding to $t=0$ and $t=1$. Since all curves in the moduli space have the same area, $\omega(u_1)$, and, by assumption, $\omega(u_1)<a(J_0)$, there must be no curves in $\widetilde{\mathcal{M}}$ corresponding to $t=0$. Fixing a constraint point on each of the asymptotic Reeb orbits of $u_1$ and a suitable number of constraint points in the image of $u_1$, we can cut $\widetilde{\mathcal{M}}$ down to a manifold $\mathcal{M}$ of dimension one. Since $\mathcal{M}$ also has no curves corresponding to $t=0$, it must be noncompact. However, as shown in \cite[Lemma 3.7]{ho}, any degeneration, at say $t_c \in (0,1)$, will result in a building which includes a $J_{t_c}$-holomorphic curve $u_{t_c}$ with $\mathrm{index}(u_{t_c}) \ge \mathrm{end}(u_{t_c})$ and $\omega(u_{t_c}) < a(J_0)$. Repeating the argument on the interval $[0,t_c]$, one eventually arrives at such a curve at $t=0$, and hence a contradiction.


\end{proof}

Applying Lemma \ref{>r2} to product Lagrangian tori in $\RR^4$ we get the following useful Corollary.

\begin{corollary}\label{>r}
Suppose that $0< r\leq s$. Let $J$ be any tame almost complex structure on $\RR^4$ that is compatible with a parametrization of $L(r,s)$, and agrees with the standard complex structure, $J_0$, outside of a bounded domain.
If $J_{\infty}$ is the almost complex structure on $\RR^4 \smallsetminus L(r,s)$ resulting from stretching-the-neck along $L(r,s)$, then every $J_{\infty}$-holomorphic plane in $\RR^4 \smallsetminus L(r,s)$ with  deformation index 1 has symplectic area at least $r$. 
\end{corollary}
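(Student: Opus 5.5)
The plan is to use Lemma \ref{>r2}(2) to show that the quantity $a(J)$, the minimal area of a $J_\infty$-holomorphic genus zero curve $u$ with $\mathrm{index}(u)\ge \mathrm{end}(u)$, is the same for the given $J$ as for an explicitly understood structure. Since an index $1$ plane has one end, it satisfies $\mathrm{index}(u) \ge \mathrm{end}(u)$. Hence it suffices to show $a(J)\ge r$, and by Lemma \ref{>r2} we may compute $a$ with a convenient reference structure.

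First, set $(M,\omega)=(\RR^4,\omega_0)$ and $J_{\mathrm{fix}}=J_0$. Then $\omega_0(\cdot,J_0\cdot)$ is the flat Euclidean metric, which is complete with zero curvature and infinite injectivity radius. Choose a bounded open neighborhood $U$ of $L(r,s)$ containing the region where $J\neq J_0$. After a small adjustment, which is harmless since the statement only concerns the stretched structure near $L$, we may assume $J$ is $\omega_0$-compatible. Then $J\in\mathcal{J}_{\mathrm{fix}}(L,U)$.

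Second, choose a reference $J'\in \mathcal{J}_{\mathrm{fix}}(L,U)$ that is compatible with the same parametrization of $L(r,s)$ and is integrable and split near $L$, as in \cite{ho}. Concretely, $J'$ is obtained from the standard product structure on $\CC^2$, adjusted in a Weinstein neighborhood so that the neck $J'_\infty$ is $S^1\times S^1$-invariant. For this $J'$, the finite energy curves with ends on $L(r,s)$ are classified in \cite{ho}. Up to reparametrization, the index $1$ planes are the disks $\{|z_1|^2\pi\le r\}\times\{pt\}$ and $\{pt\}\times\{\pi|z_2|^2\le s\}$, of areas $r$ and $s$. All other genus zero curves with $\mathrm{index}\ge\mathrm{end}$ have area which is a positive combination of $r$ and $s$. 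This uses positivity of area together with the fact that area and Maslov index are determined by boundary classes. For a class $(k,l)$ the area is $kr+ls$ and the Maslov index is $2k+2l$. The index formula $\mathrm{index}=(n-3)(2-\mathrm{end})+\mu=-(2-e)+\mu$ with $e$ ends forces $\mu\ge 2$ once $\mathrm{index}\ge e$, i.e.\ $k+l\ge1$.

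The main obstacle is ruling out curves with $k+l\ge1$ but negative $k$ or $l$, which would have area below $r$. I would handle this by positivity of intersections with the $J'$-holomorphic axes $\{0\}\times\CC$ and $\CC\times\{0\}$, together with the foliations by $\{z_1=c\}$ and $\{z_2=c\}$, which exist for the split $J'$. This gives $k,l\ge0$. Then $r\le s$ gives area $\ge r$, so $a(J')=r$, and Lemma \ref{>r2}(2) yields $a(J)=r$, which proves the claim.
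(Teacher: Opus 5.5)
Your proposal is correct and follows the paper's own route. The paper also invokes Lemma \ref{>r2}(2) with $J_{\mathrm{fix}}=J_0$ and simply asserts that the reference value is $r$. Your computation supplies exactly the ``straightforward'' check it omits: index $\ge$ ends forces $k+l\ge 1$, positivity of intersections with the holomorphic axes $\{0\}\times\CC$ and $\CC\times\{0\}$ (whose intersection numbers with $\bar u$ are $k$ and $l$) forces $k,l\ge 0$, and so the area $kr+ls$ is at least $r$.

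Two small repairs are needed.
\begin{itemize}
\item Perturbing a merely tame $J$ to a compatible one is not harmless. It changes $J_\infty$ away from $L$, and hence changes which planes exist. Instead, note that the continuation argument behind Lemma \ref{>r2} works equally well for tame structures, as the paper tacitly assumes.
\item Since the parametrization of $L(r,s)$ is arbitrary, a reference $J'$ that is split near $L$ need not be compatible with it. Ask only that $J'$ be standard near the two axes, which is all your positivity argument uses.
\end{itemize}
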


\begin{proof} For a product Lagrangian $L(r,s)$ with $r\le s$ it is straightforward to show that $a(J_0) =r$.
    
\end{proof}

With Corollary \ref{>r} in hand, we can now complete the proof of Lemma \ref{lem:noB}.
Arguing by contradiction, we assume that $\bar{\mathcal{F}}_{L_1,L_2}(J)$ includes a building $\mathbf{B}$ of Type B. As $T_{\infty}$ is a simple (unbroken) holomorphic sphere, all the curves of $\mathbf{B}$ must be disjoint from $T_{\infty}$. A single curve of $\mathbf{B}$ intersects $S_{\infty}$, and so by monotonicity has area greater than $2r-1$.

If the two top level planes of $\mathbf{B}$ both avoid $S_{\infty}$, then we can apply Corollary \ref{>r} to conclude they both have area at least $r$. As limiting buildings have the same total area as curves in the Gromov foliation, namely $2r$, this gives a contradiction since the top level cylinder of $\mathbf{B}$ must also have positive area. Hence, we conclude that exactly one of the planes of $\mathbf{B}$ intersects $S_{\infty}$, and so by monotonicity has area greater than $2r-1$. By Corollary \ref{>r}, the other plane has area at least $r$, and fits together with the cylinder to give a Maslov $2$ disk in $\RR^4$ with boundary on either $L_1$ or $L_2$. This class has area strictly greater than $r$, which implies that it has area at least $s$. But then the area of the building $\mathbf{B}$ exceeds $s + (2r-1) \ge 2r$, which is again a contradiction. This completes the proof of Lemma \ref{lem:noB}.



\subsubsection{The features of a foliation of $X$ relative to $L_1 \cup L_2$}\label{features}
Assume that $J$  is of foliation type relative to $L_1$ and $L_2$. Discarding the middle and bottom  level curves of the Type A buildings in $\bar{\mathcal{F}}_{L_1,L_2}(J)$, one obtains a foliation 
$\mathcal{F}_{L_1,L_2}(J)$ of $X\smallsetminus (L_1 \cup L_2)$ with the following properties.

\bigskip

\begin{itemize}
\item[(F1)] The foliation $\mathcal{F}_{L_1,L_2}(J)$ has three kinds of leaves: unbroken ones consisting of a single closed $J_{\infty}$-holomorphic sphere in $X\smallsetminus (L_1 \cup L_2)$ of class $[S^2 \times \{pt\}]$, broken leaves along $L_1$ that consist of a pair of finite energy $J_{\infty}$-holomorphic planes in $X\smallsetminus (L_1 \cup L_2)$ that are asymptotic to $L_1$, and broken leaves along $L_2$ that consist of a pair of finite energy $J_{\infty}$-holomorphic planes in $X\smallsetminus (L_1 \cup L_2)$ that are asymptotic to $L_2$. 
  
\item[(F2)] By positivity of intersection, each leaf of $\mathcal{F}_{L_1,L_2}(J)$ intersects $S_{\infty}$ in exactly one point. For a broken leaf this means that exactly one of its planes intersects $S_{\infty}$. We denote the two families of broken planes asymptotic to $L_1$ by $\mathfrak{r}_0$ and $\mathfrak{r}_{\infty}$, and the two families of planes asymptotic to $L_2$ by  $\mathfrak{s}_0$ and $\mathfrak{s}_{\infty}$. We label these so that the planes in $\mathfrak{r}_{\infty}$ and $\mathfrak{s}_{\infty}$ intersect $S_{\infty}$. The planes in $\mathfrak{r}_0$ and $\mathfrak{s}_0$ are disjoint from $S_{\infty}$.
  
\item[(F3)] The ends of the two planes of a broken leaf are asymptotic to the same embedded geodesic in $L_i$, but with opposite orientations. We denote the homology class of this geodesic, equipped with the orientation determined by the plane that intersects $S_{\infty}$, by $\beta_i\in H_1(L_i; \ZZ)$. The class, $\beta_i$, is the same for all broken leaves of $\mathcal{F}_{L_1,L_2}(J)$ along $L_i$. It will be  referred to as the foliation class of $\mathcal{F}_{L_1,L_2}(J)$ for $L_i$.

\item[(F4)] Each point in $X \smallsetminus (L_1 \cup L_2)$ lies in a unique leaf of $\mathcal{F}_{L_1,L_2}(J)$, and each point of $L_i$ lies on a unique geodesic in the foliation class $\beta_i$. Hence, there is a well-defined map $p \colon X \to S_{\infty}$ which takes $z \in X \smallsetminus (L_1 \cup L_2)$ to the unique intersection of its leaf with $S_\infty$, and takes $z \in L_i$ to the intersection with $S_{\infty}$ of the broken leaf asymptotic to the unique geodesic through $z$ representing the foliation class $\beta_i$. 
The images $p(L_1)$ and $p(L_2)$ are closed, embedded, disjoint curves in $S_{\infty}$.

\end{itemize}

This picture can also be refined by {\it straightening} the leaves of $\mathcal{F}_{L_1,L_2}(J)$ near $L_1 \cup L_2$ following 
Proposition 5.16 of \cite{rgi} and Lemma 3.4 of \cite{hiker}.

\begin{lemma} Perturbing $J$ away from  $S_\infty \cup T_\infty \cup\,\mathcal{U}_1 \cup \,\mathcal{U}_2$ we may assume that the unbroken leaves of 
 $\mathcal{F}_{L_1,L_2}(J)$ that intersect $ \mathcal{U}_1 \cup \mathcal{U}_2$ do so along the annuli $$\{p_1=\delta, q_1= \theta, -\epsilon <p_2<\epsilon\}$$ for some $\theta \in S^1$ and $\delta \in (-\epsilon, \epsilon).$  As well, the planes of the broken leaves of $\mathcal{F}_{L_1,L_2}(J)$ through  $S_\infty$ intersect each $\mathcal{U}_i$ along the annulus $$\{p_1 = 0, q_1 =
\theta , 0 < p_2 < \epsilon \},$$ for some $ \theta \in S^1,$  and the planes of the broken leaves of $\mathcal{F}_{L_1,L_2}(J)$ through  $S_0$ intersect each $\mathcal{U}_i$ along the annulus $$\{p_1 = 0, q_1 =
\theta , -\epsilon < p_2 < 0 \},$$ for some $ \theta \in S^1.$
\end{lemma}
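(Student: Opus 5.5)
The plan is to adapt the straightening arguments of Proposition 5.16 of \cite{rgi} and Lemma 3.4 of \cite{hiker}. These arguments are local near the Lagrangians, so the nonmonotonicity of $L_1$ and $L_2$ should only enter through the index and area bookkeeping, and that has already been handled by Lemma \ref{lem:noB}.

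First, I would build a model foliation inside each Weinstein neighborhood $\mathcal{U}_i$. For $J_\infty$ adapted to $\psi_i$, the annuli $\{p_1=\delta, q_1=\theta\}$ are $J_\infty$-holomorphic, because their tangent planes are spanned by $\partial_{q_2}$ and $\partial_{p_2}$, and $J_\infty$ maps $\partial_{q_2}$ to a multiple of $\partial_{p_2}$. The half-annuli with $\delta=0$ and $\pm p_2>0$ are likewise holomorphic, and their ends are asymptotic to the geodesics $\{q_1=\theta,\ p=0\}$ in the class $\pm\partial_{q_2}$. Using (F3), I would choose the parametrization $\psi_i$ so that the foliation class satisfies $\beta_i=[\partial_{q_2}]$, with orientation fixed by the plane meeting $S_\infty$. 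Then the asymptotics of the broken planes match the model, and in particular the planes through $S_\infty$ match the half-annuli with $p_2>0$.

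Second, I would compare the model with the actual foliation. By the asymptotic analysis of finite energy planes (\cite{rgi}), each broken plane is, near its end, a graph over the model half-annulus that converges exponentially to the trivial cylinder over its geodesic. After shrinking $\epsilon$, each unbroken leaf near $L_i$ is then a small graph over a model annulus $\{p_1=\delta, q_1=\theta\}$. The leaves are disjoint and the local intersection numbers with the model annuli are fixed. So the map sending an actual leaf to the parameters $(\delta,\theta)$ of the model leaf it approximates should be a homeomorphism near $L_i$. This gives an isotopy from the actual leaves to the model leaves, supported in a slightly larger collar $\mathcal{U}_i'\smallsetminus\mathcal{U}_i$.

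Third, I would realize this isotopy by a diffeomorphism $\Phi$ that is the identity near $S_\infty\cup T_\infty$ and near $L_1\cup L_2$. I would then set $J'=\Phi_*J$ on the region where the leaves are being straightened, and interpolate so that $J'$ keeps the model leaves complex and stays tame. Tameness should survive for $\epsilon$ small, since the leaves are $C^1$-close to symplectic annuli. Because $J'$ is unchanged near $S_\infty$, $T_\infty$ and the $L_i$, Lemma \ref{lem:noB} still applies to $J'$. By uniqueness of the foliation (every leaf class sphere through a point is unique, by positivity of intersections), the new relative foliation is exactly the straightened one.

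I expect the main obstacle to be the required form of the perturbation. The statement asks for a perturbation of $J$ away from $\mathcal{U}_1\cup\mathcal{U}_2$, yet the conclusion concerns the leaves inside $\mathcal{U}_i$. The way around this is to use the adapted neck structure. In the stretched limit, $J_\infty$ on $\mathcal{U}_i\smallsetminus L_i$ is already the model structure. I would therefore argue that each leaf is confined to the model annuli as soon as it enters $\mathcal{U}_i$, by maximum principle and uniqueness arguments for the model foliation inside $\mathcal{U}_i$. With that, all the correction happens in a collar outside $\mathcal{U}_i$, where leaves entering at the boundary of $\mathcal{U}_i$ are matched to model annuli. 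Checking that this matching is compatible with tameness, using the bounded $C^1$-distance from the asymptotic estimates, is the technical heart of the argument.
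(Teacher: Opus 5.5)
Your overall strategy is the route the paper relies on: model annuli in the Weinstein neighbourhood, exponential convergence of the leaves to them near $L_i$, a new $J$ making an interpolated foliation holomorphic, and uniqueness of the relative foliation. The paper gives no argument of its own beyond citing Proposition~5.16 of \cite{rgi} and Lemma~3.4 of \cite{hiker}. However, your resolution of what you correctly call the main obstacle does not work.

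It is not true that a leaf is confined to a model annulus as soon as it enters $\mathcal{U}_i$ just because $J_\infty$ is the adapted model there. Holomorphic curves in $\mathcal{U}_i$ are not determined by the local structure. Already for the flat structure $J\partial_{q_j}=-\partial_{p_j}$, the neighbourhood is a product of annuli in $(\CC^*)^2$, and every graph $w_1=g(w_2)$ is holomorphic. The leaves of $\mathcal{F}_{L_1,L_2}(J)$ are pieces of global curves determined by $J$ outside $\mathcal{U}_i$. As your own second step says, they only converge to the model annuli exponentially in the neck. Positivity of intersections with the model foliation can at best propagate straightness inward once the leaves are already straight along $\partial\mathcal{U}_i$; it cannot create it.

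Your third step never makes them straight there. If the isotopy is supported in $\mathcal{U}_i'\smallsetminus\mathcal{U}_i$, or $\Phi$ is the identity near $L_i$, then by your uniqueness argument the new foliation is $\Phi(\mathcal{F}_{L_1,L_2}(J))$. That foliation agrees with the old, unstraightened one wherever $\Phi=\mathrm{id}$, in particular along $\partial\mathcal{U}_i$ and near $L_i$. So, as written, the leaves near $L_i$ are never straightened.

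The missing idea is to prescribe the straight foliation on an entire smaller neighbourhood of $L_i$, right up to $L_i$. The interpolation is then done in a thin shell inside the original Weinstein neighbourhood, where the asymptotics give the closeness you need. Concretely:

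\begin{enumerate}
\item Choose $\epsilon_1<\epsilon_0\ll\epsilon$ so that on $\Psi_i(\mathcal{U}(\epsilon_0))$ every leaf is $C^1$-close, in the cylindrical metric, to a model annulus or half-annulus.
\item Let $\mathcal{G}$ equal $\mathcal{F}_{L_1,L_2}(J)$ outside $\Psi_i(\mathcal{U}(\epsilon_0))$ and equal the model foliation on $\Psi_i(\mathcal{U}(\epsilon_1))$. In the shell, interpolate by cut-off graphs; by the closeness these are symplectic and still form a foliation.
\item Keep $J'=J_\infty$ outside the shell; on $\Psi_i(\mathcal{U}(\epsilon_1))$ the model annuli are already $J_\infty$-holomorphic. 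On the shell, take any tame $J'$ that makes $T\mathcal{G}$ complex and agrees with $J_\infty$ near the shell's boundary.
\end{enumerate}

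Uniqueness then gives $\mathcal{F}_{L_1,L_2}(J')=\mathcal{G}$. The lemma holds with $\mathcal{U}_i$ replaced by $\Psi_i(\mathcal{U}(\epsilon_1))$, and ``away from $\mathcal{U}_i$'' must be read for this shrunken neighbourhood. No maximum principle is needed.

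A smaller point: you cannot pick $\psi_i$ ``using (F3)'' after the foliation is built, since $J$ must already be compatible with $\psi_i$. Instead, note that the foliation class is known in advance. Up to sign it is the unique class in $H_1(L_i;\ZZ)$ of Maslov index $2$ and area $r$, unique because $r<s$. So $\psi_i$ can be fixed with $[\partial_{q_2}]=\pm\beta_i$ from the start.
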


\begin{remark}\label{rem:reg}
 Using the regularity assumption on $J_\infty$ we can also assert that each sphere and plane that appears in a leaf of $\mathcal{F}_{L_1,L_2}(J)$ is the image of regular somewhere injective $J_{\infty}$-holomorphic curve.
\end{remark}

\begin{remark}\label{rem:flex}
The assumption that both $L_1$ and $L_2$ are symplectomorphic to $L(r,s)$ is not necessary here. The argument, in particular the use of Corollary \ref{>r}, goes through without change if $L_1$ is replaced by  $L(r, s-1+t)$ for any $t \in [1, b-s+1).$ This fact is crucial to the proof of Proposition \ref{prop:proj} below.
\end{remark}

\subsection{Completion of the proof of Proposition \ref{prop:proj}}
Choose an $\Omega$-tame almost complex structure $J$ on $X$ that is standard in some open neighborhood $U_\infty$ of $S_{\infty} \cup T_{\infty}$, is compatible with the parameterizations of $L_1$ and $L_2$, and whose limit, $J_{\infty}$, is regular for somewhere injective curves. By Lemma \ref{lem:noB}, we have the foliation $\mathcal{F}_{L_1,L_2}(J)$ of $X$ relative to $L_1 \cup L_2$. The properties of its projection map $p \colon X \to  S_{\infty}$, as described above in (F4), imply all but the final assertion of Proposition \ref{prop:proj}.  It remains to verify that the component of $S_\infty \smallsetminus p(L_1)$ containing $p(T_\infty)$,  is contained in the component of $S_\infty \smallsetminus p(L_2)$ containing $p(T_\infty)$. To prove this
we will use foliations of $X$ relative to $L[t]$ and $L[t] \cup L_2$  where $$L[t]= L(r, s-1+t)$$ and $t \in [1, b -s +1).$
The crucial fact to be exploited is that, by Assumption \ref{assump}, $L_2$ is disjoint from all of the $L[t]$.

\bigskip

\subsubsection{Stretching in a family.}
Let $\epsilon>0$ be small enough so that for all $t \ge b-s+1 -\epsilon$ the torus $L[t]$ lies in the neighborhood  $U_\infty$ where $J$ is standard. Setting $c-b-s+1-\epsilon$, let $\{J_t\}$ for $t \in [1,c]$ be a family of $\Omega$-compatible almost complex structures such that 
\begin{itemize}
    \item $J_1=J$,\\
    \item each $J_t$ is equal to $J$ in $U_\infty.$\\
    \item each $J_t$ is compatible with the parameterizations of $L[t]$ and $L_2$.
\end{itemize}

\medskip
In what follows, we will either stretch the $J_t$ along $L[t] \cup L_2$ to get a limiting almost complex structure $J_{t, \infty}$, or we will stretch the $J_t$ along just $L[t]$ to get a limiting almost complex structure $J'_{t, \infty}$. 

\begin{definition}\label{regulart}
 We say that $t \in [1,c]$ is regular if $J_{t, \infty}$ and $J'_{t, \infty}$  are both regular for somewhere injective curves.
\end{definition}

If $t$ is regular then we get a foliation of $X$ relative to $L[t] \cup L_2$, which we denote by $\mathcal{F}(J_t)$, and 
a foliation of $X$ relative to $L[t]$ which we denote by $\mathcal{F}'(J_t)$. We denote the corresponding projection maps by $p_t \colon X \to S_\infty$ and $p'_t \colon X \to S_\infty$, respectively.

\medskip

We may assume that $t=1$ and $t=c$ are regular and that the set of  $t \in (1,c)$ which are not regular is finite.
We now show that on intervals of regular $t$ the families of foliations $\mathcal{F}(J_t)$ and $\mathcal{F}'(J_t)$ vary smoothly.


\medskip

\begin{lemma}\label{lem:fam}
Suppose that $t \in (1,c)$ is regular. For sufficiently small $\delta>0$,  the points $\tau \in (t - \delta, \, t+\delta)$ are all regular and the corresponding foliations $\mathcal{F}(J_\tau)$ vary smoothly over this interval. 
\end{lemma}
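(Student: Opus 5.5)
The plan is to combine openness of regularity with the implicit function theorem applied to the regular, somewhere injective leaves of the relative foliation. First I will show that regularity of $t$ is an open condition. By Remark \ref{rem:reg}, every sphere and plane in a leaf of $\mathcal{F}(J_t)$ and of $\mathcal{F}'(J_t)$ is the image of a regular, somewhere injective curve. Genus zero curves in a four-manifold have index controlled by area and Maslov data, which are fixed along the family. So the failure of regularity of $J_{\tau,\infty}$ for curves of the relevant classes and areas is a closed condition in $\tau$. More concretely, I will use that $t$ is regular and that the non-regular set in $(1,c)$ is finite by genericity of the family. Then some interval $(t-\delta,t+\delta)$ contains no non-regular parameter, so every $\tau$ in it is regular.

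Second, I will build a parametrized moduli space. Since the $J_\tau$ are compatible with the parameterizations of $L[\tau]\cup L_2$, I will pull everything back by a smooth family of symplectomorphisms $\Psi_\tau$, supported near the neighborhood of $L[t]$ and disjoint from $L_2$ and $U_\infty$, with $\Psi_\tau(L[t])=L[\tau]$. This is possible because the tori $L[\tau]$ move by a Lagrangian isotopy through product tori, which extends to a Hamiltonian isotopy. This fixes the Lagrangians, so the stretched structures $\Psi_\tau^*J_{\tau,\infty}$ form a smooth family on a fixed manifold with fixed cylindrical ends.

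Third, I will consider the parametrized moduli spaces over $(t-\delta,t+\delta)$. These are unbroken spheres in the class $(1,0)$ with one marked point, and index one planes asymptotic to $L[\tau]$ or $L_2$ in the foliation classes. Each curve at a regular $\tau$ is cut out transversally, so by the parametric implicit function theorem (as in \cite{we}, \cite{ohzh}) these spaces are smooth manifolds fibering submersively over the interval. Gromov--Hofer compactness, together with Lemma \ref{lem:noB} and Remark \ref{rem:flex} applied at each $\tau$, shows that the limits are only leaves or Type A buildings. Hence the evaluation and asymptotic maps give, for each $\tau$, a foliation, and their smooth dependence on $\tau$ yields smoothly varying foliations and projections $p_\tau$. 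The same argument applies to $\mathcal{F}'(J_\tau)$.

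The main obstacle is smoothness across the broken leaves. Near a Type A building, the family of nearby unbroken spheres degenerates, so smoothness there needs gluing. I expect to handle this using the straightening lemma, uniformly in $\tau$. After the perturbation, leaves near $\mathcal{U}_i$ are the explicit annuli, so smoothness near $L[\tau]\cup L_2$ is automatic, and away from these neighborhoods the implicit function theorem argument applies.
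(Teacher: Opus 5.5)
Your second step fails as written. You ask for symplectomorphisms $\Psi_\tau$ with $\Psi_\tau(L[t])=L[\tau]$, supported near $L[t]$ and away from $L_2\cup U_\infty$. You justify this by saying the isotopy through product tori extends to a Hamiltonian isotopy. It does not. The isotopy $L[\sigma]$, $\sigma\in[t,\tau]$, has flux $\tau-t\neq 0$ on the class of the $z_2$-circle. This is not an artifact of the chosen path: in $\RR^4$ the Maslov index $2$ classes of $L[\tau]=L(r,s-1+\tau)$ have areas $r+\ZZ(s-1+\tau-r)$, and this set changes with $\tau$. So $L[t]$ and $L[\tau]$ are not Hamiltonian isotopic in $\RR^4$. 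Your support lies in $X\smallsetminus U_\infty$, which sits inside a copy of $\RR^4$ where compactly supported symplectic isotopies are Hamiltonian, so no such $\Psi_\tau$ exists.

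The repair is the paper's version of Fukaya's trick: drop symplecticity and use diffeomorphisms $f_\tau$ with three properties.
\begin{enumerate}
\item Near the torus, $f_\tau$ is the composite of the Weinstein charts of $L[t]$ and $L[\tau]$, so it intertwines $\psi_t$ and $\psi_\tau$ and hence the cylindrical ends.
\item $f_\tau$ is the identity near $S_\infty\cup T_\infty\cup L_2$.
\item $\|f_\tau-\mathrm{id}\|_{C^1}=O(|\tau-t|)$.
\end{enumerate}
The flux obstruction is absorbed in the non-symplectic cutoff region. Then $(f_\tau)_*J_\tau$ remains $\Omega$-tame for $\tau$ near $t$, because tameness is an open condition. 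Its stretched limit is adapted to the fixed parametrizations, which is all your third step actually needs.

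With this change your argument works. It differs from the paper mainly in where transversality comes from. You use the standing assumption that only finitely many parameters are nonregular, together with the parametric implicit function theorem. That assumption alone already gives the first assertion of the lemma; your remark that nonregularity is a closed condition is neither justified nor needed. The paper instead invokes automatic transversality \cite[Theorem 1]{we} for the embedded spheres and index one planes. This holds for every $J$, so it does not depend on genericity.

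The gluing issue you flag as the main obstacle is not part of what is claimed or later used. The paper only asserts that the curves of the foliations vary smoothly. Each plane of a broken leaf is itself a regular index one plane in the fixed stretched manifold $X\smallsetminus(L[t]\cup L_2)$, and it deforms by the same implicit function theorem within its $S^1$ Morse--Bott family. Applying the straightening lemma uniformly in $\tau$ would also require altering the given family $J_\tau$, which the lemma does not allow.
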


\begin{proof}
We use Fukaya's trick for moving regular curves with Lagrangian boundary as the boundary moves. Given $\delta > 0$, consider a family of diffeomorphisms $f_{\tau}$ of $X$, for $\tau \in [t - \eta, \, t+\eta]$, 
such that 
\medskip

\begin{itemize}
    \item $f_{\tau}(L[\tau]) =L[t]$ \\
    \item $f_{\tau} \circ \psi_{\tau} = \psi_{t}$ \\
    \item $f_{\tau}$ agrees with the identity map in a neighborhood of $S_{\infty}\cup T_{\infty} \cup L_2$ \\
    \item $\|f_{\tau}\|_{C^1}$ is of order $1$ in $\tau -t$.\\
\end{itemize}
Consider the family of almost complex structures on $X$ given by $$\widetilde{J}_{\tau} = (f_{\tau})_*J_{\tau}.$$ By the second and third conditions these are compatible with the parameterizations $\psi_{t}$ on $L[t]$ and $\psi_2$ on $L_2$, and their stretched limits $$\widetilde{J}_{\tau, \infty} = (f_{\tau})_*J_{\tau,\infty} $$  on $X \smallsetminus (L[t] \cup L_2)$ are also adapted to these parameterizations.\footnote{Each $f_{\tau}$ can also be viewed as a diffeomorphism from $X \smallsetminus (L[\tau] \cup L_2)$ to $ X \smallsetminus (L[t] \cup L_2)$.}

By automatic regularity for holomorphic spheres and holomorphic planes, see \cite[Theorem 1]{we}, for sufficiently small $\delta>0$ our $J_{t, \infty}$-holomorphic foliation deforms to give $\widetilde{J}_{\tau, \infty}$-holomorphic relative foliations $\mathcal{F}(\widetilde{J}_\tau)$ of $X$ relative to $L[t] \cup L_2$ for $\tau \in (t-\delta, t+\delta)$. Moreover, (the curves of) these foliations vary smoothly with $\tau$. The map $f^{-1}_{\tau}$ then takes the relative foliations $\mathcal{F}(\widetilde{J}_\tau)$ to the relative foliation $\mathcal{F}(J_\tau)$ of $X$ relative the $L[\tau] \cup L_2$.
\end{proof}

An identical argument yields the following. 

\begin{lemma}\label{lem:fam'}
Suppose that $t \in (1,c)$ is regular. For sufficiently small $\delta>0$,  the points $\tau \in (t - \delta, \, t+\delta)$ are all regular and the corresponding foliations $\mathcal{F}'(J_\tau)$ vary smoothly over this interval. 
\end{lemma}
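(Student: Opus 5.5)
The plan is to repeat the proof of Lemma \ref{lem:fam} verbatim, except that we stretch only along $L[\tau]$ and never along $L_2$. Fix a regular $t\in(1,c)$.

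\emph{Moving the boundary.} For $\tau$ near $t$, choose diffeomorphisms $f_\tau$ of $X$ with the following properties:
\begin{itemize}
\item $f_\tau(L[\tau])=L[t]$ and $f_\tau\circ\psi_\tau=\psi_t$;
\item $f_\tau$ is the identity near $S_\infty\cup T_\infty$;
\item $\|f_\tau - \mathrm{id}\|_{C^1}=O(|\tau-t|)$.
\end{itemize}
Such maps exist because the tori $L[\tau]=L(r,s-1+\tau)$ move smoothly in $\tau$ and stay away from the neighborhood $U_\infty$ for $\tau\le c$. We no longer need $f_\tau$ to fix $L_2$, since $L_2$ plays no role in $\mathcal{F}'$.

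Set $\widetilde J_\tau=(f_\tau)_*J_\tau$. Then:
\begin{itemize}
\item $\widetilde J_\tau$ is compatible with the fixed parameterization $\psi_t$ of $L[t]$;
\item its neck-stretched limit along $L[t]$ is $\widetilde J'_{\tau,\infty}=(f_\tau)_*J'_{\tau,\infty}$, which is adapted to $\psi_t$ on $X\smallsetminus L[t]$;
\item $\widetilde J'_{\tau,\infty}$ converges smoothly to $J'_{t,\infty}$ as $\tau\to t$, in the cylindrical-end sense.
\end{itemize}

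\emph{Deforming the foliation.} Since $t$ is regular, $\mathcal{F}'(J_t)$ is a foliation of $X$ relative to $L[t]$. Its leaves are the following, all of which are regular by Remark \ref{rem:reg}:
\begin{itemize}
\item embedded $J'_{t,\infty}$-spheres in the class $(1,0)$;
\item pairs of index-1 planes asymptotic to geodesics of $L[t]$ in the foliation class.
\end{itemize}
Wendl's automatic transversality \cite[Theorem 1]{we} applies to these embedded index-2 spheres and index-1 planes in dimension four, so regularity is an open condition. By the implicit function theorem, together with compactness of the parameter space of leaves ($S_\infty$ together with the circle of broken leaves), every leaf persists as a $\widetilde J'_{\tau,\infty}$-holomorphic curve for $|\tau-t|<\delta$, depending smoothly on $\tau$.

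Positivity of intersections and the intersection numbers with $S_\infty$ and $T_\infty$ are unchanged under this deformation. As in (F1)--(F4), the deformed curves therefore still form a foliation of $X\smallsetminus L[t]$ relative to $L[t]$, with the same foliation class. Pushing forward by $f_\tau^{-1}$ gives the foliations $\mathcal{F}'(J_\tau)$ relative to $L[\tau]$, and these vary smoothly in $\tau$.

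\emph{Regularity of nearby $\tau$.} Regularity of $J_{\tau,\infty}$ and $J'_{\tau,\infty}$ for somewhere injective curves is an open condition under small perturbation, for curves of bounded energy. Since the set of irregular $\tau$ is finite, shrinking $\delta$ if necessary makes every $\tau$ in $(t-\delta,t+\delta)$ regular.

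\emph{Main obstacle.} The delicate step is the claim that the deformed curves still fill out a genuine foliation: no leaves may be lost or appear, and they must remain disjoint and cover $X$. I would handle this as in Lemma \ref{lem:fam}. By Lemma \ref{lem:noB} (applied via Remark \ref{rem:flex} to $L[\tau]$ alone), the full foliation $\mathcal{F}'(J_\tau)$ exists for each regular $\tau$. The deformed curves are $\widetilde J'_{\tau,\infty}$-holomorphic leaves of the correct classes, and uniqueness of the leaf through each point, which follows from positivity of intersections, identifies them with the leaves of $\mathcal{F}'(J_\tau)$.
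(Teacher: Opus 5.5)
Your proposal is correct and follows the paper's route: the paper proves this lemma by repeating the Fukaya-trick argument of Lemma \ref{lem:fam}. That argument uses diffeomorphisms $f_\tau$ carrying $L[\tau]$ to $L[t]$, pushes forward $J_\tau$, deforms the regular spheres and planes by Wendl's automatic transversality, and pulls back by $f_\tau^{-1}$; your changes (not requiring $f_\tau$ to fix $L_2$, and getting regularity of nearby $\tau$ from finiteness of the nonregular set) are harmless. One small correction: the tori $L[\tau]$ need not stay out of $U_\infty$ (in fact $L[c]\subset U_\infty$ by the choice of $c$), but all you need, and what holds, is that they avoid $S_\infty\cup T_\infty$, so $f_\tau$ can be the identity near $S_\infty\cup T_\infty$.
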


To manage the behavior of these relative foliations at the  nonregular points in $[1,c]$ it is helpful to shift from the language of foliations to the language of moduli spaces of pseudo-holomorphic curves. Abusing notation, we will denote by $\mathfrak{r}_{0,t}$ the moduli space of $J_{t, \infty}$-holomorphic planes asymptotic to $L[t]$ which are disjoint from $S_{\infty}$, have Maslov class $2$ and area $r$. If $t$ is regular,  then $\mathfrak{r}_{0,t}$ is an $S^1$-family of classes each represented by a plane asymptotic to a geodesic representing negative the foliation class of $L[t]$. Moreover, each class in $\mathfrak{r}_{0,t}$ has a representative plane that is one half of a broken leaf of the foliation $\mathcal{F}(J_t)$. 
%
The moduli space 
$\mathfrak{r}_{\infty,t}$ is defined analogously, as are the moduli spaces $\mathfrak{s}_{0,t}$ and $\mathfrak{s}_{\infty,t}$ of planes asymptotic to $L_2$. If we stretch just along $L[t]$ then we have moduli spaces $\mathfrak{r}'_{0,t}$ and $\mathfrak{r}'_{\infty,t}$. Finally, we can define family moduli spaces $\tilde {\mathfrak{r}}_0$, $\tilde {\mathfrak{r}}_{\infty}$, $\tilde {\mathfrak{s}}_0$, $\tilde {\mathfrak{s}}_{\infty}$, $\tilde {\mathfrak{r}}'_0$, $\tilde {\mathfrak{r}}'_{\infty}$, where
$$\tilde {\mathfrak{r}}_0= \{(u,t) \mid u \in \mathfrak{r}_{0,t},\, t\in [1,c]\},$$
and the other spaces are defined analogously.

\bigskip

Lemma \ref{>r2} can be used to show that three of these family moduli spaces are compact.

\begin{lemma}\label{lem:fam2} Let $t_k$ be a regular sequence converging to a $t \in [1,c]$ that is not regular.

\begin{enumerate}

\item 
Suppose that $u_k$ is a sequence of $J'_{t_k, \infty}$-holomorphic planes in $\mathfrak{r}'_{0,t_k}$ ($\mathfrak{r}'_{\infty,t_k}$). Any subsequence of $u_k$ which converges to a holomorphic building must in fact converge to a single  $J'_{t, \infty}$-holomorphic plane in $X \smallsetminus L[t]$. Hence, the family moduli spaces $\tilde {\mathfrak{r}}'_0$ and $\tilde {\mathfrak{r}}'_{\infty}$ are compact.  

\item 
Suppose that $v_k$ is a sequence of $J_{t_k, \infty}$-holomorphic planes in $\mathfrak{s}_{0,t_k}$. Any subsequence of $v_k$ which converges to a holomorphic building must in fact converge to a single  $J_{t, \infty}$-holomorphic plane in $X \smallsetminus L_2$.
Hence, the family moduli space $\tilde {\mathfrak{s}}_0$ is compact.
Moreover, for each $t$, there exists a unique plane in $\mathfrak{s}_{0,t}$ asymptotic to each geodesic in $L[t]$ in the negative  of the foliation class.
\end{enumerate}
\end{lemma}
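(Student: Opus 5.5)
The plan is to argue by SFT compactness together with an area/index count, using Lemma \ref{>r2} (in the form of Corollary \ref{>r}) to rule out every nontrivial breaking.

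Take the sequence $u_k \in \mathfrak{r}'_{0,t_k}$. Each is a $J'_{t_k,\infty}$-holomorphic plane of Maslov index $2$ (deformation index $1$) and area $r$, disjoint from $S_\infty$. After passing to a subsequence it converges to a building $\mathbf{B}$ in $X \smallsetminus L[t]$, together with levels in the symplectization of $S^*L[t]$ and in $T^*L[t]$. Because the planes avoid $S_\infty$ and $J_t = J$ near $S_\infty$, positivity of intersections shows that no component of $\mathbf{B}$ meets $S_\infty$. Hence every top level component lies in $X \smallsetminus (S_\infty \cup L[t])$. Since $J_t=J$ on $U_\infty$, a neighbourhood of $S_\infty \cup T_\infty$, this region can be identified, after the standard deflation, with a region of $\RR^4$ compactifying $P(1,b)$ in the second factor. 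The relevant Lagrangian is the product torus $L(r,s-1+t)$ with $r \le s-1+t$.

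First, the top level of $\mathbf{B}$ must contain at least one component that is a plane, or more generally a curve with $\mathrm{index} \ge \mathrm{end}$. I will run the usual genus-zero index bookkeeping, as in Proposition 3.5 of \cite{rgi}. The total index is $1$, curves in the symplectization have nonnegative index, and for generic $J'_{t,\infty}$ (regularity along the 1-parameter family, with at most index $-1$ curves appearing at isolated nonregular $t$) the leaves of the building tree include a top component $w$ with $\mathrm{index}(w) \ge \mathrm{end}(w)$. Lemma \ref{>r2}(2), applied with $J_{\mathrm{fix}}$ the fixed structure outside a neighbourhood of $L[t]$, shows that $a(J'_{t})$ equals the value for the standard structure, namely $r$ by Corollary \ref{>r}. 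So $\omega(w) \ge r$. The total area is $r$ and every other nonconstant top component has positive area, so $w$ is the only top component. The lower levels then have zero $\omega$-area. The symplectization and $T^*L[t]$ levels, glued to a single plane, must consist of trivial cylinders, and a plane in $T^*L[t]$ would have to be constant. So the limit is a single $J'_{t,\infty}$-plane. Compactness of $\tilde{\mathfrak{r}}'_0$ then follows from Gromov--SFT compactness over the compact parameter interval $[1,c]$. The same argument handles $\mathfrak{r}'_{\infty,t_k}$: there the plane hits $S_\infty$ once, and the component through $S_\infty$ is the unique one there. I would phrase the area bound by adding the planes' area to the known monotonicity bound $2r-1$, or simply apply Lemma \ref{>r2} to the closed manifold $X$ via part (1).

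For part (2) I repeat the argument with $v_k \in \mathfrak{s}_{0,t_k}$. Now the stretching is along $L[t_k] \cup L_2$, so a limit building could break along $L[t]$ as well as along $L_2$. Here I would apply Lemma \ref{>r2} to both tori, using Remark \ref{rem:flex}: every top component of index $\ge$ ends that is asymptotic to either $L[t]$ or $L_2$ has area at least $r$. That forces a single top component. A component with ends on $L[t]$ and area less than $r$ with index below its number of ends is excluded by the total index count, as before. The uniqueness statement, one plane per geodesic in minus the foliation class, comes from compactness plus automatic regularity (\cite{we}). The evaluation map from $\mathfrak{s}_{0,t}$ to the space of geodesics is a proper local diffeomorphism of degree one. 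Its degree is constant in $t$ by compactness of $\tilde{\mathfrak{s}}_0$, and it equals one at regular $t$ by the foliation property (F3)--(F4). Finally, positivity of intersections with leaves of the relative foliation rules out two distinct planes on the same geodesic.

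The main obstacle is the index bookkeeping at nonregular $t$, where curves of index $-1$ may exist. I expect the cleanest route is to note that the only way the area can split is into pieces each of area at least $r$ whenever $\mathrm{index} \ge \mathrm{end}$. Any configuration of total index $1$ and total area $r$ must then put all the index into one such piece, and I would settle this by a direct case analysis of genus-zero trees.
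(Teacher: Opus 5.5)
Part (1) of your plan is essentially the paper's argument. The genuine gap is in part (2). There you assert that every top component with index $\ge$ end asymptotic to $L[t]$ or $L_2$ has area at least $r$, by applying Lemma \ref{>r2} ``to both tori.'' That lemma concerns curves all of whose ends lie on a \emph{single} torus, and its invariance argument needs a reference almost complex structure in which the minimum can be computed. The $v_k$ are stretched along $L[t_k]\cup L_2$, so the limit may a priori contain a top component with ends on both tori, and nothing in your argument bounds the area of such a curve. Concretely, let $f_1,f_2$ be the standard basis of $H_1(L[t];\ZZ)$ and let $e_1,e_2$ be the image under $\phi$ of the standard basis of $H_1(L(r,s);\ZZ)$. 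A cylinder in $X\smallsetminus(S_\infty\cup T_\infty)\cong P(2r,b)$ with boundary classes $2f_1-f_2$ and $e_2-e_1$ has Maslov index $2$, so its index equals its number of ends. Its area is $r-(t-1)\in(0,r)$ for $t\in(1,1+r)$. Since $L_2$ is an arbitrary Hamiltonian image, there is no evident reference structure for the pair $(L[t],L_2)$ from which to run the invariance argument. Your uniqueness argument inherits this gap, because it rests on compactness of $\tilde{\mathfrak{s}}_0$.

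The missing idea is to use that $t_k$ is regular. Then $v_k$ is half of a broken leaf of $\mathcal{F}(J_{t_k})$, so it is disjoint from every plane in $\mathfrak{r}_{0,t_k}\cup\mathfrak{r}_{\infty,t_k}$. Positivity of intersections then forces every curve of the limit building lying in $T^*L[t]$ to cover a cylinder in the foliation class of $L[t]$. Cutting there splits the building into two nontrivial connected sub-buildings, one of which has all its unmatched ends covering multiples of the foliation class. That piece has area at least $r$ and the other has positive area, which contradicts total area $r$. Lemma \ref{>r2} is needed only for breaking purely along $L_2$, where the minimum is $r$ because $L_2$ is Hamiltonian isotopic to $L(r,s)$ in $\RR^4$.

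Two smaller points on part (1). First, the index bookkeeping you flag as the main obstacle needs no genericity. Index $\ge$ end is equivalent to Maslov index $\ge 2$, lower-level curves have Maslov index $0$, and the even Maslov indices of the top components sum to $2$. Second, you must use that the limit also avoids $T_\infty$. Your fallback of applying Lemma \ref{>r2}(1) on the closed $X$ gives the wrong constant for $t$ near $c$, because $L[t]$ bounds a Maslov $2$ disk through $T_\infty$ of area $b-s+1-t<r$. The bound $2r-1$ near $S_\infty$ is below $r$, so it does not help either.
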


\begin{proof}

It follows from the compactification results of \cite{behwz} that to prove the first assertion  it suffices to show that the only possible top level curve in the limiting building is a plane of $\Omega$-area r.  With this, Lemma \ref{>r2} implies that it suffices to show that for some $\Omega$-compatible almost complex structure $J_0$ compatible with $L[t] =L(r, s-1+t)$, the minimal $\Omega$-area of $J_0$-holomorphic curves $u$ with $\mathrm{index}(u) \ge \mathrm{end}(u)$ is $r$. This is easily checked when $J_0$ is the standard product complex structure.

\medskip

To prove (2), we first note that since $L_2$ is Hamiltonian isotopic to the product torus $L(r,s)$ in $\RR^4$, there exists an almost complex structure on $\RR^4$ for which the minimal area of holomorphic curves is $r$. Then, since $X \setminus (S_{\infty} \cup T_{\infty})$ can be identified with a subset of $\RR^4$, we see from Lemma \ref{>r2} that curves $v_k$ cannot degenerate to nontrivial buildings that are broken only along $L_2$.

Suppose then that a limiting building, $\mathbf{B}$, of the curves $v_k$ is broken along $L[t]$. In other words $\mathbf{B}$ contains curves in $T^* L[t]$. Since the $t_k$ are regular, each $J_{t_k}$ is of foliation type and the curves in $\mathfrak{s}_{0,t_k}$ belong to the limiting foliation $\mathcal{F}(J_{t_k})$. In particular, they are disjoint from the planes in $\mathfrak{r}_{0,t_k}$ and $\mathfrak{r}_{\infty,t_k}$. It then follows, from positivity of intersection, that the curves of $\mathbf{B}$ mapping to $T^* L[t]$ must cover cylinders in the foliation class of $L[t]$. Given such a curve, the building $\mathbf{B}$ can be divided into two nontrivial connected sub-buildings, one of which has the property that all its unmatched ends cover geodesics in multiples of the foliation class in $L[t]$. This sub-building will have area at least $r$. Since the other sub-building is notrivial it will have positive area. As the total area of $\mathbf{B}$ is $r$ we have a contradiction.
\end{proof}

Now we can complete the proof of Proposition \ref{prop:proj}. Arguing by contradiction, we assume that 
\begin{equation}\label{assumpa}
p(L_2) \subset C \smallsetminus p(T_\infty).    
\end{equation}
We note that $J'_{t,\infty}$ agrees with $J_{t,\infty}$ away from a neighborhood of $L_2$. Therefore any curves of $\mathcal{F}(J_t)$ which avoid a neighborhood of $L_2$, and in particular are unbroken along $L_2$, will also form leaves of the foliation $\mathcal{F}'(J_t)$.


Our choice of $c$ and the condition that $J$ is standard in $U_\infty$ implies that  $p'_t(T_\infty) = p_t(T_\infty) = p_1(T_\infty)$ for all $t \in [1,c],$ and that $p'_c(L[c])$ is the boundary of a small disk $D_c$ in $S_\infty$ containing $p_1(T_\infty)$. Choosing  $c$ sufficiently close to $b-s+1$ we can refine assumption \eqref{assumpa} to 
\begin{equation}\label{assumpb}
p_1(L_2) \subset C \smallsetminus D_c.    
\end{equation}

Then, as the broken leaves over $\partial C$ are disjoint from $L_2$, we also have
\begin{equation}\label{bound}
    p_1'(L[1])= p_1(L[1]) = \partial C
\end{equation} and hence 
\begin{equation}\label{bound1}
    (p'_1)^{-1}(C)= (p_1)^{-1}(C).
\end{equation}
Similarly, as our almost complex structures remain standard on $U_{\infty}$ we have  
\begin{equation}\label{bound2}
    (p'_t)^{-1}(D_c)= (p_t)^{-1}(D_c),
\end{equation}
for all $t$.

Formulas \eqref{bound1} and \eqref{bound2}, together with Assumption \eqref{assumpb}, imply that
\begin{equation}\label{assumpc}
p'_1(L_2) \subset C; \, \, p'_c(L_2) \subset S_{\infty} \smallsetminus D_c.    
\end{equation}
In fact we also have
\begin{equation}\label{assumpc2}
p'_1(\mathfrak{s}_{0,1}) \subset C; \, \, p'_c(\mathfrak{s}_{0,c}) \subset S_{\infty} \smallsetminus D_c.    
\end{equation}
as the broken $J_{1,\infty}$ or $J_{c,\infty}$ holomorphic curves asymptotic to $L_2$ are disjoint from those asymptotic to $L[1]$ and $L[c]$ respectively (in the first case by the assumption that $J_1$ is regular, and in the second case as $J_t=J$ is standard on $U_{\infty}$).

By Lemmas \ref{lem:fam} and \ref{lem:fam2} the moduli spaces $\mathfrak{s}_{0,t}$ vary continuously with $t$. Using a fixed geodesic on $L_2$ in the foliation class, we can identify a continuous family of planes $\{w_t\}_{t \in [1,c]}$ with $[w_t] \in  \mathfrak{s}_{0,t}$. Let $\{ x_t \}_{t \in [1,c]}$ be a continuous family of points such that $x_t$ is in the image of $ w_t$. By \eqref{assumpc2} we have $$x_1 \in (p'_1)^{-1}(C), \, \, x_c \in (p'_c)^{-1}(S_{\infty} \smallsetminus D_c).$$

Lemmas \ref{lem:fam'} and \ref{lem:fam2} imply that the moduli spaces $\mathfrak{r}'_{0,t}$ and $\mathfrak{r}'_{\infty,t}$ also vary continuously with $t$. Taking paths of representative planes for each and invoking  \eqref{bound} and \eqref{bound2}, we obtain a homotopy between $(p_1')^{-1}(\partial C)$ and $(p_c')^{-1}(\partial D_{c})$. By \eqref{bound2}, the homotopy avoids $(p_1)^{-1}(D_c)$, and so gives a surjection onto $(p'_1)^{-1}(C \smallsetminus D_c)$. Therefore, for the path $\{ x_t \}_{t \in [1,c]}$ there is an $s \in [1,c]$, and a $J'_{s, \infty}$-holomorphic plane representing either $\mathfrak{r}'_{0, s}$ or $\mathfrak{r}'_{\infty, s}$, that intersects $x_s$.

Let $J^N_{t, \infty}$ be the result of stretching $J'_{t, \infty}$ to length $N$ along $L_2$ so that $J^N_{t, \infty} \to J_{t,\infty}$ as $N \to \infty$. Arguing as above, for a fixed $N$, we can find an $s^N \in [1,c]$, and a $J^N_{s^N, \infty}$-holomorphic plane representing  $\mathfrak{r}'_{0, s^N}$ or $\mathfrak{r}'_{\infty, s^N}$ whose image intersects $x_{s^N}$. Passing to a subsequence, we may assume that the $s^N$ converge to $s^{\infty} \in [1,c]$ and that the limits of the $J^N_{s^N, \infty}$-holomorphic planes representing  $\mathfrak{r}'_{0, s^N}$ and $\mathfrak{r}'_{\infty, s^N}$ both converge to $J_{s^{\infty}, \infty}$-holomorphic buildings. One of these limiting buildings, $\bf{B}$, must intersect $x_{s^{\infty}}$ and thus the $J_{s^{\infty}, \infty}$-holomorphic plane $w_{s^{\infty}}$ that contains $x_{s^{\infty}}$ and represents a class in $\mathfrak{s}_{0,s^\infty}$. Either the building $\bf{B}$ includes a curve that covers the plane $w_{s^{\infty}}$, or it does not.

If $\bf{B}$ does include such a curve, this curve will have area at least $r$. However, the building $\bf{B}$ has a total area $r$ and must also include a curve asymptotically to $L[{s^{\infty}}]$ of positive area. This gives a contradiction.

If $\bf{B}$ does not include a curve that covers the plane $w_{s^{\infty}}$, then, by positivity of intersection, it includes a curve that intersects $w_{s^{\infty}}$ positively. However, this would imply that, for large $N$, the planes $w_{s^N}$ representing $\mathfrak{s}_{0,s^N}$  must intersect the planes in either $\mathfrak{r}_{0, s^N}$ or $\mathfrak{r}_{\infty, s^N}$. This is a contradiction whenever the $J^N_{s^N}$ are of foliation type.

\section{The proof of Proposition \ref{prop:submanifolds}}\label{dbuildings}

The symplectic disks $D^1_{\infty}$ and $D^2_{\infty}$ in the statement of Proposition \ref{prop:submanifolds} are obtained by compactifying a plane in $\mathfrak{r}_\infty$ and a plane in $\mathfrak{s}_\infty$, respectively. The remaining submanifolds in the statement of Proposition \ref{prop:submanifolds}  will be obtained using $(d,n_1)$-buildings.  These are defined and analyzed below in the next section. Two existence results for $(d,n_1)$-buildings are established in Section \ref{sub:exist}. The process of obtaining  the desired symplectic submanifolds from these buildings is described in Section \ref{sub:shift}

\subsection{On $(d,n_1)$-buildings and their types}\label{sub:d}
Fix a nonnegative integer $d$ and a generic collection of $2d+1$ points on $L_1 \cup L_2$. Let $n_1 \le 2d+1$ be the number of these points on $L_1$. For a generic choice of tame almost complex structure $J$ on $(X,\Omega)$ (compatible with parameterizations of $L_1$ and $L_2$ as before) there exists a smooth $J$-holomorphic sphere $u\colon S^2 \to X$ that represents the class $(d,1)$ and passes through the $2d+1$ points on $L_1 \cup L_2$. This curve is unique up to reparameterization. 
As one  stretches along  $L_1 \cup L_2$ to obtain the foliation $\mathcal{F}_{L_1,L_2}(J)$ one gets a sequence of pseudo-holomorphic spheres in the class $(d,1)$. Passing to a convergent subsequence, one obtains a holomorphic building, $\mathbf{F}$, which we will refer to as a $(d,n_1)$-building. As before, this building consists of genus zero holomorphic curves in three levels.
Each top level curve of $\mathbf{F}$ can be compactified to yield a map from a surface of genus zero with boundary to $(X,L_1 \cup L_2)$. The components of the boundary correspond to the negative punctures of the curve.  They are mapped to the closed geodesics on $L_1$  or $L_2$ underlying the Reeb orbits to which the corresponding puncture is asymptotic. The middle and bottom level curves of $\mathbf{F}$ can be compactified to yield maps to either $L_1$ or $L_2$ with the same type of boundary conditions. These compactified maps can all be glued
together to form a map $\bar{\mathbf{F}}\colon S^2  \to X$ in the class $(d,1)$. 

\medskip

The curves of a $(d,n_1)$-building are constrained by its companion foliation $\mathcal{F}_{L_1,L_2}(J)$. To describe these constraints, the following notions will be useful.


\begin{definition}\label{sp}
Given an almost complex structure $J$ on $(X,\Omega)$,
a $J_\infty$-holomorphic curve $u$ in $X \smallsetminus (L_1 \cup L_2)$ is said to be {\em essential} if the map $p \circ u$ is injective. Here $p$ is the projection map defined using the foliation $\mathcal{F}_{L_1,L_2}(J)$.
\end{definition}

\begin{definition}\label{ft}
 A puncture of a $J_\infty$-holomorphic curve $u$ in $X \smallsetminus (L_1 \cup L_2)$ is said to be of {\em foliation type with respect to} $L_i$  if it is asymptotic to a closed Reeb orbit which lies on the copy of $S^*\mathbb{T}^2$ that corresponds to $L_i$ and covers a closed geodesic in an integer multiple of the foliation class $\beta_i$. The puncture is of {\em positive (negative) foliation type} if this integer is positive (negative).
\end{definition}




\begin{proposition}\label{prop:type} Each  $(d,n_1)$-building  $\mathbf{ F}$ 
is of one of the following types:




\bigskip

\noindent {\bf Type 1.} $\mathbf{ F}$ has a unique essential curve, $\underline{u}$, which is a $J_{\infty}$-holomorphic sphere with at least one puncture. The image of the injective map $p\circ \underline{u}$ is $S_{\infty}$ minus finitely many points on $p(L_1) \cup p(L_2)$. Any punctures of $\underline{u}$ asymptotic to $L_1$ are of foliation type and are either all positive or all negative. The same holds for any punctures of $\underline{u}$ along $L_2$. 



\bigskip

\noindent {\bf Type 2.1.} $\mathbf{ F}$ has exactly two essential  curves, $u_2$ and $\underline{u}$. The closures of the images of the maps $p \circ u_2$ and $p \circ \underline{u}$  are $A$ and $B \cup C$, respectively. Any punctures of $\underline{u}$ on $L_1$ are all of foliation type and are either all positive or all negative. 




\bigskip

\noindent {\bf Type 2.2.} $\mathbf{ F}$ has exactly two essential curves, $\underline{u}$ and $u_1$. The closures of the images of the maps $p \circ \underline{u}$ and $p \circ u_1$  are $A \cup B$ and $C$, respectively.  Any punctures of $\underline{u}$ on $L_2$ are all of foliation type and are either all positive or all negative. 



\bigskip

\noindent {\bf Type 3.} $\mathbf{ F}$ has exactly three essential curves, $u_1$, $\underline{u}$,  and $u_2$. The closures of the images of the maps  $p \circ u_1$, $p \circ \underline{u}$,  and $p \circ u_2$ are  $C$, $B$ and $A$, respectively. 


\end{proposition}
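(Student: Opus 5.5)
The plan is to classify the curves of $\mathbf{F}$ by their intersections with the leaves of $\mathcal{F}_{L_1,L_2}(J)$, using positivity of intersection together with the homological count $(d,1)\bullet(1,0)=1$.

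\textbf{Step 1: the curves covering $S_\infty$.} Every leaf of $\mathcal{F}_{L_1,L_2}(J)$, broken or not, has total class $(1,0)$. The glued map $\bar{\mathbf{F}}$ is in class $(d,1)$, so it meets each unbroken leaf exactly once, counted with multiplicity. Take a generic unbroken leaf $\ell$. No top level curve of $\mathbf{F}$ can cover $\ell$, because $\ell$ is a closed sphere and the building has only one component in the class with nonzero second coordinate. So by positivity of intersection, exactly one top level curve of $\mathbf{F}$ meets $\ell$, and it does so transversally in one point.

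This has three consequences. The curves with nonconstant $p\circ u$ are exactly the essential ones. Their projections are injective. Their images tile $S_\infty$ minus $p(L_1)\cup p(L_2)$. All remaining top level curves have constant projection, so each lies in a leaf: it covers a broken plane or sits in an unbroken fibre. In the same way, middle and bottom level curves in $T^*L_i$ either cover trivial cylinders over the foliation geodesics or map injectively across them.

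\textbf{Step 2: boundaries of the essential images.} For an essential curve $u$, the closure of the image of $p\circ u$ is a region whose boundary lies in $p(L_1)\cup p(L_2)$. This holds because $p\circ u$ is an open map away from punctures, and near a puncture on $L_i$ the image approaches $p(L_i)$.

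The components of $S_\infty\smallsetminus(p(L_1)\cup p(L_2))$ are exactly $A$, $B$ and $C$, since $p(L_1)$ and $p(L_2)$ are disjoint embedded circles. Each essential image is therefore a union of the closures of these regions, and distinct essential curves have disjoint interiors. If the image of an essential curve crosses $p(L_i)$, say it contains both $A$ and $B$, then every puncture of that curve on $L_i$ projects to a point of $p(L_i)$. Near such a puncture the curve is asymptotic to a Reeb orbit whose geodesic projects to a single point, so the orbit covers a multiple of the foliation class $\beta_i$.

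This gives the possibilities:
\begin{enumerate}
\item one essential curve covering everything (Type 1);
\item two essential curves, split along $p(L_2)$ as $A$ and $B\cup C$ (Type 2.1);
\item two essential curves, split along $p(L_1)$ as $A\cup B$ and $C$ (Type 2.2);
\item three essential curves covering $A$, $B$ and $C$ (Type 3).
\end{enumerate}
The essential curve covering $S_\infty$ minus finitely many points in Type 1 must contain $p(T_\infty)$. I also expect to rule out closed essential spheres that have no punctures, since such a sphere would cover all of $S_\infty$ and then the gluing to the point-constrained pieces on $L_1\cup L_2$ would force an extra intersection.

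\textbf{Step 3: signs of the foliation-type punctures.} This is the main obstacle. I need to show that the foliation-type punctures of $\underline{u}$ on a given $L_i$ are all positive or all negative. The idea is to look near $L_i$ in the straightened coordinates, where broken leaves are the annuli $\{p_1=0,\ q_1=\theta\}$ and $\pm p_2>0$ distinguishes the $\mathfrak{r}_\infty$ side from the $\mathfrak{r}_0$ side.

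A puncture of positive foliation type approaches from one side of these leaves and a negative one from the other. Suppose $\underline{u}$ had punctures of both signs on $L_i$. Compare the building with a broken leaf asymptotic to the same geodesic and count intersections with the plane in $\mathfrak{r}_\infty$ (or $\mathfrak{s}_\infty$), using the asymptotic winding and intersection theory of Siefring. A mixed-sign configuration would make $\mathbf{F}$ meet that plane, together with its partner, in a total count of at least $2$, contradicting $(d,1)\bullet(1,0)=1$.

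Equivalently, one can argue that $p\circ\underline{u}$ near $p(L_i)$ must cover both sides of the circle consistently with injectivity. I would also use the relative exactness and area data, namely that the planes have area $r$ and $s$ (Corollary~\ref{>r}), to rule out the degenerate remaining cases.
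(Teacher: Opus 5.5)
\textbf{There is a genuine gap: you do not correctly rule out a closed essential sphere.}

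Your Steps 1--3 essentially reconstruct the classification of Proposition 3.17 of \cite{hiker}, which the paper simply cites. The only thing this proposition adds to that classification is that the unique essential curve cannot be a \emph{closed} sphere with no punctures. The paper calls this ``Type 0''; note that Type 1 in the statement requires at least one puncture. That is exactly the step your proposal does not supply, and the mechanism you suggest for it cannot work.

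Such a sphere lies in $X \smallsetminus (L_1\cup L_2)$ in some class $(d-k,1)$ with $k\le d$. The nonessential curves then compactify to a cycle in class $(k,0)$. Since $(k,0)\bullet(1,0)=0$, the sphere meets each leaf exactly once and the leaf-covering pieces add nothing. So no ``extra intersection'' arises: this splitting of $(d,1)$ is homologically unobstructed. Indeed such buildings are allowed in \cite{hiker}, where the constraint points are not on the Lagrangians. Any argument that ignores where the $2d+1$ constraints sit is therefore bound to fail.

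\textbf{The missing idea is a count of point constraints.}
\begin{enumerate}
\item The nonessential curves are of two kinds: closed spheres covering unbroken leaves, which miss $L_1\cup L_2$, and sub-buildings covering broken leaves.
\item Each sub-building covering a broken leaf contributes at least $(1,0)$, so there are at most $k\le d$ of them.
\item The middle and bottom level curves of such a sub-building cover the single cylinder over one geodesic in the foliation class. So for generic constraints each sub-building passes through at most one constraint point.
\item The essential sphere lives in $X\smallsetminus(L_1\cup L_2)$, so it passes through none.
\end{enumerate}
Hence at most $d$ of the $2d+1$ constraint points on $L_1\cup L_2$ can be met, which is a contradiction.

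\textbf{Two smaller points.}
\begin{enumerate}
\item In Step 1, the reason a generic leaf $\ell$ is not covered is that only finitely many leaves are covered. It is not the second coordinate of the class: multiply covered unbroken leaves do occur among the nonessential curves (the paper's $N_0$).
\item In Step 3, the Siefring ``count at least $2$'' is never pinned down. A direct argument suffices.
\begin{itemize}
\item Along each arc of $p(L_i)$ between consecutive puncture points, $\underline{u}$ meets each broken leaf once, on either its $\mathfrak{r}_0$ or its $\mathfrak{r}_\infty$ plane (for $L_2$, on $\mathfrak{s}_0$ or $\mathfrak{s}_\infty$).
\item Which plane it meets is locally constant along the arc, because in $\mathcal{U}_i$ these planes are separated by $L_i$ ($p_2<0$ versus $p_2>0$).
\item The asymptotics at a puncture of positive (negative) foliation type force the crossings on both adjacent arcs to have $p_2>0$ ($p_2<0$).
\end{itemize}
Hence all punctures on $L_i$ have the same sign. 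Your ``Equivalently'' sentence points in this direction.
\end{enumerate}
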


\begin{proof} This is a refinement of Proposition 3.17 of \cite{hiker}. In that work, the constraint points were not restricted to lie on the relevant Lagrangian tori. This allows for an additional kind of $(d,n_1)$-buildings, Type 0, which is characterized by having a unique essential curve which is a $J_\infty$-holomorphic sphere, without punctures, that maps to $X \smallsetminus (L_1 \cup L_2)$ and represents the class $(d-k,1) \in H^2(X;\ZZ)$ for some $0 \le k \le d-1$.  To prove Proposition \ref{prop:type} it  suffices to show that if $\mathbf{F}$ is a  $(d,n_1)$-building corresponding to $2d+1$ constraint points on $L_1 \cup L_2$, then it cannot be of Type 0. Assume that $\mathbf{F}$ is of Type 0. Then its nonessential curves can be sorted into: a collection of $J_\infty$-holomorphic spheres that cover unbroken leaves of the foliation,
and a collection of sub-buildings that cover broken leaves of the foliation. Together, these nonessential curves compactify to form a disconnected curve in the class $(k,0)\in H^2(X;\ZZ)$. In particular, the number of sub-buildings covering broken leaves is at most $k \le d$. On the other hand, the middle and bottom level curves of every one of these sub-building cover a unique cylinder, in the foliation class, in one of the two copies of $T^*\mathbb{T}^2$ associated to $L_1$ or $L_2$. Hence, each of the $k \le d$ sub-buildings can pick up, at most, one of the generic point constraints. Since there are $2d+1$ of these constraints  and no other way for $\mathbf{F}$ to hit them, we have arrived at a contradiction and $\mathbf{F}$ cannot be of Type 0.

\end{proof}

\subsubsection{Managing the nonessential curves of $(d,n_1)$-buildings}\label{common}

Here we describe a scheme for managing the nonessential curves of $(d,n_1)$-buildings of Types $1$, $2.1$ and $2.2$. In what follows we will denote the collection of essential curves of a $(d,n_1)$-building $\mathbf{F}$ by $\mathbf{u}$.

\bigskip

\noindent {\bf Type $1$.}
Let $\mathbf{F}$ be a $(d,n_1)$-building of Type $1$. Then $\mathbf{u} = \{\underline{u}\}$, where $\underline{u}$ is the essential curve of $\mathbf{F}$ described in Proposition \ref{prop:type}. The image of $p \circ \mathbf{u}$ is equal to $S_{\infty}$ minus a finite collection of points on $p(L_1) \cup p(L_2)$.
The following dichotomy will be useful: the image of every nonessential curve of $\mathbf{F}$ either projects to a point in the image of $p\circ \mathbf{u}$, or it maps to one of the missing points.

The curves whose projections lie in the image of $p\circ \mathbf{u}$, are either $J_\infty$-holomorphic spheres that cover unbroken leaves of the foliation $\mathcal{F}_{L_1,L_2}(J)$, or they are $J_\infty$-holomorphic planes that form a collection of sub-buildings  of $\mathbf{F}$ that each cover a broken leaf of the foliation. Let $N$ be the total number of times these spheres and sub-buildings cover the class $(1,0)$. Then $N=N_0+N_1 + N_2$ where $N_0$ is the number of times an unbroken leaf is covered, and $N_i$ is the number of times a broken leaf on $L_i$ is covered.

The remaining curves of $\mathbf{F}$, whose images don't project to the image of $p\circ \mathbf{u}$, each belong to a sub-building defined by one of the punctures of $\mathbf{u}$. Label these punctures as  $$\{x_1, \dots, x_{k_1}, y_1, \dots, y_{k_2}\}.$$ Here, each $x_i$ is asymptotic to a closed geodesic on $L_1$ of the form $\gamma_i^{\ell_i}$, where $\gamma_i$ represents the foliation class $\beta_1$ and all the $\ell_i$ have the same sign. Similarly, each $y_j$ is asymptotic to a closed geodesic on $L_2$ of the form $\sigma_j^{m_j}$ where $[\sigma_j]=\beta_2$ and all the $m_j$ have the same sign. In these terms, the image of $p\circ \mathbf{u}$ is equal to $S_\infty$ minus the set  $$\{p(\gamma_1), \dots, p(\gamma_{k_1})\}\cup\{ p(\sigma_1), \dots, p(\sigma_{k_2})\} \subset p(L_1) \cup p(L_2).$$ 

The relevant features of the sub-building corresponding to the puncture $x_i$ are as follows. It compactifies to a disk and has a single unmatched end on $\gamma_1^{\ell_i}$. Its top level curves all cover one of the planes in a single matching pair of planes in $\mathfrak{r}_0 \cup \mathfrak{r}_\infty$. Let $M^0_1(i)$ be the degree of the map from the sub-building to the plane in $\mathfrak{r}_0$ and let $M^\infty_1(i)$  be the degree of the map from the sub-building to the plane in $\mathfrak{r}_\infty.$ We then have 
\begin{align}
 \ell_i+M^{\infty}_1(i)-M^0_1(i)=0.   
\end{align}
The description of the sub-buildings corresponding to the $y_j$ is completely analogous, and yields nonnegative numbers $M^0_2(j)$ and $M^\infty_2(j)$ satisfying
\begin{align}
 m_j+M^{\infty}_2(j)-M^0_2(j)=0.   
\end{align} 
Set  $$M_1^0 = \sum_{i=1}^{k_1} M^0_1(i),$$ and define the numbers $M_1^\infty$, $M_2^0$, and $M_2^\infty$ analogously. 

With these labels in place we can derive some equations and inequalities  which will be useful later on. First we note that if the punctures of $\mathbf{u}$ along $L_1$ are all positive then each $\ell_i$ is at least one, and we have 
\begin{align}\label{pos1}
M_1^0 = M_1^\infty + \sum_{i=1}^{k_1} \ell_i \geq M_1^\infty + k_1.
\end{align}
If these punctures are all negative we have 
\begin{align}\label{neg1}
M_1^\infty = M_1^0 - \sum_{i=1}^{k_1} \ell_i \geq M_1^0 + k_1.
\end{align}
The fact that there are  $n_1$ point constraints on $L_1$ implies that 
\begin{align}\label{constraint1}
k_1 + N_1 \geq n_1. 
\end{align}
Computing the $\Omega$-area of the curves of  $\mathbf{F}$ we get
\begin{align}\label{area1}
\Omega (\mathbf{u})+ r(M_1^0 +M_1^\infty + M_2^0 + M_2^\infty)+2rN = 2rd+b.  
\end{align}
Finally, computing the intersection number of the curves of $\mathbf{F}$ with $S_{\infty}$ we get the expression
\begin{align}\label{S1}
\mathbf{u} \cdot S_{\infty}+ M_1^\infty + M_2^\infty + N  =  d.
\end{align}

\noindent {\bf Type $2.1$.}
Let $\mathbf{F}$ be a $(d,n_1)$-building of Type $2.1$. In this case  $\mathbf{u} = \{u_2,\underline{u}\}$. The numbers $N$, $N_0$, $N_1$ and $N_2$ can be defined as above. Labeling the punctures of $\underline{u}$  on $L_1$ as  $\{x_1, \dots, x_{k_1}\}$, each $x_i$ is again asymptotic to a closed geodesic on $L_1$ of the form $\gamma_i^{\ell_i}$, where $[\gamma_i] =\beta_1$ and all the $\ell_i$ have the same sign. For each puncture $x_i$ there is a corresponding sub-building of $\mathbf{F}$ that compactifies to a disk whose boundary maps to $\gamma_1^{\ell_i}$. The top level curves of this subuilding yield degrees $M^0_1(i)$ and $M^\infty_1(i)$ as before,  satisfying
\begin{align}\label{eq:2.1match}
 \ell_i+M^{\infty}_1(i)-M^0_1(i)=0,   
\end{align}
and hence inequality  \eqref{pos1} or \eqref{neg1}. To accommodate the point constraints on $L_1$ we again have 
\begin{align}\label{constraint2}
k_1 + N_1 \geq n_1. 
\end{align}

The remaining top level components of $\mathbf{F}$ cover planes in $\mathfrak{s}_0$ and $\mathfrak{s}_\infty$. Let $\widetilde{M}^0_2$ be the total degree of these curves that  cover planes in $\mathfrak{s}_0$ and let  $\widetilde{M}^\infty_2$ be the total degree of these curves covering planes in $\mathfrak{s}_\infty$. The fact that there are $n_2 = 2d+1-n_1$ point constraints on $L_2$ implies that 
\begin{align}\label{constraints2.1}
 \widetilde{M}^0_2+\widetilde{M}^\infty_2   \geq n_2 - 1.
\end{align}
This follows from an index argument. Indeed, the deformation index of curves in $T^*\mathbb{T}^2$ is $2s-2$, where $s$ is the number of positive ends, so we must have $2s-2 \geq 2n_2$. Since two ends can match the essential curves, the rest must match planes in $\mathfrak{s}_0$ or $\mathfrak{s}_\infty$, and the inequality follows.

In these terms, the computation of the $\Omega$-area of the curves of  $\mathbf{F}$ yields
\begin{align}\label{area2}
\Omega (\mathbf{u})+ r(M_1^0 +M_1^\infty + \widetilde{M}_2^0 + \widetilde{M}_2^\infty)+2rN = 2rd+b,  
\end{align}
and the intersection number of the curves of $\mathbf{F}$ with $S_{\infty}$ yields the expression
\begin{align}\label{S2}
\mathbf{u} \cdot S_{\infty}+ M_1^\infty + \widetilde{M}_2^\infty + N  =  d.
\end{align}

\noindent {\bf Type $2.2$.}
Let $\mathbf{F}$ be a $(d,n_1)$-building of Type $2.2$. In this case  $\mathbf{u} = \{\underline{u}, u_1\}$ and the discussion is completely analagous to that for buildings with Type 2.1, with the role of the puncture $x_i$ on $L_1$ replaced by the role of the puncture $y_j$ on $L_2$  and all the other labels swapped. In particular, we have
\begin{align}
 m_j+M^{\infty}_2(j)-M^0_2(j)=0,   
\end{align}
\begin{align}\label{constraint2.2}
k_2 + N_2 \geq n_2 ,
\end{align}
\begin{align}\label{area2.2}
\Omega (\mathbf{u})+ r(\widetilde{M}_1^0 +\widetilde{M}_1^\infty + M_2^0 + M_2^\infty)+2rN = 2rd+b,  
\end{align}
and 
\begin{align}\label{S2.2}
\mathbf{u} \cdot S_{\infty}+ \widetilde{M}_1^\infty + M_2^\infty + N  =  d.
\end{align}


\subsection{On $(d,n_1)$-buildings of high degree}\label{sub:exist}

In this section we establish two classification results for $(d,n_1)$-buildings for sufficiently large $d$. These are used in the next section to prove the existence of the remaining symplectic submanifolds from Proposition \ref{prop:submanifolds}. 

The following monotonicity lemma, established by Boustany in \cite{boust}, will be used several times to leverage the assumption that $d$ is large. 
\begin{lemma}\label{lem:monotonicity}(\cite{boust}, Lemma 5.6)
Let $S \subset  X \smallsetminus (L_1 \cup L_2)$ be a closed $J_{\infty}$-holomorphic sphere. There exists an $\epsilon = \epsilon(S)>0$ such that for every (possibly) punctured $J_{\infty}$- holomorphic sphere $u$ in $X \smallsetminus (L_1 \cup L_2)$ we have
\begin{align}\label{lem:mon} \Omega (u) \geq \epsilon (u \bullet S).\end{align}
    
\end{lemma}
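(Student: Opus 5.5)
The plan is a monotonicity argument for $J_\infty$-holomorphic curves near the closed curve $S$, combined with a neck-stretching compactness argument to get uniformity in the choice of punctured sphere $u$.

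First I reduce to local intersections. By positivity of intersections, $u\bullet S$ is a sum of positive local multiplicities at the finitely many points of $u^{-1}(S)$. This assumes $u$ does not cover $S$; if it does, $u$ is a multiple cover of the closed sphere $S$ and the inequality holds with $\epsilon\le \Omega(S)/(S\bullet S)$ when $S\bullet S>0$, or trivially when $S\bullet S\le 0$. Since $S$ is compact and disjoint from $L_1\cup L_2$, it has a compact tubular neighborhood $N_\delta(S)$, at positive distance from $L_1\cup L_2$. On that neighborhood $J_\infty$ agrees with a fixed tame almost complex structure, so the metric $g=\Omega(\cdot,J_\infty\cdot)$ has bounded geometry there.

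The key step is a uniform local area bound. I fix holomorphic normal coordinates along $S$ and choose a smooth tubular projection $\pi\colon N_\delta(S)\to S$ whose fibers are $J_\infty$-holomorphic disks $D_x$ of radius $\delta$, transverse to $S$. Such a family exists after shrinking $\delta$, for instance by deforming the $J_\infty$ structure slightly near $S$, which does not affect the buildings away from $S$. Given an intersection point $x\in u\cap S$ of local multiplicity $m$, the component of $u\cap N_\delta(S)$ through $x$ is a proper holomorphic curve in $N_\delta(S)$. By the monotonicity lemma for $J$-holomorphic curves, it has $g$-area at least $c\,m\,\delta^2$. To see the factor $m$, note that the restriction of $\pi$-normal projection to a disk fiber direction gives a branched cover of degree at least $m$ onto a disk of radius comparable to $\delta$. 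Alternatively, intersect with the nearby holomorphic fibers $D_x$ and integrate the local intersection numbers over the $2$-parameter family, which yields area $\ge m\cdot \mathrm{area}$ of a transverse disk.

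To sum the contributions, I use the fact that the local pieces around distinct intersection points may overlap. So instead of summing piece by piece, I integrate the intersection number of $u$ with the family of parallel copies $S_v$ (sections of the normal bundle with $|v|<\delta$). Each $S_v$ is homologous to $S$ and is disjoint from the ends of $u$, so $u\bullet S_v=u\bullet S$. Then Fubini (coarea) gives $\Omega(u)\ge \mathrm{Area}(u\cap N_\delta)\gtrsim \int_{|v|<\delta} u\bullet S_v\,dv = c\,\delta^2\,(u\bullet S)$. One caveat is that the $S_v$ are only approximately $J_\infty$-holomorphic, so the intersections need not be positive. To handle this, I choose $S_v$ to be $J'$-holomorphic for a structure $J'$ agreeing with $J_\infty$ near $S$, which is possible by a Gromov-type local foliation argument, or I use the algebraic count together with the transverse-disk projection $\pi$. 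The bound then holds with $\epsilon=c\,\delta^2$, which depends only on $S$.

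I expect this last step to be the main obstacle: producing a family of genuinely positively intersecting parallel copies (or a holomorphic disk bundle), so that the counting is uniform in $u$.
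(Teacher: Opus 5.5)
The paper does not prove this lemma; it quotes it from Boustany. So your argument has to stand on its own, and as written it does not, because the step you call the main obstacle is handled by claims that fail.

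\begin{itemize}
\item You cannot ``deform $J_\infty$ slightly near $S$''. The estimate concerns curves that are $J_\infty$-holomorphic exactly there.
\item Pairwise disjoint copies of $S$ in its homology class, holomorphic or merely smooth, exist only if $S\bullet S=0$, since a disjoint homologous copy forces $S\bullet S=0$. So neither your $J'$-holomorphic family nor a family of sections of the normal bundle indexed by $|v|<\delta$ is available in general. This matters here: the paper applies the lemma to $A_0,A_\infty,B_0,B_\infty$, whose self-intersection is $2e>0$.
\item The local bound $c\,m\,\delta^2$ is not what monotonicity gives. Monotonicity controls area in a ball per local sheet of $u$, not per unit of intersection multiplicity with $S$. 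For example, the graph of $z\mapsto z^m$ meets $\{w=0\}$ with multiplicity $m$ but has area about $\pi\delta^2$ in the $\delta$-ball.
\item The branched-cover justification would need the normal projection restricted to $u$ to be holomorphic, and it is not.
\end{itemize}

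The averaging idea in your last paragraph is the right mechanism. What you are missing is that positivity is irrelevant. For almost every $v$, the intersections of $u$ with $S_v$ are transverse and $|u\bullet S_v|\le \#(u\cap S_v)$. The area formula bounds $\int \#(u\cap S_v)\,dv$ by a constant times the area of $u$ inside $N_\delta(S)$. So your Fubini inequality already holds with algebraic counts, and the $J'$-holomorphic copies are unnecessary.

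To remove the trivialization problem, replace the family by a Thom form: a closed $2$-form $\tau$ supported in $N_\delta(S)$ and Poincar\'e dual to $S$.
\begin{itemize}
\item The ends of $u$ are asymptotic to Reeb orbits, so they lie in small neighborhoods of $L_1\cup L_2$. Hence $u^{-1}(\overline{N_\delta(S)})$ is compact and $u\bullet S=\int u^*\tau$.
\item Wherever $du_z\neq 0$, its image is a $J_\infty$-complex line on which $u^*\Omega\ge 0$. Therefore $u^*\tau\le C\,u^*\Omega$ pointwise, where $C$ is the supremum of $|\tau(\xi,J_\infty\xi)|/\Omega(\xi,J_\infty\xi)$ over nonzero $\xi\in TX$ based in $\overline{N_\delta(S)}$. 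This supremum is finite by compactness and tameness.
\item Integrating gives $u\bullet S\le C\,\Omega(u)$, which is the lemma with $\epsilon=1/C$.
\end{itemize}
This needs no monotonicity, no holomorphic disk fibration, no separate multiple-cover case, and no compactness argument.
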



Our first result concerns the case when all the constraint points are on one of the Lagrangian tori.

\begin{proposition}\label{prop:Fd}
For $d$ sufficiently large, every $(d,2d+1)$-building is of Type 2.2. Each such building, $\mathbf{F}$. consists of two essential planes, $\underline{u}$, and $u_1$, a single lower level curve which maps to $T^*L_1$ and has $2d+1$ (positive) punctures, and $2d-1$ top level planes in $\mathfrak{r}_0 \cup \mathfrak{r}_\infty$. All the curves of $\mathbf{F}$ are somewhere injective and hence regular.
\end{proposition}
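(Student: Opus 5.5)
We argue by eliminating, for $n_1=2d+1$ (so $n_2=0$), the building types of Proposition \ref{prop:type} other than Type 2.2, using the area identities \eqref{area1}, \eqref{area2}, \eqref{area2.2}, the intersection identities \eqref{S1}, \eqref{S2}, \eqref{S2.2}, the constraint inequalities, and the monotonicity Lemma \ref{lem:monotonicity} with $S$ a fixed unbroken leaf (for instance $T_\infty$) or $S_\infty$. The guiding count is that each planar end in $\mathfrak{r}_0\cup\mathfrak{r}_\infty$ costs area $r$, and $2d+1$ constraints on $L_1$ force roughly $2d$ such planes. Their total area $\approx 2dr$ nearly exhausts the budget $2rd+b$, leaving a defect of order $b$ independent of $d$. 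Meanwhile an essential curve covering a large region of $S_\infty$ meets the fixed fibre $T_0$ or $T_\infty$, and by \eqref{S1} or its analogues carries intersection with $S_\infty$ of order $d$ unless the planes absorb it.

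\emph{Type 1.} The essential curve $\underline u$ projects onto $S_\infty$ minus points, so it meets $T_\infty$ and $T_0$. Since $n_2=0$, \eqref{constraint1} gives $k_1+N_1\ge 2d+1$. In the positive case \eqref{pos1} gives $M_1^0\ge M_1^\infty+k_1$; in the negative case \eqref{neg1} gives $M_1^\infty\ge M_1^0+k_1$. Inserting into \eqref{area1}, and using $\Omega(\underline u)>0$ together with $2rN\ge 2rN_1$, the total area is at least $r(2k_1+\dots)+2rN_1\ge 2r(2d+1)-\dots$, which exceeds $2rd+b$ for large $d$ unless most constraints are absorbed by the $x_i$. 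In the negative case \eqref{S1} bounds $M_1^\infty\le d$, so $k_1\le d$; then $N_1\ge d+1$ and the area is at least $2r(d+1)+\Omega(\underline u)$. Controlling $\Omega(\underline u)$ from below via Lemma \ref{lem:monotonicity} and its intersection with $S_\infty$ yields the contradiction once $d$ is large relative to $b/\epsilon$. The positive case goes the same way with the roles of $\mathfrak r_0,\mathfrak r_\infty$ swapped, using that $\underline u\cdot T_0\ge 1$ forces intersections with the leaves.

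\emph{Type 2.1 and Type 3.} Here $u_2$ is an essential plane over $A$ with ends on $L_2$, and there are no constraints on $L_2$. We compare indices: these buildings contain curves in $T^*L_2$ that are unconstrained, so generic index counting together with the constraint bound on $L_1$ loses at least one dimension, contradicting that the $(d,1)$ moduli space cut down by $2d+1$ points is rigid. An alternative is area bookkeeping as in Type 1, using that $B\cup C$ contains $p(T_\infty)$.

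\emph{Type 2.2.} Once Type 2.2 remains, \eqref{constraint2.2} is vacuous and the $L_1$ constraints must be picked up in $T^*L_1$. The index formula $2s-2\ge 2(2d+1)$ for the bottom curve, as in \eqref{constraints2.1}, gives at least $2d+2$ positive ends. Two of these match $\underline u$ and $u_1$, and the remaining $\ge 2d$ match planes in $\mathfrak r_0\cup\mathfrak r_\infty$. Area \eqref{area2.2} with $\Omega(\underline u)+\Omega(u_1)>0$, $N\ge 0$ and the $\mathfrak s$-terms then pins down equality: exactly one lower curve with $2d+1$ ends, exactly $2d-1$ planes, $N=0$, and no covers. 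Since the pieces are then simple (each plane is an embedded leaf plane, and degree counts force the multiplicities to be one), somewhere injectivity follows and regularity comes from Remark \ref{rem:reg} and automatic regularity.

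The main obstacle is the exact count $2d-1$ versus $2d$. This requires matching the $2d+2$ ends with the degree-one contributions, and would need \eqref{S2.2} combined with intersection positivity with $T_0$ and $T_\infty$ to fix $\underline u\cdot S_\infty$ and $u_1\cdot S_\infty$.
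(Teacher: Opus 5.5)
There are genuine gaps. Your overall plan (eliminate Types 1, 2.1 and 3, then analyse Type 2.2) and your opening heuristic agree with the paper, but several of the case arguments, as written, do not go through.

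\textbf{Type 1.} The estimate $r(2k_1+\dots)$ is wrong: \eqref{pos1} and \eqref{neg1} only give $M_1^0+M_1^\infty\ge k_1$. So \eqref{area1} by itself never exceeds $2rd+b$.
\begin{itemize}
\item The negative case is immediate from \eqref{S1}. Since $k_1\le M_1^\infty$ and $N_1\le N$, you get $M_1^\infty+N\ge 2d+1>d$. Monotonicity plays no role there.
\item The positive case cannot be done ``with $\mathfrak r_0,\mathfrak r_\infty$ swapped''. No sphere available at this stage meets the $\mathfrak r_0$ planes with multiplicity growing in $d$; the spheres $A_0$ of Proposition \ref{subaxis} are built from this very proposition. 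Also, $T_0$ and $T_\infty$ meet the whole building only once.
\item The missing step is to feed $M_1^0\ge M_1^\infty+k_1$ and $k_1+N_1\ge 2d+1$ into \eqref{area1}. This gives $\Omega(\underline u)+r(2M_1^\infty+M_2^0+M_2^\infty)+rN_1+2r(N_0+N_2)\le b-r$, so $\Omega(\underline u)$, $M_1^\infty$, $M_2^\infty$ and $N$ are bounded independently of $d$.
\item Then \eqref{S1} forces $\underline u\cdot S_\infty\ge d-O(1)$. This contradicts Lemma \ref{lem:monotonicity} applied with $S=S_\infty$, not with a leaf (every building meets a leaf once).
\end{itemize}

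\textbf{Type 2.1.} This is disposed of by the identical argument, using \eqref{area2} and \eqref{S2}. Your index argument is not available here. In Type 2.1 the $L_1$ constraints are absorbed by the foliation-type sub-buildings at the punctures $x_i$, which are multiple covers of $\mathfrak r$-planes and of foliation cylinders. Formal index counts give no control over such covers, which is why the paper counts constraints via $k_1+N_1\ge n_1$ rather than by index.

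\textbf{Type 3.} Index is the right tool here, but you have to actually run it. From $s_1\ge 2d+2$ and the odd indices of the planar sub-buildings, one gets $\mathrm{index}(\mathbf u)\le 2(3-s_2)$. This forces $s_2=2$, $\mathrm{index}(\underline u)=0$, and a single rigid constrained curve in $T^*L_1$. Its asymptotics generically fail to match those of the rigid $\underline u$.

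\textbf{Type 2.2.} Your own count gives at least $2d+2$ ends and at least $2d$ planes, so no ``equality'' can produce $2d+1$ ends and $2d-1$ planes. The paper's proof in fact ends with $s_1=2d+2$, i.e.\ $2d$ simple planes. This is consistent with the area count: the essential planes are Maslov $2$ disks of areas $s$ and $b-s$, leaving exactly $2dr$. The $2d+1$/$2d-1$ in the statement appears to be an off-by-one slip, so the obstacle you single out is not the real one.

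The genuine gap in Type 2.2 is elsewhere.
\begin{itemize}
\item With only $\Omega(\underline u)+\Omega(u_1)>0$, the identity \eqref{area2.2} pins nothing down.
\item Even with the correct lower bounds coming from the Maslov--area relation, area does not exclude a foliation-type puncture of $\underline u$ on $L_2$ capped by an $\mathfrak s$-plane. Such a configuration gives a rigid but admissible $\underline u$.
\item The paper excludes it by index. It shows $\mathrm{index}(\mathbf u)\le 2-s_2$, and since $\mathrm{index}(u_1)$ is odd, $\mathrm{index}(\underline u)\le 1-s_2$.
\item The case $s_2=1$ would require the rigid $\underline u$ to match a rigid constrained curve in $T^*L_1$, which fails for generic $J$.
\item Hence $s_2=0$, all inequalities are equalities, and each planar sub-building is a single somewhere injective plane. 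That is what gives somewhere injectivity and regularity; degree counting alone, without these equalities, does not.
\end{itemize}
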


\begin{proof}

\noindent The first step of the proof is to use Lemma \ref{lem:monotonicity} and the  labeling described  in Subsection \ref{common} to show that when $d$ is sufficiently large here are no  $(d,2d+1)$-buildings of Type 1 or Type 2.1.  

Arguing by contradiction we assume first that for an arbitrarily large  $d$ there is a $(d,2d+1)$-building $\mathbf{F}$ of Type $1$. Let $\underline{u}$ be the unique essential curve of $\mathbf{F}$.

\medskip

\noindent{\em Case 1.} Assume that the $k_1$ punctures of $\underline{u}$ along $L_1$ are all negative. Then \eqref{neg1} yields $$M_1^\infty \geq M_1^0 + k_1.$$ In the current setting,  \eqref{constraint1} implies that $$k_1 +N_1 \geq 2d+1.$$ Taken together, these two inequalities imply that $M^{\infty}_1+ N \geq 2d+1$ which  contradicts \eqref{S1}.

\medskip

\noindent{\em Case 2.} Assume that the $k_1$ punctures of $\underline{u}$ along $L_1$ are all positive. By \eqref{pos1} we have $$M_1^0 \geq M_1^\infty + k_1.$$ In this case, \eqref{area1} implies that 
\begin{align*}
\Omega(\underline{u})+ r(2M_1^\infty +k_1 + M_2^0 + M_2^\infty)+2r(N_0+N_1+N_2)
\leq 2rd+b  
\end{align*}
Together with the implication from  \eqref{constraint1} that $k_1 +N_1 \geq 2d     +1$ we get
\begin{align*}
\Omega(\underline{u})+ r(2M_1^\infty + M_2^0 + M_2^\infty) +rN_1 +2r(N_0 + N_2)  \leq b-r.  
\end{align*}
This implies that the numbers $\Omega(\underline{u})$,  $M_1^\infty$, $M_2^\infty$, and $N$, which implicitly depend on $d$, are uniformly bounded from above independently of $d$. By \eqref{S1} we must then have  \begin{align*} \lim_{d \to \infty} \underline{u} \cdot S_{\infty} = \infty.\end{align*}  By Lemma \ref{lem:monotonicity}, this contradicts the assertion above that $\Omega(\underline{u})$ is uniformly bounded from above independently of $d$. Cases 1 and 2 imply that for sufficiently large $d$ the building $\mathbf{F}$ cannot be of Type 1.

\medskip

Next, we assume first that for arbitrarily large $d$ there is a $(d,2d+1)$-building $\mathbf{F}$ of Type $2.1$. Let ${\bf{u}}_d=\{ u_2, \underline{u}\}$ be the essential curves of $\mathbf{F}$.

\medskip
 
 \noindent{\em Case 3.} Assume that the punctures of $\underline{u}$, and hence $\mathbf{u}_d$, along $L_1$ are all negative. By inequality \eqref{neg1} we have $M^{\infty}_1 \geq M^1_0 +k_1$. Together with \eqref{constraint2} this implies that $M^\infty_0 + N \geq 2d+1$ which contradicts \eqref{S2}.

\medskip

\noindent{\em Case 4.} Assume that  the punctures of $\underline{u}$ along $L_1$ are all positive. In this case, we have $M_1^0 \geq M_1^\infty + k_1 $ and  $k_1 +N_1 \geq 2d+1$. Together with \eqref{area2}, these imply that
\begin{align*}
\Omega(\mathbf{u}_d)+ r(2M_1^\infty + \widetilde{M}_2^0 + \widetilde{M}_2^\infty)+rN_1 +2r(N_0 + N_2)  \leq b-r.  
\end{align*}
From this it follows that $\Omega(\mathbf{u}_d)$,  $M_1^\infty$, $\widetilde{M}_2^\infty$, and $N$ are uniformly bounded from above independently of $d$. With this, equation  \eqref{S2} implies that  \begin{align*}\lim_{d \to \infty} \underline{u} \cdot S_{\infty} = \infty.\end{align*} By Lemma \ref{lem:monotonicity}, this contradicts the assertion above that $\Omega(\bf{u}_d)$ is uniformly bounded from above independently of $d$. Cases 3 and 4 imply that, for sufficiently large $d$, the building $\mathbf{F}$ cannot be of Type 2.1.

\medskip

Finally, we use index computations to show that any  $(d,2d+1)$-building  cannot be of Type $3$. Let $\mathbf{F}$ be such a building.  Let $s_i$ be the total number of positive ends of the curves of $\mathbf{F}$ that map to $T^*L_i$. The total index of the curves mapping to $T^*L_i$ is at most $2s_i -2$, where the upper bound corresponds uniquely to the case of a single curve with $s_i$ positive ends. 

To accommodate the $2d+1$ constraint points on $L_1$ we must have \begin{equation}\label{eq:s1}
    s_1  \geq 2d+2.
\end{equation}
Since two of these $s_1$ ends match with essential curves of $\mathbf{F}$, there must be $s_1 -2$ addition curves of $\mathbf{F}$ corresponding to each of the remaining ends. These curves can be included in planar sub-buildings that cover curves in $\mathfrak{r}_0 \cup \mathfrak{r}_\infty$. Similarly, 
there are also $s_2 -2$ planar sub-buildings of $\mathbf{F}$ that cover curves in $\mathfrak{s}_0 \cup \mathfrak{s}_\infty$.  

By the index bound above for curves in $T^*L_i$, the total index of the curves of $\mathbf{F}$ (defined to be the sum of the indices of the individual curves minus any matching conditions) is at most \begin{equation}\label{eq:ind3}
\begin{split}
   \mathrm{index}(\mathbf{u}) + (2s_1-2) + (2s_2-2)+ \\
   \sum_{i=1}^{s_1-2}\mathrm{index}(\mathbf{v_i}) + \sum_{j=1}^{s_2-2}\mathrm{index}(\mathbf{w_j}) - (s_1 +s_2). 
\end{split}
\end{equation}
The first term, $\mathrm{index}(\mathbf{u})$, is the total index of the three essential curves, the next two terms are upper bounds for the index of the curves in $T^*L_i$, the two sums represent the total index of the planar sub-buildings mapping into $\mathfrak{r}_0 \cup \mathfrak{r}_\infty$ and $\mathfrak{s}_0 \cup \mathfrak{s}_\infty$ respectively, and the last term 
comes from the matching constraints at Reeb orbits for the Morse-Bott Reeb flow of the flat metric on $\mathbb{T}^2$. 
As planar sub-buildings, like holomorphic planes, have odd index, we have
\begin{equation}\label{eq:ind35}
   \sum_{i=1}^{s_1-2}\mathrm{index}(\mathbf{v_i}) \geq s_1 -2; \, \, \,  \sum_{j=1}^{s_2-2}\mathrm{index}(\mathbf{w_j}) \geq s_2 -2.
\end{equation}
Since the total index is preserved under limits, the total index of $\mathbf{F}$ is equal to $2(2d+1)$, and it then follows from \eqref{eq:s1}, \eqref{eq:ind3} and  \eqref{eq:ind35} that 
\begin{equation}\label{eq:ind4}
   \mathrm{index}(\mathbf{u}) \leq 2(3-s_2). 
\end{equation}
The three essential curves of $\mathbf{F}$ are $u_1$, $\underline{u}$,  and $u_2$. The curves $u_1$ and $u_2$ are holomorphic planes and hence have odd indices. Since $s_2 \ge 2$ , we must  then have $s_2=2$, $\mathrm{index}(u_1) = \mathrm{index}(u_2) =1$, and 
$\mathrm{index}(\underline{u})=0$. Furthermore, all our inequalities become equalities, and hence $s_1 = 2d+2$. For a generic choice of $J$, 
this means the building $\mathbf{F}$ has a single curve in $T^*L_1$, and this curve is rigid (that is, the curve has deformation index $0$ when constrained to intersect the $2d+1$ points). In turn, this means the constraint points determine a finite collection of possible positive ends, and these ends will not, generically,  match with the ends of the rigid curve $\underline{u}$.

\medskip

It remains for us to show that the limit $\mathbf{F}$, of Type 2.2, consists entirely of regular curves.  Arguing as above, the total index of the curves of $\mathbf{F}$ is at most \begin{equation}\label{eq:ind5}
\begin{split}
   \mathrm{index}(\mathbf{u}) + (2s_1-2) + 2s_2 + \\
   \sum_{i=1}^{s_1-2}\mathrm{index}(\mathbf{v_i}) + \sum_{j=1}^{s_2}\mathrm{index}(\mathbf{w_j}) - (s_1 + 2s_2). 
\end{split}
\end{equation} 
The third term, $2s_2$, corresponds to curves of $\mathbf{F}$ covering cylinders in $T^*L_2$, and the next two terms represent the total index of the planar sub-buildings as above with ends on $L_1$ and $L_2$ respectively, and the  
final term corresponds to matching condition constraints.

The analogue of \eqref{eq:ind35} is
\begin{equation}\label{eq:ind36}
   \sum_{i=1}^{s_1-2}\mathrm{index}(\mathbf{v_i}) \geq s_1 -2; \, \, \,  \sum_{j=1}^{s_2}\mathrm{index}(\mathbf{w_j}) \geq s_2 -2.
\end{equation}

Since the total index of the curves of $\mathbf{F}$ is equal to $2(2d+1)$, it follows from \eqref{eq:s1}, \eqref{eq:ind5} and \eqref{eq:ind36}  that 
\begin{equation}\label{eq:ind7}
   \mathrm{index}(\mathbf{u}) \leq 2-s_2. 
\end{equation}
The two essential curves of $\mathbf{F}$ are $\underline{u}$ and $u_2$. Since  $u_2$ is a holomorphic plane it has odd index and we have \begin{equation}\label{eq:ind6}
   \mathrm{index}(\underline{u}) \leq 1-s_2. 
\end{equation} 
Therefore $s_2  \le 1$, and if $s_2=1$ then the essential curve $\underline{u}$ is rigid. In this case our various inequalities must be equalities, in particular $s_1=2d+2$ and there is a single curve in $T^*L_1$ which is also rigid. However for generic $J$ we do not expect to find rigid curves with matching asymptotics. 
Thus, $s_2 =0$ and the inequalities \eqref{eq:ind7} and \eqref{eq:ind6}, and hence \eqref{eq:ind36}, are equalities. This means that our planar sub-buildings are in fact single, somewhere injective holomorphic planes, and so all the curves of $\mathbf{F}$ are somewhere injective as required.

\end{proof}








The primary implication of Proposition \ref{prop:Fd} is the following.

\begin{proposition}\label{subaxis} For sufficiently large $e \in \NN$, there exist embedded symplectic spheres $A_0, A_{\infty}, B_0, B_{\infty} \subset X \smallsetminus (L_1 \cup L_2)$ in the class $(e,1)$, with the following properties:
\begin{enumerate}
\item $A_0$ intersects each plane in $\mathfrak{r}_0$ once and is disjoint from the planes in $\mathfrak{r}_{\infty}$;
\item $A_{\infty}$ intersects each plane in $\mathfrak{r}_{\infty}$ once and is disjoint from the planes in $\mathfrak{r}_0$;
\item $B_0$ intersects each plane in $\mathfrak{s}_0$ once and is disjoint from the planes in $\mathfrak{s}_{\infty}$;
\item $B_{\infty}$ intersects each plane in $\mathfrak{s}_{\infty}$ once and is disjoint from the planes in $\mathfrak{s}_0$;
\item either $A_0$ and $A_{\infty}$ both intersect the planes in $\mathfrak{s}_0$ and are disjoint from the planes in $\mathfrak{s}_{\infty}$, or $A_0$ and $A_{\infty}$ both intersect the planes in $\mathfrak{s}_{\infty}$ and are disjoint from the planes in $\mathfrak{s}_0$;
\item either $B_0$ and $B_{\infty}$ both intersect the planes in $\mathfrak{r}_0$ and are disjoint from the planes in $\mathfrak{r}_{\infty}$, or $B_0$ and $B_{\infty}$ both intersect the planes in $\mathfrak{r}_{\infty}$ and are disjoint from the planes in $\mathfrak{r}_0$.
\end{enumerate}
\end{proposition}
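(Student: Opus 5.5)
The plan is to build the four spheres by gluing the components of the $(d,2d+1)$-buildings from Proposition \ref{prop:Fd}, together with their analogues with all constraints on $L_2$, and then to smooth the result into embedded spheres. We choose the constraints so that the single lower-level curve in $T^*L_1$ has its ends in a prescribed configuration.

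Fix $d$ large and take a $(d,2d+1)$-building $\mathbf{F}$. By Proposition \ref{prop:Fd} it has:
\begin{itemize}
\item[(i)] two essential planes, $\underline{u}$ (projecting onto $\overline{A\cup B}$, asymptotic to $L_1$) and $u_1$ (projecting onto $\overline{C}$);
\item[(ii)] one curve $v$ in $T^*L_1$ with $2d+1$ positive ends;
\item[(iii)] $2d-1$ planes in $\mathfrak{r}_0\cup\mathfrak{r}_\infty$.
\end{itemize}
Let $a$ be the number of these planes in $\mathfrak{r}_0$ and $c$ the number in $\mathfrak{r}_\infty$.

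The first step is to pin down the homological bookkeeping. The intersection number with $S_\infty$ is $d=\mathbf{u}\cdot S_\infty+c$. The area identity is $\Omega(\mathbf{u})+r(2d-1)=2rd+b$. The asymptotic classes of the ends of $v$ sum to zero in $H_1(L_1)$. Together these should force $\underline{u}$ and $u_1$ to have ends that are nonzero multiples of a class independent of $\beta_1$. Intersection with a plane of $\mathfrak{r}_0$ or $\mathfrak{r}_\infty$ is then computed by the linking of ends in the $T^*L_1$ level.

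The second step is to glue. We smooth $\bar{\mathbf{F}}$ by replacing $v$ together with the attached planes by a nearby embedded surface, using the straightened local model near $L_i$ from the straightening lemma. For $A_0$ we want a building in which every attached plane lies in $\mathfrak{r}_\infty$, or more precisely in which the net intersection with an $\mathfrak{r}_\infty$ plane is zero and with an $\mathfrak{r}_0$ plane is one. We pass to degree $e$ by varying $d$ and the ratio of $a$ to $c$ through the choice of constraint points. Positivity of intersections with the leaves of the relative foliation, which are regular and somewhere injective by Remark \ref{rem:reg}, converts ``the count is zero'' into ``disjoint''. $B_0$ and $B_\infty$ are produced in the same way from buildings with all constraints on $L_2$; by symmetry the analogue of Proposition \ref{prop:Fd} is of Type 2.1.

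The third step handles items (5) and (6). The glued sphere $A_\bullet$ avoids $L_2$, so it meets every broken leaf along $L_2$ in a single point. Which half of the leaf carries that point is determined by which of $p(A)$ or $p(C)$ regions the essential curves cover near $p(L_2)$. That choice is the same for $A_0$ and $A_\infty$, since both come from Type 2.2 buildings with the same essential curves $\underline{u}$ and $u_1$, up to deformation.

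I expect the main obstacle to be step two: obtaining embeddedness and the exact intersection counts after gluing. The gluing happens along a Morse--Bott family, and the lower-level curve $v$ has many ends. My first attempt would be to glue in a local model in which $v$ is replaced by a branched-cover-free surface built from the straightened annuli. I would then use adjunction in the class $(e,1)$, together with the fact that all components are somewhere injective, to conclude that a generic small perturbation is embedded.
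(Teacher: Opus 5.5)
Your outline starts from the right building (the Type 2.2 building of Proposition \ref{prop:Fd}, and its Type 2.1 analogue for $B_0,B_\infty$). But step two has a genuine gap. You never say how the compactified building $\bar{\mathbf{F}}$, which passes through $L_1$, is moved off $L_1$, and that push-off is exactly what decides properties (1) and (2).

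Every top-level curve of $\mathbf{F}$ is disjoint from a generic plane of $\mathfrak{r}_0\cup\mathfrak{r}_\infty$. The attached planes are other leaves, and $p\circ\underline{u}$ and $p\circ u_1$ are injective with images in the open regions on either side of $p(L_1)$. So the entire intersection of the final sphere with such a plane is created in the gluing region near $L_1$. It is not governed by how the attached planes are split between $\mathfrak{r}_0$ and $\mathfrak{r}_\infty$. In any case you cannot steer that split with constraint points: $c=d-\mathbf{u}\cdot S_\infty\le d$ while $a+c=2d-1$, and the rest of the building is forced by Proposition \ref{prop:Fd}.

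The paper's mechanism is to move the boundary instead.
\begin{enumerate}
\item Since all top-level curves are regular, their $L_1$-asymptotics can be translated to a parallel torus $L_1(\mathbf{v})=\{p_1=a,\,p_2=b\}$ in the Weinstein neighbourhood, with $a\neq 0$ and $|a|$ small relative to $|b|$ (Lemma \ref{lem:shift1}).
\item Compactifying and smoothing near $L_1(\mathbf{v})$ gives an embedded symplectic sphere in $X\smallsetminus(L_1\cup L_2)$. Embeddedness comes from this construction, not from adjunction applied to a glued, non-holomorphic surface.
\item The sign of $b$ selects the family. For $b<0$, the side occupied by the straightened $\mathfrak{r}_0$ annuli, the sphere meets each $\mathfrak{r}_0$ plane once and misses $\mathfrak{r}_\infty$. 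For $b>0$ the roles reverse (Lemma \ref{lem:shift2}).
\end{enumerate}

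This also affects your step three. In the paper, $A_0$ and $A_\infty$ come from one and the same building, shifted in opposite directions, and the shift is localized near $L_1$. Both spheres therefore inherit the single alternative satisfied by $\underline{u}$: either $\underline{u}\bullet\mathfrak{s}_0=1$ and $\underline{u}\bullet\mathfrak{s}_\infty=0$, or the reverse. That gives (5) at once. In your scheme $A_0$ and $A_\infty$ would come from different buildings, so the claim that they share the same essential curves ``up to deformation'' is unsupported, and (5) does not follow as written.
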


\begin{proof} We will establish the existence of the spheres $A_0$ and $A_{\infty}$ . The existence of the spheres $B_0$ and $B_{\infty}$ follows from the same argument by exchanging the roles of $L_1$ and $L_2$.  The starting point for the argument is an $(e,2e+1)$-building, $\mathbf{F}$, of the Type 2.2 as in Proposition \ref{prop:Fd}. We will obtain $A_0$ and $A_{\infty}$ by  deforming $\mathbf{F}$ near $L_1$ and then compactifying and smoothing.

To deform  $\mathbf{F}$ we will use the procedure developed to prove Proposition 3.24 in \cite{hiker}. Let $\mathbf{v} =(a,b)$ be a vector in $(-\epsilon, \epsilon)^2$ and let $L_1(\mathbf{v})$ be the Lagrangian torus in $\mathcal{U}_1$ defined by $\{p_1=a, p_2=b\}$. Let $u$ be a top level curve of $\mathbf{F}$. By Proposition \ref{prop:Fd}, $u$ is a regular plane asymptotic to $L_1$. With this,  Lemmas 3.27 and 3.29 of \cite{hiker} imply the following.

\begin{lemma}\label{lem:shift1}
 If $\|v\|$ is sufficiently small, and $|a|$ is sufficiently small with respect to $|b|$, then there is an almost complex structure $J_{\mathbf{v}}$ on $X \smallsetminus (L_1 \cup L_1(\mathbf{v})\cup L_2)$ and a $J_{\mathbf{v}}$-holomorphic curve $u(\mathbf{v})$ whose domain is identical to that of $u$ and whose asymptotics are identical to those of $u$ but with the ones on $L_1$ translated to  $L_1(\mathbf{v})$. 
\end{lemma}

For  $\mathbf{v}$ as in Lemma \ref{lem:shift1}, let $\mathbf{F}(\mathbf{v})$ be the building obtained by translating each top level curve $u$ of $\mathbf{F}$ to $u(\mathbf{v})$ and identifying the bottom level curves of of $\mathbf{F}$ with the same curves now formally mapping to a copy of $T^*\TT^2$  that corresponds to $L_1(\mathbf{v})$. 

Compactifying the building $\mathbf{F}(\mathbf{v})$ and smoothing the resulting map near $L_1(\mathbf{v})$, we get a symplectic  embedding $$F_e(\mathbf{v}) \colon S^2 \to X \smallsetminus (L_1 \cup L_2).$$ Arguing as in  Lemma 3.33 of \cite{hiker}, we have the following. 

\begin{lemma}\label{lem:shift2}
   Suppose that  $a \neq 0$. If $b<0$, then $F_e(\mathbf{v})(S^2)$ intersects each plane in $\mathfrak{r}_0$ once and is disjoint from the planes in $\mathfrak{r}_{\infty}$. If $b>0$, then $F_e(\mathbf{v})(S^2)$ is disjoint from the planes in $\mathfrak{r}_0$ and intersects each plane in $\mathfrak{r}_{\infty}$ once.
 \end{lemma}

For $a_1\neq 0$ and  $b_1<0$ as above, set $$A_0 = F_e(\mathbf{v})(S^2).$$
Similarly, choose $a\neq 0$, and  $b>0$, and set $$A_\infty = F_e(\mathbf{v})(S^2).$$ For these choices the symplectic spheres $A_0$ and $A_\infty$ satisfy conditions (1) and (2) of Proposition \ref{subaxis}, respectively. To verify property (5) we note that Proposition \ref{prop:Fd} implies that either $\underline{u} \bullet \mathfrak{s}_0 =1$ and  $\underline{u} \bullet \mathfrak{s}_\infty =0$ or $\underline{u} \bullet \mathfrak{s}_0 =0$ and  $\underline{u} \bullet \mathfrak{s}_\infty =1$. By the construction of $A_0$ and $A_{\infty}$ from $\mathbf{F}(\mathbf{v})$, the same alternative holds for the intersections of $A_0$ and $A_{\infty}$ with  $\mathfrak{s}_0 $ and  $\mathfrak{s}_\infty$.

\end{proof}

We now consider $(d,d)$-buildings.

\begin{proposition}\label{prop:dd} For $d$ sufficiently large, every $(d,d)$-building is of Type 3. Each such building, $\mathbf{F}$, has three essential curves $u_2$, $\underline{u}$, and $u_1$ with areas $s$, $r$, and $b-s$, respectively. Moreover, the only top level curves of $\mathbf{F}$, other than its three essential curves, are $d-1$ broken planes in $\mathfrak{r}_0 \cup \mathfrak{r}_{\infty}$ and $d$ broken planes in $\mathfrak{s}_0 \cup \mathfrak{s}_{\infty}$. 
All the constituent curves of  $\mathbf{F}$ are somewhere injective and hence regular. 
\end{proposition}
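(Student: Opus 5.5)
The proof follows the template of Proposition \ref{prop:Fd}. We first rule out Types 1, 2.1 and 2.2 for large $d$, using the bookkeeping of Subsection \ref{common} together with Lemma \ref{lem:monotonicity}. We then pin down the Type 3 buildings by an index count. Here $n_1=d$ and $n_2=d+1$. Proposition \ref{prop:type} already excludes Type 0.

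\emph{Type 1.} Let $\underline{u}$ be the unique essential curve. Its punctures along $L_1$ are all of one sign, and likewise along $L_2$.
\begin{itemize}
\item If both sets of punctures are negative, \eqref{neg1} and its $L_2$ analogue give $M_1^\infty+M_2^\infty\ge k_1+k_2$. Combined with \eqref{constraint1} and its analogue, this gives $M_1^\infty+M_2^\infty+N\ge 2d+1$, contradicting \eqref{S1}.
\item If both are positive, \eqref{pos1} gives $M_i^0\ge M_i^\infty+k_i$. Substituting into \eqref{area1}, together with $k_i+N_i\ge n_i$, gives
\[
\Omega(\underline{u})+r(2M_1^\infty+2M_2^\infty)+r(N_1+N_2)+2rN_0\le 2rd+b-r(2d+1)=b-r.
\]
So $\Omega(\underline{u})$, the $M_i^\infty$ and $N$ are bounded independently of $d$. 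Then \eqref{S1} forces $\underline{u}\cdot S_\infty\to\infty$, contradicting Lemma \ref{lem:monotonicity}.
\item In the mixed case, say positive on $L_1$ and negative on $L_2$, \eqref{S1} gives $M_2^\infty+N_2\ge k_2+N_2\ge d+1$, so $M_2^\infty+N\ge d+1$, already contradicting \eqref{S1}. The opposite mixed case is symmetric: $M_1^\infty+N\ge d$ forces $\underline{u}\cdot S_\infty=0$ and pins every inequality to equality. Substituting $M_2^0\ge M_2^\infty+d+1$ into \eqref{area1} then bounds the area too tightly, giving $\Omega(\underline{u})+\dots\le b-2r-\dots$. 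Combined with the requirement that $\underline{u}$, whose projection covers $S_\infty$, intersects $S_\infty$, this is a contradiction.
\end{itemize}

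\emph{Types 2.1 and 2.2.} These are handled the same way using \eqref{area2}, \eqref{S2}, \eqref{constraint2}, \eqref{constraints2.1} (respectively \eqref{area2.2}, \eqref{S2.2}, \eqref{constraint2.2}). Negative punctures contradict the $S_\infty$ count. Positive punctures give an area bound of the form $\Omega(\mathbf{u})+\dots\le b-r(\cdot)$ with $N$ and the $M^\infty$ bounded, so the intersection with $S_\infty$ blows up, again contradicting Lemma \ref{lem:monotonicity}.

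The delicate point is that here only about half the constraints lie on each torus. The estimates therefore give slack of order $rd$ rather than $r(2d+1)$. I will need to use the constraint counts on \emph{both} tori simultaneously, through \eqref{constraints2.1} or \eqref{constraint2.2}, to recover the full $2rd$ that cancels the right-hand side. I expect this to be the main obstacle, particularly the mixed-sign subcases.

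\emph{Type 3.} I repeat the index computation \eqref{eq:ind3}. The constraints force $s_1\ge d+1$ and $s_2\ge d+2$, the total index is $2(2d+1)$, and planar sub-buildings have index at least $1$. Together these give
\[
\mathrm{index}(\mathbf{u})\le 3,
\]
with the odd-index planes $u_1,u_2$ forcing $\mathrm{index}(u_1)=\mathrm{index}(u_2)=1$ and $\mathrm{index}(\underline{u})\le 1$. Genericity excludes equality failures that would produce rigid curves with matching asymptotics. Hence all inequalities are sharp:
\begin{itemize}
\item there is a single curve in each $T^*L_i$, with $s_1=d+1$ and $s_2=d+2$;
\item this yields $d-1$ planes in $\mathfrak{r}_0\cup\mathfrak{r}_\infty$ and $d$ planes in $\mathfrak{s}_0\cup\mathfrak{s}_\infty$;
\item each planar sub-building is a single somewhere injective plane, so all curves are regular.
\end{itemize}

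\emph{Areas.} Finally, $u_2$ and $u_1$ are index-one essential planes projecting onto $A$ and $C$. Corollary \ref{>r} together with the Maslov/area classes of $L(r,s)$ gives each area at least $r$, in the class $s$ or $b-s$. Meanwhile $\underline{u}$ is a cylinder over $B$ of area at least $r$, by the argument of Lemma \ref{lem:noB}. The total area is
\[
2rd+b=\Omega(\mathbf{u})+r(2d-1),
\]
so $\Omega(\mathbf{u})=b+r$. This forces the areas $s$, $r$, $b-s$ once the minimal possible values are combined.
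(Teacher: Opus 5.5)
\textbf{Gap: excluding Types 1, 2.1 and 2.2.} Your exclusion of these types does not go through, and your suggested fix (using the constraint counts on both tori at once) cannot repair it. The only intersection identity you use is \eqref{S1} or \eqref{S2}, i.e.\ intersection with $S_\infty$. That sphere meets the planes of $\mathfrak{r}_\infty$ and $\mathfrak{s}_\infty$ but not those of $\mathfrak{r}_0$ and $\mathfrak{s}_0$, and with only $n_1=d$ constraints on $L_1$ the area identity leaves slack.

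Take Type 1 with negative punctures on $L_1$ and positive ones on $L_2$. Your inequalities force $\underline{u}\cdot S_\infty=0$, $M_1^0=M_2^\infty=N_0=N_2=0$ and $M_1^\infty+N_1=d$. Then \eqref{area1} gives only $\Omega(\underline{u})=rd+b-rM_2^0-rN_1\le b-r-rN_1$, which is no contradiction. Your closing step, that $\underline{u}$ must meet $S_\infty$ because $p\circ\underline{u}$ covers $S_\infty$, is false. An essential curve meets (almost) every leaf once, like a section, and such a curve need not meet $S_\infty$; a sphere in class $(0,1)$ disjoint from $S_\infty$ is an example.

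Type 2.1 fails in the same way. With negative punctures, $M_1^\infty+N\ge d$ only forces equality in \eqref{S2}, leaving $\widetilde M^0_2\ge d$ completely unconstrained. With positive punctures, the area bound controls only the sum $\widetilde M^0_2+\widetilde M^\infty_2$. So $\widetilde M^\infty_2$ can be of order $d$, and $\mathbf{u}\cdot S_\infty$ need not blow up.

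\textbf{Missing idea.} What you need is Proposition \ref{subaxis}. Fix $e$ and deform $J$ so that the spheres $A_0,A_\infty,B_0,B_\infty$ in class $(e,1)$ are $J$-holomorphic. After relabeling:
\begin{itemize}
\item $A_0$ meets $\mathfrak{r}_0\cup\mathfrak{s}_\infty$;
\item $A_\infty$ meets $\mathfrak{r}_\infty\cup\mathfrak{s}_\infty$;
\item $B_0$ meets $\mathfrak{s}_0\cup\mathfrak{r}_\infty$;
\item $B_\infty$ meets $\mathfrak{s}_\infty\cup\mathfrak{r}_\infty$.
\end{itemize}
Each sphere gives an identity with right-hand side $d+e$, and each can serve as $S$ in Lemma \ref{lem:monotonicity}. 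For example, $\underline{u}\bullet B_0+M_1^\infty+M_2^0+N=d+e$ contradicts $M_1^\infty+M_2^0+N_1+N_2\ge 2d+1$ in the mixed case above. Similarly, $\mathbf{u}\bullet A_0+M_1^0+\widetilde M^\infty_2+N=d+e$ gives $\widetilde M^\infty_2\le e$ in positive Type 2.1, after which the area bound plus monotonicity finishes. This is why the paper proves Proposition \ref{prop:Fd} first.

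\textbf{Two smaller errors.}
\begin{itemize}
\item In Type 3 the index count gives $\mathrm{index}(\mathbf{u})\le 4$, not $3$. Since $\underline{u}$ is a cylinder its index is even, so your bound would leave only a rigid $\underline{u}$. That is exactly the case which must be excluded by genericity (neither bottom curve can then be rigid, forcing $\mathrm{index}(\mathbf{u})\le 0$). The correct conclusion is $\mathrm{index}(\underline{u})=2$.
\item For the areas, the argument of Lemma \ref{lem:noB} gives no lower bound for a cylinder with ends on two different tori. The paper instead shows $\Omega(u_2)\in\{s,2r-s\}$ via a comparison disk over $A$, and rules out $2r-s$ using Corollary \ref{>r} or the inflation of $S_\infty$. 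It uses $\Omega(u_1)=b-s$ for the standard torus. It then gets $\Omega(\underline{u})\le r$ from the total area, and shows by a gluing argument that $\Omega(\underline{u})$ is a positive multiple of $r$. This same argument is what pins down $s_1=d+1$ and $s_2=d+2$; index sharpness alone does not.
\end{itemize}
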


\subsubsection{The proof of Proposition \ref{prop:dd}} The proof uses the symplectic spheres of Proposition \ref{subaxis} for a fixed  integer, say  $e$. By relabeling, if necessary, we may assume that $A_0$ and $A_{\infty}$ both intersect $\mathfrak{s}_{\infty}$, and that $B_0$ and $B_{\infty}$ both intersect $\mathfrak{r}_{\infty}$. We may also assume that these spheres are $J$-holomorphic. Indeed, the sphere $A_0$ is already holomorphic away from an $L_1(\mathbf{v_0})$, with $\mathbf{v_0} = (a_1,b_1)$ and $b_1<0$, and since the spheres is symplectic we can deform $J$ in a neighborhood of $L_1(\mathbf{v_0})$ to make $A_0$ holomorphic. We may assume this neighborhood is disjoint from $B_0$ and $B_{\infty}$, since they are both deformations of buildings with no ends on $L_1$. We may also assume the neighborhood of $L_1(\mathbf{v_0})$ is disjoint from $A_{\infty}$. This follows as in Lemma 3.35 from \cite{hiker}, using a continuation argument, as $L_1(\mathbf{v_0})$ may be chosen close to $\mathfrak{r}_0$ while $A_{\infty}$ is disjoint from the planes in $\mathfrak{r}_0$. As the deformation of $J$ making $A_0$ holomorphic does not affect $B_0$, $B_{\infty}$ or $A_{\infty}$, or indeed any of the families of planes $\mathfrak{r}_{0}$, $\mathfrak{r}_{\infty}$, $\mathfrak{s}_{0}$, $\mathfrak{s}_{\infty}$, we may repeat the argument for the other spheres.
As a result of this, the spheres $A_0$, $A_{\infty}$, $B_0$ and $B_{\infty}$ can each play the role of $S$ in the applications of Lemma \ref{lem:monotonicity} below.

\bigskip

\noindent{\bf Step 1.} First we show that $\mathbf{F}$ cannot be of Type 1 when $d$ is sufficiently large. Arguing by contradiction we assume that we can find a Type 1 building $\mathbf{F}$ for an arbitrarily large $d$. This building  has a single essential curve, $\underline{u}$,  and there are four cases to consider corresponding to the possibilities that the punctures of $\underline{u}$ along $L_1$ are all negative/positive, and the punctures of $\underline{u}$ along $L_2$ are all negative/positive. In each case, we will arrive at a contradiction when $d$ is large enough.

\bigskip

\noindent{\it{Case 1.}} Assume first that the punctures of $\underline{u}$ along $L_1$ are all negative, and the punctures of $\underline{u}$ along $L_2$ are all negative. This implies the collection of inequalities:
\begin{align*}
    &M_1^\infty \geq M_1^0 + k_1,\\
    &M_2^\infty \geq M_2^0 + k_2,\\
    &k_1 +N_1 \geq d,\\
    &k_2 +N_2 \geq d+1.
\end{align*}
From these it follows that 
$$M_1^\infty + M_2^\infty + N_1 + N_2 \ge 2d+1$$
which contradicts \eqref{S1}.

\bigskip

\noindent{\it{Case 2.}} If the punctures of $\underline{u}$ along $L_1$ are all negative, and the punctures of $\underline{u}$ along $L_2$ are all positive we have:
\begin{align*}
    &M_1^\infty \geq M_1^0 + k_1,\\
    &M_2^0 \geq M_2^\infty + k_2,\\
    &k_1 +N_1 \geq d,\\
    &k_2 +N_2 \geq d+1,
\end{align*}
and hence
$$M_1^\infty + M_2^0 + N_1 + N_2 \ge 2d+1.$$
However, taking the intersection of $\mathbf{F}$ with the sphere $B_0$ yields 
\begin{align*}
    \underline{u} \bullet B_0 + M_1^\infty+M_2^0 +N =d+e.
\end{align*}
Fixing $e$, these two formulas cannot both hold when $d$ is  sufficiently large.

\bigskip

\noindent{\it{Case 3.}} Arguing similarly, if the punctures of $\underline{u}$ along $L_1$ are all positive, and the punctures of $\underline{u}$ along $L_2$ are all negative we have
$$M_1^0 + M_2^\infty + N_1 + N_2 \ge 2d+1.$$ For fixed $e$ and large $d$ this contradicts the equality
\begin{align*}
  \mathbf{F} \bullet A_0 = \underline{u} \bullet A_0 + M_1^0 +M_2^\infty +N =d+e.
\end{align*}

\bigskip

\noindent{\it{Case 4.}} Finally, if the punctures of $\underline{u}$ along $L_1$ and $L_2$ are all positive then we have 
\begin{align}\label{c41}
M_1^0 + M_2^0 + N_1 + N_2 \ge 2d+1. 
\end{align}
Computing areas we have
\begin{align*}
\Omega(\underline{u}) + r(M_1^0 +M_1^{\infty}+M_2^0 +M_2^{\infty}) + 2rN = 2dr + b
\end{align*}
and using \eqref{c41} this gives
\begin{align}\label{c42}
\Omega(\underline{u}) + r(M_1^{\infty}+ M_2^{\infty})
&\leq  b - r - rN.
\end{align}

From this we see that $N$ is uniformly bounded, independently of $d$, as is the total area of the essential curve $\underline{u}$ and all curves covering $\mathfrak{r}_{\infty}$ and $\mathfrak{s}_{\infty}$.
But, computing intersections with $A_{\infty}$, we have
\begin{align*}
  \mathbf{F} \bullet A_{\infty} = \underline{u} \bullet A_{\infty} + M_1^{\infty} +M_2^{\infty} +N =d+e
\end{align*}
and, by \eqref{c42}, for sufficiently large $d$ this contradicts Lemma \ref{lem:monotonicity}.





\bigskip

\noindent{\bf Step 2.}  Next we show that  $\mathbf{F}$ cannot be of Type 2.1 when $d$ is large. Assume that for an arbitrarily large $d$ we can find a limit $\mathbf{F}$ of Type 2.1. This building has two essential curves 
$\mathbf{u} = \{u_2,\underline{u_d}\}$ and the punctures of $\underline{u}$ along $L_1$ are either all negative or positive.

\bigskip

\noindent{\it{Case 1.}} Suppose the punctures of $\underline{u}$ along $L_1$ are negative.  In this case we have:
\begin{align*}
    &M_1^\infty \geq M_1^0 + k_1,\\
    &k_1 +N_1 \geq d,\\
    &\widetilde{M}^0_2+\widetilde{M}^\infty_2   \geq d+2.
\end{align*}
By the last of these inequalities, either $\widetilde{M}^0_2 \ge (d+2)/2$ or $\widetilde{M}^\infty_2 \ge (d+2)/2$. Suppose that  \begin{equation}\label{m2}\widetilde{M}^\infty_2 \ge (d+2)/2. \end{equation} Then, intersecting $\mathbf{F}$ with $A_\infty$ we get
\begin{equation}\label{ainf}d+e =\mathbf{u} \cdot A_\infty + M_1^{\infty} + \widetilde{M}^\infty_2 +N.\end{equation}
By \eqref{m2} and the inequalities for $M^{\infty}_1$ and $N_1$ above, it follows that the right hand side of  \eqref{ainf} is at least $d + (d+1)/2$.  This produces a contradiction whenever  $d> 2e-2$. 

Similarly, if $\widetilde{M}^0_2 \ge (d+2)/2$, then intersecting $\mathbf{F}$ with $B_0$ we get
\begin{align*}
    d+e = \mathbf{u} \cdot B_0 + M_1^\infty + \widetilde{M}^0_2 +N. 
\end{align*}
The right hand side is at least $d + (d+2)/2$ and we again get a contradiction whenever  $d> 2e-2$.



\bigskip

\noindent{\it{Case 2.}} When the punctures of $\underline{u}$ along $L_1$ are positive, we have:
\begin{align*}
    &M_1^0 \geq M_1^\infty + k_1,\\
    &k_1 +N_1 \geq d,\\
    &\widetilde{M}^0_2+\widetilde{M}^\infty_2   \geq d+2.
 \end{align*}
In particular $M_1^0 + N \ge d$.

Intersecting with $A_0$ we get
\begin{align*}
    d+e = \mathbf{u} \cdot A_0 + M_1^0 + \widetilde{M}^\infty_2 +N,
\end{align*}
and together with the above this gives $\widetilde{M}^\infty_2 \le e$, so $\widetilde{M}^0_2 \ge d+2-e$.

Next we recall the area of our limiting building:
\begin{align*}
\Omega(\underline{u}) + r(M_1^0 + M_1^{\infty}+ \widetilde{M}^0_2+\widetilde{M}^\infty_2) + 2rN.
\end{align*}
Combining with the above inequalities this implies

\begin{align*}
\Omega(\underline{u}) + r(M_1^{\infty}+\widetilde{M}^\infty_2) 
&\leq 2dr + b - r(d - N) -r(d+2-e) - 2rN\\ &= b +r(e-2) - rN.
\end{align*}

This means that for fixed $e$ the count $N$, plus the total area of the essential curves together with all curves in our building lying in $\mathfrak{r}_{\infty}$ and $\mathfrak{s}_{\infty}$, is bounded independently of $d$. Thus all curves intersecting $B_{\infty}$ have uniformly bounded area, but as this intersection number is $d+e$ we contradict Lemma \ref{lem:monotonicity} when $d$ is large.

\bigskip

\noindent{\bf Step 3.}  The proof that $\mathbf{F}$ cannot be of Type 2.2 when $d$ is sufficiently large is nearly identical to the proof in {\bf Step 2}. The only change, after relabeling, is the change of the number of constraint points from $d$ to $d+1$.  This does not affect the argument.

\bigskip

\noindent{\bf Step 4.}  It remains to verify the various properties of the constituent curves of the Type 3 limit $\mathbf{F}$.\\

\noindent{\it Indices.} We begin by considering the indices of the curves of $\mathbf{F}$, starting at bottom level. Let $s_i$ be the total number of positive ends of the curves of $\mathbf{F}$ that map to $T^*L_i$.

The total index of the curves mapping to $T^*L_i$ is denoted by $I_i$, and is at most $2s_i -2$. This upper bound corresponds to the case of a single curve with $s_i$ asymptotic ends. (As shown below, this is the case that actually occurs.) In any case the $I_i$ are necessarily even. To satisfy the point constraints we must have
\begin{equation}\label{consrtaint}
I_1 \ge 2d; \, \, I_2 \ge 2(d+1).
\end{equation}

Since $\mathbf{F}$ is of Type 3, two of the $s_i$ positive ends match with essential curves of $\mathbf{F}$. So, there must be $s_i -2$ additional curves of $\mathbf{F}$ corresponding to each of the remaining ends. Each of these curves form part of a planar sub-building in $\mathfrak{r}_0 \cup \mathfrak{r}_\infty$ when $i=1$ or a  planar sub-building in $\mathfrak{s}_0 \cup \mathfrak{s}_\infty$ when $i=2$. We denote the indices of these sub-buildings by $J_{k,i}$ for $i=1,2$. These numbers are odd, and equal $1$ exactly when the sub-building consists of a single plane.

The total index of the curves of $\mathbf{F}$ is given by \begin{equation}\label{eq:indg3}
   \mathrm{index}(\mathbf{u}) + I_1 + I_2 + \sum_{k=1}^{s_1-2} (J_{k,1}-1) + \sum_{k=1}^{s_2-2} (J_{k,2}-1) - 4 = 2(2d+1), 
\end{equation}
where the first term is the total index of the three essential curves, we subtract $1$ from the index of the planar sub-buildings for the matching conditions are their end, and the final term is the matching condition for the $4$ ends of the essential curves. We recall this sum is $2(2d+1)$ because the total index of a limiting building is equal to the index of holomorphic curves representing the class $(d,1)$.

Hence, using \eqref{consrtaint},
\begin{equation*}
    \mathrm{index}(\mathbf{u}) = \mathrm{index}(u_2)+\mathrm{index}(\underline{u})+\mathrm{index}(u_1) \leq 4,
\end{equation*}
with equality only if $I_1=2d$, $I_2=2(d+1)$ and all $J_{k,i}=1$.

Suppose $\mathrm{index}(\underline{u})=0$, so the essential cylinder is rigid. Then, in order to have matching ends in our building, neither the lower level curves in $T^* L_1$ nor $T^* L_2$ can be rigid, that is, both $I_1 > 2d$ and $I_2 > 2(d+1)$. But as these terms are even, we then have $$\mathrm{index}(\mathbf{u}) = \mathrm{index}(u_2)+\mathrm{index}(\underline{u})+\mathrm{index}(u_1)  \leq 0,$$ and this is a contradiction as holomorphic planes have odd index, and for regular almost complex structures the index is nonnegative.
Similarly we cannot have $\mathrm{index}(\underline{u}) \ge 4$ and both essential planes having odd index, so we conclude that
\begin{equation*}
    \mathrm{index}(u_2)=1, \, \, \mathrm{index}(\underline{u})=2, \, \, \mathrm{index}(u_1)=1.
\end{equation*}



\bigskip

\noindent{\it Areas.} Next we verify the symplectic areas of the essential curves of $\mathbf{F}$ asserted in Proposition \ref{prop:dd}. 

\begin{lemma}\label{lem:s}
The $\Omega$-area of   $u_2$ is equal to $s$.
\end{lemma}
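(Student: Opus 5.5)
The plan is to compute $\Omega(u_2)$ as the symplectic area of a Maslov-index-two disk with boundary on $L_2$. Then we show this area is pinned to $s$, using that $L_2$ is Hamiltonian isotopic in $\RR^4$ to $L(r,s)$.

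First, $u_2$ is an index $1$ essential plane asymptotic to $L_2$. Its projection has closure $A$, which does not contain $p(T_\infty)$. So $u_2$ is disjoint from $T_\infty$. It is also disjoint from $S_\infty$: otherwise, by positivity of intersections, $p\circ u_2$ would hit $p(T_\infty)$, since $S_\infty$ meets every leaf once and is itself a section. Hence the compactification $\bar u_2$ is a disk in $X\smallsetminus(S_\infty\cup T_\infty)\subset \RR^4$ with boundary on $L_2$. Since $\mathrm{index}(u_2)=1$ for a plane in dimension four, its Maslov index is $2$. Its boundary class is the asymptotic geodesic class, which is not a multiple of $\beta_2$: the image $A$ is a full disk bounded by $p(L_2)$, not a single broken leaf. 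Transporting by the Hamiltonian isotopy to $L(r,s)$, the Maslov-two classes of $L(r,s)$ in $\RR^4$ are $e_1+k(e_1-e_2)$-type classes with areas $r+k(r-s)$. More precisely, they are the classes $(m,n)$ with $m+n=1$, of area $mr+ns$. So $\Omega(u_2)\in\{r+k(r-s): k\in\ZZ\}$, equivalently $\{mr+(1-m)s\}$.

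Second, bound the area from both sides. Positivity gives $\Omega(u_2)>0$. For the lower bound, Corollary \ref{>r} gives $\Omega(u_2)\ge r$. To exclude the value $r$ itself (the class $\pm\beta_2$-type disks of area $r$, like the planes in $\mathfrak{s}_0$), I would show that the boundary of $\bar u_2$ together with $\beta_2$ forms a basis of $H_1(L_2)$. The point is that $u_2$ covers $A$ exactly once, while the planes of area $r$ are fibers. The classes with area $r$ are those with boundary $\pm\beta_2$, so $\Omega(u_2)\neq r$, and the next admissible value forces $\Omega(u_2)\ge s$.

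For the upper bound, use the total area identity for the Type 3 building:
\[
\Omega(u_2)+\Omega(\underline{u})+\Omega(u_1)+r(\text{number of covered planes})+2rN=2rd+b.
\]
Here there are $2d-1$ broken planes (from Step 4), and $\Omega(\underline u)>0$. We also need $\Omega(u_1)\ge b-s$, obtained by the symmetric argument on $L_1=L(r,s)$, whose relevant classes have area $b-s$ relative to $S_\infty$ (as $u_1$ meets $S_\infty$ once). Combining these gives $\Omega(u_2)<s+r$. Every admissible value of the form $mr+(1-m)s$ lies outside $(r,s)\cup(s,s+r)$ when $r<1\le s$ and $s-r$ is not tiny. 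The delicate case is when $s-r$ is small; there one must check that the only option in $[s,s+r)$ is $s$, using $2r\ge 1$ and $s\ge1$. Hence $\Omega(u_2)=s$.

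The main obstacle is the middle step: identifying the boundary class of $\bar u_2$ as complementary to $\beta_2$, and excluding area $r$. I would handle this through intersection numbers with the planes of $\mathfrak{s}_0$ and $\mathfrak{s}_\infty$ and the fact that $p\circ u_2$ is injective onto $A$.
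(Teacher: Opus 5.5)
There is a genuine gap at the first step. Your claim that $u_2$ is disjoint from $S_\infty$ is unjustified, and the reason you give does not work. $S_\infty$ is a section of $p$, so $p$ is the identity on it, and a point of $u_2\cap S_\infty$ simply projects to a point of $A\subset S_\infty$. This is entirely compatible with $p(T_\infty)\notin\bar A$; your argument would work for the leaf $T_\infty$, not for $S_\infty$. Nothing rules out $u_2\bullet S_\infty=q\ge 1$, and the paper treats that case explicitly. When $q\ge1$, $\bar u_2$ does not lie in $X\smallsetminus(S_\infty\cup T_\infty)\subset\RR^4$, so neither Corollary \ref{>r} nor your list of admissible $\RR^4$-areas applies to it. Its class differs from that of a disk in $\RR^4$ with the same boundary by $q$ copies of the fiber class $(1,0)$, each of area $2r$ and Maslov index $4$.

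The paper handles this by comparing $\bar u_2$ with a topological Maslov-two disk $u$ over $\bar A$ in $X\smallsetminus S_\infty$ (a lift of $\bar A$, corrected by $\mathfrak{s}_0$-disks). The difference $\bar u_2-u$ has zero intersection with $T_0$ and $T_\infty$ and Maslov index $0$. It is therefore homologous to a Maslov-zero combination of the two hemispheres of a broken leaf, which has area zero because both hemispheres have area $r$. Together with $[\partial\bar u_2]\bullet\beta_2=\pm1$, this gives $\Omega(u_2)\in\{s,2r-s\}$ for every $q$. The value $2r-s$ is then excluded by Corollary \ref{>r} if $q=0$ (this is essentially your route), and by the inflation bound $\Omega(u_2)>q(2r-1)\ge 2r-s$ if $q\ge1$.

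Your upper-bound step should be discarded, because it fails. The admissible values $s+k(s-r)$, $k\ge1$, lie in $(s,s+r)$ whenever $k(s-r)<r$. For instance $2s-r<s+r$ as soon as $s<2r$ (e.g.\ $r=0.9$, $s=1.5$). So the problem is not confined to tiny $s-r$, and the conditions $2r\ge1$, $s\ge1$ do not remove these values. Your justification of $\Omega(u_1)\ge b-s$ also conflates the two axes: it is $T_\infty$ that $u_1$ meets once, while its intersection with $S_\infty$ is not controlled a priori.

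No upper bound is needed, however. Identify $\pm\beta_2$ with $e_1$, which is the only Maslov-two class of area $r$ since $r<s$. The Maslov-two classes completing it to a basis are then $e_2$ and $2e_1-e_2$, of areas $s$ and $2r-s$. So the basis property you already intend to prove, combined with the lower bound, finishes the argument. That basis property is what the paper records as $\gamma_{2,d}\bullet\beta_2=\pm1$. It holds because $p\circ u_2$ is injective with closure $\bar A$, so the asymptotic geodesic of $u_2$ projects with degree one onto $p(L_2)=\partial\bar A$.
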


\noindent{\em The Proof of Lemma \ref{lem:s}.} We begin with a sequence of intermediate claims. Let $$v_2 \colon (D,\partial D) \to (X , L_2)$$ be the compactification of $u_2$, where  $D \subset \RR^2$ is the closed unit disk. Setting $\gamma_{2,d} = [v_2(\partial D)]  \in H_1(L_2;\ZZ)$, we have $\gamma_{2,d} \bullet \beta_2 =\pm 1.$

\begin{claim}\label{claim:u}
    There is a  smooth map $$u \colon (D,\partial D) \to (X \smallsetminus S_\infty, L_2)$$ such that 
    \begin{enumerate}
         \item $p \circ u (D)$  is equal to the closure of $A$ in $S_{\infty}$,
         \item $[u(\partial D)] =\gamma_{2,d} + \ell \beta_2 \in H_1(L_2; \ZZ)$ for some $\ell \in \ZZ$,
        \item The Maslov index of $u$ is $2$.
        
    \end{enumerate}
\end{claim}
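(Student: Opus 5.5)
The plan is to build $u$ by starting from the compactified disk $v_2$ and modifying it only near the broken leaves of $\mathcal{F}_{L_1,L_2}(J)$ over $p(L_2)=\partial A$. Since $u_2$ is essential, $p\circ u_2$ is injective and its image closure is $\bar A$. So $v_2$ already satisfies (1) up to the points where $u_2$ meets $S_\infty$. By (F2) the leaves over $A$ hit $S_\infty$, and $u_2\bullet S_\infty$ may be positive, so $v_2$ need not avoid $S_\infty$. The first step is therefore to remove these intersections. I would push $v_2$ off $S_\infty$ inside the fibers: over each point $z\in A$, $p^{-1}(z)$ is a sphere in class $(1,0)$ meeting $S_\infty$ once, and $J$ is standard on a neighborhood of $S_\infty$ of capacity $2r-1$. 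Hence near $S_\infty$ the fibration is a product $D\times S^2$ over a neighborhood of $S_\infty$, and I can move the graph-like piece of $v_2$ in the fiber direction to avoid the point at infinity. This isotopy is supported in the interior of $p^{-1}(A)$, so the boundary on $L_2$ and the projection image are unchanged.

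The second step deals with the fact that a fiberwise push cannot change the intersection number with $S_\infty$ relative to the boundary. Instead, I would correct the intersections by boundary sum with broken planes. For each positive intersection point $q$ of $v_2$ with $S_\infty$, I take the leaf through $p(q)$ and slide it to a broken leaf over $\partial A$. Its $\mathfrak{s}_\infty$ half meets $S_\infty$ once, with boundary class $\beta_2$. I then glue the oppositely oriented $\mathfrak{s}_0$ plane, compactified, whose boundary is $-\beta_2$, onto $v_2$. Concretely, I remove a small disk around $q$ and tube it along a path in $p^{-1}(\bar A)$ to the $\mathfrak{s}_\infty$ plane. The $(1,0)$ sphere through $q$ is the union of an $\mathfrak{s}_0$ and an $\mathfrak{s}_\infty$ plane in the limit. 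Replacing the local sheet of $v_2$ by the complement piece therefore trades the intersection with $S_\infty$ for a boundary change of $\mp\beta_2$. After doing this for every intersection, which are finitely many and can be taken transverse after perturbation, we obtain $u$ missing $S_\infty$ with $[u(\partial D)]=\gamma_{2,d}+\ell\beta_2$, giving (2). All gluings take place over $\bar A$, since the added planes project to points of $\partial A$, so (1) is preserved.

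For (3), I would compute the Maslov index by additivity. The disk $v_2$ compactifies an index-$1$ plane $u_2$, so it has Maslov index $2$ via the index formula $\mathrm{ind}=\mu-1$ for planes in dimension four asymptotic to Morse--Bott families. Each modification replaces a neighborhood of a point on a class-$(1,0)$ sphere, with $c_1=2$, by a pair of Maslov-$2$ disks from $\mathfrak{s}_0\cup\mathfrak{s}_\infty$. The net Maslov change equals $2\times$(number of $\mathfrak{s}_0$ disks added) minus $c_1$ contributions. Equivalently, $\mu(u)=\mu(v_2)+2m-2m'$, where the sphere relations force $m=m'$. I expect the main obstacle to be making this cut-and-paste bookkeeping precise and checking that the surgered map stays in $X\smallsetminus S_\infty$ with the Maslov count balanced. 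A cleaner alternative I would try first is topological. Since $p^{-1}(\bar A)\smallsetminus S_\infty$ deformation retracts appropriately, as a disk bundle with fiber $\CC$ away from broken leaves, any relative class over $\bar A$ can be represented by a section-like disk. One then picks the unique lift whose boundary lies in $\gamma_{2,d}+\ZZ\beta_2$ and checks that its Maslov index is $2$ using $\mu(\beta_2)=2$-type relations, namely $\mu$ of the $\mathfrak{s}_\infty$ disk equals $2$, together with additivity.
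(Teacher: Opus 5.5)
There is a genuine gap: condition (3) is never established, and your first step cannot work as stated.

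\textbf{Step 1 is impossible when $m>0$.} Here $m=u_2\bullet S_\infty$. Since $S_\infty$ is closed and disjoint from $L_2$, the number $v\bullet S_\infty$ depends only on $[v]\in H_2(X,L_2;\ZZ)$. Since $u_2$ and $S_\infty$ are both holomorphic, every intersection counts positively. So no isotopy of $v_2$ supported in the interior of $p^{-1}(A)$ can remove the $m$ intersection points.

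\textbf{Step 2 lowers the Maslov index.} This happens however the surgery is read. Replacing the sheet of $v_2$ through $q$ by the reversed complementary piece of the fiber sphere subtracts a class $(1,0)$ with $c_1=2$. That changes $\mu$ by $-4$ and leaves the boundary unchanged. Boundary-summing with a reversed compactified $\mathfrak{s}_\infty$ disk instead changes $\mu$ by $-2$ and the boundary by $-\beta_2$. Nothing forces cancellation, and the assertion that ``sphere relations force $m=m'$'' has no justification. After removing all intersections with $S_\infty$, you are left with Maslov index $2-2m$ or $2-4m$, not $2$.

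\textbf{The missing ingredient is a correction step.} Glue on copies of a compactified $\mathfrak{s}_0$ disk; this is the substance of the paper's proof. Such a disk is disjoint from $S_\infty$, projects to a single point of $\partial A$, has Maslov index $2$ and boundary class $-\beta_2$. Adding the right number of them therefore restores $\mu=2$ while preserving (1) and (2). With that fix your surgery route also works: subtract $m$ compactified $\mathfrak{s}_\infty$ disks, add $m$ $\mathfrak{s}_0$ disks, and get boundary $\gamma_{2,d}-2m\beta_2$.

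\textbf{Your ``cleaner alternative'' is essentially the paper's argument, but misstated.} The paper lifts an embedding $D\to\bar A$ to a disk in $X\smallsetminus S_\infty$ with boundary on $L_2$, by trivializing $p$ over $A$. It then notes that the Maslov index is even and adjusts it with $\mathfrak{s}_0$ disks. In your version, the lift is not unique: every lift has boundary in $\gamma_{2,d}+\ZZ\beta_2$. Its Maslov index is also not automatically $2$. Two lifts whose boundaries differ by $k\beta_2$ differ in $H_2(X,L_2;\ZZ)$ by $-k$ copies of the $\mathfrak{s}_0$ disk, because the difference has zero intersection with both $S_\infty$ and $T_\infty$. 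Their Maslov indices therefore differ by $-2k$. So $\mu=2$ has to be arranged by choosing the twist or gluing $\mathfrak{s}_0$ disks; it cannot simply be checked.
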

\begin{proof} 

Let $u'' \colon D \to  S_{\infty}$ be a smooth embedding such that $u''(D)$ is equal to the closure of $A$. 
Trivializing the bundle $p$ over $A$ we can then lift $u''$ to obtain a smooth map $u' \colon (D,\partial D) \to (X \smallsetminus S_\infty, L_2)$ that satisfies conditions (1) and (2). The Maslov index of $u'$ is even. Gluing a suitable multiple of a compactified disk from $\mathfrak{s}_0$ to $u'$ we get a disk $u$ of Maslov index 2 which still maps into $X \smallsetminus S_\infty$ and still satisfies conditions (1) and (2).

\end{proof}

\begin{claim}
A map $u$ as in Claim \ref{claim:u} has symplectic area equal to either $s$ or $2r-s$.
\end{claim}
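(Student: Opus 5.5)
The plan is to use that $L_2=\phi(L(r,s))$ is Hamiltonian isotopic in $\RR^4$ to the product torus $L(r,s)$, and to read off the area and Maslov class of $u$ from the known classes of $L(r,s)$. The map $u$ lies in $X\smallsetminus S_\infty$. To land in $\RR^4$, I would also push it off $T_\infty$. Since $p(T_\infty)$ lies in the component of $S_\infty\smallsetminus p(L_2)$ other than $A$, the fibres over $\bar A$ avoid $T_\infty$. Then $u$ lies in $X\smallsetminus(S_\infty\cup T_\infty)$, which is identified with a subset of $\RR^4$ that contains $P(1,b)$ after shrinking the inflation region. So $u$ defines a relative class in $H_2(\RR^4,L_2)\cong H_1(L_2)$. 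In $\RR^4$ the area and Maslov index of a disk depend only on its boundary class. Pulling back by $\phi$ gives homomorphisms on $H_1(L(r,s))=\ZZ\langle e_1,e_2\rangle$, where $e_1,e_2$ are the two circle factors. The area homomorphism is $(m,n)\mapsto mr+ns$ and the Maslov homomorphism is $(m,n)\mapsto 2m+2n$.

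Next I would identify $\beta_2$ under this identification. The planes in $\mathfrak{s}_0$ compactify to Maslov $2$ disks in $\RR^4$ with boundary $-\beta_2$. By Remark \ref{rem:reg}, their area equals $2r$ minus the area of the $\mathfrak{s}_\infty$ plane; that plane meets $S_\infty$, so this area is positive. By Corollary \ref{>r} applied to $L_2$, each of the two planes has area at least $r$, and together they have area $2r$, so each has area exactly $r$. Thus $-\beta_2$ is a class with Maslov index $2$ and area $r$. The Maslov $2$ classes are $(m,1-m)$, with areas $s+m(r-s)$. Since $r<s$ (if $r=s$ every Maslov $2$ class has area $r$ and the claim is trivial), area $r$ forces $m=1$. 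Hence $-\beta_2=\pm$ the image of $e_1$, and $\beta_2$ corresponds to $-e_1$ up to this identification.

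Now $[u(\partial D)]=\gamma_{2,d}+\ell\beta_2$ has Maslov index $2$, so it is $(m,1-m)$ for some $m$. Its intersection number with $\beta_2\sim e_1$ is $\pm1$, because $\gamma_{2,d}\bullet\beta_2=\pm1$ and adding multiples of $\beta_2$ does not change this. This does not pin down $m$, since any $(m,1-m)$ has pairing $\pm(1-m)$ with $e_1$. So I get $1-m=\pm1$, that is, $m=0$ or $m=2$. The corresponding areas are $s$ for $m=0$ and $2r-s$ for $m=2$.

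The main obstacle is making the identification with $\RR^4$ legitimate. First, $u$ must avoid $T_\infty$ as well as $S_\infty$. Second, the Maslov index computed in $X$ must agree with the one in $\RR^4$; this holds since $c_1$ is supported near the axes we avoid. Third, the area computed with $\Omega_2$ must agree with the standard area on the complement; the inflation changes the form only near $S_\infty$, which $u$ misses. I would check these before running the lattice computation above.
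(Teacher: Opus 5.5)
Your argument is the paper's argument with the lattice computation written out. The paper also places $u$ in $X\smallsetminus(S_\infty\cup T_\infty)\subset\RR^4$ and uses four facts: $\{\beta_2,[u(\partial D)]\}$ is an integral basis, $\beta_2$ has Maslov index $2$ and area $r$, $[u(\partial D)]$ has Maslov index $2$, and $L_2$ is a symplectic image of $L(r,s)$. Your determinant step, $[u(\partial D)]=(m,1-m)$ with $1-m=\pm1$, is exactly what the paper's phrase ``depending on the orientation of our integral basis'' abbreviates. The paper does not prove that the foliation class has area $r$ here. It treats this as known from the foliation analysis: it is built into the definition of $\mathfrak{s}_{0,t}$ and into Proposition \ref{prop:submanifolds}(7).

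The one step you justify incorrectly is that both planes of a broken leaf along $L_2$ have area exactly $r$.
\begin{itemize}
\item Corollary \ref{>r} only applies to planes in the complement of $S_\infty\cup T_\infty$. The $\mathfrak{s}_\infty$ plane meets $S_\infty$ by definition, so the corollary gives you no lower bound $r$ for it.
\item Remark \ref{rem:reg} concerns regularity, not area. The total area $2r$ comes from the broken leaf representing the class $(1,0)$.
\end{itemize}

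The repair uses your own lattice. The compactified $\mathfrak{s}_0$ plane lies in $\RR^4$ and is a Maslov $2$ disk, so its area is $s+m(r-s)$ for some integer $m$. Corollary \ref{>r} applies to it and gives area at least $r$. Since $r<s$, the case $m\ge 2$ gives area at most $2r-s<r$, so only two cases remain: $m=1$ (area $r$) or $m\le 0$ (area at least $s$). The $\mathfrak{s}_\infty$ plane meets $S_\infty$, and $J$ is standard on a neighbourhood of $S_\infty$ of capacity $2r-1$. By monotonicity, and strictly because the plane also leaves that neighbourhood, its area exceeds $2r-1\ge 2r-s$. Hence the $\mathfrak{s}_0$ plane has area less than $s$, so exactly $r$. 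This is the same reasoning the paper uses in the proof of Lemma \ref{lem:noB}.

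A smaller point: $u$ avoids $S_\infty$ but may enter the inflated collar. So the right reason the areas agree is that $X_2\smallsetminus(S_\infty\cup T_\infty)$ is symplectomorphic to $P(2r,b)$, compatibly with $L_2=\phi(L(r,s))$. It is not that $u$ misses the region where the form was changed.
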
 

\begin{proof} View $L_2$ as a Lagrangian torus in the exact symplectic manifold $X \smallsetminus (S_\infty \cup T_{\infty})$.
By condition (1) of Lemma \ref{claim:u}, $u$ maps into $X \smallsetminus S_\infty \cup T_{\infty}$. Condition (2) implies that  $\{ \beta_2, [u(\partial D)]\}$ is an integral basis of $H_1(L_2; \ZZ)$. The area class and Maslov class of $L_2$ in $X \smallsetminus S_\infty \cup T_{\infty}$ are therefore determined by the areas and indices of the classes $\beta_2$ and  $[u(\partial D)]$. 
(These are well defined independently of a bounding disk, as we work now in $X \smallsetminus (S_\infty \cup T_{\infty}) \subset \RR^4$.)
The area of $\beta_2$ is $r$ and its Maslov index is $2$. By condition (3) of Lemma \ref{claim:u} the Maslov index of $[u(\partial D)]$ is also $2$. Then since $L_2$ is an image of $L(r,s)$ under a symplectomorphism of $\RR^4$, and $\{ \beta_2, [u(\partial D)]\}$ is a basis of $H_1(L_2; \ZZ)$, the area of $[u(\partial D)]$ must either be $s$ or $2r-s$ (depending on the orientation of our integral basis).
\end{proof}

\begin{claim}
The map $v_2$ has the same $\Omega$-area as the map $u$ in Claim \ref{claim:u}.
\end{claim}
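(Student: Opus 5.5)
The plan is to compare $v_2$ and $u$ by gluing them, together with a suitable number of compactified planes from $\mathfrak{s}_0$, into a closed $2$-cycle $Z$ in $X$. I then pin down the homology class of $Z$ by an intersection count with a fiber of $p$, and finish by comparing Maslov index and area. The key observation is that the disks in $\mathfrak{s}_0$ and the fiber class $(1,0)$ share the same ratio of symplectic area to Maslov index:
\begin{itemize}
\item a disk in $\mathfrak{s}_0$ has area $r$ and Maslov index $2$;
\item the class $(1,0)$ has area $2r$ and $c_1 = 2$, so Maslov index $4$ when viewed as a closed class.
\end{itemize}
Hence any such cycle satisfies $\Omega(Z) = \tfrac{r}{2}\mu(Z)$, and this will force $\Omega(v_2) = \Omega(u)$.

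First, the boundary classes differ by a multiple of the foliation class. We have $[v_2(\partial D)] = \gamma_{2,d}$ and $[u(\partial D)] = \gamma_{2,d} + \ell\beta_2$. The compactified planes of $\mathfrak{s}_0$ have boundary $-\beta_2$. So the chain $Z = v_2 - u - \ell\, w$, where $w$ is a compactified plane of $\mathfrak{s}_0$ and negative $\ell$ is interpreted with reversed orientation, has null-homologous boundary on $L_2$. Capping it off by a $2$-chain in $L_2$, which changes neither area nor Maslov index, gives a closed class $[Z] \in H_2(X;\ZZ) = \ZZ\times\ZZ$. The Maslov index is additive under this gluing. Since $\mu(v_2) = \mu(u) = 2$ (the plane $u_2$ has index $1$, hence Maslov index $2$), we get
\[
\mu(Z) = 2 - 2 - 2\ell = -2\ell.
\]
On the other hand, $\mu(Z) = 2c_1(Z)$.

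Next I identify $[Z]$ by intersecting with a fiber $H = p^{-1}(z)$, where $z \in A$ is a generic point not in $p(L_2)$; $H$ is a $J$-holomorphic sphere in the class $(1,0)$.
\begin{itemize}
\item Since $u_2$ is essential with $p\circ u_2$ injective onto $A$, the disk $v_2$ meets $H$ exactly once, positively by positivity of intersections.
\item The disk $u$ was built as a lift of an embedding $D\to \bar A$ over a trivialization of $p$. So it also meets $H$ once, and choosing the lift with the same orientation as $p\circ v_2$ makes this intersection positive.
\item The planes of $\mathfrak{s}_0$ are pieces of broken leaves lying over $p(L_2)$, so they miss $H$.
\end{itemize}
Thus $Z\bullet(1,0) = 0$. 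Since $(a,b)\bullet(1,0) = b$, this gives $[Z] = m(1,0)$ for some $m\in\ZZ$. Consequently $\mu(Z) = 4m$ and $\Omega(Z) = 2rm$.

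Finally, comparing Maslov indices gives $-2\ell = 4m$, that is, $\ell = -2m$. Comparing areas gives
\[
2rm = \Omega(Z) = \Omega(v_2) - \Omega(u) - \ell r = \Omega(v_2) - \Omega(u) + 2rm,
\]
so $\Omega(v_2) = \Omega(u)$. The main point needing care is the sign bookkeeping: the orientation of $u$ relative to $v_2$ over $A$, and the sign of the $\mathfrak{s}_0$ boundary relative to $\beta_2$. If instead $\mathfrak{s}_\infty$ disks were needed, they have the same area $r$ and Maslov index $2$, so the identity $\Omega = \tfrac r2\mu$ still holds and the conclusion is unaffected.
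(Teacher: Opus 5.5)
Your argument is correct and is essentially the paper's. The paper observes that $v_2-u$ has zero intersection number with both $T_0$ and $T_\infty$, so it is homologous to a combination of the two hemispheres of a compactified broken leaf, and Maslov index zero then forces area zero; you do the same bookkeeping by first closing the cycle with $-\ell$ copies of an $\mathfrak{s}_0$ disk and then detecting the resulting closed class with a single fiber. One small correction: the orientation of $u$ over $A$ that you say you must \emph{choose} is actually forced by condition (2) of Claim \ref{claim:u}, since $p_*\beta_2=0$ implies that $p\circ u|_{\partial D}$ and $p\circ v_2|_{\partial D}$ have the same degree onto $\partial A$.
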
 

\begin{proof}
The glued map $v_2-u$ is a cycle in $X$ with boundary on $L_2$. Since its intersection number with both $T_0$ and $T_\infty$ is zero, it is homologous to a cycle that is contained in a (compactified) broken sphere of the foliation and has boundary on the equator. Since the Maslov index of the new cycle is zero, its symplectic area also vanishes. Hence, the area of $v_2-u$ is zero, as desired.
    
\end{proof}

We can now complete the proof of Lemma \ref{lem:s}. By the results above, it suffices to prove that the $\Omega$-area of $u_2$, and hence $v_2$, is not equal to $2r-s.$

\bigskip

\noindent{\it Case 1.} Suppose $u_2 \bullet S_\infty=0$.  Then, by positivity of intersection $u_2$ is a pseudo-holomorphic curve in $X \smallsetminus (S_\infty \cup T_\infty \cup L_2) \subset \RR^4$. It then follows from Corollary \ref{>r} that the symplectic area of $u_2$ must be at least $r$, which is greater than $2r-s$ by hypothesis.


\bigskip

\noindent{\it Case 2.} Suppose that $u_2 \bullet S_\infty=m \ge 1$. Since $S_\infty$ has been inflated  by $2r-1$, the  symplectic area of $u_2$ is at least $m(2r-1)$, which, again by hypothesis, is strictly greater than $2r-s$.

\medskip

\noindent This completes the proof of Lemma \ref{lem:s}. 

\medskip

As $L_1$ is the standard product torus, and $u_1$ can be compactified to give a Maslov $2$ disk, we also have the following.
\begin{lemma}\label{lem:b-s}
The $\Omega$-area of $u_1$ is equal to $b-s$.
\end{lemma}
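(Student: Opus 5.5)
We will follow the template of Lemma \ref{lem:s}, replacing $L_2$ by the standard product torus $L_1=L(r,s)$. The extra structure of $L_1$ pins down the areas and Maslov indices of all its disk classes.

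Let $v_1\colon (D,\partial D)\to (X,L_1)$ be the compactification of $u_1$, and set $\gamma_{1,d}=[v_1(\partial D)]\in H_1(L_1;\ZZ)$. Since $p\circ u_1$ is injective with image closure $C$, the boundary winds once along $p(L_1)=\partial C$. Hence $\gamma_{1,d}\bullet\beta_1=\pm1$, and $\{\beta_1,\gamma_{1,d}\}$ is an integral basis of $H_1(L_1;\ZZ)$. Because $\mathrm{index}(u_1)=1$ (Step 4), $v_1$ has Maslov index $2$. The foliation class $\beta_1$ is also the boundary of the compactified disks in $\mathfrak{r}_\infty$. These have Maslov index $2$ and area $r$, and they intersect $S_\infty$ once.

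The plan is to compare $v_1$ with an explicit model disk in the product picture. The region $C$ contains $p(T_\infty)$ and is bounded by $p(L_1)$. In the product structure of $S^2(2r)\times S^2(b)$, the natural candidate is the disk $\{z_1\}\times\{\pi|z_2|^2\ge s\}\cup\{\infty\}$, with boundary on $L(r,s)$. It has area $b-s$ and Maslov index $2$, and it meets $T_\infty$ once and $S_\infty$ not at all.

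We will construct a smooth disk $u\colon(D,\partial D)\to(X\smallsetminus S_\infty,L_1)$ exactly as in Claim \ref{claim:u}. Lift an embedding of $\bar C$ using a trivialization of $p$ over $C$, then glue copies of disks from $\mathfrak{r}_0$ to fix the Maslov index at $2$, keeping $[u(\partial D)]=\gamma_{1,d}+\ell\beta_1$. Next, the argument of the third claim in the proof of Lemma \ref{lem:s} shows that $v_1-u$ has zero area. Indeed, $v_1-u$ has zero intersection with $T_0$ and $T_\infty$, so it is homologous to a Maslov $0$ cycle supported in a broken fiber, and such a cycle has zero area.

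It remains to compute the area of $u$. The disk $u$ meets $T_\infty$ once and misses $S_\infty$. We subtract a copy of the $T_\infty$-fiber class $(0,1)$, of area $b$. This produces a relative class in $X\smallsetminus(S_\infty\cup T_\infty)\subset\RR^4$ with boundary $-[u(\partial D)]$ and Maslov index $2-4=-2$ (since $c_1(0,1)=2$). For the standard torus $L(r,s)\subset\RR^4$ the area and Maslov classes are explicit: a class $m e_1+n e_2$ has area $mr+ns$ and Maslov index $2m+2n$.

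The main obstacle is identifying $[u(\partial D)]$ in the basis $e_1,e_2$, where $e_2$ is the $z_2$-circle. The Maslov constraint forces $-[u(\partial D)]=m e_1+n e_2$ with $m+n=-1$. The class $\beta_1$ has area $r$, Maslov index $2$, and bounds a disk avoiding $T_\infty$ and $T_0$, so $\beta_1=e_1$ (an $\mathfrak{r}$-disk of area $r$ must be the $z_1$-direction). Since $\{\beta_1,[u(\partial D)]\}$ is a basis, $n=\pm1$. The choice $n=1$, $m=-2$ gives area $-2r+s$, so the area of $u$ would be $b-2r+s$. The choice $n=-1$, $m=0$ gives area $-s$, so the area of $u$ is $b-s$. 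To rule out $b-2r+s$, we note that intersection with $T_0$ fixes the orientation: $\partial u$ must wind positively around $p(T_\infty)$, and this matches $n=-1$.

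If this orientation bookkeeping proves delicate, the fallback is to mimic the case analysis of Lemma \ref{lem:s}. Alternatively, we can use the total-area count of the Type 3 building, $\Omega(\mathbf u)=2rd+b-r(2d-1)-\dots$, together with $\Omega(u_2)=s$ and the forthcoming area $r$ of $\underline u$. Either route excludes the spurious value $b-2r+s$ and leaves $\Omega(u_1)=b-s$.
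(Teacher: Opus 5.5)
You follow the same template as the paper's one-line proof: standard $L_1$, a Maslov $2$ compactification, and the argument of Lemma \ref{lem:s}. Your reduction correctly narrows $\Omega(u_1)$ to $b-s$ or $b-2r+s$. The sign slips along the way are harmless: $u$ minus the class $(0,1)$ has boundary $[u(\partial D)]$, and with the paper's convention $\beta_1=-e_1$.

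The gap is the exclusion of $b-2r+s$, which is the only step with real content.

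\emph{Why your justification fails.} Since $u$ lies over $\bar C$ and $z_0\in A$ is disjoint from $C$, the fact $u\bullet T_0=0$ carries no information. The positive winding of $p(\partial u)$ around $p(T_\infty)$ only gives $p_*[u(\partial D)]=[\partial C]$. To turn this into $n=-1$ you need $p_*\colon H_1(L_1;\ZZ)\to H_1(p(L_1);\ZZ)$ to send the $z_2$-circle to $-[\partial C]$. Equivalently, the leaves through points near $L_1$ with $\pi|z_2|^2>s$ must project into $C$. That holds for the split structure, but $p$ comes from the stretched, nonstandard $J$, and none of your data detects it.

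Concretely, set $\Delta_1=\{\pi|z_1|^2\le r\}\times\{\mathrm{pt}\}$, $\Delta_2=\{\mathrm{pt}\}\times\{\pi|z_2|^2\le s\}$ and $\Delta_2^c=\{\mathrm{pt}\}\times\{\pi|z_2|^2\ge s\}$. Both $[u]=[\Delta_2^c]$ and $[u]=[\Delta_2^c]+2[\Delta_2]-2[\Delta_1]$ satisfy everything you use:
$u\bullet S_\infty=0$, $u\bullet T_\infty=1$, Maslov index $2$, and boundary forming a basis with $\beta_1$.
They differ only in $u\bullet(S^2\times\{0\})\in\{0,2\}$, and $S^2\times\{0\}$ is not a leaf. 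This sign is exactly what the paper's appeal to the standardness of $L_1$ is covering. Your write-up makes the issue explicit but does not resolve it.

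\emph{Why the fallbacks fail.} The case analysis of Lemma \ref{lem:s} only produces lower bounds on area. Those exclude the smaller spurious value $2r-s$ there, but here the spurious value $b-2r+s$ exceeds $b-s$ because $s>r$. The area count is circular: $\Omega(\underline u)=r$ is derived after, and by means of, this lemma, and the ``multiple of $r$'' step there presupposes the same orientation. Without it, the budget only gives $\Omega(u_1)+\Omega(\underline u)\le b-s+r$. That excludes $b-2r+s$ only when $2s\ge 3r$, leaving for example $r$ near $1$ and $s=1$.

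\emph{What would close the gap.} You need a continuity argument like those in Section \ref{foliation}.
\begin{enumerate}
\item Take the family $L[t]$ from $t=c$, where everything is split, down to $t=1$ (or a path of almost complex structures to the split one), stretching only along the single torus. Lemma \ref{lem:fam2}(1) keeps the broken planes compact.
\item The hypersurface swept out by these broken leaves is locally $\{p_1=0\}$ near the torus, varies continuously, and never meets the $J_t$-holomorphic sphere $T_\infty$. Hence the side of the torus facing $T_\infty$ is the split one.
\item Pass from stretching along $L_1$ to stretching along $L_1\cup L_2$ by the area argument used in Lemma \ref{lem:fam2} and at the end of Section \ref{foliation}.
\end{enumerate}
Equivalently, show that the leaves over $S_\infty\smallsetminus\bar C$ are homologous in $X\smallsetminus L_1$ to $S^2\times\{0\}$. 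That forces $u\bullet(S^2\times\{0\})=0$, and hence $\Omega(u_1)=b-s$.
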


Finally, we add the areas of all curves in our limiting building. Writing $v_{k,1}$ and $v_{k,2}$ for the planar sub-buildings asymptotic to $L_1$ and $L_2$ respectively this gives
\begin{align*}
2dr + b &= \Omega(u_1) + \Omega(u_2) + \Omega(\underline{u}) + \sum_{k=1}^{s_1-2} \Omega(v_{k,1}-1) + \sum_{k=1}^{s_2-2} \Omega(v_{k,2}) \\
{}&\geq (b-s) + s + \Omega(\underline{u}) + r(s_1 -2) + r(s_2-2) \\
{}&\geq b + \Omega(\underline{u}) + r(d-1) + rd\\
{}&= \Omega(\underline{u}) + b + r(2d-1).
\end{align*}
Here, the first inequality uses Lemmas \ref{lem:s} and \ref{lem:b-s}, then just says our planar sub-buildings have area at least $r$. The second inequality applies \eqref{consrtaint}.

It follows that $\Omega(\underline{u}) \le r$, with equality only if $s_1 = d+1$ and $s_2 = d+2$. The essential curves $\underline{u}$ and $u_2$, the planes $v_{k,2}$ and lower level curves in $T^* L_2$ can be glued to give a disk with boundary on $L_1$, lying in $X \smallsetminus T_\infty$. Such a disk has even Maslov class, say $2n$, and then area $s + (n-1)r$. As $\Omega(u_2)=s$ and the $v_{k,2}$ have area a multiple of $r$, we see that $\underline{u}$ must also have area a multiple of $r$. As a holomorphic curve it has positive area, and so $\Omega(\underline{u}) = r$, and consequently $s_1 = d+1$ and $s_2 = d+2$.

With this, the proof of Proposition \ref{prop:dd} is complete. 

\bigskip

An entirely similar argument yields the following.

\begin{proposition}\label{prop:dd+1} For $d$ sufficiently large, every $(d,d+1)$-building is of Type 3. Each such building, $\mathbf{G}$, has three essential curves $u_2$, $\underline{u}$, and $u_1$ with areas $s$, $r$, and $b-s$, respectively. Moreover, the only top level curves of $\mathbf{G}$, other than its three essential curves, are $d$ broken planes in $\mathfrak{r}_0 \cup \mathfrak{r}_{\infty}$ and $d-1$ broken planes in $\mathfrak{s}_0 \cup \mathfrak{s}_{\infty}$. 
All the constituent curves of $\mathbf{G}$ are somewhere injective and hence regular. 
\end{proposition}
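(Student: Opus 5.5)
The plan is to follow the proof of Proposition \ref{prop:dd} step by step. Now there are $n_1=d+1$ constraint points on $L_1$ and $n_2=d$ points on $L_2$. Throughout, fix the auxiliary spheres $A_0,A_\infty,B_0,B_\infty$ of Proposition \ref{subaxis} in class $(e,1)$, made $J$-holomorphic as before, so that they can serve as the sphere $S$ in Lemma \ref{lem:monotonicity}.

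\textbf{Step 1: rule out Type 1.} Take a Type 1 building. Its essential curve has $L_1$-punctures all of one sign and $L_2$-punctures all of one sign, giving four sign cases. The constraint inequalities become $k_1+N_1\ge d+1$ and $k_2+N_2\ge d$, so the sums still exceed $2d$.
\begin{itemize}
\item If both signs are negative, this contradicts \eqref{S1}.
\item For mixed signs, intersect with $B_0$ or $A_0$ respectively; the total $d+e$ is exceeded once $d$ is large.
\item If both signs are positive, the area identity bounds $N$, $M_1^\infty$, $M_2^\infty$ and $\Omega(\underline{u})$ uniformly in $d$. Intersecting with $A_\infty$ then contradicts Lemma \ref{lem:monotonicity}.
\end{itemize}

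\textbf{Steps 2 and 3: rule out Types 2.1 and 2.2.} This repeats Steps 2 and 3 of that proof. For Type 2.1 the bound \eqref{constraints2.1} now reads $\widetilde{M}^0_2+\widetilde{M}^\infty_2\ge d-1$, and $k_1+N_1\ge d+1$. The pigeonhole argument with $A_\infty$ or $B_0$ in the negative case, and the $A_0$ then $B_\infty$ monotonicity argument in the positive case, go through with the constants shifted by one. Type 2.2 is symmetric. The only change anywhere is which torus carries $d$ and which carries $d+1$ constraints, and the arguments are insensitive to this shift.

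\textbf{Step 4: structure of the Type 3 limit.} Apply the index count \eqref{eq:indg3}, now with $I_1\ge 2(d+1)$ and $I_2\ge 2d$. This gives $\mathrm{index}(\mathbf{u})\le 4$. The parity and genericity arguments against a rigid matching then force indices $1,2,1$, single bottom-level curves, and all $J_{k,i}=1$, so every planar sub-building is a single somewhere-injective plane. The areas follow from Lemma \ref{lem:s} and Lemma \ref{lem:b-s}, whose proofs never used the distribution of constraint points: $u_2$ has area $s$ and $u_1$ has area $b-s$. The total area computation gives
\[
2dr+b\ge b+\Omega(\underline{u})+r(s_1-2)+r(s_2-2).
\]
Using $s_1\ge d+2$ and $s_2\ge d+1$, this yields $\Omega(\underline{u})\le r$. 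The gluing argument shows that $\Omega(\underline{u})$ is a positive multiple of $r$, so $\Omega(\underline{u})=r$. Equality then holds throughout: $s_1=d+2$ and $s_2=d+1$. Hence there are $d$ planes in $\mathfrak{r}_0\cup\mathfrak{r}_\infty$ and $d-1$ planes in $\mathfrak{s}_0\cup\mathfrak{s}_\infty$.

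\textbf{Main obstacle.} The delicate point is Step 4's exclusion of $\mathrm{index}(\underline{u})=0$ and the genericity claim that rigid curves do not match. It must be checked that the extra constraint on $L_1$ rather than $L_2$ does not change the parity argument. Since both $I_i$ are even and the bounds shift symmetrically, I expect it to go through unchanged.
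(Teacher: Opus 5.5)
Your proposal is correct and follows the paper's route: the paper proves this proposition only by saying that an argument entirely similar to that for Proposition \ref{prop:dd} applies, and your transcription with $n_1=d+1$, $n_2=d$ is that argument, including the resulting counts $s_1=d+2$ and $s_2=d+1$, hence $d$ and $d-1$ planes. The parity step you flagged also goes through unchanged: if $\mathrm{index}(\underline{u})=0$, both bottom-level curves must be nonrigid, so $I_1\ge 2(d+2)$ and $I_2\ge 2(d+1)$, which forces $\mathrm{index}(\mathbf{u})\le 0$ exactly as before.
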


\subsection{Completion of the proof of Proposition \ref{prop:submanifolds}}
\label{sub:shift}

Let $D_A$, $O_B$ and $D_C$ be compactifications of the three essential curves of $\mathbf{F}$. Conditions (2)-(8) of Proposition \ref{prop:submanifolds} follow directly from this choice and Proposition \ref{prop:dd}. As established in Proposition 3.24 of \cite{hiker},  the desired embedded symplectic spheres $F$ and $G$ can be obtained by deforming the curves of the buildings $\mathbf{F}$ and $\mathbf{G}$ and then compactifying the new buildings to symplectic spheres. In particular, deformations can be prescribed to achieve the intersection properties (9) in Proposition \ref{prop:submanifolds}. The relevant relabeling needed to invoke Proposition 3.4 of \cite{hiker} directly, is as follows: $$\mathbb{L} \to L_2$$ $$L_{1,1} \to L_1$$ $$\mathbb{E} \to D_A$$ $$E_{1,1} \to D_C$$ $$A_0 \to A$$ $$A_\infty \to C.$$  
Proposition 3.24 in \cite{hiker} unfortunately does not state the intersections with $O_B$ as required for (9)(f), however this is implicit in the proof. Indeed, the sphere $G$ intersects the building $\mathbf{F}$ at least $d$ times in $p^{-1}(A)$ and $d$ times in $p^{-1}(C)$, and is disjoint from $L_1$ and $L_2$. Further we may assume $G$ is holomorphic near any intersections. As both $G$ and $\mathbf{F}$ represent the class $(d,1)$, and $(d,1) \cdot (d,1) = 2d$, this implies that there are actually precisely $2d$ intersections in $p^{-1}(A \cup C)$ and $G$ is disjoint from $O_B$. 

Regarding the deformation $F$ of $\mathbf{F}$, \cite[Lemma 3.33]{hiker} implies that $F$ intersects $\mathbf{F}$ precisely $d$ times in the closure of $p^{-1}(C)$ (we include the closure here to include intersections with planes in $\mathfrak{s}_0 \cup \mathfrak{s}_{\infty}$) and precisely $d-1$ times in $p^{-1}(A)$; in other words, each plane in $\mathfrak{r}_0 \cup \mathfrak{r}_{\infty} \cup \mathfrak{s}_0 \cup \mathfrak{s}_{\infty}$ leads to exactly one intersection. As $F$ is disjoint from $L_1 \cup L_2$ this leaves one additional intersection with $\mathbf{F}$, which occurs on $O_B$.

The remaining property, (1), is a simple consequence of the standard compactification procedure. $\Box$


\section{Proof of Theorem \ref{thm:hm}}

It suffices to identify a Lagrangian torus $\mathbf{L} \subset P(2r,b)$ that is Hamiltonian isotopic to $L(r,s)$, and then to construct a Hamiltonian diffeomorphism of $P(2r,b)$ that  displaces each of the $L_j$ from $\mathbf{L}$.

\medskip

\noindent{\bf Product Hamiltonian flows.} Arguing as in \cite{ho}, see also \cite{hicksmak}, we will construct the desired map using Hamiltonian diffeomorphisms of $\CC^2$ generated by functions of the form $$F(z_1,z_2) = G(z_1)H(z_2).$$ Denoting the time $t$ Hamiltonian flow of  $S \in C^{\infty}(M\times \RR /\ZZ)$ by $\phi^t_S$, we have  
\begin{align}\label{move}
    \phi^1_F(z_1,z_2) = \left(\phi^{H(z_2)}_G (z_1),\, \phi^{G(z_1)}_H (z_2) \right).
\end{align} We now recall a model setting in which maps of this type can be used to kill intersection points efficiently, in terms of the size of their support. 

Let $A$ and $B$ be subsets of $\CC^2$ and let $\mathrm{pr}_i$ be projection to the $z_i$-plane. Suppose that $\mathrm{pr}_1(A) = [-2a,\,a ] \times \{0\}$, for some $a>0$, and  $\mathrm{pr}_1(B) = \{0 \} \times [-1,\, 1]$. 
Suppose also that $$\mathrm{pr}_1(A \cap B)=(0,0).$$

Let $H(z_2)$ be a Hamiltonian, with $\min(H)=0$, whose time one flow disjoins $$\mathrm{pr}_2(\mathrm{pr}_1^{-1}(0,0) \cap A)$$ from $$\mathrm{pr}_2(\mathrm{pr}_1^{-1}(0,0) \cap B).$$
Define $G(z_1)$, near $\mathrm{pr}_1(A)$, to be a smoothing of the piecewise linear function  $$\chi(x_1) = \begin{cases} 0, &\text{for } x_1<-a+\varepsilon , \\
\frac{x+a-\varepsilon}{a-2\varepsilon}, &\text{for } -a+\varepsilon \leq x_1 \leq -\varepsilon , \\1, &\text{for } x_1>-\varepsilon,
\end{cases}$$ for some $0<\varepsilon<a.$ 
On $A$, the map $\phi^1_F$ is given (approximately) by the formula $$((x_1, y_1- H(x_2,y_2)\chi'(x_1)),\phi^{\chi(x_1)}_H(x_2,y_2)).$$
From this, and our choice of $H$, it follows easily that $\phi^1_F(A)$ is disjoint from $B$. Moreover, the projection $\mathrm{pr}_1(\Phi^1_F(A))$ is (approximately) equal to 
$$\mathrm{pr}_1(A) \cup ([-1, \, 0] \times [-\max(H)/a,\, 0]).$$ In particular, in the $z_1$-projection, the action of $\phi^1_F$ can be viewed as being local, and the rectangle $[-1, \, 0] \times [-\max(H)/a,\,0]$ can be viewed as the {\it cost} of disjoining $A$ from $B$. This is illustrated in Figure \ref{fig:model}. In more general situations, like those to come, this rectangle must be disjoint from $\mathrm{pr}_1(B)$ in order to avoid the creation of new intersections.

\begin{figure}[!h]
    \centering
    \includegraphics[width=1\linewidth]{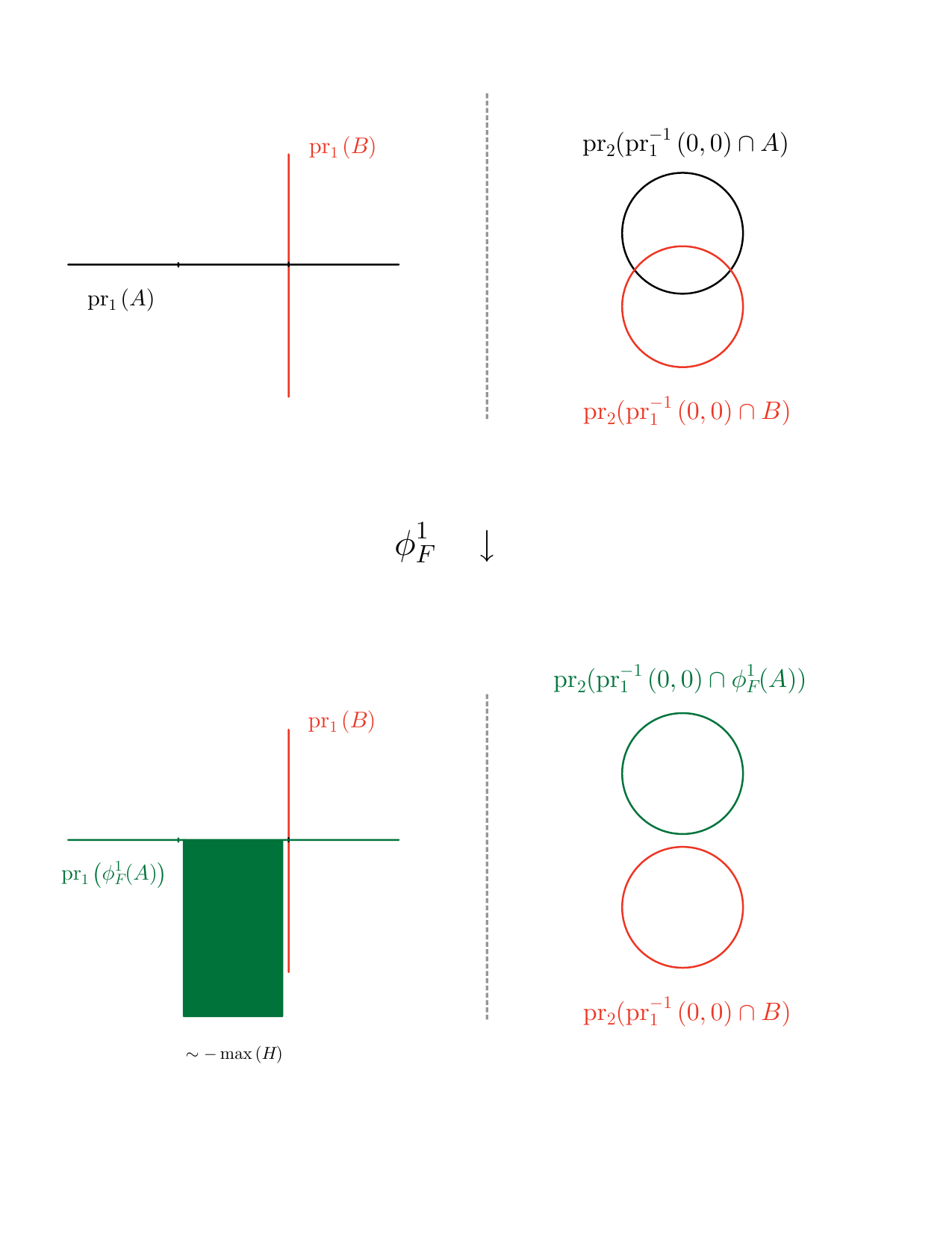}
    \caption{Displacing local models.}
    \label{fig:model}
\end{figure}

\medskip

With this model in hand, we begin the proof of Theorem \ref{thm:hm}. Before specifying $\mathbf{L}$ we reposition the $L_j = L(r) \times \Lambda_j$. We begin by repositioning the closed curves $\Lambda_j$ in $D(b)$. Equipping $D(b)$ with the rescaled polar coordinates $(\rho, \theta) \in [0,b) \times [0,1]$, fix an $a \in \left(\frac{s}{b},\frac{1}{k}\right)$ and move each $\Lambda_j$ so that is a smoothing of the boundary of $$\left[\epsilon, \frac{s}{a} +\epsilon\right] \times \left((j-1)(a+\delta), ja +(j-1)\delta\right)$$ where $\epsilon>0$ is arbitrarily small and $\delta$ is defined by
$ak + (k-1)\delta=1.$
See Figure \ref{fig:wedge}. 

\begin{figure}[!h]
    \centering
    \vspace{-0cm}
    \includegraphics[width=5.5in, height=7in]{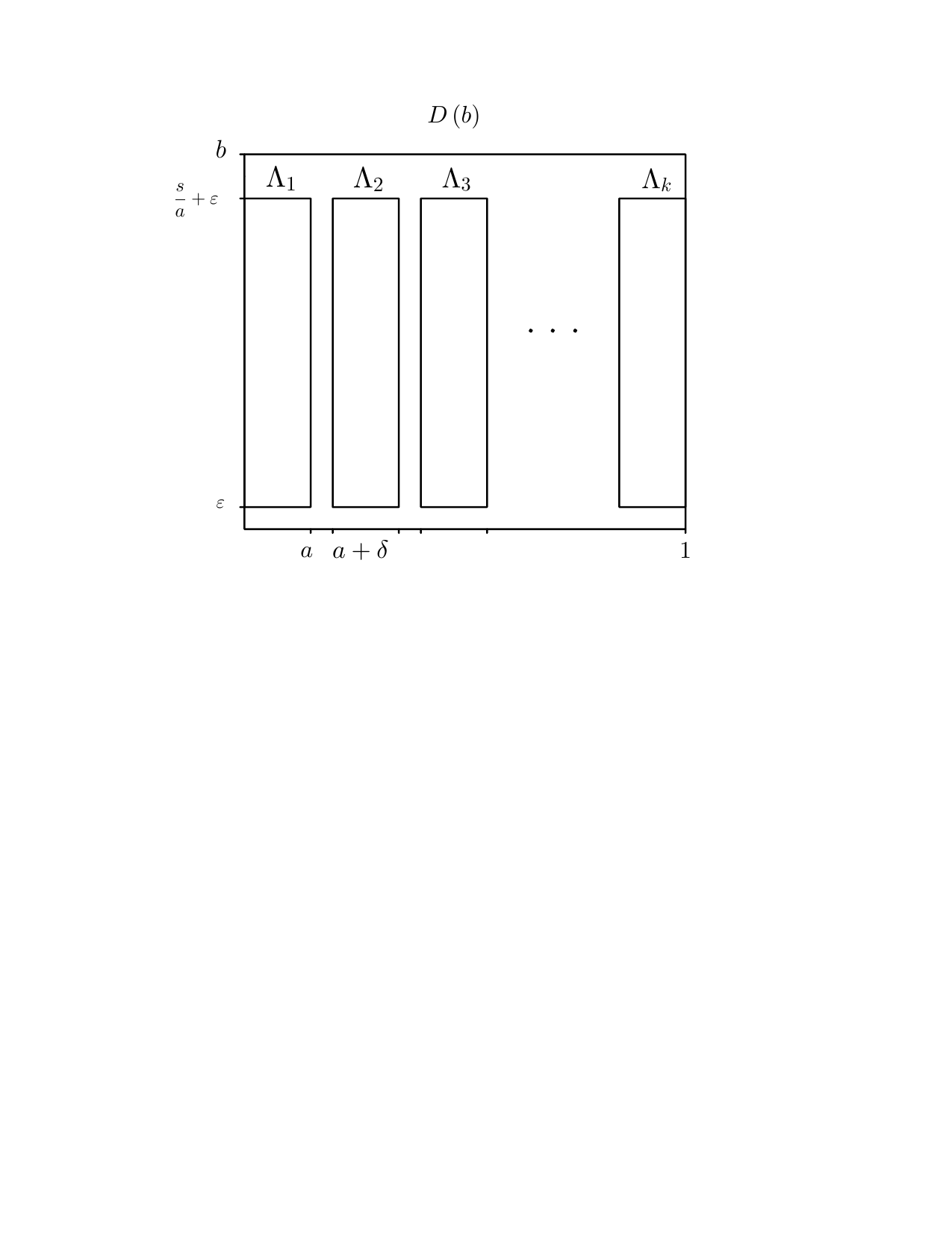}
     \vspace{-8cm}
    \caption{The repositioned circles $\Lambda_j$.}
    \label{fig:wedge}
\end{figure}

Identifying $D(2r)$with a square of side length $\sqrt{2r}$, we fix two smooth curves, $\gamma_1$ and $\gamma_2$, of area $r$ that are smoothings of the piecewise linear curves pictured in Figure \ref{fig:gamma}. The green rectangles appearing in Figure \ref{fig:gamma} all have area $\mathfrak{o}/k$ for the overlap $\mathfrak{o}$ defined as in \eqref{eq:g}. 
\begin{figure}[!h]
    \centering
    \includegraphics[width=\linewidth]{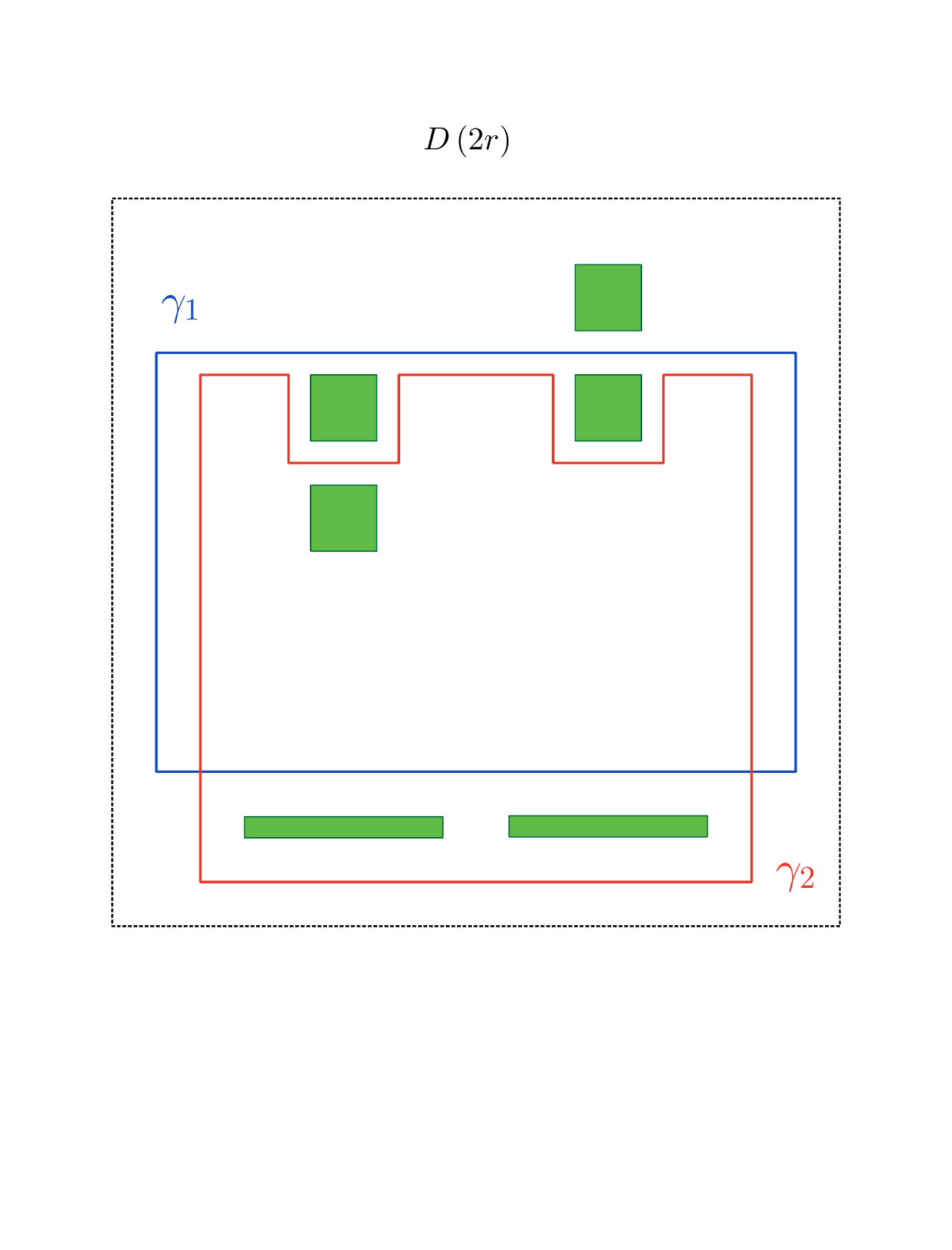}
      \vspace{-3.5cm}
    \caption{Regions in the disk $D(2r)$.}
    \label{fig:gamma}
\end{figure}

The repositioned $L_j$ are then defined by setting  $L_j = \gamma_1 \times \Lambda_j$ if $j$ is odd and $L_j = \gamma_2 \times \Lambda_j$ if $j$ is even. 
The Lagrangian $\mathbf{L}$ is chosen to be of the form $\Gamma \times \Sigma$ where $\Gamma$ is a curve in $D(2r)$ and $\Sigma$ is a closed curve in $D(b)$. We choose $\Gamma$ to be a smoothing of the piecewise linear curve in $D(2r)$, bounding area $r$ pictured in Figure \ref{fig:gamma123}.
\begin{figure}[!h]
        \centering
    \includegraphics[width=\linewidth]{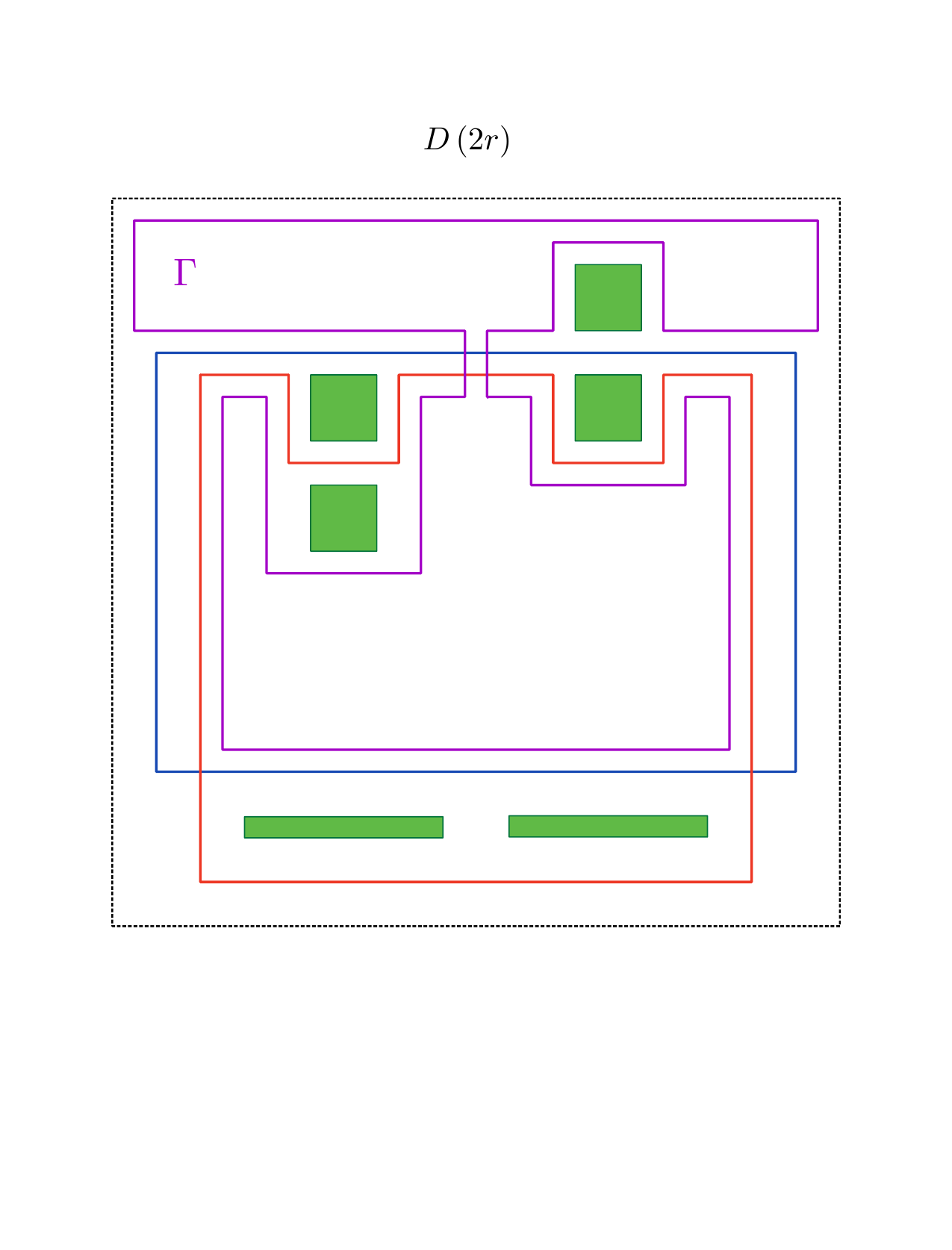}
      \vspace{-3.5cm}
    \caption{The circle $\Gamma$.}
    \label{fig:gamma123}
\end{figure}
The closed curve $\Sigma$ in $D(b)$ of area $s$ is chosen to that it intersects each $\Lambda_j$ as pictured in Figure \ref{fig:Sigma1}. Here, again, the green rectangles are assumed to have area equal to $\mathfrak{o}/k$.
\begin{figure}[!h]
    \centering 
    \includegraphics[width=\linewidth]{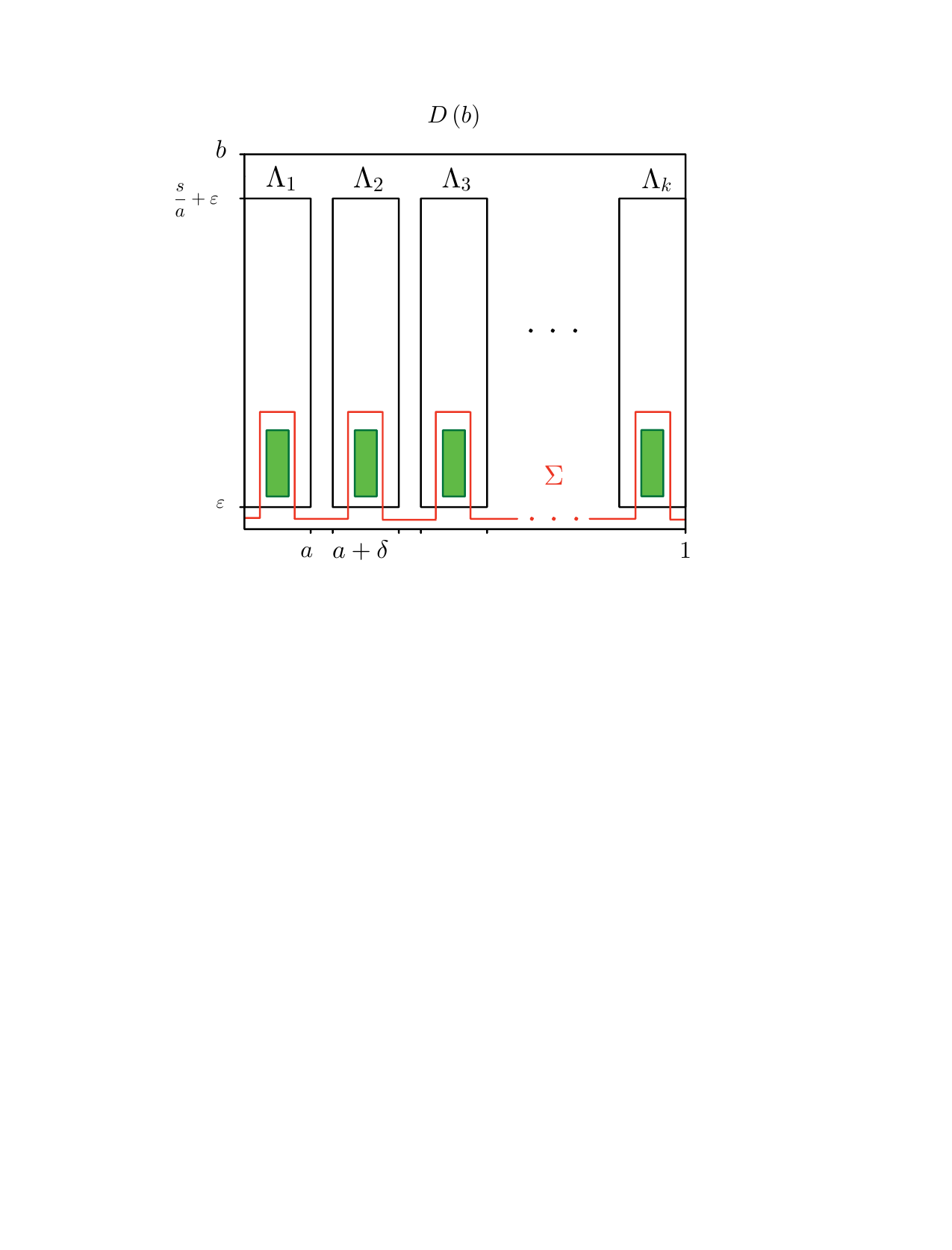}
      \vspace{-8cm}
    \caption{The circle $\Sigma$.}
    \label{fig:Sigma1}
\end{figure}

\begin{remark}The configuration of curves of area $r$ and boxes of area $\mathfrak{o}/k$ pictures in Figure \ref{fig:gamma123} is only possible under the assumption that $r>6\mathfrak{o}/k$.\end{remark}

We now displace the repositioned $L_j$ from $\mathbf{L}$ via a Hamiltonian diffeomorphism of $P(2r,b)$. This will be done in stages, for consecutive pairs of the $L_j$. Consider first $L_1$ and $L_2$. Let $H_1$ be a Hamiltonian on $D(b)$ that displaces $\Lambda_1$ from $\Sigma$ such that $\phi^1_{H_1}(\Lambda_1)$ intersects only $\Lambda_1$ and $\Lambda_2$ as pictured in Figure \ref{fig:H1}.  
\begin{figure}[!h]
    \centering 
    \includegraphics[width=\linewidth]{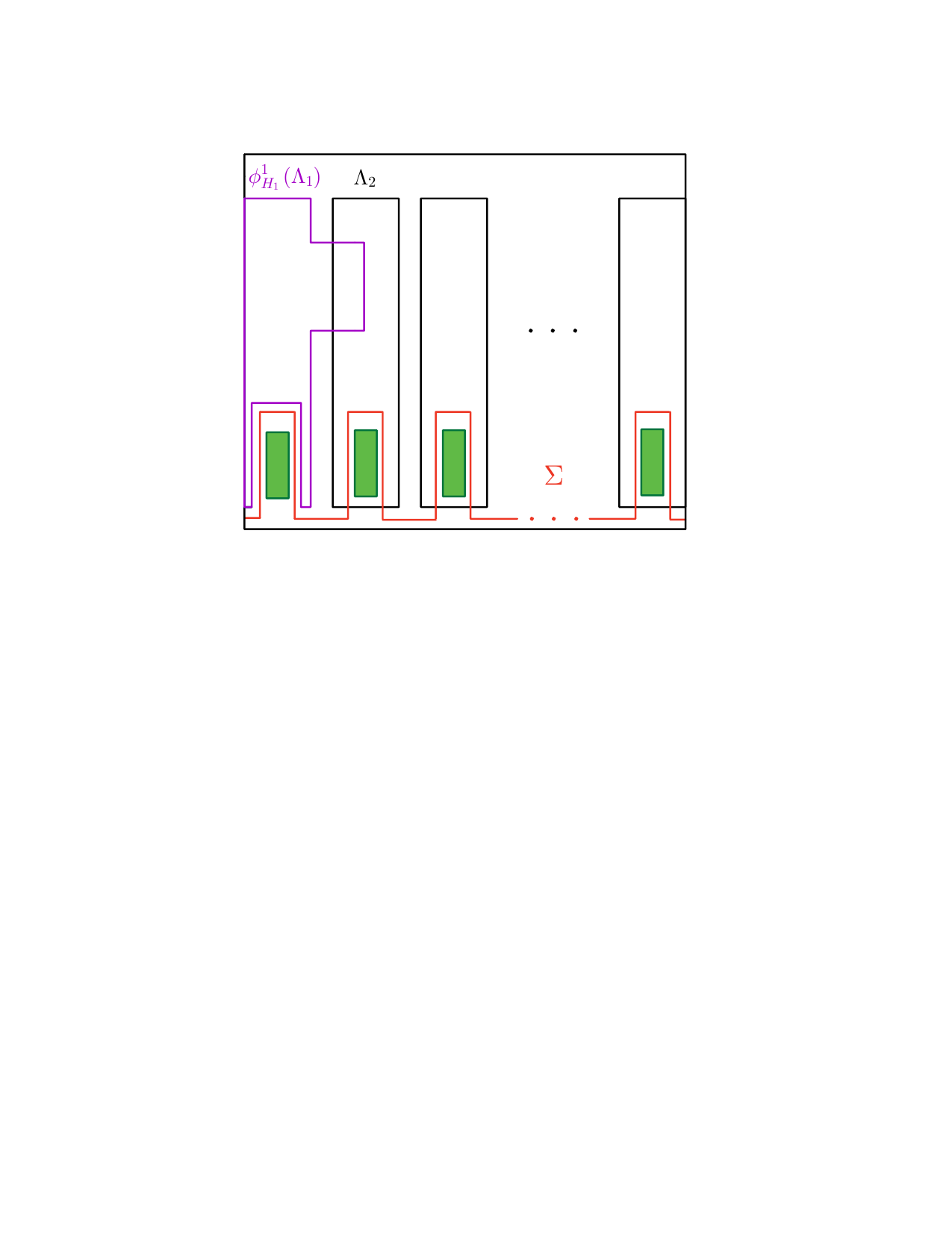}
      \vspace{-8cm}
    \caption{Moving $\Lambda_1$ away from $\Sigma$.}
    \label{fig:H1}
\end{figure}
Similarly, let $H_2$ be a Hamiltonian that displaces $\Lambda_2$ from $\Sigma$ such that $\phi^1_{H_2}(\Lambda_2)$ intersects only $\Lambda_1$ and $\Lambda_2$ as depicted in Figure \ref{fig:H2}. Both $H_1$ and $H_2$ can be chosen so that their minimum value is zero and their maximum value, $\bar{H}$, is the same. This shared maximum value $\bar{H}$ is greater than $\mathfrak{o}/k$ and can be chosen to be arbitrarily close to $\mathfrak{o}/k$.  Note also that both $H_1$ and $H_2$ can be chosen to have support contained in a region of the form $\{\eta < \theta < 2a + \delta-\eta\}$ for some $\eta>0$.

\begin{figure}[!h]
    \centering 
    \includegraphics[width=\linewidth]{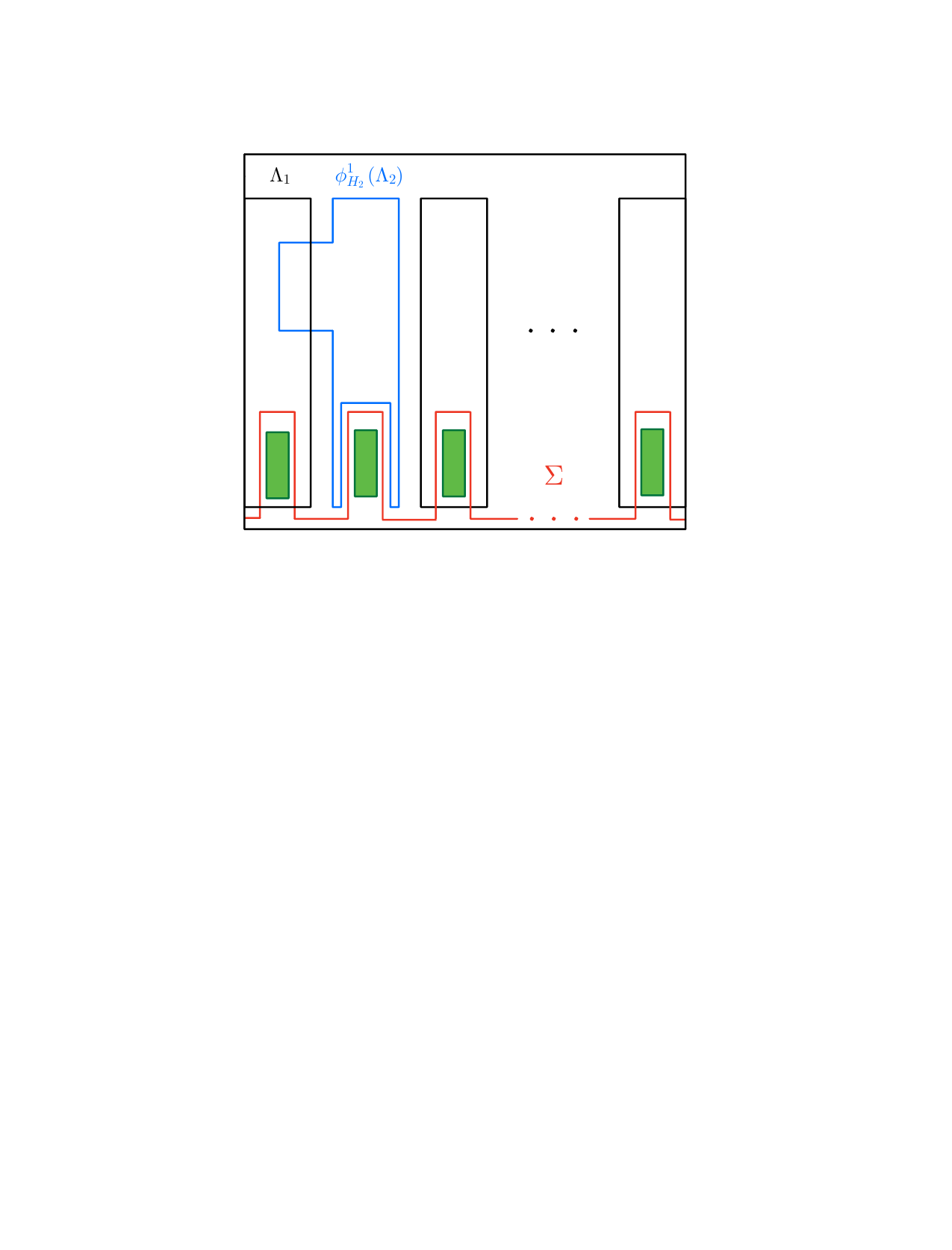}
      \vspace{-8cm}
    \caption{Moving $\Lambda_2$ away from $\Sigma$.}
    \label{fig:H2}
\end{figure}

Let $G$ be a function on $D(2r)$ such that in the region $g$ pictured in Figure \ref{fig:g}, we have $G(z_1) = \chi(x_1)$ where $\chi$ is a smoothing of a piecewise linear function, similar to the one in the  model, whose constant values along a horizontal strip in $g$ are labeled in Figure \ref{fig:g}. Assume also that $G=0$ outside an arbitrarily small neighborhood of the region $g$. 

\begin{figure}[!h]
    \centering 
    \includegraphics[width=\linewidth]{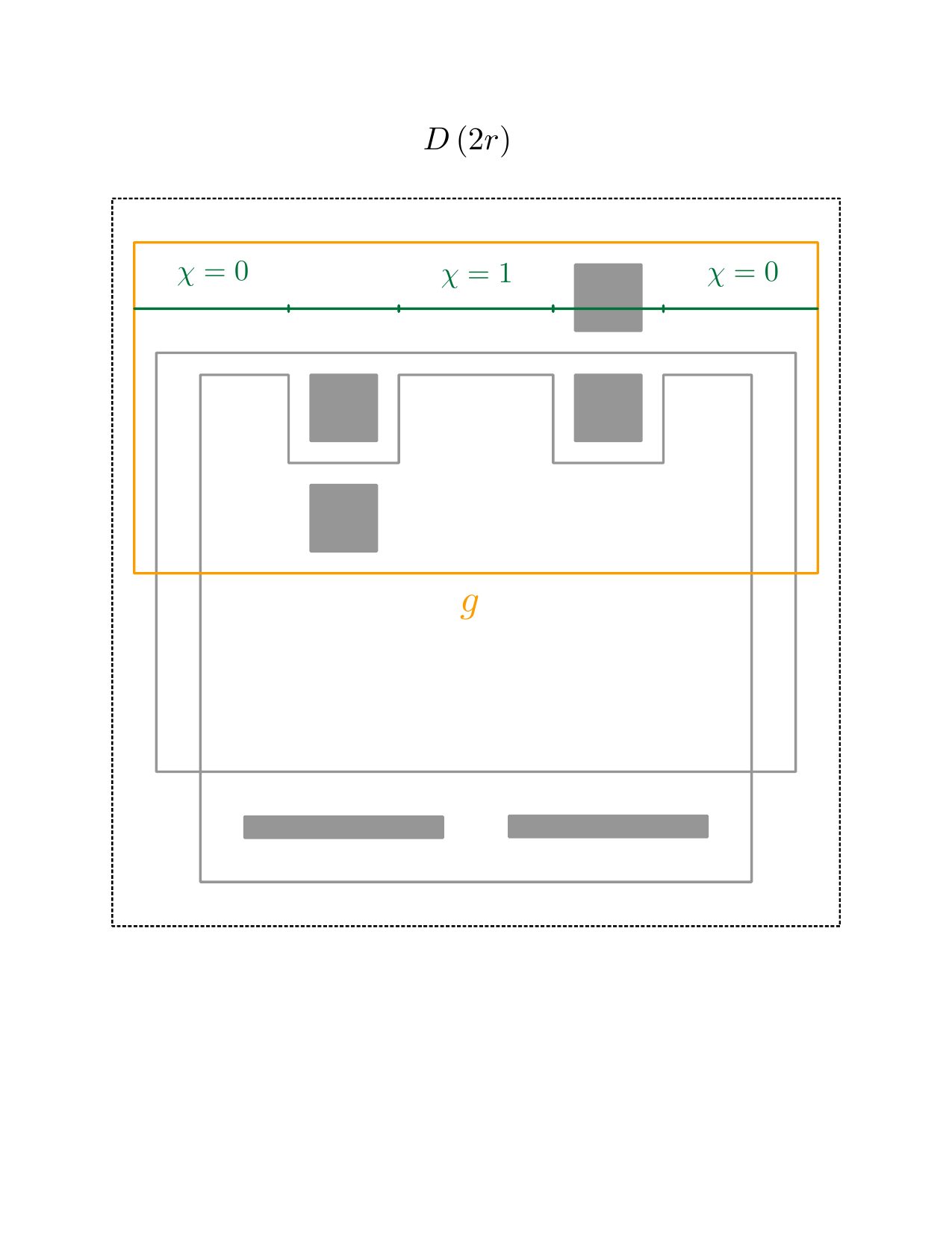}
      \vspace{-3.5cm}
    \caption{The function $G$.}
    \label{fig:g}
\end{figure}

\begin{figure}[!h]
    \centering 
    \includegraphics[width=\linewidth]{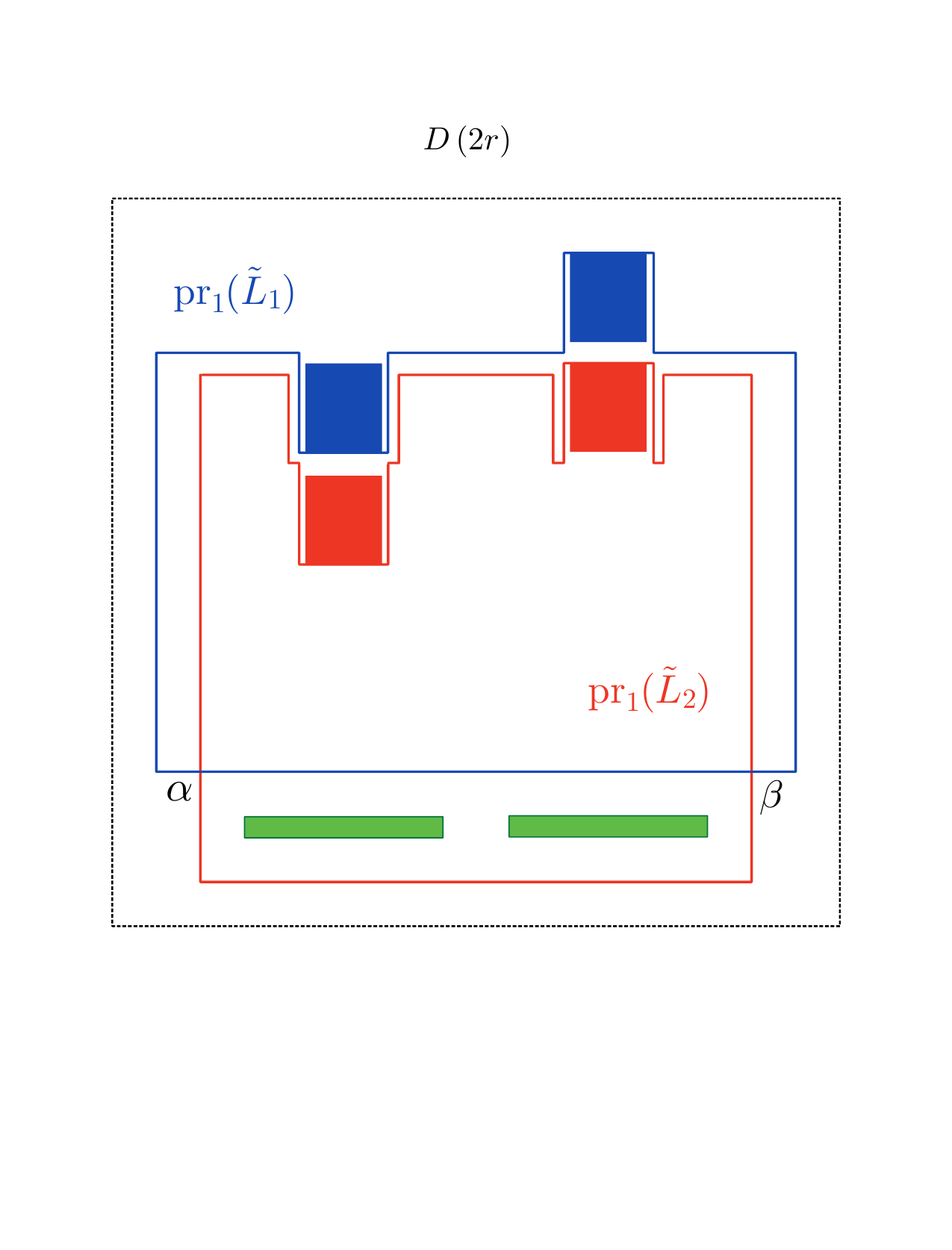}
      \vspace{-3.5cm}
    \caption{Projections of the tori $\tilde{L}_i$.}
    \label{fig:proj}
\end{figure}

Set $F_1(z_1,z_2) =G(z_1)H_1(z_2)$ and $F_2(z_1,z_2) =G(z_1)H_2(z_2)$ and consider the Lagrangian tori $\tilde{L}_1 = \phi^1_{F_1}(L_1)$ and $\tilde{L}_2 = \phi^1_{F_2}(L_2)$. The images of $\tilde{L}_1$ and $\tilde{L}_2$ under $\mathrm{pr}_1$ are pictured in Figure \ref{fig:proj}. Because the additional areas can be arranged to lie in our green regions, these projections intersect in the same two points, $\alpha$ and $\beta$, as do $\gamma_1 =\mathrm{pr}_1(L_1)$ and $\gamma_2=\mathrm{pr}_1(L_2)$. 
Similarly, the images of $\tilde{L}_1$ and $\tilde{L}_2$ under $\mathrm{pr}_1$ also intersect $\Gamma$ in the same points as $\gamma_1 $ and $\gamma_2$, respectively.

\begin{lemma}\label{lem:off}
    For the construction above
    \begin{enumerate}
        \item $\tilde{L}_1$, $\tilde{L}_2$  and $\mathbf{L}$ are disjoint from one another.\\
        \item $\mathrm{pr}_2(\tilde{L}_1)$ and $\mathrm{pr}_2(\tilde{L}_2)$ are contained in a subset of $D(b)$ of the form $\{\eta \leq \theta \leq 2a + \delta-\eta\}$ for some $\eta>0$.
    \end{enumerate}
\end{lemma}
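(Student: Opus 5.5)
The plan is to use the explicit formula \eqref{move} for the flows of the product Hamiltonians. For $F_i = G(z_1)H_i(z_2)$ we have
\[
\phi^1_{F_i}(z_1,z_2) = \bigl(\phi^{H_i(z_2)}_G(z_1),\, \phi^{G(z_1)}_{H_i}(z_2)\bigr).
\]
Throughout, I track the $\mathrm{pr}_1$- and $\mathrm{pr}_2$-projections of $\tilde L_1$, $\tilde L_2$ and $\mathbf{L}$.

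Assertion (2) is the easy part, so I would prove it first. The second coordinate of a point of $\tilde L_i$ is $\phi^{G(z_1)}_{H_i}(z_2)$ with $z_2\in\Lambda_i$, and $G$ takes values in $[0,1]$. Hence $\mathrm{pr}_2(\tilde L_i)$ lies in the union of $\Lambda_i$ and the orbit segments of $H_i$ starting on $\Lambda_i$. The curves $\Lambda_1,\Lambda_2$ lie in the angular sector $\{(j-1)(a+\delta)<\theta<ja+(j-1)\delta\}$, $j=1,2$, which is contained in $\{\eta\le\theta\le 2a+\delta-\eta\}$ for small $\eta$. Since $H_i$ is supported in that sector, it preserves it. So the whole $H_i$-orbit of $\Lambda_i$ stays there, which gives (2).

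For (1), I split the argument according to the $z_1$-projection. The pictured projections give the following facts. $\mathrm{pr}_1(\tilde L_1)\cap\mathrm{pr}_1(\tilde L_2)=\{\alpha,\beta\}$. The intersection $\mathrm{pr}_1(\tilde L_i)\cap\Gamma$ equals $\gamma_i\cap\Gamma$. The added "cost" rectangles, of area roughly $\bar H$ times the width of the ramp of $G$, lie in the green regions of area $\mathfrak{o}/k$, which is possible because $\bar H$ is arbitrarily close to $\mathfrak{o}/k$. Therefore a potential intersection point of two of the three tori must lie over one of finitely many points of the $z_1$-plane, and it suffices to show that the $z_2$-fibres over those points are disjoint.

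\emph{Over $\alpha,\beta$ (the pair $\tilde L_1,\tilde L_2$).} I would arrange $G$ to be locally constant near $\alpha$ and $\beta$, say $G=0$ near one and $G=1$ near the other, matching Figure \ref{fig:g}. Over a point where $G=0$, the fibres are the unmoved curves $\Lambda_1$ and $\Lambda_2$, which are disjoint. Over a point where $G=1$, they are $\phi^1_{H_1}(\Lambda_1)$ and $\phi^1_{H_2}(\Lambda_2)$. I then need these two curves to be disjoint, which I would build into the choice of $H_1$ and $H_2$ in Figures \ref{fig:H1} and \ref{fig:H2}. If instead one fibre is moved and the other is not, I need $\phi^1_{H_1}(\Lambda_1)\cap\Lambda_2=\emptyset$ (or the symmetric statement). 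The figures allow $\phi^1_{H_1}(\Lambda_1)$ to meet $\Lambda_2$, so in that case the values of $G$ at $\alpha$ and $\beta$ must be chosen so that such a moved/unmoved pairing never occurs.

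\emph{Over $\gamma_i\cap\Gamma$ (the pair $\tilde L_i,\mathbf{L}$).} Here $G=1$ (the plateau of the model), so the fibre of $\tilde L_i$ is $\phi^1_{H_i}(\Lambda_i)$, while the fibre of $\mathbf{L}$ is $\Sigma$. These are disjoint because $H_i$ displaces $\Lambda_i$ from $\Sigma$.

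\emph{Over the ramp region.} Away from these finitely many points the $z_1$-projections of the relevant tori are disjoint, including the cost rectangles, which lie in green regions avoided by $\Gamma$ and by the other $\gamma_j$. Hence there are no intersections there.

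The main obstacle is the bookkeeping that reconciles the values of $G$ at $\alpha$, $\beta$ and at the points of $\gamma_i\cap\Gamma$ with the requirement that the cost rectangles fit in the green regions without creating new crossings of projections. This is exactly where the geometry of the figures and the hypothesis $r>6\mathfrak{o}/k$ enter. I would first verify it against Figure \ref{fig:g}, checking each crossing point in turn using the approximate formula for $\phi^1_F$ from the model.
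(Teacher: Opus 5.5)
Your reduction is the paper's. Intersections can only occur over the finitely many crossing points of the $\mathrm{pr}_1$-projections. Near the points where $\mathrm{pr}_1(\tilde L_i)$ meets $\Gamma$, $G\equiv 1$, so the fibres there are $\phi^1_{H_i}(\Lambda_i)$ and $\Sigma$, which are disjoint by the choice of $H_i$. Part (2) follows from the support of $H_1,H_2$; since the flow preserves the sector, the values of $G$ are irrelevant there. The flaw is in how you handle $\alpha$ and $\beta$.

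You propose $G=1$ near one of them, and then need $\phi^1_{H_1}(\Lambda_1)\cap\phi^1_{H_2}(\Lambda_2)=\emptyset$, to be ``built into'' the choice of $H_1,H_2$. That cannot be done in general. Both moved curves are disjoint from $\Sigma$ by construction, and all three curves enclose area $s$, so none can lie inside another. Pairwise disjointness would therefore give three disjoint disks of area $s$ in $D(b)$. This is impossible when $k=2$, since then $b<3s$. For larger $k$ nothing in the construction supplies it either: $H_1$ is only controlled against $\Sigma$ and the $\Lambda_j$, and $\phi^1_{H_1}(\Lambda_1)$ is explicitly allowed to cross $\Lambda_2$.

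The missing fact, which the paper's proof uses, is that $G(\alpha)=G(\beta)=0$, with $G$ locally constant near both points. The fibres of $\tilde L_1$ and $\tilde L_2$ over $\alpha$ and $\beta$ are then simply $\Lambda_1$ and $\Lambda_2$, which are disjoint. The value $G=1$ is needed only near the crossings with $\Gamma$.

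Your worry about a ``moved/unmoved'' pairing over $\alpha,\beta$ is vacuous. $F_1$ and $F_2$ share the same $G$, so over any point $z_1$ where $G$ is locally constant, both fibres are flowed for the same time $G(z_1)$.
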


\begin{proof}

It follows from the discussion above that any point of intersection of $\tilde{L}_1$ and $\tilde{L}_2$ must lie in either $(\mathrm{pr}_1)^{-1}(\alpha)$ or $(\mathrm{pr}_1)^{-1}(\beta)$. But the intersections of $\tilde{L}_1$ and $\tilde{L}_2$ with these two fibers are the same as those of $L_1$ and $L_2$ since $G(\alpha)=G(\beta) =0.$ In particular,  $\mathrm{pr}_1^{-1}(\alpha) \cup \tilde{L}_i =\Lambda_i=\mathrm{pr}_1^{-1}(\beta) \cup \tilde{L}_i$ for $i =1,2$. Hence, $\tilde{L}_1$ and $\tilde{L}_2$ are disjoint.

Similarly, any point of intersection of $\tilde{L}_1$ and $\mathbf{L}$ must lie in the fiber of $\mathrm{pr}_1$ over one of the two intersection points of $\mathrm{pr}_1(\tilde{L}_1)$ and $\mathrm{pr}_1(\mathbf{L}) =\Gamma.$ Labeling these two points $\alpha_1$ and $\beta_1$, the definition of $G$ implies that it is identically equal to one near both. Hence, 
$$\mathrm{pr}_1^{-1}(\alpha_1) \cup \tilde{L}_1 = \mathrm{pr}_1^{-1}(\beta_1) \cup \tilde{L}_1 =\phi^1_{H_1}(\Lambda_1)$$ while 
$$\mathrm{pr}_1^{-1}(\alpha_1) \cup \mathbf{L} = \mathrm{pr}_1^{-1}(\beta_1) \cup \mathbf{L} =\Sigma.$$ By the definition of $H_1$, these are disjoint and hence so are $\tilde{L}_1$ and  $\mathbf{L}$. A similar argument yields $\tilde{L}_2 \cap\mathbf{L} = \emptyset.$

The last assertion follows immediately from the fact, mentioned above, that both $H_1$ and $H_2$ can be chosen to have support contained in a region of the desired form.
\end{proof}

Invoking the last assertion of Lemma \ref{lem:off}, and defining $H_i$ analogously to $H_1$ or $H_2$ depending on the parity of $i$, the remaining $\tilde{L}_j$ can be repositioned within $P(2r,b)$, in pairs,  so that in the end they are disjoint from each other and from $\mathbf{L}$, as desired.

\end{document}